\newtheorem{thm}{Theorem}[section]
\newtheorem{cor}[thm]{Corollary}
\newtheorem{corollary}[thm]{Corollary} 
\newtheorem{lem}[thm]{Lemma}
 \newtheorem{lemma}[thm]{Lemma}
\newtheorem{prop}[thm]{Proposition}
\newtheorem{que}{Question}
\theoremstyle{definition}
\newtheorem{defn}[thm]{Definition}
\newtheorem{rem}[thm]{Remark}
\numberwithin{equation}{section}
\newcommand{\norm}[1]{\Vert#1\Vert}
\newcommand{\Norm}[1]{\left\Vert#1\right\Vert}
\newcommand{\abs}[1]{\left|#1\right|}
\newcommand{\na}{\nabla}
\newcommand{\pa}{\partial}
\newcommand{\lec}{\lesssim}
\newcommand{\bs}{\begin{split}}
\newcommand{\essss}{\end{split}}
\newcommand{\td}{\tilde}
\renewcommand{\div}{\operatorname{div}}
\newcommand{\bna}{\overline\na} 
\newcommand{\eqnb}{\begin{equation}}
\newcommand{\eqne}{\end{equation}}
\newcommand\al{\alpha}
\newcommand\be{\beta}
\newcommand\de{\delta}
\newcommand\De{\Delta}
\newcommand{\ga}{\gamma}
\newcommand\Ga{\Gamma}
\newcommand\ep{\epsilon}
\newcommand\ka{\kappa}
\newcommand{\la}{\lambda}
\newcommand{\La}{\Lambda}
\newcommand\ph{\varphi}
\newcommand{\p}{\partial}
\renewcommand{\th}{\theta}
\newcommand{\om}{\omega}
\newcommand{\Om}{\Omega}
\newcommand{\R}{\mathbb{R}}
\newcommand{\RR}{\mathbb{R}}
\newcommand{\ZZ}{\mathbb{Z}}
\newcommand{\Z}{\mathbb{Z}}
\newcommand{\cM}{\mathcal{M}}
\renewcommand{\d}{\mathrm{d}}
\newcommand{\supp}{\operatorname{supp}}
\newcommand{\I}{\text{Id}}
\newcommand{\loc}{\text{loc}}
\newcommand{\tail}{\text{tail}}
\def\dint{\,\ThisStyle{\ensurestackMath{%
  \stackinset{c}{.2\LMpt}{c}{.5\LMpt}{\SavedStyle-}{\SavedStyle\phantom{\int}}}%
  \setbox0=\hbox{$\SavedStyle\int\,$}\kern-\wd0}\int}
\newcommand\reallywidehat[1]{%
\savestack{\tmpbox}{\stretchto{%
  \scaleto{%
    \scalerel*[\widthof{\ensuremath{#1}}]{\kern.1pt\mathchar"0362\kern.1pt}%
    {\rule{0ex}{\textheight}}
  }{\textheight}%
}{2.4ex}}%
\stackon[-6.9pt]{#1}{\tmpbox}%
}
\begin{document}

\title{Local regularity of weak solutions of the hypodissipative Navier-Stokes equations}

\author{Hyunju Kwon, Wojciech S. O\.za\'{n}ski}
\address{
\begin{minipage}{\linewidth}
Hyunju Kwon, \\School of Mathematics, Institute for Advanced Study, Princeton, NJ 08540 \\\texttt{hkwon@ias.edu}\\  \\
Wojciech S. O\.za\'{n}ski\\
Department of Mathematics, University of Southern California, Los Angeles, CA 90089\\
\texttt{ozanski@usc.edu}
\end{minipage}
} 

\begin{abstract} 
We consider the 3D incompressible hypodissipative Navier-Stokes equations, when the dissipation is given as a fractional Laplacian $(-\De)^s$ for $s\in (\frac34,1)$, and we provide a new bootstrapping scheme that makes it possible to analyse weak solutions locally in space-time. This includes several homogeneous Kato-Ponce type commutator estimates which we localize in space, and which seems applicable to other parabolic systems with fractional dissipation. We also provide a new estimate on the pressure, $\|(-\Delta)^s p \|_{\mathcal{H}^1}\lec \| (-\Delta )^{\frac s2} u \|^2_{L^2}$. We apply our main result to prove that any suitable weak solution $u$ satisfies $\na^n u \in L^{p,\infty }_\loc(\R^3\times(0,\infty))$ for $p=\frac{2(3s-1)}{n+2s-1}$, $n=1,2$. As a corollary of our local regularity theorem, we improve the partial regularity result of Tang-Yu [\emph{Comm. Math. Phys., 334(30), 2015, pp. 1455--1482}], and obtain an estimate on the box-counting dimension of the singular set $S$, $d_B(S\cap \{t\geq t_0 \} )\leq \frac13 (15-2s-8s^2) $ for every $t_0>0$. 
\end{abstract}
\maketitle

\section{Introduction}\label{sec_intro}
We consider three-dimensional incompressible Navier-Stokes equations with fractional Laplacian dissipation,
\begin{align}\label{fNS}
\begin{cases}
\pa_t u + (-\De)^{s} u+ (u\cdot \na) u+\na p  = 0, \\
\div u = 0,
\end{cases}
\end{align}
for $s>0$, in the whole space $\R^3\times (0,T)$, $T\in (0,\infty )$. The system is supplemented with initial data $u|_{t=0}= u_0\in L^2(\R^3)$ that is divergence free. The fractional Laplacian is defined as the Fourier multiplier with symbol $|\xi |^2$, \[
\reallywidehat{(-\De)^s f} (\xi)= |\xi|^{2s}\widehat{f}(\xi) 
\quad \forall f\in \mathcal{D}'(\R^3). 
\]

From the physical point of view, this model, for $0<s<1$, describes fluids with internal friction in \cite{mercado} and has also been obtained from a stochastic Lagrangian particle approach by \cite{zhang}. From the analytical point of view, \eqref{fNS} has special importance as a generalization of the classical Navier-Stokes equations (i.e. when $s=1$). Lions \cite{Lion69} first studied \eqref{fNS} and has established the existence and uniqueness of global-in-time classical solution when $s\geq \frac 54$, satisfying the energy inequality,
\eqnb\label{EI}
\int_{\RR^3} |u(t)|^2  \d x + 2\int_0^t \int_{\RR^3} | \La^s u |^2 \d x \, \d \tau \leq \int_{\RR^3} |u_0|^2  \d x , \quad \forall t\geq 0,
\eqne
where $\Lambda \equiv (-\Delta )^{1/2}$. In the case of $s<\frac 54$, the existence of global-in-time classical solution remains open. In particular, this question for the classical Navier-Stokes equations ($s=1$) remains one of the Millennium Problems. One of the important developments of the regularity theory is the celebrated $\epsilon$-regularity theory of Caffarelli-Kohn-Nirenberg \cite{CKN}, who showed that the $1$-dimensional parabolic Hausdorff measure of the singular set (i.e. the set of point $(x,t)$ such that $u$ is unbounded in any neighbourhood of $(x,t)$) vanishes for every suitable weak solution (see \cite{CKN} for the definition). Recently, Tang-Yu \cite{TY15} have extended this result to the hypodissipative case $\frac 34<s<1$, by showing that the $(5-4s)$-dimensional Hausdorff measure of the singular set vanishes for every suitable weak solution\footnote{see Definition~\ref{def_suitable}}. They also showed existence of a suitable weak solution for given divergence-free initial data $u_0\in L^2$. In the case of hyperdissipation $1<s\leq\frac54$, a similar result has recently been obtained by Colombo-De Lellis-Massaccesi \cite{colombo_dl_m} (see also \cite{katz_pavlovic} and \cite{pr_not_suitable}). We note that these results cover, at most, H\"older continuity of solutions outside of the singular set, and any regularity aspects of derivatives of suitable weak solutions have remained an open question for $s\ne 1$, $s<5/4$. 

The regularity of derivatives of the Navier-Stokes equations with fractional dissipation is an interesting open problem. In the case of classical Navier-Stokes equations one can deduce boundedness of higher derivatives using the classical procedure, which we now briefly sketch. (It is described in detail in Section 13 and Section D.3 in \cite{NSE_book}, for example.)\\ Consider the vorticity $\om = \mathrm{curl}\,  u$ and let us focus on the vorticity formulation $\p_t \omega - \Delta \omega = (\omega \cdot \nabla ) u - (u\cdot \nabla )\omega $. Set $W\coloneqq \omega \phi$, where $\phi\in {C_c^\infty} (\RR^3 \times (0,\infty))$ is a cutoff function. Then $W$ satisfies an equation which is, roughly speaking, of the form
\[
\p_t W -\Delta W = \pa (W u) + Wu + \p W + W
\]
(where $\p$ denotes any spatial partial derivative). Considering only the leading order term ``$\p (Wu )$'' on the right-hand side, and applying standard parabolic regularity estimates gives
\[
\| W \|_{L^r} \lec \| W u \|_{L^a} \lec \| W \|_{L^m} \| u \|_{L^q}
\]
for $5/a\leq 5/r+1$, where $L^r\equiv L^r (\RR^3 \times (0, \infty ))$, and we also applied H\"older's inequality with $1/a=1/m+1/q$. This gives the condition
\[
5\left( \frac{1}{m} -\frac{1}{r} \right) < 1-\frac{5}{q},
\]
from which it is clear that if $u\in L^q$ for some $q>5$ then the right hand side of the above inequality is strictly positive, and so one can choose $r>m$, which improves local regularity of $\om$. Therefore, using  the ``initial regularity'' $\omega \in L^2$ obtained from the energy inequality, one can use a bootstrapping argument (with decreasing cutoff functions $\phi$) together with the Biot-Savart estimates to obtain local boundedness of all spatial derivatives of $u$. 


Considering the case $s\ne 1$, it is clear that the hypodissipative case ($s<1$) is drastically more complicated as we do not necessarily have vorticity $\omega \in L^2$. Indeed the energy inequality \eqref{EI} gives only $\La^{s} u \in L^2$, and so it is not even clear that $\omega = \mathrm{curl}\,u$ is a well-defined quantity. Therefore one cannot use the vorticity equation to bootstrap regularity. On the other hand using the equations \eqref{fNS} directly becomes much more difficult as one needs to take into account both the nonlocality of the fractional Laplacian $(-\Delta )^s$ and the nonlocality of the pressure function $p$. This gives rise to two important open questions:

\begin{que} If a Leray-Hopf weak solution $u$ to \eqref{fNS} is bounded on some cylinder $Q_2$ then are the derivatives of $u$ bounded, on some smaller cylinder $Q_1$?\end{que}
\begin{que} Do derivatives of solutions $u$ to \eqref{fNS} admit any a priori estimates? \end{que}

In this work we provide positive answers to both questions in the hypodissipative case
 \[
\frac 34<s<1.
\]
Namely, we answer the first question in our first result (Theorem~\ref{thm_bootstrapping_intro} below), and we show (in Theorem~\ref{thm_main} below) that derivatives of $u$ admit local estimates in weak Lebesgue spaces with an optimal exponent for any suitable weak solution $u$ (see Definition~\ref{def_suitable}).

\begin{thm}\label{thm_bootstrapping_intro}
Suppose that a Leray-Hopf weak solution $u$ to \eqref{fNS} for $\frac  34<s<1$ satisfies  
\eqnb\label{local_terms_intro}\begin{split}
\| u \|_{L^\infty_{t,x}(Q_1)}&+ \| u\|_{ L^2_tW^{s,2}_x(Q_2)}+\|p\|_{ L^1_{t,x}(Q_1)}+ \| \na p \|_{L^1_{t,x}(Q_1)} \\
&+ \| \mathcal{M} (\La^s u )\|_{L^2 (Q_2)}+\|\mathcal{M} |\La^s u |^{\frac 2{1+\de}} \|_{ L^{1+\de} (Q_2)}+ \| \mathscr{M}_4 |\La^{2s-1}\na p | \|_{ L^1 (Q_2)}\leq c<\infty
\end{split}
\eqne
for $\de= \frac {2s}{6-s}$. Then the velocity $u$ satisfies
\[
\sup_{Q_1} |u(x,t)| + |\na u(x,t) | + |\nabla^2 u(x,t) | \leq C_0
\]
for some constant $C_0=C_0(c,s)>0$. 
\end{thm}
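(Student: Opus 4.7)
The plan is to run a bootstrapping argument analogous to the one sketched in the introduction for the classical Navier--Stokes equations, but with the vorticity equation replaced by the momentum equation \eqref{fNS} itself, since at the hypodissipative regularity level $\La^s u \in L^2$ one cannot even define $\om = \mathrm{curl}\, u$ as an $L^2$ quantity. Starting from the hypothesis $u \in L^\infty (Q_1)$ together with $\La^s u \in L^2$ and the pressure bounds on $Q_1$, I would progressively upgrade the local regularity on a nested family $Q_1 \subset \cdots \subset Q_{1+\ve_k} \subset \cdots \subset Q_2$ of cylinders, obtaining first $\na u \in L^\infty$ on an intermediate cylinder and then $\na^2 u \in L^\infty$ on $Q_1$. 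At each step I would multiply the relevant quantity by a space-time cutoff $\phi$ supported in the larger cylinder; at the first stage, $U \coloneqq \phi u$ satisfies
\[
\p_t U + (-\De)^s U = -\phi \na p - \phi\, (u\cdot \na) u + u\, \p_t \phi + [(-\De)^s,\phi]\,u,
\]
and I would bound $U$ (and subsequently $\na U$ and $\na^2 U$) in space-time Lebesgue norms via Duhamel's formula for the fractional heat semigroup $\ee^{-t(-\De)^s}$, whose derivative-smoothing estimate costs $t^{-1/(2s)}$ per derivative.

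The nonlinear term $\phi\, (u\cdot \na) u$ is controlled directly by $\| u \|_{L^\infty(Q_1)}$ combined with $\| u \|_{L^2_t W^{s,2}_x(Q_2)}$, and at the first stage the pressure term $\phi\, \na p$ is absorbed by the assumed $\|\na p\|_{L^1(Q_1)}$. At later stages of the bootstrap, the pressure is instead handled via the homogeneous localized estimate $\|(-\De)^s p \|_{\mathcal{H}^1} \lec \|\La^s u\|_{L^2}^2$ advertised in the abstract. The genuinely new term is the commutator
\[
[(-\De)^s, \phi]\, u(x) = c_s \, \mathrm{p.v.} \!\int_{\R^3} \frac{(\phi(y)-\phi(x))\, u(y)}{|x-y|^{3+2s}} \, \d y,
\]
which, unlike the classical case $s=1$, is a genuinely nonlocal singular integral. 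I would split it into a near-diagonal part, to be estimated by the localized homogeneous Kato--Ponce type commutator estimates developed earlier in the paper, and a tail part in $\{|x-y| \gtrsim 1\}$ which can be pointwise majorised by a Hardy--Littlewood maximal function of $\La^s u$ (and at later iteration stages of $\La^{2s-1}u$ and $\La^{2s-1}\na p$). This is precisely why \eqref{local_terms_intro} includes the maximal-function hypotheses on $\cM(\La^s u)$, $\cM |\La^s u|^{2/(1+\de)}$, and $\mathscr{M}_4 |\La^{2s-1}\na p|$: they are exactly the quantities needed to reinsert these tail contributions into the iteration without loss.

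The main obstacle, and the source of the restriction $s>\tfrac34$, is scaling bookkeeping across the bootstrap. Each fractional Duhamel estimate gains only $2s < 2$ derivatives measured in summability, so one must check that the two-stage upgrade $u \mapsto \na u \mapsto \na^2 u$ does close on $Q_1$ after finitely many intermediate cutoffs, and that the prescribed exponent $\de = 2s/(6-s)$ is simultaneously compatible with the Sobolev embeddings needed in the near-diagonal commutator estimate and with the integrability of the maximal-function tails at every stage of the iteration. For $s \leq \tfrac34$ the corresponding tail exponents would fail to be subcritical, the gain per Duhamel step would be insufficient to compensate for the derivative needed in $\na p$ through the pressure estimate, and the iteration would stall --- which is exactly why the threshold $s>\tfrac34$ appears in the hypothesis.
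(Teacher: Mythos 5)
Your overall architecture --- localize with a space-time cutoff, run Duhamel for the fractional heat semigroup, and control the commutator $[(-\De)^s,\phi]u$ by splitting into a near-diagonal piece and a tail majorised by maximal functions --- is indeed the paper's strategy, and you correctly identify why the maximal-function hypotheses appear in \eqref{local_terms_intro}. But there are two genuine gaps. First, the two-stage ladder $u\mapsto\na u\mapsto\na^2 u$ that you propose stalls at the second rung. To upgrade $\na u$ one must estimate $[(-\De)^s,\phi]\na u$, and the near-diagonal part of that commutator costs $2s-1+\bar\ve$ derivatives of $\na u$, i.e.\ control of $u$ at order strictly above $2s$; but each Duhamel step starting from a divergence-form right-hand side gains strictly less than $2s$ derivatives, so the first rung can never supply this (see Remark~\ref{rem.trick} area and, explicitly, the remark after Lemma~\ref{lem_comm.nau}). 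The paper's resolution is an extra intermediate stage: one derives the equation for the auxiliary quantity $\phi\,\La^\ga u$ with $\ga\in(s,1)$ (\texttt{Step 3} of the proof), which forces the new commutators $[(-\De)^s,\phi]\La^\ga u$ and $[\La^\ga,\phi]\bigl((u\cdot\na)u+\na p\bigr)$ and the nontrivial splitting \eqref{na_u_decomposition} of $\na u$ inside the commutator for $\na u$ (Lemmas~\ref{lem_comm.Deu.var1} and~\ref{lem_comm.nau}). Your sketch contains no substitute for this step, and without it the iteration does not close. Relatedly, $\de=\frac{2s}{6-s}$ does not arise from near-diagonal Sobolev bookkeeping but from the tail of the quadratic commutator $[\La^\ga,\phi](u\cdot\na)u$, where one must land on a power of $|\La^s u|$ strictly below $2$ under $\mathcal{M}$ (Lemma~\ref{lem_comm.u}).

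Second, your plan to handle the pressure ``at later stages'' via the global estimate $\|(-\De)^sp\|_{\mathcal{H}^1}\lec\|\La^su\|_{L^2}^2$ is not available here: that estimate requires $\La^s u\in L^2(\R^3)$, whereas the hypotheses of the theorem are purely local (only $\|u\|_{L^2_tW^{s,2}_x(Q_2)}$ is assumed). In the paper the global estimate serves only to verify, for actual weak solutions, that the hypotheses \eqref{local_terms_intro} are finite. Inside the proof the pressure is handled locally: the decomposition $\phi\na p=\phi\na\mathcal{R}_{ij}(u_iu_j\bar\phi)+\Ga$ with $\Ga$ smooth and controlled by $\|u\|_{L^\infty(B_1)}^2+\|p\|_{L^1(B_1)}$ (Lemma~\ref{lem_pressure_est}) turns the leading pressure contribution into a Riesz transform of the localized nonlinearity, which can be fed back into the Duhamel scheme, and the commutator $[\La^\ga,\phi]\na p$ is where the hypothesis on $\mathscr{M}_4(\La^{2s-1}\na p)$ is actually consumed (Lemma~\ref{lem_comm.p}). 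You would need to supply both mechanisms for the pressure terms to close.
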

Here $p$ stands for the pressure function of a weak solution $u$ (see Definition~\ref{def_weak} below), $\mathcal{M}$ denotes the Hardy-Littlewood maximal function and $\mathscr{M}_4$ denotes the grand maximal function of order $4$. We give the precise definitions in Section~\ref{sec_hardy_and_grand_max} below. We note that the assumptions of Theorem~\ref{thm_bootstrapping_intro} imply that $\sup_{Q_1} |\na^k u |\leq C_k (c,s)$ for all $k\geq 0$, where $C_k=C_k(c,s)>0$, which can be shown using the same method (see Section~\ref{sec_bootstrapping}). For simplicity, we restrict ourselves to $k\leq 2$.

In order to prove Theorem~\ref{thm_bootstrapping_intro} we develop a new bootstrapping scheme which provides a robust method of dealing with all nonlocalities. In fact, introducing an arbitrary space-time cut-off $\phi$ one needs to estimate a number of commutators of the form $[(-\De )^s , \phi]v\coloneqq (-\De)^s(v\phi) - \phi (-\De)^s v$. Here, $v$ can be $u$, $\na u$, $\La^\ga u$ (for $\ga \in (s,1)$), $u\cdot \nabla u$ or $\na p$. In contrast to the usual Kato-Ponce type estimates (see \cite{kato_ponce, grafakos_oh, li}), such commutators need to be localized in the sense that the right hand sides can involve only local information of some controlled quantities, and they all appear to be new. Each instance of $v=u,\na u,\La^\ga u,u\cdot \nabla u, \na p$ brings new challenges to our analysis, which we discuss in more detail in Section~\ref{sec_commutator_est}. 

A remarkable property of our commutator estimates, presented in Lemmas~\ref{lem_1.10}--\ref{lem_comm.p}, is that merely local information of $u$, $p$,  $\mathcal{M}(\La^s u)$ and $\mathscr{M}_4 (\La^{2s-1}\na p)$ suffices to control all the tail terms related to the fractional Laplacian $(-\Delta)^s$. In this sense the commutators are well-suited to the local regularity result of Theorem~\ref{thm_bootstrapping_intro} above.
We discuss the main new ideas (i.e. ``tricks'') of this control of the tail terms in Lemma~\ref{lem_tricks}, for the reader's convenience. We explain the reason for the use of the grand maximal function (as opposed to some simpler notion of a maximal function) below (above Corollary~\ref{cor_single_scale_local_reg_into}). 

We note that the theorem above is still valid with the grand maximal function $\mathscr{M}_4$ replaced by the Hardy-Littlewood maximal function $\mathcal{M}$ and with $\delta =0 $, but in any of these simplifications the boundedness of the last two terms appearing in \eqref{local_terms_intro} cannot be guaranteed in general, as $\mathcal{M}f\not \in L^1$ for any $f\in L^1$, $f\ne 0$. In this sense Theorem~\ref{thm_bootstrapping_intro} gives an optimal answer to Question 1 above.\\

The boundedness of derivatives of $u$ requires some minimal local control of $p$, $\nabla p$, as well as local $L^1$ control of $\mathscr{M}_4(\Lambda^{2s-1} \na p)$, as stated in Theorem~\ref{thm_bootstrapping_intro}. We show that these quantities are finite for any weak solution since the pressure is given via the singular integral \eqref{pressure_formula}. While for the classical Navier-Stokes equations one has a classical global estimate for $\na^2 p$, based on the fact that $-\De p =\pa_i u_j \pa_j u_i $ as well as the Coifman-Lions-Meyer-Semmes \cite{CLMS} estimate and the Fefferman-Stein \cite{fefferman_stein} estimate, the analogous result has been unknown for the hypodissipative Navier-Stokes equations \eqref{fNS}. In contrast with the classical Navier-Stokes equations, we now have 
\[(-\De)^s p = (-\De)^{2s-2}(\pa_i u_j \pa_j u_i ),\]
which involves the non-local operator $(-\De)^{2s-2}$, and so distributing it equally to $\pa_i u_j$ and $\pa_j u_i$ is not possible in general. We get around this issue by generalising the technique of Li \cite{li} for the Kenig-Ponce-Vega-type commutator estimates  \cite{kenig_ponce_vega} and using the divergence-free condition of $u$ to obtain that
\eqnb\label{est_on_p_intro}
\norm{\mathcal{R}^n(-\De)^s p}_{L^1(\R^3)} \lec_{s,n} \norm{\Lambda^s u}_{L^2(\R^3)}^2
\eqne
for every $n\geq 0$ (where $\mathcal{R}=\La^{-1}\na$ denotes the Riesz transform), which is another main result of this paper, see Proposition~\ref{prop_integrability_of_pressure}. Thanks to this global estimate, the global integrability of $\mathscr{M}_4(\Lambda^{2s-1} \na p)$ follows (see \eqref{pressure_global_grand_max}), and so using a Poincar\'e-type Lemma~\ref{lem_poincare_for_p} and 
the Calder\'on-Zygmund inequality gives boundedness of all pressure terms appearing in \eqref{local_terms_intro} above.

As a corollary of Theorem~\ref{thm_bootstrapping_intro} we obtain an improved statement of the partial regularity result of Tang-Yu \cite{TY15}: if a suitable weak solution $(u,p)$ is such that 
\[
\limsup_{r\to 0^+} r^{-5+4s} \int_{Q_r^*} y^a |\overline{\na} u^*|^2 < \epsilon_0,
\]
then, for some $\rho>0$, $\na^k u \in L^\infty (Q_{\rho})$ for every $k\geq 0$ (rather than merely for $k=0$). As mentioned above, such boundedness result of derivatives is well-known in the case of classical Navier-Stokes equations (see Theorem 1.4 in \cite{NSE_book}, for example), but has been an open problem in the case of fractional dissipation.\\

Our second result is concerned with an application of Theorem~\ref{thm_bootstrapping_intro} that provides an answer to Question 2.
\begin{thm}[Derivatives of suitable weak solutions]\label{thm_main}
Let $u$ be a suitable weak solution of \eqref{fNS} for $\frac 34<s<1$ (see Definition~\ref{def_suitable}). Then
\[
\norm{\na^n u}_{L^{p,\infty}(t_0,T; L^{p,\infty}(K))}^p 
{ \lec_{n,s}}  \norm{u_0}_{L^2(\R^3)}^2 + \frac {|K|}{t_0^{2-\frac1s}}
\]
for every $t_0\in (0,T)$ and every open and bounded subset $K\subset \R^3$, where $p \coloneqq \frac{2(3s-1)}{n+2s-1} $ and $n\in \{1,2\}$. 
\end{thm}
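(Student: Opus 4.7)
The plan is to combine the improved $\ep$-regularity corollary of Theorem~\ref{thm_bootstrapping_intro} (the sharpening of the Tang--Yu partial regularity result discussed below it) with a Vitali covering argument in the spirit of Scheffer that exploits the energy inequality \eqref{EI}. Write $a=1-2s$ and note that $p(n+2s-1)=6s-2$ for the exponent $p=2(3s-1)/(n+2s-1)$. For each $(x_0,t_0)\in \R^3\times (0,T)$, define the regularity scale
\[
r_*(x_0, t_0) := \sup\left\{ r \in (0, t_0^{1/(2s)}] : r^{-(5-4s)} \int_{Q_r^*(x_0, t_0)} y^a |\bna u^*|^2 < \ep_0 \right\}
\]
(with $\sup \emptyset := 0$). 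Applying the $\ep$-regularity corollary to the parabolic rescaling $u_r(y,\sigma) := r^{2s-1} u(x_0 + ry, t_0 + r^{2s}\sigma)$ and undoing the scaling yields the pointwise bound
\[
|\na^n u(x_0, t_0)| \lec_{n,s} r_*(x_0, t_0)^{-(n+2s-1)}, \qquad n \in \{1,2\},
\]
whenever $r_*(x_0,t_0)>0$; the complement $\{r_*=0\}$ is contained in the singular set and hence has zero Lebesgue measure by Tang--Yu.

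For $r \in (0, T^{1/(2s)}]$, set $E_r := \{(x, t) \in K \times (t_0, T) : r_*(x, t) < r\}$ and split $E_r = E_r^{(i)} \sqcup E_r^{(ii)}$ according to whether $t > r^{2s}$ or $t \leq r^{2s}$. At each point of $E_r^{(i)}$ the $\ep$-condition at scale $r$ fails, so $\int_{Q_r^*(x, t)} y^a |\bna u^*|^2 \geq \ep_0 r^{5-4s}$. A parabolic Vitali extraction produces pairwise disjoint cylinders $\{Q_r(x_i,t_i)\}_{i=1}^N$ with $E_r^{(i)} \subset \bigcup_i Q_{5r}(x_i,t_i)$. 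Summing the failing inequality and using the Caffarelli--Silvestre identity $\int_0^\infty y^a |\bna u^*|^2\,\d y \sim |\La^s u|^2$ together with \eqref{EI} yields
\[
N \ep_0 r^{5-4s} \;\leq\; \int_0^T \int_0^\infty \int_{\R^3} y^a |\bna u^*|^2 \;\lec\; \norm{u_0}_{L^2}^2,
\]
hence $|E_r^{(i)}| \lec N r^{3+2s} \lec r^{6s-2} \norm{u_0}_{L^2}^2$. The remainder $E_r^{(ii)}$ is nonempty only when $r > t_0^{1/(2s)}$, and there the trivial bound $|E_r^{(ii)}| \leq |K|(r^{2s} - t_0) \leq |K| r^{2s}$ suffices.

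Choose $r := (C_0/\lambda)^{1/(n+2s-1)}$, with $C_0$ the constant from the first step. The pointwise bound gives $\{|\na^n u| > \lambda\} \cap (K \times (t_0, T)) \subset E_r$, and combining the two covering cases
\[
|\{|\na^n u| > \lambda\} \cap (K \times (t_0, T))| \lec r^{6s-2} \norm{u_0}_{L^2}^2 + |K| r^{2s}\, \mathbf{1}_{r > t_0^{1/(2s)}}.
\]
Multiplying by $\lambda^p = C_0^p\, r^{-(6s-2)}$ and using $r^{2s-(6s-2)} = r^{-(4s-2)} \leq t_0^{-(2s-1)/s} = t_0^{-(2-1/s)}$ when $r\geq t_0^{1/(2s)}$, we obtain the uniform-in-$\lambda$ estimate
\[
\lambda^p \, |\{|\na^n u| > \lambda\} \cap (K \times (t_0, T))| \;\lec_{n,s}\; \norm{u_0}_{L^2}^2 + \frac{|K|}{t_0^{2-1/s}},
\]
and taking the supremum over $\lambda$ yields the claimed weak-type bound.

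The main obstacle is that $r_*$ is defined purely in terms of smallness of $\int y^a |\bna u^*|^2$, whereas invoking Theorem~\ref{thm_bootstrapping_intro} on the rescaled profile $u_r$ requires verifying \emph{all} of the quantities in \eqref{local_terms_intro}, in particular the pressure terms and the grand-maximal-function quantity $\mathscr{M}_4(\La^{2s-1}\na p)$. This is precisely what the improved corollary of Theorem~\ref{thm_bootstrapping_intro} is designed to provide: the new pressure estimate \eqref{est_on_p_intro} (Proposition~\ref{prop_integrability_of_pressure}), combined with the Fefferman--Stein boundedness of $\mathscr{M}_4$ on $\mathcal{H}^1$, delivers global $L^1$ integrability of $\mathscr{M}_4(\La^{2s-1}\na p)$. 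This quantity is then absolutely continuous and becomes arbitrarily small on sufficiently fine parabolic cylinders, allowing Theorem~\ref{thm_bootstrapping_intro} to be invoked at each scale $r\leq r_*(x_0,t_0)$.
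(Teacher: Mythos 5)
Your covering/Chebyshev skeleton is sound and the exponent arithmetic ($p(n+2s-1)=6s-2$, the $t_0$-dependent correction from small times) matches the paper's, but the first step --- the pointwise bound $|\na^n u(x_0,t_0)|\lec r_*(x_0,t_0)^{-(n+2s-1)}$ with $r_*$ defined \emph{only} through smallness of $r^{-(5-4s)}\int_{Q_r^*}y^a|\bna u^*|^2$ --- is a genuine gap, and it is exactly the difficulty the paper is built to overcome. No quantitative $\ep$-regularity theorem in the paper (or in Tang--Yu) takes as hypothesis the smallness of the extension energy alone: Proposition~\ref{prop_tang_yu} also requires smallness of $\sup_t\int|u|^2$, $\int|u|^3$ and the pressure, and Corollary~\ref{cor_single_scale_local_reg_into} requires smallness of $|u|^3+|p|^{3/2}$ together with the maximal-function quantities. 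The extension energy controls only the \emph{oscillation} of $u$ (via the Poincar\'e inequality of Lemma~\ref{lem_Poin}), not $u$ itself, so a large constant drift is invisible to your $r_*$; moreover $|u|^3$ does not have the energy scaling ($\int_{Q_1}|u_\la|^3=\la^{4s-6}\int_{Q_\la}|u|^3$, and $4s-6\ne 4s-5$), so even if you added $\int_{Q_r}|u|^3$ to the definition of $r_*$, the Chebyshev count at the threshold $\ep r^{6-4s}$ would only give $|E_r|\lec r^{6s-3}$ and a strictly smaller exponent than $p$. The ``improved corollary'' you invoke is the qualitative $\limsup$ statement below Theorem~\ref{thm_bootstrapping_intro}: it yields boundedness on \emph{some} unquantified cylinder $Q_\rho$ and cannot produce the uniform constant $C_0$ at scale $r_*$ that your choice $r=(C_0/\la)^{1/(n+2s-1)}$ requires. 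Likewise, ``$\mathscr{M}_4(\La^{2s-1}\na p)$ becomes arbitrarily small on fine cylinders by absolute continuity'' is a per-point, non-uniform statement and does not feed into a covering argument; what is needed (and what the paper uses) is that failure of smallness at scale $r$ forces a globally integrable, energy-scaling quantity to be at least $\ep r^{5-4s}$ on $Q_r$.

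The paper closes this gap with two ingredients you are missing. First, a Galilean change of variables along the flow of the mollified velocity (the map $\Phi_t$ in Section~\ref{sec_pf_of_main_thm}) recentres $u$ so that $\int u\psi\,\d x=0$ at every time; only then can $\|u\|_{L^3}$ and $\|p\|_{L^1_tL^2_x}$ on the unit cylinder be controlled by scale-optimal quantities (Lemma~\ref{lem_Poin} and the Poincar\'e-type inequality for the pressure, Lemma~\ref{lem_poincare_for_p}), which is the content of Theorem~\ref{thm_local_reg_intro}. Second, the set where regularity fails is measured against the full collection of globally integrable, energy-scaling quantities $F$ and $G$ of \eqref{def_of_F_G} (extension energy, Gagliardo seminorm, $(\mathcal{M}|\La^su|^{2/(1+\de)})^{1+\de}$, $|\La^{2s-1}\na p|$, $\mathscr{M}_4(\La^{2s-1}\na p)$), all bounded by $\|\La^su\|_2^2$ via \eqref{global} and Proposition~\ref{prop_integrability_of_pressure} --- not the extension energy alone. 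If you redefine $r_*$ through the composite quantity $H^\la$ evaluated along the flow, your covering argument becomes essentially the paper's Chebyshev estimate on $(\Om^\la_\ep(t))^c$ and does go through.
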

We note that the restriction $n\in \{1,2\}$ comes from the fact that only these values of $n>0$ give that $p\geq 1$ for $s\in (3/4,1)$

Theorem~\ref{thm_main} is related to the study of second derivatives in the case of the classical Navier-Stokes equations, which was initiated by Constantin \cite{constantin}. He showed the existence of global-in-time Leray-Hopf weak solution (i.e. weak solution that satisfies the strong energy inequality\footnote{We refer the reader to \cite{NSE_book} for the definition of Leray-Hopf weak solutions as well as other notions of solutions.}) satisfying a priori estimate for $\na^2 u$ in $L^p $ for every $p<\frac43$ in a periodic setting. Then Lions \cite{lions} improved this result to $\na^2 u \in L^{\frac43,\infty }(\RR^3\times (0,T))$ for any Leray-Hopf weak solution $u$. On the other hand, Vasseur \cite{vasseur} suggested a new approach for the analysis of higher derivatives based on the $\epsilon$-regularity theory, and obtained bounds for $\norm{\na^n u}_{L^{p,\infty }_{\loc}(\RR^3\times (0,T))}$ for $p<\frac{4}{n+1}$, $n\in \mathbb{N}$, that are uniform up to the putative blow-up time of a smooth solution $u$. The latter result was improved to $\norm{\na^n u}_{L^{\frac{4}{n+1}, \infty}_{\loc}(\RR^3\times (0,T))}$ by Choi-Vasseur \cite{choi_vasseur}, who also obtained the estimates on fractional derivatives. Very recently, Vasseur and Yang \cite{VaYa20} improved this result to $\na^2 u\in L^{\frac43, q}_\loc (\R^3\times(0,T))$ for $q>\frac43$, and for any suitable weak solution. In this context, Theorem~\ref{thm_main} is the first result concerned with the higher derivatives of solutions to Navier-Stokes equations with fractional dissipation. \\

The value of the exponent $p=2(3s-1)/(n+2s-1)$ in Theorem~\ref{thm_main} is determined by the {\it energy scaling}, which can be made precise by noting that the hypodissipative Navier-Stokes equation \eqref{fNS} is invariant under the scaling
\begin{align}\label{scale.invarience}
u_\lambda (x,t) \coloneqq \lambda^{2s-1} u(\lambda x, \lambda^{2s}t), \quad
p_\lambda (x,t) \coloneqq \lambda^{4s-2} p(\lambda x, \lambda^{2s}t)
\end{align}
for any $\la>0$. The energy functional, defined by 
\[
E(u) \coloneqq \sup_{t\geq 0} \int_{\RR^3} |u(t)|^2 \d x + 2\int_0^\infty \int_{\RR^3} | \La^s u |^2 \d x\, \d t,
\]
of the rescaled velocity $u_\la$ for $\la>0$ satisfies
\[
E(u_\lambda ) = \lambda^{4s-5} E(u).
\]
We say that a pivot quantity that is integrated over a cylinder has the \emph{energy scaling} if it scales with the same exponent as the energy. For example $|\La^s u |^2$ has the energy scaling because
\[
\int_{Q_1} |\La^s u_\lambda |^2 \d x\,\d t = \lambda^{4s-5} \int_{Q_\lambda } |\La^s u |^2 \d x\, \d t,
\]
where $Q_\la = B_\la \times  (-\la^{2s},0)$. The exponent $p$ in Theorem~\ref{thm_main} is chosen for $|\na^n u|^p$ to have the energy scaling.\\

Our proof of Theorem~\ref{thm_main} is inspired by the approach of Vasseur \cite{vasseur}, which is based on an $\ep$-regularity theorem and {\it Galilean invariance}, that is the invariance of \eqref{fNS} under a transformation 
\eqnb\label{gali_inv}
u_{\bold c} (x,t) \coloneqq  {\bold c}'(t) + u(x-{\bold c}(t), t), \quad
p_{\bold c}(x,t) \coloneqq  p(x-{\bold c}(t),t)
\eqne
for $ \bold{c}(t)\in \RR^3$. To be more precise, suppose that we can obtain local boundedness of $|\na^n u(x,t)|$ under the smallness assumption only on the pivot quantities over $Q_\la(x,t)$ that obey the energy scaling. For example, suppose that $\dint_{Q_\la(x,t)} |\La^s u|^2 \leq \ep^2 \la^p$ for $p\coloneqq \frac{2(3s-1)}{2s}$ implies boundedness of  $|\na^n u(x,t)|$ on $Q_{\la/2}(x,t)$ if $\ep>0$ is sufficiently small. Then the Lebesgue measure of the super level set $\{{ (x,t)\in \R^3\times(\la^{2s}, \infty): }|\na u(x,t)|\geq \la\}$ can be estimated using Chebyshev's inequality, provided that $|\La^s u|^2$ is integrable in the whole domain $\R^3\times(0,T)$, which results in $\na u \in L^{p,\infty}_{\loc}$. The point here is that the desired value of $p$ comes from using the pivot quantities that are globally integrable and have the energy scaling. Such quantities will be called \emph{scale optimal}. For instance the quantities $|\La^s u|^2$ and $|(-\De)^s p|$ are scale optimal\footnote{A suitable weak solution $u$ satisfies the energy inequality \eqref{EI} (see Section~\ref{sec_suitable_ws}), which gives the global integrability of $|\La^s u|^2$. For the global integrability of $(-\De)^sp$, recall \eqref{est_on_p_intro}.}. Thus one would wish for an $\epsilon$-regularity result that implies local boundedness of spatial derivatives of $u$ from a smallness assumption that involves only scale optimal quantities.

Although such a result is currently unknown, using the Galilean invariance \eqref{gali_inv} it turns out sufficient to prove such $\ep$-regularity result under the assumption that the velocity has zero $\psi$-mean, $\int_{\RR^3} u(t)\psi \d x=0$, for every $t$. Here, $\psi$ is a function in $C_c^\infty (B_1)$ with $\int_{\RR^3} \psi \,\d x=1$, which we now fix (and it will remain fixed throughout the paper). Under such assumption, we obtain the following  $\ep$-regularity result.
\begin{thm}[Local regularity]\label{thm_local_reg_intro} Let $s\in (\frac34,1)$. There exists $\ep=\ep (s, \psi )>0$ such that if $(u,p )$ is a suitable weak solution of \eqref{fNS} such that $\int u(x,t)\psi(x) \d x =0$ for all $t\in (-5^{2s},0) $ and
\begin{equation}\begin{split}
\label{smallness_intro}
&\int_{Q_5^*} y^a |\bna u^*|^2 \d x\,\d y\, \d t  + \int_{-5^{2s}}^0 \int_{B_5}\int_{B_5} \frac{|u(x,t)-u(y,t)|^2}{|x-y|^{3+2s}}\d x \,\d y\, \d t\\
&\hspace{2cm}+ \int_{Q_5}   
\left( (\mathcal{M} |\La^s u |^{\frac 2{1+\de}})^{1+\de} +  |\La^{2s-1}\na p|  +|\mathscr{M}_4 (\La^{2s-1}\na p)|\right) \,\d x\, \d t \leq \ep,
\end{split}\end{equation}
where $\delta \coloneqq \frac{2s}{6-s}$, then 
\[
\sup_{Q_\frac12} (|u|+ |\na u|+|\nabla^2  u|)  \leq C_0
\]
for some positive constant $C_0=C_0(s)$.
\end{thm}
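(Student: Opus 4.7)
The plan is to reduce Theorem~\ref{thm_local_reg_intro} to an application of the bootstrapping Theorem~\ref{thm_bootstrapping_intro}. Namely, I would verify finiteness of the seven quantities appearing in \eqref{local_terms_intro} on appropriate cylinders inside $Q_5$ (taking, say, the inner cylinder to be $Q_{1/2}$ and the outer one $Q_1$), at which point Theorem~\ref{thm_bootstrapping_intro} immediately produces the desired pointwise bound on $|u|+|\nabla u|+|\nabla^2 u|$ on $Q_{1/2}$. The smallness in \eqref{smallness_intro} is used in two distinct ways: it must be small enough to invoke Tang--Yu's \cite{TY15} $\epsilon$-regularity theorem in order to obtain the $L^\infty$ bound on $u$ required by \eqref{local_terms_intro}, while for the remaining six quantities mere \emph{finiteness} (and not smallness) of the corresponding terms in \eqref{smallness_intro} suffices.

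The first step is the $L^\infty$ bound. The extension integral $\int_{Q_5^*}y^a|\bna u^*|^2$ appearing in \eqref{smallness_intro} is exactly the scale-invariant pivot quantity of the Tang--Yu partial regularity theorem, so by choosing $\ep$ below their threshold we obtain $\|u\|_{L^\infty(Q_\rho)}\le C(s)$ for some $\rho\in(1,5)$. In particular $\|u\|_{L^\infty(Q_1)}\le C(s)$, as needed.

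The second step is to verify each of the remaining terms of \eqref{local_terms_intro}. For $\|u\|_{L^2_tW^{s,2}_x(Q_2)}$, the Gagliardo-seminorm term in \eqref{smallness_intro} controls $[u(\cdot,t)]_{W^{s,2}(B_5)}$ in $L^2_t$, and the zero-mean assumption $\int u\psi\,\d x=0$ upgrades this to a full $W^{s,2}$ bound on $B_2$ via a fractional Poincar\'e inequality. The pressure terms $\|p\|_{L^1(Q_1)}$ and $\|\nabla p\|_{L^1(Q_1)}$ are obtained by combining the $L^1$ bound on $|\La^{2s-1}\nabla p|$ from \eqref{smallness_intro} with the Poincar\'e-type Lemma~\ref{lem_poincare_for_p} and the Calder\'on--Zygmund inequality, essentially re-running in a localized form the argument outlined in the excerpt for the global estimate \eqref{est_on_p_intro}. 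The bound on $\|\mathcal{M}(\La^s u)\|_{L^2(Q_2)}$ follows by applying the spatial Hardy--Littlewood maximal inequality at each time to $\La^s u(\cdot,t)$, which lies in $L^2(\R^3\times(-5^{2s},0))$ globally by the energy inequality \eqref{EI} (since $u$ is a suitable weak solution). Finally, the two remaining terms $\|\mathcal{M}|\La^s u|^{2/(1+\de)}\|_{L^{1+\de}(Q_2)}$ and $\|\mathscr{M}_4|\La^{2s-1}\nabla p|\|_{L^1(Q_2)}$ are controlled directly by the corresponding terms in \eqref{smallness_intro}.

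The most delicate step, I expect, is the localized pressure control: deducing $\|p\|_{L^1(Q_1)}+\|\nabla p\|_{L^1(Q_1)}$ from only the $L^1$ bound on $\La^{2s-1}\nabla p$ together with the available information on $u$ requires careful treatment of the nonlocality of $\La^{2s-1}$, and relies crucially on the new global pressure estimate \eqref{est_on_p_intro} (Proposition~\ref{prop_integrability_of_pressure}) in combination with Lemma~\ref{lem_poincare_for_p}. The zero $\psi$-mean normalization, which is consistent with and motivated by the Galilean invariance \eqref{gali_inv}, is equally indispensable: without it, no amount of fractional-derivative information could recover an $L^2$ bound on $u$ itself, and therefore neither the $W^{s,2}$ nor the $L^\infty$ bound would be available.
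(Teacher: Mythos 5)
Your overall architecture --- first establish $\|u\|_{L^\infty(Q_1)}\lesssim 1$, then verify the hypotheses \eqref{local_terms_intro} and invoke Theorem~\ref{thm_bootstrapping_intro} --- is exactly the paper's, and your treatment of the six ``derivative'' hypotheses is essentially right (pressure via Lemma~\ref{lem_poincare_for_p}, the $W^{s,2}$ bound via the zero $\psi$-mean, the maximal-function terms from \eqref{smallness_intro}). One small correction there: $\|\mathcal{M}(\La^s u)\|_{L^2(Q_2)}$ should be bounded using the pointwise inequality $\mathcal{M}(\La^s u)\le(\mathcal{M}|\La^s u|^{2/(1+\de)})^{(1+\de)/2}$ together with the hypothesis, not via the global energy inequality, since the latter would make $C_0$ depend on $u_0$ rather than only on $s$.

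The genuine gap is in your first step. The one-scale Tang--Yu $\ep$-regularity theorem (Proposition 2.9 of \cite{TY15}, restated as Proposition~\ref{prop_tang_yu}) does \emph{not} yield $\|u\|_{L^\infty(Q_1)}\le 1$ from smallness of the extension integral alone; its hypothesis is the smallness of
\[
\sup_t\int_{B_{10/9}}|u|^2\,\d x+\int_{Q_{10/9}^*}y^a|\bna u^*|^2+\Bigl(\int_{Q_{10/9}}|u|^3\Bigr)^{2/3}+\Bigl(\int\bigl(\textstyle\int_{B_{10/9}}|p|\bigr)^{q}\d t\Bigr)^{2/q}.
\]
(The single-quantity Tang--Yu criterion you appear to have in mind is the $\limsup_{r\to0^+}r^{-5+4s}\int_{Q_r^*}y^a|\bna u^*|^2<\ep_0$ condition, which is a statement about all small scales at a point and cannot be read off from the one-scale hypothesis \eqref{smallness_intro}.) The real content of the reduction is therefore to derive smallness of $\sup_t\|u\|_{L^2}^2$, $\|u\|_{L^3}$ and the pressure norm from the scale-optimal quantities in \eqref{smallness_intro}. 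In the paper's Proposition~\ref{prop:red}, the $L^3$ and pressure bounds follow from Lemma~\ref{lem_Poin} (Poincar\'e via the extension, using the zero $\psi$-mean) and Lemma~\ref{lem_poincare_for_p}, \emph{modulo} $\|u\|_{L^\infty_tL^2_x}$; and the smallness of $\|u\|_{L^\infty_tL^2_x(Q_{10/9})}$ --- the step entirely absent from your proposal --- is obtained by testing the local energy inequality \eqref{LEI} with a cutoff $(\xi^*)^2$, bounding the cubic and pressure flux terms by $(1+\int|u|^2\xi^2)\,F(t)$ with $\|F\|_{L^1_t}\lesssim\sqrt\ep+\ep$, and closing with Gr\"onwall's inequality. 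Without this use of the suitability (local energy inequality) there is no route from \eqref{smallness_intro} to the hypotheses of any available one-scale $\ep$-regularity criterion, so the proof does not go through as written.
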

Here, $a\coloneqq 1-2s$ and $u^*=u^*(x,y)$ denotes the Caffarelli-Silvestre extension of $u$, which gives rise to the extended cylinder $Q_5^*\subset \R^5$ of $Q_5$ and the gradient $\bna$ with respect to $(x,y)$. We give the precise definitions in Section~\ref{sec_frac_lap} below. We note that, in order to prove the above theorem, only local boundedness of $u$ needs to be shown, as local boundedness of $|\na u|$ and $|\na^2 u |$ follows from Theorem~\ref{thm_bootstrapping_intro}.

An important ingredient of the proof of Theorem~\ref{thm_local_reg_intro} is a new Poincar\'e-type inequality for the pressure function,
\begin{equation}\label{poin.p_intro}
\norm{\na p - (\na p)_\psi}_{L^{\frac65}(B_\frac54)} 
\lec \norm{\La^{2s-1}\na p}_{L^1(B_5)} 
+ \norm{ \mathscr{M}_4(\La^{2s-1}\na p) }_{L^1(B_3)} ,
\end{equation}
where $(g)_\psi \coloneqq \int_{\RR^3} g\psi \,\d x$, which we develop in Lemma~\ref{lem_poincare_for_p}. Such inequality is necessary in the local regularity argument (see \eqref{p_in_L2}) to control the pressure function using only scale-optimal quantities. As above, such inequality is also valid with the grand maximal function $\mathscr{M}_4$ replaced by the Hardy-Littlewood maximal function $\mathcal{M}$, but in that case the global integrability would be lost, and so would be the scale-optimality of the local regularity result of Theorem~\ref{thm_local_reg_intro}. In fact, one could suspect that perhaps employing the smooth maximal function or the non-tangential maximal function would be sufficient to get around this difficulty. This has been demonstrated, for example, by Choi-Vasseur \cite{choi_vasseur} whose application of the smooth maximal function allowed them to obtain the endpoint integrability exponent $4/3$. In fact, it is also true of \eqref{poin.p_intro}, for which we show (in Lemma~\ref{lem_poincare_for_p}) the stronger estimate with $\mathscr{M}_4$ replaced by the smooth maximal function. 

The reason for the necessity to use of the grand maximal function $\mathscr{M}_4$ comes from our first result, Theorem~\ref{thm_bootstrapping_intro}, where it is needed to estimate the commutator involving the pressure function, $[\La^\ga, \phi]\na p$ (where $\ga\in (s,1)$). It is our most challenging estimate, and we present it in Lemma~\ref{lem_comm.p}. Its difficulty comes from the fact that this commutator involves both the nonlocality caused by the pressure function $p$ and the nonlocality of $\La^\ga$, and, as above, its estimate needs to be strong enough to involve only local information of a scale-optimal quantity. This is where the flexibility allowed by the grand maximal function becomes essential as, in some sense, it allows to control, in $L^1$, a family of double convolutions with uniform estimates (see \eqref{def_of_L} and Lemma~\ref{lem_bound_G_by_grand_max}). We discuss it in more detail below \eqref{It11_optimal}, but we point out that it is the main reason why we are able to prove a scale-optimal result of the form of Theorem~\ref{thm_local_reg_intro}. 
In other words, we show that one can obtain the endpoint integrability exponent $p=2(3s-1)/(n+2s-1)$ in Theorem~\ref{thm_main} thanks to the grand maximal function $\mathscr{M}_4$. \\

 Furthermore, as a corollary of Theorem~\ref{thm_local_reg_intro}, we prove that the local regularity of suitable weak solutions is still valid if the zero $\psi$-mean condition is replaced by a smallness assumption on $|u|^3+|p|^{\frac32}$. 
\begin{cor}\label{cor_single_scale_local_reg_into}
There exists $\ep>0$ such that if a suitable weak solution $(u,p )$ of \eqref{fNS} satisfies
\begin{equation*}\begin{split}
&\int_{Q_5^*} y^a |\bna u^*|^2 \d X \,\d t + \int_{-5^{2s}}^0 \int_{B_5}\int_{B_5} \frac{|u(x,t)-u(y,t)|^2}{|x-y|^{3+2s}}\d x \,\d y\, \d t\\
&\hspace{1cm}+ \int_{Q_5}  \left( 
(\mathcal{M} |\La^s u |^{\frac 2{1+\de}})^{1+\de} +  |\La^{2s-1}\na p|  +|\mathscr{M}_4 (\La^{2s-1}\na p)|+|u|^3 + |p|^{\frac32} \right) \leq \ep,
\end{split}\end{equation*}
where $\delta \coloneqq \frac{2s}{(6-s)}$. Then
\[
\sup_{Q_\frac12}\left(|u|+
|\na u|+|\nabla^2  u| \right) \leq C_1
\] 
for some constant $C_1>0$. 
\end{cor}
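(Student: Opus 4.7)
The plan is to deduce Corollary~\ref{cor_single_scale_local_reg_into} from Theorem~\ref{thm_local_reg_intro} by removing the zero $\psi$-mean hypothesis via the Galilean invariance \eqref{gali_inv}, at the cost of the extra smallness of $|u|^3 + |p|^{\frac32}$. I would introduce a time-dependent shift $c\colon [-5^{2s}, 0] \to \RR^3$ solving the ODE
\[ c'(t) = -\int_{\RR^3} u(y, t)\,\psi(y + c(t))\,\d y, \qquad c(-5^{2s}) = 0, \]
and set $\tilde u(x, t) \coloneqq c'(t) + u(x - c(t), t)$, $\tilde p(x, t) \coloneqq p(x - c(t), t) - x \cdot c''(t)$. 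Then $(\tilde u, \tilde p)$ is again a (suitable) weak solution of \eqref{fNS}, satisfies $\int \tilde u(\cdot, t)\psi\,\d x = 0$ for every $t$, and the ODE is locally well-posed since its right-hand side is Lipschitz in $c$. From the smallness $\int_{Q_5}|u|^3 \leq \ep$ and two applications of H\"older's inequality,
\[ |c'(t)| \leq \norm{\psi}_{L^{3/2}}\norm{u(\cdot, t)}_{L^3(B_{1+|c(t)|})}, \quad |c(t)| \lesssim (5^{2s})^{2/3}\norm{\psi}_{L^{3/2}}\ep^{1/3}, \]
valid as long as $|c|_\infty \leq 4$; a standard continuity/bootstrap argument then yields $|c(t)| \leq C\ep^{1/3}$ uniformly on $[-5^{2s}, 0]$, hence arbitrarily small for small $\ep$.

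Next I would verify that the pivot quantities in \eqref{smallness_intro} for $(\tilde u, \tilde p)$ are controlled by those for $(u, p)$. Each pivot quantity involves either a translation-invariant maximal operator ($\mathcal{M}$, $\mathscr{M}_4$) or a fractional differential operator ($\La^s$, $\La^{2s-1}\na$, or $\bna$ applied to the Caffarelli--Silvestre extension $\tilde u^*(x, y, t) = c'(t) + u^*(x - c(t), y, t)$) that annihilates the spatially constant $c'(t)$ and the spatially constant gradient of $-x\cdot c''(t)$. Therefore the pivot quantities for $(\tilde u, \tilde p)$ on $Q_R$ equal the corresponding quantities for $(u, p)$ on the $c$-translate of $Q_R$, contained in $Q_{R+|c|_\infty}$, and hence the smallness \eqref{smallness_intro} holds for $(\tilde u, \tilde p)$ on $Q_{5 - |c|_\infty}$. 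A rescaling $\tilde u \mapsto \tilde u_\lambda$ that exploits the energy scaling of the pivot quantities brings this smallness back to the scale $5$ with at most a factor $\lambda^{4s-5}$ close to $1$. Applying Theorem~\ref{thm_local_reg_intro} gives $\sup_{Q_{1/2}}(|\tilde u_\lambda| + |\na \tilde u_\lambda| + |\na^2 \tilde u_\lambda|) \leq C_0$, and undoing the rescaling and Galilean shift transfers the bounds on $|\na u|, |\na^2 u|$ to a slightly smaller cylinder inside $Q_{1/2}$ (absorbable into $C_1$). The pointwise bound on $|u|$ follows from $|u(y, t)| \leq |\tilde u(y + c, t)| + |c'(t)|$ together with the uniform estimate $|c'(t)| \leq \norm{\psi}_{L^2}\norm{u(\cdot, t)}_{L^2}$, finite uniformly in $t$ by the global energy inequality for suitable weak solutions.

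The main technical obstacle will be twofold. First, the nonlinear $c(t)$ forces the spatially linear pressure correction $-x\cdot c''(t)$, which, after integration by parts using $\div u = 0$, introduces a forcing term $\int c''(t)\cdot u\,\phi\,\d x\,\d t$ into the local energy inequality; realising $(\tilde u, \tilde p)$ as a bona fide suitable weak solution requires checking that this term is compatible with the framework, which in turn uses the integrability of $c''(t)$ implied by the equation together with the smallness of $\na p$. Second, the $\psi$-mean condition is not preserved under the rescaling $\tilde u \mapsto \tilde u_\lambda$ since $\psi$ is fixed, so one either performs a small secondary Galilean adjustment at the new scale, or applies a scale-invariant analogue of Theorem~\ref{thm_local_reg_intro} (valid on any $Q_R$ with a correspondingly rescaled $\psi$) that follows from the scaling invariance of \eqref{fNS}.
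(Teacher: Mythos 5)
Your route is genuinely different from the paper's. The paper does not invoke the Galilean invariance for this corollary at all: it reruns the proof of Proposition~\ref{prop:red} (the reduction to the Tang--Yu criterion, Proposition~\ref{prop_tang_yu}), replacing each use of the zero $\psi$-mean by the new smallness of $|u|^3+|p|^{3/2}$ --- Lemma~\ref{lem_Poin} is applied to $u-(u)_\psi$ only, the pressure mean $(\na p)_\psi$ is bounded by $\norm{p}_{L^{3/2}(B_1)}$ after an integration by parts, and the velocity means arising in the local energy inequality are bounded by $\norm{u}_{L^3}$; then $\norm{u}_{L^\infty(Q_1)}\leq 1$ follows from Proposition~\ref{prop_tang_yu} and the derivative bounds from Theorem~\ref{thm_bootstrapping_intro}. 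This is considerably shorter and sidesteps every difficulty your plan creates.

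Beyond the issues you flag (suitability of $(\tilde u,\tilde p)$ and the existence/integrability of $c''$), your plan has two concrete gaps. First, the bound on $|u|$ itself: from $u(z,t)=\tilde u(z+c(t),t)-c'(t)$ you need $\sup_t|c'(t)|$, but $|c'(t)|\lec\norm{u(\cdot,t)}_{L^3(B_2)}$ is \emph{not} uniformly small in $t$ under the hypothesis $\int_{Q_5}|u|^3\leq\ep$ (which controls only the time integral, and the support of $\psi(\cdot+c(t))$ is not contained in the set where the $L^\infty$ bound on $\tilde u$ holds, so you cannot bootstrap). Your fallback $|c'(t)|\leq\norm{\psi}_2\norm{u(t)}_2$ makes $C_1$ depend on $\norm{u_0}_{L^2}$, i.e.\ on the solution, which is weaker than the universal constant the $\ep$-regularity statement is meant to provide; repairing this requires a uniform-in-time local $L^2$ bound on $u$, i.e.\ precisely the local energy inequality computation that the direct proof performs. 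Second, the rescaling step is circular as written: $\tilde u_\lambda$ has zero mean against $\lambda^{-3}\psi(\cdot/\lambda)$ rather than against $\psi$, the constant $\ep$ in Theorem~\ref{thm_local_reg_intro} depends on $\psi$, and $\lambda$ depends on $\sup_t|c(t)|$, which depends on the ODE you set up with $\psi$. The circle can be broken (fix $\lambda$ in advance and run the ODE with the weight $\lambda^{-3}\psi(\cdot/\lambda)$, or reprove Proposition~\ref{prop:red} with the radius $5$ replaced by $5-\sup_t|c(t)|$), but neither option follows from the theorem as a black box, so this step needs to be carried out rather than asserted.
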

The corollary makes it possible to estimate the box-counting dimension of singular set, whose upper bound is consistent with a similar result in the hyperdissipative case \cite[Corollary 1.4]{colombo_dl_m} that is concerned with H\"older continuity of solutions (regularity of higher derivatives in the hyperdissipative case remains an open problem).
\begin{corollary}[The box-counting dimension]\label{cor_box_counting} Let $(u,p)$ be a suitable weak solution  in $\R^3\times(0,T)$, and let 
\[\begin{split}
S \coloneqq &\{ (x,t) \in \RR^3 \times (0,T )\colon \text{some spatial derivative of } u \\
&\hspace{4cm}\text{ is unbounded in any neighbourhood  of } (x,t) \} 
\end{split}
\]
denote the singular set of $u$. Then, for any $t_0>0$, the box-counting dimension of the singular set satisfies
\[
d_B(S\cap \{ t>t_0 \}) \leq \frac13 (15-2s-8s^2)
\]
for every $t_0\in (0,T)$.
\end{corollary}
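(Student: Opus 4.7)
\medskip
\noindent\textbf{Proof plan for Corollary~\ref{cor_box_counting}.}

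The plan is to convert Corollary~\ref{cor_single_scale_local_reg_into} into a box-counting estimate via a standard Vitali-type packing argument, with one crucial twist: the hypothesis of that corollary mixes quantities of two different scaling weights, and only one family of them is ``scale optimal''. I first fix $t_0>0$ and work in $\R^3\times(t_0/2,T)$. From the energy inequality \eqref{EI} together with the Sobolev embedding $H^s\hookrightarrow L^{6/(3-2s)}$ and interpolation with $L^\infty_tL^2_x$, one obtains $u\in L^q(\R^3\times(t_0/2,T))$ for $q\coloneqq (6+4s)/3$ (note $q\geq 3$ precisely for $s\geq 3/4$). Taking the divergence of \eqref{fNS} yields $-\De p=\pa_i\pa_j(u_iu_j)$, and the Calder\'on--Zygmund theorem then gives $p\in L^{q/2}$ on the same slab. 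Meanwhile, Proposition~\ref{prop_integrability_of_pressure} and the Hardy--Littlewood and $\mathcal{H}^1$--grand-maximal inequalities imply that $\int|\La^s u|^2$, $\int(\mathcal{M}|\La^s u|^{2/(1+\de)})^{1+\de}$, $\int|\La^{2s-1}\na p|$, and $\int|\mathscr{M}_4(\La^{2s-1}\na p)|$ are all finite on the slab, bounded by a constant $M=M(\|u_0\|_{L^2},t_0)$.

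Next I work out the scaling. Under $u_\lambda(x,t)=\lambda^{2s-1}u(\lambda x,\lambda^{2s}t)$ and the corresponding scaling of $p$, every quantity in \eqref{smallness_intro} with integrand $\Phi_1\in\{y^a|\bna u^*|^2,\,(\mathcal{M}|\La^su|^{2/(1+\de)})^{1+\de},\,|\La^{2s-1}\na p|,\,|\mathscr{M}_4(\La^{2s-1}\na p)|,\,\text{the Gagliardo integrand}\}$ satisfies $\int_{Q_5}\Phi_1(u_\la)=\la^{4s-5}\int_{Q_{5\la}}\Phi_1(u)$, whereas $|u|^3$ and $|p|^{3/2}$ satisfy $\int_{Q_5}\Phi_2(u_\la)=\la^{4s-6}\int_{Q_{5\la}}\Phi_2(u)$. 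Hence, for a singular point $(x_0,t_0')\in S\cap\{t>t_0\}$, the failure of the hypothesis of Corollary~\ref{cor_single_scale_local_reg_into} at every scale $r>0$ (after shifting so that $Q_{5r}(x_0,t_0')$ lies in the slab) reads
\[
\frac{1}{r^{5-4s}}\int_{Q_{5r}(x_0,t_0')}\Phi_1 \;+\; \frac{1}{r^{6-4s}}\int_{Q_{5r}(x_0,t_0')}(|u|^3+|p|^{3/2}) \;>\;\ep.
\]

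Now fix a small $r>0$ and pick a maximal family $\{(x_i,t_i)\}_{i\in I}\subset S\cap\{t>t_0\}$ such that the two-sided parabolic cylinders $Q_{r/10}^{\pm}(x_i,t_i)$ are pairwise disjoint. By maximality, $\{Q_{r/2}^\pm(x_i,t_i)\}$ cover $S\cap\{t>t_0\}$, and the enlarged $\{Q_{5r}(x_i,t_i)\}$ have bounded overlap depending only on $s$. Split $I=I_A\sqcup I_B$ according to which term in the displayed inequality above carries at least $\ep/2$. For $i\in I_A$, summing gives $|I_A|\,\ep r^{5-4s}/2\leq C\int\Phi_1\leq CM$, so $|I_A|\lec\,\ep^{-1}M\,r^{-(5-4s)}$. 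For $i\in I_B$, I apply H\"older with exponents $q/3$ and $q/(q-3)$ to trade $L^3$ against $L^q$:
\[
\int_{Q_{5r}}|u|^3 \leq \Bigl(\int_{Q_{5r}}|u|^q\Bigr)^{\!3/q}|Q_{5r}|^{(q-3)/q},\qquad |Q_{5r}|\lec r^{3+2s},
\]
and analogously for $|p|^{3/2}$ with $L^{q/2}$. This yields $\int_{Q_{5r}}(|u|^q+|p|^{q/2})\geq c\ep^{q/3}r^{\alpha}$ with
\[
\alpha \;=\; \frac{q(6-4s)-(q-3)(3+2s)}{3} \;=\; q(1-2s)+3+2s \;=\; \tfrac{1}{3}(15-2s-8s^2)
\]
for $q=(6+4s)/3$. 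Summing over $I_B$ with bounded overlap, $|I_B|\lec\,\ep^{-q/3}M\,r^{-(15-2s-8s^2)/3}$. Since $5-4s<(15-2s-8s^2)/3$ throughout $s\in(3/4,1)$, this is the dominant term, and covering $S\cap\{t>t_0\}$ by $\{Q_{r/2}^\pm(x_i,t_i)\}$ produces $N(r)\lec r^{-(15-2s-8s^2)/3}$, which is the required estimate.

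The main technical point is the H\"older trade in Case $B$: one must use precisely the energy-interpolated exponent $q=(6+4s)/3$ and the Calder\'on--Zygmund-improved integrability of $p$ in order to match the conjectured dimension $\tfrac13(15-2s-8s^2)$. Everything else amounts to careful scaling bookkeeping and a standard maximal packing argument.
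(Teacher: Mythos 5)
Your proposal is correct and follows essentially the same route as the paper: apply the single-scale $\ep$-regularity criterion (Corollary~\ref{cor_single_scale_local_reg_into}) at every scale around a singular point, exploit the global integrability of the scale-optimal quantities together with $u\in L^{2(2s+3)/3}$, $p\in L^{(2s+3)/3}$, and close with a packing argument in which H\"older's inequality converts the cubic terms into the exponent $\frac13(15-2s-8s^2)$, which dominates the contribution $r^{-(5-4s)}$ of the energy-scaling terms. The only (cosmetic) differences are your explicit case dichotomy and Vitali covering versus the paper's direct count of disjoint cylinders, and your weight $r^{-(5-4s)}$ on the scale-optimal terms, which is the sharp one dictated by \eqref{scale.invarience} (the paper's displayed criterion uses $r^{-1}$ there, which changes nothing since $5-4s<\frac13(15-2s-8s^2)$ on $(\frac34,1)$).
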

In Figure~\ref{fig_dimensions} below, we sketch the currently known estimates on the dimension of the singular set for the Navier-Stokes equations with different powers of dissipation.\nocite{wang_yang,rob_sad_2009}
\begin{figure}[h]
\centering
 \includegraphics[width=0.6\textwidth]{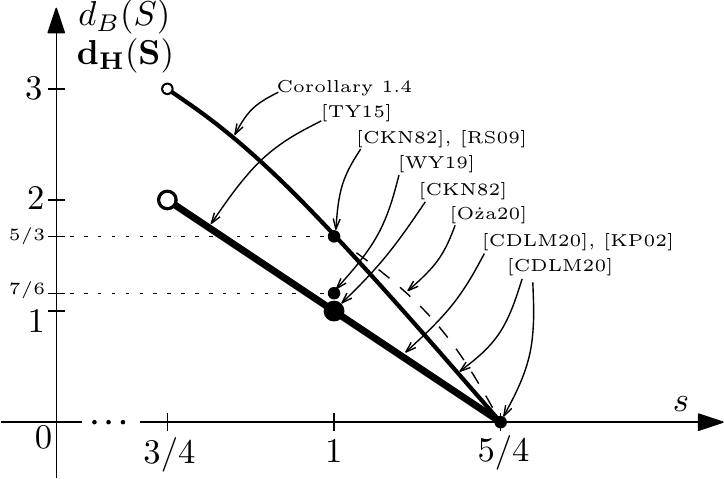}
 \nopagebreak
  \captionof{figure}{Sketch of the currently known estimates on the dimension of the singular set. The curve of the box-counting bound is described by the polynomial $(-8s^2-2s+15)/3$. The dashed line represents the bound on the box-counting dimension of the singular set in space of any Leray-Hopf weak solutions (i.e. not necessarily suitable weak solutions) and is described by the polynomial $(-16s^2+16s+5)/3$.}\label{fig_dimensions} 
\end{figure}

Finally, let us briefly comment why we only consider $s>3/4$. If $s<3/4$ then it is not clear how one should interpret the local energy inequality \eqref{LEI}, since the cubic term on the right-hand side can no longer be well-defined using the a priori estimates. (Note that the a priori estimates $u\in L^\infty_tL^2_x$, $\La^s u \in L^2_tL^2_x$ give only that $u\in L^{2(2s+3)/3}(\RR^3 \times (0,T))$; and $2(2s+3)/3>3$ iff $s>3/4$.) In fact, the existence of suitable weak solutions is not clear if $s\leq 3/4$, as one can no longer use compactness in $L^3$. In the range $s\in (3/4 ,1)$ we control $\| u \|_{L^4 (B_{5/4})}$ using the $L^2$ norm of the extension $u^*$ (see \eqref{computation_local_reg}) as well as the $L^1_tL^2_x$ norm of the pressure function (see \eqref{p_in_L2} and \eqref{computation_local_reg}), which are crucial elements of the proof of Theorem~\ref{thm_local_reg_intro}. 

The structure of the article is as follows. In Section~\ref{sec_prelims}, we first introduce notations and some preliminary concepts. 
Then, in Section~\ref{sec_pf_of_main_thm}, we prove Theorem~\ref{thm_main} using the global estimate \eqref{est_on_p_intro} on the pressure and the local regularity, Theorem~\ref{thm_local_reg_intro}, which are consequently proved in Section~\ref{sec_global_p} and Sections~\ref{sec_local_study}, respectively. Section~\ref{sec_local_study} is the central part of the paper, where we first prove a Poincar\'e-type inequality for the pressure function in Section~\ref{sec_poincare_for_p}, and then obtain the local boundedness of $u$ in Section~\ref{sec_boundedness_of_u}. Section~\ref{sec_consequences} discusses the proof of Corollary~\ref{cor_single_scale_local_reg_into} and Corollary~\ref{cor_box_counting}. Section~\ref{sec_bootstrapping} is dedicated to the proof of Theorem~\ref{thm_bootstrapping_intro} via the new bootstrapping scheme and Section~\ref{sec_commutator_est} contains the required commutator estimates.

\section{Preliminaries and Notations}\label{sec_prelims}
\subsection{Notations}
For any quantities $A$ and $B$, we will write $A\lec  B$ if $A\leq CB$ for some positive constant $C>0$. Similarly, we will write $A\gtrsim B$ if $A\geq CB$ and $A\sim B$ if $A\lec B$ and $A\gtrsim B$. 

We let $B_r(x) \coloneqq \{ x'\in \R^3 : |x'-x|<r\}$ be the Euclidean ball in $\R^3$ centred at $x$ with radius $r>0$ and  $B_r^*(x)$ be its extension $B_r^*(x) \coloneqq  B_r(x) \times [0,r)$ in $\R^4$. We denote the parabolic cylinder centred at $(x,t)$ with radius $r$ by $Q_r(x,t) \coloneqq B_r(x) \times (t-r^2, t]$ and its extension by $Q_r^*(x,t)\coloneqq B_r^*(x) \times (t-r^2, t]$. For brevity, when the centres are origin, i.e., either  $x=0$ or $(x,t)=(0,0)$, we use the abbreviations $B_r$, $B_r^*$, $Q_r$, and $Q_r^*$, respectively.   

Given a sequence $\{a_j\}$, we use the sequence space $l^p$ with its norm $(a_j)_{l^p_j} =(\sum_{j} |a_j|^p)^\frac 1p$ for $p\in [1,\infty)$ and $(a_j)_{l^\infty_j} = \sup_j |a_j|$.  
We denote the usual Lebesgue spaces by $L^p(\Om)$, where $\Om$ is a subset of either $\R^3$ or $\R^3\times\R$. When $\Om$ is whole space $\RR^3$ we simply write $L^p \equiv L^p (\RR^3)$ (and similarly for other function spaces) and $\norm{\cdot}_{L^p}\equiv \norm{\cdot}_p $. Given a cylinder $Q= B\times I\subset \R^3\times \R$ we use the abbreviation $L^{p'}_tL^{p}_x (Q)\coloneqq L^{p'}(I;L^p(B))$. Moreover, given a domain $K$, we denote the weak $L^p$-space (or the {Lorentz space}) by $L^{p,\infty}(K)$, with the norm  
\[
\| f \|_{L^{p,\infty} (K)} \coloneqq \inf \left\lbrace C>0 \colon \left| K\cap \{ f >\lambda \} \right| \leq \frac{C^p}{\lambda^{p}} \text{ for every } \lambda>0 \right\rbrace .
\]
We follow the usual definition of the Sobolev space $W^{k,p}(\Om)$ for integer $k$ and $p\in [1,\infty]$. (As mentioned above, if $\Om=\R^3$ then we simply write $W^{k,p} \coloneqq W^{k,p}(\R^3)$.) We denote the collection of all smooth functions with the compact support in $\Om$ by $C_c^\infty(\Om)$. For brevity we omit the integral region if it is $\R^3$, i.e. we write $\int f \d x \coloneqq  \int_{\RR^3} f \d x$. We denote the Lebesgue measure of a set $E$ by $|E|$ and we let $(u)_E$ denote the average of $u$ over $E$, $(u)_E\equiv \dint_E u\, \d x \coloneqq \frac 1{|E|}\int_E u\,\d x$.

We use the following the Fourier transform convention,
\begin{align*}
\widehat{f}(\xi) =& \int f(x) e^{-ix\cdot\xi} \d x, \qquad
f(x) = \frac 1{(2\pi)^3} \int \widehat{f}(\xi) e^{ix\cdot \xi} d \xi;
\end{align*}
then $\widehat{fg}(\xi)=\frac 1{(2\pi)^3} \int \widehat{f}(\xi-\eta) \widehat{g}(\eta)  \d \eta$. We follow the standard convention regarding the Littlewood-Paley operators: we let $\rho(\xi)$ be a radial smooth function supported in ${B_2}$ which is identically $1$ on $\overline{B_1}$. For any integer $j$ and distribution $f$ in $\R^3$, we set
\begin{align}
\widehat{P_{\leq j} f}(\xi) &\coloneqq \rho\left(2^{-j} \xi \right) \hat{f}(\xi),\quad
\widehat{P_{> j} f}(\xi) \coloneqq  \left(1-\rho\left(2^{-j} \xi\right)\right) \hat{f}(\xi), \nonumber\\
\label{varrho}
\widehat{P_{j}f}(\xi) &\coloneqq \left( \rho\left(2^{-j} \xi\right)
-\rho\left(2^{-(j-1)} \xi\right)\right) \hat{f}(\xi)
=: \varrho\left(2^{-j} \xi\right) \widehat{f}(\xi).
\end{align}
\subsection{Fractional Laplacian and its extension}\label{sec_frac_lap}
We first introduce several characterizations of the fractional Laplacian. The fractional Laplacian $(-\De)^s$ for $s\in (0,1)$ can be represented as 
\begin{align}\label{def.frac}
(-\De)^s u(x) = C_s \text{ p.v.}\int \frac {u(x)-u(y)}{|x-y|^{3+2s}} \d y
\end{align}
for some normalization constant $C_s$, and for $s\in (-\frac32,0)$ as
\begin{align*}
(-\De)^{s} u(x) = C_{s} \int \frac {u(y)}{|x-y|^{3-2s}} \d y.
\end{align*} 
See \cite{Stinga19} for the details. Moreover the  fractional Laplacian for $s\in (0,1)$ can be characterised using the Caffarelli-Silvestre \cite{caffarelli_silvestre} extension 
\eqnb\label{def_of_caff_silv_ext}
u^*(x , y)\coloneqq  \int P(x-z,y)u(z) \d z ,
\eqne
where $P(x,y) \coloneqq { c_s}\, {y^{2s}}{(|x|^2+|y|^2)^{\frac{-(3+2s)}2}}$ for some normalization constant $c_s>0$. It is a solution of the extension problem 
\begin{align}\label{ext.prob}
\begin{cases}
\bna \cdot (y^a \bna u^*) =0, \quad (x,y)\in \R^4_+\\
u^*(x,0) = u(x)
\end{cases}
\end{align}
where $a\coloneqq 1-2s$ and $\overline{\na}$ is the gradient with respect to $(x,y)$. The fractional Laplacian can be recovered using the extension by the formula  
\begin{align}\label{frac.limit}
(-\De)^{s} u(x) = -\overline{C}_s\lim_{y\to 0^+} y^a \pa_y u^*(x,y)
\end{align}
in the sense of distributions where $\overline{C}_s$ is a  constant depending only on $s$. We note that we will sometimes use the same notation $\phi^*$ to denote any extension to $\RR^4_+$ of a function $\phi$ defined on $\RR^3$, but in such case, we will specify it. What is more, considering the energy functionals, we also have 
\begin{align}\label{ext.norm}
\int_{\R^4_+} y^a |\bna u^*|^2 \d X 
= \int |\La^s u|^2 \d x,
\end{align}
where we set $X\coloneqq (x,y)$. Moreover, the Caffarelli-Silvestre extension of rescaled solution $u_\la$, defined as in \eqref{scale.invarience}, can be written as a rescaled extended solution,
\[
(u_\la)^*(x,y, t) = \la^{2s-1} u^*(\la x,\la y, \la^{2s}t).
\]

Furthermore, we introduce fractional Leibniz rules \cite{kenig_ponce_vega, grafakos_oh, li}: for $\al>0$, $\be_1, \be_2\geq 0$, $\be = \be_1 + \be_2\in (0,1)$,  
\begin{align}
\| \La^\al (fg) \|_{L^r} &\lec_{\al, r, p_1, p_2, q_1, q_2} \| \La^\al f \|_{L^{p_1}} \| g\|_{L^{q_1}}  +\| f \|_{L^{p_2}} \|  \La^\al g\|_{L^{q_2}}, 
\label{fractional_leibniz}\\
\| \La^\be (fg)- g\La^\be f - f \La^\be g  \ \|_{L^r} &\lec_{\be, \be_1, \be_2, r, p_1,q_1} \| \La^{\be_1} f \|_{L^{p_1}} \|\La^{\be_2} g\|_{L^{q_1}}
\label{fractional_leibniz1}
\end{align}
provided that $1\le r <\infty $, and $1<p_1,p_2, q_1, q_2\le \infty$ satisfying $\frac 1r=\frac 1{p_1}+\frac 1{q_1} = \frac 1{p_2} + \frac 1{q_2}$, 

\subsection{Sobolev-Slobodeckij space}
In this subsection, we introduce fractional order Sobolev spaces. For a given Lipschitz domain $\Om\subset \R^3$, $k\in \mathbb{N}\cup\{0\}$, $\ga \in (0,1)$, $p\in [1, \infty ) $ we define the Sobolev–Slobodeckij space 
\begin{align*}
W^{k+\ga,p}(\Om) \coloneqq 
\left \{f \in W^{k,p}(\Om) \colon \norm{f}_{\dot{W}^{k+\ga ,p}(\Om)}^p \coloneqq \int_\Om\int_\Om \frac{|\na^k f(x)-\na^k f(y)|^p}{|x-y|^{3+\ga p}} \d x \, \d y <+\infty 
\right \}.
\end{align*}
We define the Sobolev-Slobodeckij space norm by
\begin{align*}
\norm{f}_{W^{\ga ,p}(\Om)}
= \norm{f}_{\dot{W}^{\ga ,p}(\Om)} + \norm{f}_{L^p(\Om)}. 
\end{align*}
In case of $p=2$, we also use the notation $W^{\ga ,2} = H^{\ga }$ and $\dot{W}^{\ga ,2} = \dot{H}^{\ga }$.

When $\Om = \R^3$ and $\ga >0$, the Sobolev-Slobodeckij space is related to the $L^p$ norm of the fractional derivatives, 
\eqnb\label{norm_comparisons}
\begin{split}
\norm{f}_{\dot{W}^{\ga,p}} &\lec \norm{\La^\ga f}_{L^p} + \norm{f}_{L^p} \quad \text{ for } p\in [2,\infty),\\
\norm{\La^{\ga} f}_{L^p} &\lec \norm{f}_{{W}^{\ga,p}} \ \  \qquad\qquad\qquad \text{ for } p\in (1,2],
\end{split}
\eqne
see Theorem~5.5 in \cite{stein_singular} for a proof. Furthermore, if $p=2$ and $0<\ga<1$, then
\begin{align}\label{Sob.Slo.L2norm}
\norm{f}_{\dot{W}^{\ga, 2}} \sim \norm{\La^\ga f}_{L^2},
\end{align}
see, for example, in \cite[Proposition 3.4]{hitchhiker} for a proof.

\subsection{Suitable weak solutions}\label{sec_suitable_ws}
In this section we introduce the notion of a suitable weak solution to \eqref{fNS}. We first define Leray-Hopf weak solutions. 
\begin{defn}[Leray-Hopf weak solution]\label{def_weak}
Let $u_0\in L^2(\R^3)$ be divergence-free. We say that a function $u$ is a \emph{weak solution} of \eqref{fNS} with initial data $u_0$ on $\R^3\times (0,T)$ if
\begin{enumerate}
\item $u$ belongs to the space
\[
u\in L^\infty(0,T;L^2) \cap L^2(0,T; \dot{H}^s)
\]
and is divergence free in the sense of distributions. 
\item $u$ satisfies \eqref{fNS} in a weak sense,
\begin{align*}
\int_0^T\int u \cdot (\pa_t \xi -(-\De)^s \xi) + u\otimes u:\na \xi \,\d x \, \d t = -\int u_0 \cdot \xi|_{t=0} \d x
\end{align*} 
for any divergence-free $\xi \in C_c^\infty(\R^3\times [0,T))$.

\item $u$ satisfies the strong energy inequality,
\begin{align*}
\int |u(t)|^2  \d x + 2\int_{t_0}^t \int | \La^s u |^2 \d x \, \d \tau \leq \int |u(t_0)|^2  \d x 
\end{align*}
for almost all $t_0\geq 0$ (including $0$) and all $t>t_0$.
\end{enumerate}
\end{defn}
Using the energy inequality it is clear that the initial data is achieved as the strong limit $u(\cdot, t) \to u_0$ in $L^2$ as $t\to 0^+$. Given $u$ the corresponding pressure is given by the singular integral
\eqnb\label{pressure_formula}
p(x) \coloneqq \int_{\RR^3} \sum_{i,j=1}^3 \frac{ \p_i u_j (y) \p_j u_i (y)}{4\pi |x-y |}\,  \d y,
\eqne
as in the case of classical Navier-Stokes equations.

We now define suitable weak solutions.  

\begin{defn}[Suitable weak solution]\label{def_suitable}
We say that a Leray-Hopf weak solution $(u,p)$ of \eqref{fNS} on $\RR^3 \times (0,T)$ is \emph{suitable} if 
\begin{enumerate}
\item $(u,p)$ satisfy \eqref{fNS} in the sense of distributions,
\item for every $\xi = \xi(x,y,t)\in C^\infty_c (\RR^4 \times (0,T); [0,\infty ))$, {\it the local energy inequality}
\eqnb\label{LEI}
\begin{split}
\int |u(t)&|^2 \xi(t)|_{y=0} \d x
+ 2 \overline{C}_s \int_{t_0}^t 
 \int_{\RR^4_+} y^a |\overline{\na } u^* |^2 \xi 
 \d X \,\d \tau\\
&\leq  \int |u(t_0)|^2 \xi(t_0)|_{y=0} \d x+ \overline{C}_s \int_{t_0}^t \int_{\RR^4_+} |u^*|^2 \overline{\mathrm{div}} (y^a \overline{\na } \xi ) \d X \, \d\tau \\
&+ \int_{t_0}^t \int (u\cdot \na \xi|_{y=0} )\left( 2p+|u|^2\right)  \d x \, \d \tau 
+ \int_{t_0}^t \int |u|^2 \left( \pa_t\xi|_{y=0} +\overline{C}_s \lim_{y\to 0^+}y^a\pa_y \xi \right)   \d x \,\d \tau
\end{split}
\eqne
holds for all $0<t_0<t<T$ (recall $a\coloneqq 1-2s$), where $\overline{\mathrm{div}}$ denotes the divergence operator with respect to $X=(x,y)$ and $\overline{C}_s$ is defined as in \eqref{frac.limit}. 
\end{enumerate} 
\end{defn}
(Analogously one can define a suitable weak solution on any open time interval.) As mentioned in the introduction, the existence of a suitable weak solution to \eqref{fNS} on $\RR^3 \times (0,T)$ has been obtained by \cite[Theorem 4.1]{TY15} for any divergence-free initial data $u_0\in L^2$. The proof is based on dissipative regularization. 

We note that any suitable weak solution $(u,p)$ defined on $\RR^3 \times (0,T)$ satisfies $u\in L^r(0,{T};L^q)$ and $p\in L^{r/2}(0,{T};L^{q/2})$ for any $r,q\in [2,\infty ]\times [2,\frac 6{3-2s}] $ with
\begin{align*}
\frac {2s}{r} + \frac 3q = \frac 32.
\end{align*}
This follows from interpolation and the Calder\'on-Zygmund estimate. In particular, $u\in L^3(\R^3\times(0,T))$ and $p\in L^{3/2}(\R^3\times(0,T))$, so that the second last integral in the local energy inequality is well-defined.

\subsection{Poincar\'e inequality}
Here we introduce two Poincar\'e inequalities that involve the Caffarelli-Silvestre extension.
\begin{lem}[Poincar\'e inequality using the extension]\label{lem_Poin} If $u$ satisfies $\int u(x) \phi(x) \d x=0$ for some non-zero smooth cut-off $\phi$ supported in $B_1$, then it satisfies 
\begin{align*}
\norm{u}_{L^{\frac{6}{3-2s}}(B_1)} 
\lec \norm{y^\frac{a}2 \bna u^*}_{L^2(B_2^*)}
\end{align*}
\end{lem}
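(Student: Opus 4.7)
The plan is to reduce the claim to three standard ingredients: (i) a local comparison between the Sobolev--Slobodeckij seminorm $[u]_{\dot W^{s,2}(B_1)}$ and the weighted Dirichlet energy of $u^*$ on $B_2^*$; (ii) a fractional Poincar\'e step that uses the zero $\phi$-mean condition to upgrade the seminorm bound to a full $W^{s,2}$ bound; (iii) the critical fractional Sobolev embedding $W^{s,2}(B_1)\hookrightarrow L^{6/(3-2s)}(B_1)$ on the Lipschitz domain $B_1$.

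\emph{Step 1 (local seminorm control).} I would first establish
\[
\int_{B_1}\!\int_{B_1}\frac{|u(x)-u(z)|^2}{|x-z|^{3+2s}}\,\d x\,\d z
\;\lec\;\int_{B_2^*} y^a |\bna u^*|^2 \,\d X.
\]
For each $(x,z)\in B_1\times B_1$, connect $(x,0)$ to $(z,0)$ in $\overline{\R^4_+}$ by the piecewise-linear path through the apex $(\tfrac{x+z}{2},\tfrac{|x-z|}{2})$; since $|x|,|z|<1$, this path stays in $\overline{B_2^*}$. Writing $u(x)-u(z)$ as the line integral of $\bna u^*$ along this path and applying Cauchy--Schwarz with weight $y^a$ (finite because $a=1-2s<1$, so $\int_0^1 t^{-a}\,\d t<\infty$) yields
\[
|u(x)-u(z)|^2 \;\lec\; |x-z|^{2-a}\int_0^1 |\bna u^*(\gamma(t))|^2\, y(t)^a\,\d t.
\]
Since $(2-a)-(3+2s)=-2$, a change of variables (for each fixed $t$, $(x,z)\mapsto\gamma(t)$ together with orthogonal coordinates) with bounded Jacobian integrates this into the claimed bound.

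\emph{Step 2 (Poincar\'e via the zero $\phi$-mean condition).} Because $(u)_\phi := \int u\,\phi = 0$,
\[
u(x) = \frac{1}{\int \phi}\int\bigl(u(x)-u(z)\bigr)\phi(z)\,\d z,
\]
so Cauchy--Schwarz and $\mathrm{supp}\,\phi\subset B_1$ give $|u(x)|^2 \lec_\phi \int_{B_1}|u(x)-u(z)|^2\,\d z$ for every $x\in B_1$. Integrating over $x\in B_1$ and using $|x-z|\le 2$ on $B_1\times B_1$ to reintroduce the kernel $|x-z|^{-(3+2s)}$ (up to a harmless constant) yields
\[
\|u\|_{L^2(B_1)}^2\;\lec\;\int_{B_1}\!\int_{B_1}\frac{|u(x)-u(z)|^2}{|x-z|^{3+2s}}\,\d x\,\d z .
\]

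\emph{Step 3 (Sobolev embedding).} Combining Steps 1 and 2 gives $\|u\|_{W^{s,2}(B_1)}\lec \|y^{a/2}\bna u^*\|_{L^2(B_2^*)}$. The fractional Sobolev embedding $W^{s,2}(B_1)\hookrightarrow L^{6/(3-2s)}(B_1)$, valid on the Lipschitz domain $B_1$ at the critical exponent $\tfrac{1}{q}=\tfrac{1}{2}-\tfrac{s}{3}$, completes the proof.

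The only genuine work is Step 1: one must keep the connecting paths inside $B_2^*$ and track the weight $y^a$ in the Cauchy--Schwarz step, which is exactly why the midpoint is chosen at height $|x-z|/2$ (so that $a<1$ makes the $t^{-a}$ integrand integrable). Steps 2 and 3 are entirely standard once Step 1 is in hand, and the $\phi$-mean hypothesis enters only through Step 2.
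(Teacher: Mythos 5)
Your overall architecture differs from the paper's: the paper simply invokes the known local Poincar\'e inequality $\norm{u-(u)_{B_1}}_{L^{6/(3-2s)}(B_1)}\lec\norm{y^{a/2}\bna u^*}_{L^2(B_2^*)}$ from \cite[Proposition 2.2]{TY15} and then removes the mean via the $\phi$-orthogonality (your Step 2 is essentially the same computation as the paper's \eqref{est.ave}). Your Steps 2 and 3 are fine. The problem is Step 1, where you try to prove the local trace inequality from scratch, and the single-path argument as written does not close.

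Concretely, the Jacobian in your change of variables is not bounded. For $t\in(0,\tfrac12]$ your path is $\gamma(t)=(x+t(z-x),\,t|x-z|)$; setting $w=z-x$, $\xi=x+tw$, $r=|w|$ and $y=t r$, one has $\d x\,\d z=\d\xi\,r^2\,\d r\,\d\omega$ and, for fixed $t$, $\d r=\d y/t$. The factor $|x-z|^{-2}$ coming from your (correct) bound $|u(x)-u(z)|^2\lec|x-z|^{2-a}\int_0^1|\bna u^*(\gamma(t))|^2y(t)^a\,\d t$ exactly cancels $r^2$, so the double integral reduces to
\[
\int_0^{1/2}\frac{1}{t}\int_0^{2t}\Big(\int|\bna u^*(\xi,y)|^2\,\d\xi\Big)\,y^a\,\d y\,\d t
\;=\;\int_0^1\Big(\int|\bna u^*(\xi,y)|^2\,\d\xi\Big)\,y^a\log\tfrac1{y}\,\d y,
\]
which carries a genuine logarithmic weight near $y=0$ and is not controlled by $\int_{B_2^*}y^a|\bna u^*|^2$. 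The underlying reason is that a point $(\xi,y)$ with small $y$ lies on the chosen path $\gamma_{x,z}$ for roughly $\log(1/y)$ dyadic values of $|x-z|$, each contributing comparably; no choice of apex height repairs this. The standard fix (and what is behind the inequality the paper cites) replaces the single path by comparisons with weighted averages of $u^*$ over Whitney-type boxes $B_r(x)\times(r/2,r)$ at height comparable to $r=|x-z|$, together with a telescoping sum over dyadic scales estimated by Cauchy--Schwarz with geometric weights; the boxes at scale $r$ then have bounded overlap in $y$ and the log disappears. As written, your Step 1 proves only a lossy version of the seminorm bound, so the proof has a genuine gap; replacing Step 1 by a citation of \cite[Proposition 2.2]{TY15} (or by the dyadic box-average argument) would make the rest of your argument work.
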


\begin{rem} The domains $B_1$ and $B_2^*$ can be easily replaced by $B_{r}$ and $B_R^*$ any $r<R$; a suitable factor involving $r$ and $R$ then appears to respect the scaling of the inequality.  
\end{rem}
\begin{proof} We note that we have the Poincar\'e-type inequality 
\eqnb\label{prePoin}
\norm{u-(u)_{B_1}}_{L^{\frac{6}{3-2s}}(B_1)} 
\lec \norm{y^\frac{a}2  \bna u^*}_{L^2(B_2^*)},
\eqne
which was introduced in \cite[Proposition 2.2]{TY15}. Then the desired estimate follows from
\begin{equation}\begin{split}
\label{est.ave}
\norm{(u)_{B_1} }_{L^{\frac{6}{3-2s}}(B_1)} 
&\sim |(u)_{B_1}|
=\abs{(u)_{B_1}-\frac 1{A_\phi}\int u\phi \,\d x' }
= \left |\frac 1{A_\phi}\int (u-(u)_{B_1})\phi \,\d x'\right |\\
&\lec \norm{u-(u)_{B_1}}_{L^{\frac{6}{3-2s}}(B_1)} 
\lec \norm{y^\frac{a}2  \bna u^*}_{L^2(B_2^*)},
\end{split}\end{equation}
where $A_\phi \coloneqq \int \phi(x) \d x$.
\end{proof}
We will also use the weighted Poincar\'e inequality
\eqnb\label{weighted_poin}
\int_{B_1^*} y^a | u^* - (u^*)_{B_1^*} |^2 \lec \int_{B_1^*} y^a | \bna u^* |^2 ,
\eqne
where $(u^*)_{B_1^*} \coloneqq |B_1^*|^{-1} \int_{B_1^*} y^a u^*$, proved by \cite{fabes_k_s} (see also (2.10) in \cite{TY15}).

We will later show (in Section~\ref{sec_poincare_for_p}) another Poincar\'e-type inequality, which will use the smooth maximal function (see \eqref{def_of_smooth_max_fcn} below) instead of the extension.

\subsection{The Hardy space and the grand maximal function}\label{sec_hardy_and_grand_max}

The Hardy space $\mathcal{H}^1$ is defined by
\[
\mathcal{H}^1 (\RR^3) \coloneqq \{ f\in L^1(\R^3) \colon \mathcal{R}f\in L^1(\R^3) \}
\]
with the Hardy norm
\[
\| f\|_{\mathcal{H}^1} \coloneqq \| f \|_{1} + \| \mathcal{R} f \|_{1} .
\]
where $\mathcal{R}\coloneqq \La^{-1}\na$ is the Riesz transform. 

The Hardy norm can be characterized in various ways; first of all, using the Littlewood-Paley projection operator $P_j$, we have that
\begin{align}\label{hardy.norm.1}
\norm{f}_{\mathcal{H}^1} 
\sim \|(P_j f)_{l^2_j}\|_1, 
\end{align}
see (2.1.1) in \cite{grafakos_modern}. Moreover, it can be characterized using the grand maximal function. To this end we first consider several types of maximal functions. To be more precise, the {\it Hardy-Littlewood maximal function} is defined by
\[
\mathcal{M}f(x) \coloneqq \sup_{r>0} \dint_{B_r(x)} |f(y)| \d y.
\]
Given $\Psi \in \mathcal{S}(\RR^3)$ we denote the {\it smooth maximal function} of $f$ with respect of $\Psi$
by 
\eqnb\label{def_of_smooth_max_fcn}
\mathcal{M}(f;\Psi )(x) \coloneqq \sup_{t>0} \left| \Psi_t \ast f (x)\right|,
\eqne
where  $\Psi_t (x) \coloneqq t^{-3} \Psi(t^{-1}x) $. Furthermore, we denote the {\it non-tangential maximal function} with aperture $1$ with respect to $\Psi$ by
\[
\mathcal{M}_1^*(f;\Psi)(x) \coloneqq \sup_{t>0, \, |y-x|\leq t} \left|  \Psi_t \ast f (y)\right|.
\]
The {\it grand maximal function } is defined as
\begin{align*}\mathscr{M}_N (f)(x) 
\coloneqq \sup\left\{ \mathcal{M}_1^*(f;\Psi)(x) : \Psi\in \mathcal{S}, \int (1+|x|)^N \sum_{|\al|\leq N+1} |\pa^\al \Psi(x)| \d x \leq 1\right\}, 
\end{align*}
where $N\geq 4$. By definition, we clearly have
\[
\mathcal{M}(f,\Psi )  \leq \mathcal{M}_1^* (f;\Psi ) \hspace{2cm} \text{ for all }\Psi \in \mathcal{S}.
\]
Using the grand maximal function, the Hardy norm can be characterized as
\eqnb\label{hardy_characterisation}
\norm{f}_{\mathcal{H}^1}\sim \norm{\mathscr{M}_4 (f)}_1.
\eqne
For a proof we refer the reader to \cite[Theorem 2.1.4]{grafakos_modern} .

In particular, we have
\eqnb\label{smooth_max_fcn_vs_hardy}
\|\mathcal{M}(f;\Psi ) \|_1 \lec_\Psi \| f \|_{\mathcal{H}^1}
\eqne
for any given $\Psi \in \mathcal{S}$. The benefit of the grand maximal function is that it is bounded as an operator $\mathcal{H}^1(\R^3)\to L^1(\R^3)$, while the Hardy-Littlewood maximal function
is not.
Furthermore, \eqref{hardy_characterisation} and \eqref{est_on_p_intro} imply that
\eqnb\label{pressure_global_grand_max}
\| \mathscr{M}_4 ( \La^{2s-1} \na p ) \|_1 \lec \| \mathcal{R} (-\Delta )^s p \|_{\mathcal{H}^1} \lec \| \mathcal{R}^2 (-\Delta )^s p \|_{1} \lec \| \La^s u \|_2^2 \lec C(u_0)
\eqne
for any weak solution $u$ to \eqref{fNS}.

We conclude this section by introducing several properties \cite{li}
of the Hardy-Littlewood maximal function.
\begin{lemma} 
\label{lem.max} Suppose $f\in \mathcal{S}'(\R^3)$ and $\supp(\widehat{f})\subset B_r$ for some $r>0$. Then
\[
\sup_{z\in  \R^3} \frac{|f(x-z)|}{(1+r|z|)^3} 
\lec \mathcal{M}|f|(x) \qquad \text{ for } x\in \R^3.  
\] 
\end{lemma}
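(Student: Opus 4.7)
The plan is to reduce to the unit-frequency case by scaling, then use a reproducing-kernel identity together with a dyadic decomposition. First, I would set $g(x)\coloneqq f(x/r)$, so that $\widehat g(\xi)=r^{3}\widehat f(r\xi)$ is supported in $\overline{B_1}$. A direct change of variables gives $\mathcal{M}|f|(x)=\mathcal{M}|g|(rx)$ and
\[
\sup_{z\in\R^3}\frac{|f(x-z)|}{(1+r|z|)^3}=\sup_{z'\in\R^3}\frac{|g(rx-z')|}{(1+|z'|)^3},
\]
so the statement reduces to proving the case $r=1$.

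For this reduced case, I would fix once and for all a Schwartz function $\phi\in\mathcal{S}(\R^3)$ with $\widehat\phi\equiv 1$ on $\overline{B_1}$. Since $\supp\widehat g\subset\overline{B_1}$, one has $g=g*\phi$, so by the Schwartz decay of $\phi$, for any $N\geq 1$,
\[
|g(x-z)|\leq C_N\int |g(x-w)|(1+|w-z|)^{-N}\,\d w.
\]
I would split this integral over dyadic shells $A_k\coloneqq\{2^{k-1}\leq |w-z|<2^k\}$ for $k\geq 1$ and $A_0\coloneqq\{|w-z|<1\}$. On $A_k$ the weight is of size $\sim 2^{-Nk}$, and after the substitution $u=x-w$ the integrand becomes $|g(u)|$ integrated over $B_{2^k}(x-z)$. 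The key geometric observation is $B_{2^k}(x-z)\subset B_{2^k+|z|}(x)$, a ball centred at $x$, so
\[
\int_{B_{2^k}(x-z)}|g(u)|\,\d u\leq |B_{2^k+|z|}|\cdot\mathcal{M}|g|(x)\lec (2^k+|z|)^3\,\mathcal{M}|g|(x).
\]

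Finally, I would sum over $k$ using $(2^k+|z|)^3\lec 2^{3k}+|z|^3$, obtaining
\[
\sum_{k\geq 0}2^{-Nk}(2^k+|z|)^3\lec \sum_{k\geq 0}2^{(3-N)k}+|z|^3\sum_{k\geq 0}2^{-Nk}\lec 1+|z|^3\sim (1+|z|)^3,
\]
provided $N>3$. Combining this with the previous step and then undoing the scaling yields the claimed inequality. The only delicate point is that the exponent $3$ in $(1+r|z|)^3$ matches exactly the dimension of $\R^3$, forcing the choice $N>3$ in the summation; since $\phi$ is Schwartz any such $N$ is admissible, and the resulting constant depends only on the dimension (through the fixed $\phi$).
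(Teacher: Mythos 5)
Your proof is correct. Note that the paper does not actually prove this lemma --- it is quoted from Li \cite{li} without proof --- so there is no in-paper argument to compare against; what you have written is the standard Peetre/Plancherel--P\'olya-type argument (rescale to unit frequency support, reproduce $g=g*\phi$ with $\widehat\phi\equiv 1$ on $\overline{B_1}$, and sum over dyadic shells, absorbing each shell into a ball centred at $x$ of radius $2^k+|z|$), and every step checks out: the scaling identities for $\mathcal{M}$ and for the weight are right, the inclusion $B_{2^k}(x-z)\subset B_{2^k+|z|}(x)$ is the correct geometric point, and the choice $N>3$ makes the dyadic sum converge to $(1+|z|)^3$. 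The only thing worth making explicit is why pointwise values and the identity $g=g*\phi$ are legitimate: by Paley--Wiener--Schwartz a tempered distribution with compactly supported Fourier transform is a smooth function of polynomial growth, so the convolution with the Schwartz function $\phi$ converges absolutely and agrees with $g$ pointwise; and when $\mathcal{M}|g|(x)=\infty$ the claimed bound is vacuous, so no integrability hypothesis beyond this is needed.
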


\begin{lemma} For any $\Psi \in \mathcal{S}$ and $f\in L^1_\loc(\R^3)$, we have
\eqnb\label{smooth_max_bdd_by_max}
\mathcal{M} (f ; \Psi )(x) \lec_\Psi \mathcal{M}f(x), \qquad \text{ for } x\in \R^3. 
\eqne
\end{lemma}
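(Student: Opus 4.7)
The plan is to exploit the rapid decay of the Schwartz function $\Psi$ together with a dyadic decomposition of $\R^3$ into annuli centred at $x$. Fix any integer $N > 3$. Since $\Psi \in \mathcal{S}(\R^3)$, there is a constant $C_N = C_N(\Psi)$ so that
\[
|\Psi(z)| \leq C_N (1+|z|)^{-N} \qquad \text{for all } z \in \R^3,
\]
and hence $|\Psi_t(x-y)| \leq C_N t^{-3}(1+|x-y|/t)^{-N}$. Plugging this into the definition of $\mathcal{M}(f;\Psi)(x)$, the entire task reduces to bounding
\[
\sup_{t>0} \int_{\R^3} t^{-3}\left(1+\tfrac{|x-y|}{t}\right)^{-N} |f(y)|\,\d y
\]
by a constant multiple of $\mathcal{M}f(x)$ uniformly in $t$.

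Fix $t>0$ and decompose $\R^3 = B_t(x) \cup \bigcup_{k\geq 0} A_k$, where $A_k \coloneqq B_{2^{k+1}t}(x)\setminus B_{2^k t}(x)$. On $B_t(x)$ the kernel is bounded by $C_N t^{-3}$, so
\[
\int_{B_t(x)} t^{-3}(1+|x-y|/t)^{-N}|f(y)|\,\d y \lec t^{-3}\int_{B_t(x)} |f(y)|\,\d y \lec \mathcal{M}f(x).
\]
On the annulus $A_k$ we have $|x-y|/t \geq 2^k$, so $(1+|x-y|/t)^{-N}\leq 2^{-kN}$, while $A_k\subset B_{2^{k+1}t}(x)$. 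Therefore
\[
\int_{A_k} t^{-3}(1+|x-y|/t)^{-N}|f(y)|\,\d y \lec 2^{-kN} t^{-3}\int_{B_{2^{k+1}t}(x)} |f(y)|\,\d y \lec 2^{-kN} 2^{3(k+1)} \mathcal{M}f(x) .
\]

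Summing over $k\geq 0$ gives a geometric series $\sum_{k\geq 0} 2^{(3-N)k}$, which converges since $N>3$ and is independent of $t$. Combining the two contributions and taking the supremum over $t>0$ yields $\mathcal{M}(f;\Psi)(x) \lec_\Psi \mathcal{M}f(x)$, as required. There is no real obstacle here: the only care needed is to choose $N$ strictly greater than the dimension $3$ so that the dyadic sum converges, and to track that the implied constant depends only on the Schwartz seminorms of $\Psi$ used to control $C_N$.
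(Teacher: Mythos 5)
Your proof is correct and follows essentially the same route as the paper: both decompose $\R^3$ into the ball $B_t(x)$ plus dyadic annuli and use the polynomial decay of the Schwartz function (the paper fixes the decay order $N=4$, you keep a general $N>3$) to sum a convergent geometric series against averages of $|f|$ over the dilated balls.
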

\begin{proof} For every $t>0$, we let $\Psi_t (x) \coloneqq t^{-3} \Psi (t^{-1} x)$ and note that
\[\begin{split}
\left| \Psi_t \ast f (x) \right| 
&\le t^{-3}
 \left(\int_{|y|\le t} +  \sum_{l\geq 0} \int_{2^lt<|y|\leq 2^{l+1}t}\right) |\Psi (t^{-1} y)| |f (x-y)| \d y  \\
&\leq \| \Psi \|_\infty  t^{-3}\int_{|y|\leq t}  |f(x-y)|\d y 
+\sup_{x'} |x'|^4 |\Psi(x')|  t^{-3} \sum_{l\geq 0} \int_{2^lt<|y|\leq 2^{l+1}t} \frac{1}{t^{-4}|y|^4}| f(x-y)|\d y\\
&\lec_\Psi   \dint_{|y|\leq t}  |f(x-y)|\d y 
+  \sum_{l\geq 0} 2^{-l} \dint_{|y|\leq 2^{l+1}t} | f(x-y)|\d y\\
&\lec \mathcal{M}f (x).
\end{split}
\]
\end{proof}

\subsection{Parabolic regularity}\label{sec_para_reg}
Here we mention several facts regarding regularity of solutions to the initial value problem
\begin{align}\label{fractional.dis.eq}
\begin{cases}
\pa_t v + (-\De)^s v = g  &\qquad\R^3\times(0,T) \\
v|_{t=0} =0			,
\end{cases}
\end{align}
where $T\in (0,\infty)$ and $g$ is given. We let $e^{-t(-\De)^s}$ denote that fractional heat semigroup define in the Fourier space by $\reallywidehat{e^{-t(-\De)^s}g}(\xi) = e^{-t|\xi|^{2s}} \widehat{g}(\xi)$. The fractional heat semigroup satisfies the following $L^{\overline{p}}$-$L^p$ estimate. 
\begin{lem}\label{lemma:para.est} For any $s>0$, $ \al \le \overline{\al}$, and $1\le p \le \overline{p} \le \infty$, we have
\begin{align*}
\norm{\La^{\overline\al} e^{-t(-\De)^s}f}_{{\overline p}}
\lec t^{-\frac{\overline{\al}-\al}{2s} - \sigma} \norm{\La^{\al }f}_{{p}} 
\end{align*}
where $\sigma = \frac 3{2s} \left(\frac 1p - \frac 1{\overline p}\right)$. 
\end{lem}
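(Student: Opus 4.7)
The plan is to reduce, via scaling, to the case $t=1$, and then bound the resulting kernel in every $L^q$ via its Fourier symbol. By the composition identity $\Lambda^{\overline\al} e^{-t(-\De)^s} = \Lambda^{\beta} e^{-t(-\De)^s} \circ \Lambda^\al$ with $\beta \coloneqq \overline\al - \al \ge 0$, setting $g \coloneqq \Lambda^\al f$ reduces the claim to
\[
\norm{\Lambda^\beta e^{-t(-\De)^s} g}_{\overline p} \lec t^{-\beta/(2s) - \sigma} \norm{g}_p.
\]
Next, I would write this operator as a convolution $K^\beta_t \ast g$, where the kernel is defined on the Fourier side by $\widehat{K^\beta_t}(\xi) = |\xi|^\beta e^{-t|\xi|^{2s}}$.

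A change of variable $\xi \mapsto t^{-1/(2s)}\xi$ in the defining inverse Fourier integral then yields the scaling identity
\[
K^\beta_t(x) = t^{-(3+\beta)/(2s)} K^\beta_1\bigl(t^{-1/(2s)} x\bigr),
\]
from which $\norm{K^\beta_t}_q = t^{-(3+\beta)/(2s) + 3/(2sq)} \norm{K^\beta_1}_q$. Since $\overline\al \ge \al$ and $p\le \overline p$, the choice $1/q \coloneqq 1 - 1/p + 1/\overline p$ lies in $[0,1]$, so Young's convolution inequality gives $\norm{K^\beta_t \ast g}_{\overline p} \le \norm{K^\beta_t}_q \norm{g}_p$. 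A brief arithmetic check using $3/(2sq) = 3/(2s) - \sigma$ shows that the resulting exponent of $t$ is exactly $-\beta/(2s) - \sigma$, matching the claim.

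The only remaining step, and the main technical obstacle, is to show $\norm{K^\beta_1}_q < \infty$ uniformly for $q \in [1,\infty]$. The $L^\infty$ bound is immediate from the Hausdorff--Young inequality, since $\beta \ge 0$ removes any singularity at $\xi=0$ and $e^{-|\xi|^{2s}}$ beats any polynomial at infinity, giving $\widehat{K^\beta_1} \in L^1$. For the $L^1$ bound, I would split the symbol $|\xi|^\beta e^{-|\xi|^{2s}}$ into a compactly supported piece near the origin and a Schwartz-class piece away from the origin: the latter Fourier-transforms to a Schwartz function, and the former can be handled by checking that sufficiently many weak $\xi$-derivatives (whose singularities at $0$ behave like $|\xi|^{\beta-k}$ and are therefore locally integrable on $\R^3$ as long as $k < \beta+3$, combined with the rapid decay of $e^{-|\xi|^{2s}}$ at infinity) belong to $L^1$; this yields polynomial decay $|K^\beta_1(x)|\lec (1+|x|)^{-N}$ for some $N>3$, and hence $K^\beta_1 \in L^1$. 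Interpolating between the $L^1$ and $L^\infty$ bounds then gives $K^\beta_1 \in L^q$ for every $q\in [1,\infty]$, completing the proof.
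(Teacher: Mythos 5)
Your overall strategy is the standard one (the paper itself does not prove this lemma but defers to Lemma~2.2 of \cite{Zhai10}): the reduction to $g=\La^\al f$ and $\beta=\overline\al-\al\ge 0$, the scaling identity $K^\beta_t(x)=t^{-(3+\beta)/(2s)}K^\beta_1(t^{-1/(2s)}x)$, the choice $1/q=1-1/p+1/\overline p\in[0,1]$ in Young's inequality, and the exponent arithmetic are all correct. The entire weight of the proof therefore rests on $K_1^\beta\in L^1$, and that is where your argument has a genuine gap.

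The problem is the derivative-counting step. To conclude $|K^\beta_1(x)|\lec |x|^{-N}$ with $N>3$ from $x^\al K^\beta_1=\reallywidehat{\pa^\al m}$ (with $m(\xi)=|\xi|^\beta e^{-|\xi|^{2s}}$), you need $\pa^\al m\in L^1$ for some integer $|\al|=N\ge 4$. Your own integrability criterion ``$k<\beta+3$'' then forces $\beta>1$, so the argument as written does not close for $\beta\in[0,1]$ --- and $\beta=0$ (i.e.\ $\overline\al=\al$) is precisely the case the paper needs for \eqref{para.est.0}. Moreover, the criterion itself misidentifies the singularity: when $\beta$ is an even integer (e.g.\ $\beta=0$), $|\xi|^\beta$ is smooth and the true non-smooth part of $m$ at the origin is the term $|\xi|^{\beta+2s}$ coming from expanding the exponential; in general the decay of $K^\beta_1$ is governed by the smallest exponent $\ga_0\in\{\beta+2sj:j\ge0\}$ that is not an even integer (if none exists, $m$ is Schwartz), and integer-order differentiation only yields $|x|^{-N}$ with $N\le\lceil\ga_0\rceil+2$, which fails to beat $3$ whenever $\ga_0\le 1$ (e.g.\ $\beta=0$ with $s\le 1/2$, or any $\beta\in(0,1]$). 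To repair this, replace the crude derivative count by a dyadic decomposition of $m$ near the origin, $m=\sum_{j\le 0}m_j$ with $\supp m_j\subset\{|\xi|\sim 2^j\}$ and $\|\pa^\al m_j\|_1\lec 2^{j(\ga_0+3-|\al|)}$, which after summing gives the sharp bound $|K^\beta_1(x)|\lec (1+|x|)^{-3-\ga_0}$ with $\ga_0>0$; alternatively, invoke the known pointwise decay of such kernels (Lemma~1 of \cite{grafakos_oh}, which the paper itself uses as \eqref{decay.f}, or the subordination formula for $e^{-t(-\De)^s}$ when $\beta=0$ and $0<s\le1$). With that substitution the rest of your proof is complete.
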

We refer the reader to Lemma~2.2 in \cite{Zhai10} for a proof. The lemma shows that if $v$ is defined by the Duhamel formula
\begin{align}\label{sol.fr.dis.eq}
v \coloneqq \int_0^t e^{-(t-\tau)(-\De)^s } g(x, \tau) \d \tau,
\end{align}
then 
\begin{align}\label{para.est}
\norm{\La^{\overline\al} v}_{L^{\overline r}(0,T;L^{\overline p})}
\lec \norm{\La^{\al }g}_{L^r (0,T;L^{p})}
\end{align}
for any $T>0$, provided that $1\le r \leq \bar{r}\leq  \infty$ satisfies
\[
\frac{\overline\al - \al}{2s} + \sigma  + \left(\frac 1r - \frac 1{\overline r} \right) < 1 .
\]
Moreover, in the case of $\sigma = 0$ (i.e. $ p= \overline{p}$) and $\al = \overline{\al}$, we have 
\begin{align}\label{para.est.0}
\norm{v}_{L^{\infty}(0,T;L^{p})}
\lec \norm{g}_{L^1(0,T;L^p)}
\end{align}
for every $T>0$. Furthermore, we note that $v$ defined by \eqref{sol.fr.dis.eq} for $g\in L^p(\R^3\times (0,T))$, where $p\in [1,\infty )$, is the unique distributional solution to \eqref{fractional.dis.eq} in the space $L^r (0,T; L^p )$ for any $r,p\in [1,\infty ]$, that is if $w\in L^r (0,T; L^p )$ and
\eqnb\label{uniqueness_of_fr_heat}
\int_0^T w(\phi_t - (-\Delta )^s \phi ) =0
\eqne
for every $\phi \in C_c^\infty (\RR^3 \times [0,T))$ then $w=0$, which can be proved in the same way as Theorem~4.4.2 in \cite{giga_giga_saal}.

\section{Proof of Theorem~\ref{thm_main}}\label{sec_pf_of_main_thm}

In this section, we prove Theorem~\ref{thm_main}.
First, we denote by  
\begin{align*}
\overline{u}_\la(x,t) := \int u(x+ \la y,t)\psi(y) \d y
\end{align*} 
the mollified velocity, where $\psi$ is defined as in Theorem~\ref{thm_local_reg_intro}. We now fix $(x,t)\in \RR^3 \times ((5\la )^{2s},T)$. We define the flow map $\Phi_t(x,\tau)$ corresponding to the mollified velocity $u_\la$, starting from a point $x$ at time $t$;
\begin{equation*}
\begin{cases}
\pa_\tau \Phi_t (x,\tau) 
= \overline{u}_\la(\Phi_t(x,\tau), \tau),\\
\Phi_t(x, t) = x, 
\end{cases}\qquad \tau \in (t-5^{2s},t].
\end{equation*}
The flow map $\Phi_t (x,\tau )$ is well defined (since $u_\la$ is smooth in space, uniformly in time) and $|\det D\Phi_t(\cdot,\tau)|=1$ at each time $\tau \in(t-5^{2s},t]$ (as $\div u_\la =0$). We define $ v_\la, q_\la$ by applying the Galilean transformation 
\begin{align*}
v_\la(z, \tau) &\coloneqq  \la^{2s-1} u(\Phi_t(x,t + \la^{2s}\tau) + \la z, t + \la^{2s}\tau) - \la^{2s-1} \overline{u}_\la(\Phi_t(x,t + \la^{2s}\tau), t+\la^{2s}\tau),\\
q_\la(z,w, \tau)
&\coloneqq \la^{4s-2} p(\Phi_t(x,t + \la^{2s}\tau) + \la z, t + \la^{2s}\tau) + \la^{2s-1} z \pa_{\tau}(\overline{u}_\la(\Phi_t(x,t + \la^{2s}\tau), t+\la^{2s}\tau))
\end{align*}
and we define the extension 
\begin{align*}
v_\la^*(z,w, \tau)
&\coloneqq \la^{2s-1} u^*(\Phi_t(x,t + \la^{2s}\tau) + \la z, \la w, t + \la^{2s}\tau)\\
&\quad-\la^{2s-1}\int u^*(\Phi_t(x,t + \la^{2s}\tau) + \la z', \la  w, t + \la^{2s}\tau)\psi(z') \d z',
\end{align*}
where $u^*$ is the Caffarelli-Silvestre's extension of $u$ (recall \eqref{def_of_caff_silv_ext}). 
By the Galilean invariance of \eqref{fNS} we can easily see that $(v_\la, q_\la)$ also solves \eqref{fNS} on $\R^3\times (-5^{2s},0)$. Indeed, the purpose of the construction $(v_\la, q_\la)$ comes from the mean-zero property of $v_\la$;
\begin{align*}
\int v_\la(z,\tau) \psi(z) \d z =0, \quad \forall \tau\in (-5^{2s},0).
\end{align*}
Now, we set 
\eqnb\label{def_of_F_G}
\begin{split}
F(x,t) &\coloneqq ( \mathcal{M} |\La^s u(x,t) |^{\frac 2{1+\de}})^{1+\de}
+|\La^{2s-1}\na p(x,t)|  +|\mathscr{M}_4 (\La^{2s-1}\na p )(x,t)|\\
G(x,y,t) &\coloneqq y^a |\bna u^* (x,y,t)|^2 
\end{split}
\eqne
(recall $\delta \coloneqq 2s/(6-s)$), and 
\[\begin{split}
H^\la(x,t) \coloneqq \int_{Q_{5\la}(0,t)} 
 F(\Phi_t(x,\tau)+z, \tau) \d z \,\d \tau 
+
 \int_{Q^*_{5\la}(0,t)}  
G(\Phi_t(x,\tau)+z, y, \tau) \d z \,\d y\, \d \tau& \\
+\int_{t-(5\la)^{2s}}^t \int_{B_{5\la}} \int_{B_{5\la}} 
\frac{|u(\Phi_t(x,\tau) + z, \tau) -u(\Phi_t(x,\tau) + z', \tau)|^2}{|z-z'|^{3+2s}} \d z \,\d z'\, d\tau.
\end{split}\]
For each $t\in ((5\la)^{2s},T)$ we define 
\eqnb\label{condition.Om}
\Om_{\ep}^\la(t) \coloneqq \{ x\in \R^3 \colon H^\la(x,t) \leq \ep\la^{5-4s} \},
\eqne
where $\ep$ is a sufficiently small constant given by Theorem~\ref{thm_local_reg_intro}. By a simple change of the variables, we obtain the following lemma.

\begin{lemma}\label{lem:connection} Given $\la>0$ and $t\in ((5\la)^{2s},T)$, let $x\in \Om_{\ep}^\la(t)$. Then $(v_{\la}, q_{\la}) = (v_{\la,x,t}, p_{\la,x,t})$ and the extension $v^*_\la$ satisfy \eqref{smallness_intro};
\begin{equation}\label{smallness.new}\begin{split}
&\int_{Q_5^*} w^a |\bna v^*_\la|^2 \d Z\d\tau  + \int_{-5^{2s}}^0 \int_{B_5}\int_{B_5} 
\frac{|v_\la(z,\tau)-v_\la(z',\tau)|^2}{|z-z'|^{3+2s}}\d z \d z' \d \tau\\
&\hspace{1cm}+ \int_{Q_5}   
(\mathcal{M} |\La^s v_\la |^{\frac 2{1+\de}})^{1+\de} +  |\La^{2s-1}\na q_\la|  +|\mathscr{M}_4 (\La^{2s-1}\na q_\la)| \d Z \d\tau \leq \ep.
\end{split}\end{equation}
\end{lemma}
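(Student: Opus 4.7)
\emph{Proof plan.} The argument is a change-of-variables computation exploiting the scale invariance \eqref{scale.invarience}, the Galilean invariance \eqref{gali_inv}, and the fact that each of the five terms on the left-hand side of \eqref{smallness.new} is scale-optimal. Concretely, I would substitute $(\tilde z, \tilde w, \tilde\tau) = (\Phi_t(x, t+\la^{2s}\tau) + \la z,\, \la w,\, t + \la^{2s}\tau)$ in each integral and verify that the integrand together with the Jacobian (including the weight $w^a = \la^{-a}\tilde w^a$) produces a common prefactor $\la^{4s-5}$. Since in the new variables $\tilde z$ ranges over $B_{5\la}(\Phi_t(x,\tilde\tau))$ and $\tilde\tau$ over $(t - (5\la)^{2s}, t]$, these rescaled integrals are precisely the terms defining $H^\la(x,t)$, so the hypothesis $H^\la(x,t) \leq \ep\la^{5-4s}$ closes the estimate.

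The required pointwise identities under the rescaling are as follows. Because the $\psi$-mean $\overline{u}_\la(\Phi_t, \tilde\tau)$ is constant in $z$, it drops out of $v_\la(z,\tau) - v_\la(z',\tau)$, out of $\na_z v_\la^*$, and out of $\La_z^s v_\la$, giving $\La_z^s v_\la(z,\tau) = \la^{3s-1}\La^s u(\Phi_t + \la z, \tilde\tau)$ and $\na_z v_\la^*(z,w,\tau) = \la^{2s}\na_x u^*(\Phi_t+\la z, \la w, \tilde\tau)$. For the pressure, the linear-in-$z$ correction $\la^{2s-1} z\cdot\pa_\tau\overline{u}_\la$ in $q_\la$ has gradient constant in $z$ and is therefore annihilated by $\La^{2s-1}$ (viewed via the principal value \eqref{def.frac}), so $\La_z^{2s-1}\na_z q_\la = \la^{6s-2}\La^{2s-1}\na p(\Phi_t + \la z, \tilde\tau)$. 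Both the Hardy-Littlewood maximal function and the grand maximal function commute with spatial translations and dilations, so the two maximal-function integrals rescale identically. A term-by-term computation then produces the prefactor $\la^{4s-5}$ in every case, matching $H^\la$.

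The only nontrivial step is the $\pa_w$-component of $\bna v_\la^*$, where the $w$-derivative of the mean correction does not vanish:
\[
\pa_w v_\la^*(z,w,\tau) = \la^{2s}\pa_y u^*(\Phi_t + \la z, \la w, \tilde\tau) - \la^{2s}\int \pa_y u^*(\Phi_t + \la z', \la w, \tilde\tau)\psi(z')\,\d z'.
\]
Using $(A-B)^2 \leq 2A^2 + 2B^2$, the first term contributes the expected $\pa_y u^*$ part of the $G$-integral. The square of the second term is bounded, by Cauchy-Schwarz with $\psi$ supported in $B_1$, by $\|\psi\|_2^2\int_{B_1}|\pa_y u^*(\Phi_t + \la z', \la w, \tilde\tau)|^2\,\d z'$; the change of variables $\tilde z' = \la z'$ converts this into $\la^{-3}\int_{B_\la(\Phi_t)}|\pa_y u^*|^2\,\d\tilde z'$, which, after integrating in $(z,w,\tau)$ and rescaling $\tilde w = \la w$, $\tilde\tau = t+\la^{2s}\tau$, contributes $\la^{4s-5}$ times an integral over $Q_{5\la}^*(0,t)$ (since $B_\la(\Phi_t) \subset B_{5\la}(\Phi_t)$). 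Hence this correction is absorbed by the $G$-part of $H^\la$, up to a multiplicative constant $C(\psi)$ that is accounted for by choosing $\ep$ in \eqref{condition.Om} slightly smaller than the one furnished by Theorem~\ref{thm_local_reg_intro}. The main obstacle is precisely this mean-correction bookkeeping combined with the careful tracking of $\la$-exponents; everything else reduces to the scale invariance of \eqref{fNS} and the cancellation properties of $\La^{2s-1}\na$ on linear functions.
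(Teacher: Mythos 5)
Your proof is correct and follows the same route as the paper, whose proof simply performs the change of variables $(\la z,\la w, t+\la^{2s}\tau)\mapsto(z,w,\tau)$ and invokes the definition of $\Om_{\ep}^\la(t)$. You go further in correctly flagging the only term that does not rescale exactly --- the $\pa_w$-derivative of the $\psi$-mean correction in $v_\la^*$ --- and your Cauchy--Schwarz treatment of it (at the cost of a harmless constant $C(\psi)$ absorbed into the choice of $\ep$) is precisely the bookkeeping that the paper's one-line proof leaves implicit.
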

\begin{proof}
The claim follows by applying the change of variables $(\la z, \la w, t+\la^{2s} \tau) \mapsto (z, w, \tau)$ to the left hand side of \eqref{smallness.new} and then using the definition \eqref{condition.Om}.
\end{proof}
As a consequence of Lemma~\ref{lem:connection} and Theorem~\ref{thm_local_reg_intro}, we obtain
\begin{align*}
\sup_{Q_\frac12} \left( |\na v_\la | + |\na^2 v_\la| \right) \leq C_0
\end{align*}
for some $C_0>0$. In particular, using $(\Phi_t(x,t+\la^{2s}\tau) + \la z, t+ \la^{2s}\tau)|_{(z,\tau)=(0,0)} = (\Phi_t(x,t),t) = (x,t)$, we have
\begin{align*}
\la^{2s}|\na u(x,t)|  + \la^{2s+1} |\na^2 u(x,t)| \leq C_0,
\end{align*}
so that 
\begin{equation}\label{inc.Omc}
\left\{x : |\na^2 u(x,t)| \geq \frac{C_0}{\la^{2s+1}} \right\}, \left\{x: |\na u(x,t)| \geq \frac{C_0}{\la^{2s}} \right\}
\subset (\Om_{\ep}^\la(t))^c.
\end{equation}
We now estimate $|(\Om_{\ep}^\la(t))^c|$.
\begin{lemma}\label{lem_measure.Omc} 
For given $\la>0$ and $t\in((5\la)^{2s},T)$, the set $\Om_\ep^\la(t)$ (defined by \eqref{condition.Om}) satisfies
\begin{align*}
|(\Om_{\ep}^\la(t))^c| \lec \frac{\la^{6s-2}}{\ep}  \dint_{t-(5\la)^{2s}}^t\int |\La^s u(x,\tau)|^2 \d x \,\d \tau.
\end{align*}
\end{lemma}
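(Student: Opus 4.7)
The plan is to apply Chebyshev's inequality to the superlevel set
\[
(\Om_\ep^\la(t))^c = \{x \in \R^3 : H^\la(x,t) > \ep \la^{5-4s}\},
\]
which reduces matters to bounding $\int_{\R^3} H^\la(x,t)\,\d x$ by $\lec \la^{3}\int_{t-(5\la)^{2s}}^{t}\|\La^s u(\tau)\|_{2}^{2}\,\d \tau$. Since the time interval has length $\sim \la^{2s}$, the factor $\la^{3}/\la^{5-4s} = \la^{4s-2}$ equals, up to a constant, $\la^{6s-2}/(5\la)^{2s}$, which matches the $\dint$-average appearing on the right-hand side of the claim.

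For each of the three pieces of $H^\la$ I would first apply Fubini, pushing $\int_{\R^3}\,\d x$ inside the $\tau$- and $z$-integrals. Then, for each fixed $\tau$ and $z$, I would change variables $x' = \Phi_t(x,\tau)$. Because $\overline{u}_\la$ is divergence-free (the mollifier commutes with $\div$ and $\div u = 0$), Liouville's theorem gives $|\det D_x \Phi_t(\cdot,\tau)|=1$, so $\d x = \d x'$, and translation invariance of Lebesgue measure then erases the offset $z$. For the $F$- and $G$-pieces this produces a prefactor $|B_{5\la}| \lec \la^{3}$ multiplying
\[
\int_{t-(5\la)^{2s}}^{t}\!\int_{\R^3} F(x,\tau)\,\d x\,\d \tau \quad \text{and} \quad \int_{t-(5\la)^{2s}}^{t}\int_{0}^{5\la}\!\int_{\R^3} G(x,y,\tau)\,\d x\,\d y\,\d \tau,
\]
respectively. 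For the double-difference term, after $x' = \Phi_t(x,\tau)$ the substitutions $\xi = x'+z$ and $h = z-z'$ rewrite it as an integral of $|u(\xi,\tau)-u(\xi-h,\tau)|^{2}/|h|^{3+2s}$, where for each admissible $h$ (with $|h|\leq 10\la$) the corresponding $z$-set has measure $\lec \la^{3}$, producing the same volume factor.

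To finish I would bound each resulting global-in-space integral by $\|\La^{s}u(\tau)\|_{2}^{2}$: the strong $L^{1+\de}$ boundedness of $\mathcal{M}$ handles the maximal-function piece of $F$, while \eqref{est_on_p_intro}, \eqref{hardy_characterisation}, and \eqref{pressure_global_grand_max} control the two pressure pieces of $F$; \eqref{ext.norm} handles $G$; and the identity $\iint |u(\xi)-u(\xi-h)|^{2}/|h|^{3+2s}\,\d \xi\,\d h \sim \|\La^{s}u\|_{2}^{2}$ (cf.\ \eqref{Sob.Slo.L2norm}) handles the double-difference term. Integrating in $\tau$ over $(t-(5\la)^{2s},t]$ then yields the claimed bound. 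The main technical care will be in the double-difference term, where the constraint $z,z'\in B_{5\la}$ must be tracked under the changes of variable so as to recover only the $\la^{3}$ volume factor and not a larger quantity; everything else rests on already-established global bounds, of which \eqref{est_on_p_intro} is the only deep input.
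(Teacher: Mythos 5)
Your proposal is correct and follows essentially the same route as the paper: Chebyshev's inequality, Tonelli plus the measure-preserving change of variables $x\mapsto\Phi_t(x,\tau)$ and translations to extract the $|B_{5\la}|\lec\la^3$ volume factor, and then the global bounds (maximal function boundedness on $L^{1+\de}$, \eqref{est_on_p_intro} with \eqref{hardy_characterisation} for the pressure, \eqref{ext.norm} and \eqref{Sob.Slo.L2norm} for the extension and double-difference terms) to control everything by $\|\La^s u(\tau)\|_2^2$. Your treatment of the double-difference term via $h=z-z'$ is a cosmetic reorganization of the paper's substitution and yields the same $\la^3$ factor, so no further comment is needed.
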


\begin{proof}
The claim follows from Chebyshev's inequality and the fact that
\begin{equation}\label{global}\begin{split}
&\int_{\R^4_+} y^a |\bna u^*|^2 \d X
+  \int\int \frac{|u(x,t)-u(x',t)|^2}{|x-x'|^{3+2s}}\d x \d x' \\
&\hspace{1cm}+ \int  
(\mathcal{M} |\La^s u |^{\frac 2{1+\de}})^{1+\de} +  |\La^{2s-1}\na p|  +|\mathscr{M}_4 (\La^{2s-1}\na p)| \d x  \lec \int |\La^s u(x,t)|^2 \d x,
\end{split}\end{equation}
which we verify below. Indeed, assuming \eqref{global}, 
\begin{align*}
&\int H^\la(x,t) \d x \\
&=\int  \left[
\int_{Q_{5\la}(0,t)} F(\Phi_t(x,\tau)+z, \tau) \d z \,\d \tau
+
\int_{Q^*_{5\la}(0,t)}  
G(\Phi_t(x,\tau)+z, y, \tau) \d z \,\d y\, \d \tau\right] \d x \\
&\quad+\int\int_{t-(5\la)^{2s}}^t \int_{B_{5\la}} \int_{B_{5\la}} 
\frac{|u(\Phi_t(x,\tau) + z, \tau) -u(\Phi_t(x,\tau) + z', \tau)|^2}{|z-z'|^{3+2s}} \d z \,\d z'\, d\tau \,\d x \\
&\leq 
\int_{Q_{5\la}(0,t)} \int
 F(x+z, \tau) \d x  \,\d z\, \d \tau 
+\int_{Q^*_{5\la}(0,t)} \int
G(x+z, y, \tau)\d x \,\d z\, \d y\, \d \tau \\
&\quad+\int_{t-(5\la)^{2s}}^t \int_{B_{5\la}} \int \int
\frac{|u(x + z,\tau) -u(x + z', \tau)|^2}{|z-z'|^{3+2s}} \d x\, \d z' \,\d z  \,d\tau \\
&=
|B_{5\la}|\left[\int_{t-(5\la)^{2s}}^t\left( \int
 F(x,\tau) \d x   
+\int_{\R^4_+}
G(x, y, \tau)\d X  
+
\iint
\frac{|u(x, \tau) -u(x',\tau)|^2}{|x-x'|^{3+2s}} \d x \d x'  \right)\d \tau\right] \\
&\lec  (5\la)^{3} \int_{t-(5\la)^{2s}}^t\int |\La^s u(x,\tau)|^2 \d x\, \d \tau.
\end{align*}
where the first inequality follows from Tonelli's theorem and the change of variables $\Phi_t(x,\tau) \mapsto x$, the second equality follows from the change of variables $x+z\mapsto x$ in the first two terms and $z'+x \mapsto x'$ and $x+z\mapsto x$ in the last term, and the last inequality follows from \eqref{global}. This together with Chebyshev's inequality give 
\begin{align*}
|(\Om_{\ep}^\la(t))^c|
&= |\{x  :  H^\la(x,t) \geq \ep \la^{5-4s} \}|\\
&\leq \frac {\la^{4s-5}}{\ep} 
\int H^\la(x,t) \d x 
\lec \frac{\la^{6s-2}}{\ep}\dint_{t-(5\la)^{2s}}^t  \int |\La^s u(x,\tau)|^2 \d x\, \d \tau ,
\end{align*}
as required. 

It remains to verify \eqref{global}. Using the equivalence between norms, \eqref{ext.norm} and \eqref{Sob.Slo.L2norm}, we have
\begin{align*}
\int_{\R^4_+} y^a |\bna u^*|^2 \d X
+ \int\int \frac{|u(x,t)-u(x',t)|^2}{|x-x'|^{3+2s}}\d x \,\d x'
\lec
 \int |\La^s u|^2 \d x 
\end{align*}
Since the maximal operator $\mathcal{M}$ is bounded on $L^q(\R^3)$-space for any $1<q<\infty$, we take $q\coloneqq 1+\delta $ to obtain
\begin{align*}
\int  
(\mathcal{M} |\La^s u |^{\frac 2{1+\de}})^{1+\de}\d x  \lec
\int   
(|\La^s u |^{\frac 2{1+\de}})^{1+\de}\d x 
=
\int   
|\La^s u |^2\d x.
\end{align*}
On the other hand, using the characterization \eqref{hardy_characterisation} and the definition of the Hardy norm, we have
\begin{align*} 
\int|\mathscr{M}_4(\La^{2s-1}\na p)| \d x 
\lec \norm{\La^{2s-1}\na p}_{1}  + \norm{\mathcal{R} (\La^{2s-1}\na p)}_{1}
\end{align*}
Furthermore, writing $\La^{2s-1}\na p = \mathcal{R} (-\De)^s p$ and applying \eqref{est_on_p_intro} (see Proposition~\ref{prop_integrability_of_pressure} below), we get
\begin{align}\label{pressure.bound}
\int   
|\La^{2s-1}\na p|
+|\mathscr{M}_4(\La^{2s-1}\na p)| \d x
\lec 
\sum_{n=1}^2 \norm{\mathcal{R}^n (-\De)^s p}_{1}
\lec \norm{\La^s u}_{2}^2,
\end{align}
which concludes the proof of \eqref{global}
\end{proof}

We now verify that Theorem~\ref{thm_main} follows from \eqref{inc.Omc} and Lemma~\ref{lem_measure.Omc}. For any $\la>0$ and $t\in( (5\la)^{2s},T)$, we have
\begin{align*}
\left|\left\{x\in K : |\na^2 u(x,t)| \geq \frac{C_0}{\la^{2s+1}} \right\}\right| 
&\leq |(\Om_{\ep}^\la(t))^c| 
\lec_s \la^{6s-2} \dint_{t-(5\la)^{2s}}^t\int |\La^s u(x,\tau)|^2 \d x\d \tau\\
&\lec \la^{6s-2} \mathcal{M}(\norm{\La^s u}_{2}^21_{(0,T)})(t)  
\end{align*}
and analogously
\begin{align*}
\left|\left\{x\in K : |\na u(x,t)| \geq \frac{C_0}{\la^{2s}} \right\}\right| 
 \lec \la^{6s-2}\mathcal{M}(\norm{\La^s u}_{2}^21_{(0,T)})(t),
\end{align*} 
where $1_E$ denotes the characteristic function of a set $E\subset \R$.  
In other words, by letting $R\coloneqq C_0 \la^{-2s-(n-1)}$ ($n=1,2$) we obtain
\[
\left|\left\{x\in K : |\na^n u(x,t)| \geq R \right\}\right| 
 \lec R^{-p} \mathcal{M}(\norm{\La^s u}_{L^2(\R^3)}^21_{(0,T)})(t) \qquad \text{ for } R^p > c_0  t^{-\frac{2(3s-1)}{2s }},
\]
where $n=1,2$ and $c_0\coloneqq  C_0^p5^{2(3s-1)}$ (recall that $p = \frac{2(3s-1)}{n+2s-1}$). Thus
\begin{align*}
R^p \left|\left\{x\in K : |\na^n u(x,t)| \geq R \right\}\right| \leq 
\begin{cases}
R^p |K| &R^p\leq c_0 t^{-\frac{2(3s-1)}{2s }},\\
C \mathcal{M}(\norm{\La^s u}_{2}^21_{(0,T)})(t)  , & R^p>c_0 t^{-\frac{2(3s-1)}{2s }}
\end{cases}
\end{align*}
for some constant $C>0$, and hence
\begin{align*}
\norm{\na^n u(\cdot, t)}_{L^{p,\infty}(K)}^p
&\lec \max\left(\frac{|K|}{t^{\frac{2(3s-1)}{2s }}}, \mathcal{M}(\norm{\La^s u}_{2}^21_{(0,T)})(t) 
\right)\\
&\leq  \mathcal{M}(\norm{\La^s u}_{2}^21_{(0,T)})(t) 
+\frac{|K|}{t^{3-\frac1s}}.
\end{align*}
Including time dependence this gives
\begin{align*}
\norm{\na^n u}_{L^{p,\infty}(((t_0,T)\times K)}^p
&\lec \norm{  \mathcal{M}(\norm{\La^s u}_{2}^21_{(0,T)})}_{L_t^{1,\infty}(\R)} + \frac{|K|}{t_0^{2-\frac 1s}} 
\lec \norm{\La^s u}_{L^2((0,T)\times \R^3)}^2 
+ \frac{|K|}{t_0^{2-\frac 1s}}\\
&\lec \norm{u_0}_{2}+ \frac{|K|}{t_0^{2-\frac 1s}},
\end{align*}
as required, where we used the fact that the maximal operator $\mathcal{M}:  L^1(\R) \to L^{1,\infty}(\R)$ is bounded in the second inequality, as well as the energy inequality \eqref{EI} in the last step.

\section{Global integrability of the pressure}\label{sec_global_p}
In this section we show (in Proposition~\ref{prop_integrability_of_pressure}) that for each integer $n\geq 0$
\[
\norm{\mathcal{R}^n(-\De)^s p}_{1}
\leq c_{s,n} \norm{\Lambda^s u}_{2}^2,
\]
which we have used above in \eqref{pressure.bound}.

We first introduce some decay estimate, required in the proof of Proposition~\ref{prop_integrability_of_pressure}.

\begin{lemma}\label{lem_pointwise_Dlk}
Let $g\in \mathcal{S}(\R^3)$. Then for any $n\in \mathbb{N}\cup\{0\}$ and $s>0$,
\[
|\mathcal{R}^n \Lambda^{2s} g (x)| \lec_{g,s,\eta,n} \frac 1{(1+|x|)^{3+\eta}}
\]
for every $\eta \in (0,2s)$, $x\in \RR^3$, where $\mathcal{R}$ is the Riesz transform defined by $\widehat{\mathcal{R} g}(\xi) = \frac{-i\xi}{|\xi|} \hat{g}(\xi)$. 
\end{lemma}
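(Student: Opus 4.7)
The plan is to exploit that $\mathcal{R}^n\Lambda^{2s}$ is a Fourier multiplier with symbol $m(\xi)=\frac{(-i\xi)^{\otimes n}}{|\xi|^n}|\xi|^{2s}$, which is smooth on $\mathbb{R}^3\setminus\{0\}$ and homogeneous of degree $2s$. Writing $\mathcal{R}^n\Lambda^{2s}g=K\ast g$ with $K=\mathcal{F}^{-1}m\in\mathcal{S}'(\mathbb{R}^3)$, the two key properties I will leverage are that $|m(\xi)|\leq |\xi|^{2s}$, and that $K$ coincides, away from the origin, with a smooth function homogeneous of degree $-(3+2s)$; in particular $|K(z)|\leq C|z|^{-(3+2s)}$ for $|z|\geq 1$.

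For $|x|\leq 4$, I will argue directly on the Fourier side: since $\widehat{g}\in\mathcal{S}$, the product $m\widehat{g}$ is integrable and
\[
\|\mathcal{R}^n\Lambda^{2s}g\|_{L^\infty}\leq (2\pi)^{-3}\int|m(\xi)||\widehat{g}(\xi)|\,d\xi<\infty,
\]
which yields the claim on the compact set $\{|x|\leq 4\}$, since $(1+|x|)^{-(3+\eta)}$ is bounded below there.

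For $|x|\geq 4$, I will fix a cutoff $\psi\in C_c^\infty(B_2)$ with $\psi\equiv 1$ on $B_1$ and split $K=K\psi+K(1-\psi)$. The singular piece $K\psi$ is a compactly supported distribution of some finite order $N_0$, so by Schwartz decay of $g$ (valid uniformly on $B_2(x)$ once $|x|\geq 4$),
\[
|(K\psi)\ast g(x)|\leq C\!\!\sum_{|\alpha|\leq N_0}\sup_{z\in B_2(x)}|\partial^\alpha g(z)|\leq C_N(1+|x|)^{-N}
\]
for every $N$. The regular piece $K(1-\psi)$ is a smooth function with $|K(1-\psi)(z)|\leq C|z|^{-(3+2s)}$ for $|z|\geq 1$, hence integrable on $\mathbb{R}^3$ precisely because $s>0$. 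Splitting $(K(1-\psi))\ast g(x)$ according to $|y|\leq |x|/2$ versus $|y|>|x|/2$: in the first region $|x-y|\geq |x|/2$ gives $|K(1-\psi)(x-y)|\leq C|x|^{-(3+2s)}$ and a contribution $\leq C|x|^{-(3+2s)}\|g\|_{L^1}$, while in the second region $|g(y)|\leq C_N(1+|x|)^{-N}$ combined with $K(1-\psi)\in L^1$ produces a contribution $\leq C_N(1+|x|)^{-N}$. Adding the pieces yields $|\mathcal{R}^n\Lambda^{2s}g(x)|\leq C(1+|x|)^{-(3+2s)}$, which is strictly stronger than the asserted bound for any $\eta<2s$.

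The only subtle step is the structural claim that $K=\mathcal{F}^{-1}m$ restricts to a smooth function of homogeneity $-(3+2s)$ on $\mathbb{R}^3\setminus\{0\}$, so that the distributional singularity of $K$ is entirely absorbed into $K\psi$ while $K(1-\psi)$ satisfies the pointwise bound $|K(1-\psi)(z)|\lesssim |z|^{-(3+2s)}$. This is a standard fact about inverse Fourier transforms of smooth homogeneous symbols, but it must be invoked carefully since the cases $2s\in 2\mathbb{Z}$ (where $m$ is a genuine polynomial times a unit) and $2s\notin 2\mathbb{Z}$ (where the homogeneous distribution requires analytic continuation / principal-value interpretation near $0$) must both be handled uniformly. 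Once this is granted, everything else is routine splitting of the convolution together with the rapid decay of $g$.
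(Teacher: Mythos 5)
Your proof is correct, but it takes a genuinely different route from the paper's. The paper handles $n=0$ by citing the pointwise decay estimate of Grafakos--Oh and then runs an induction on $n$, applying the explicit principal-value kernel of a single Riesz transform at each step: the integral is split into the regions $|y|\leq|x|/2$, $|y-x|\leq|x|/2$ and their complement, with a cancellation trick ($\tfrac{x-y}{|x-y|^4}-\tfrac{x}{|x|^4}$) in the first region and a H\"older-type interpolation with a carefully tuned exponent $\theta$ near the singularity; each inductive step degrades the decay exponent slightly, from $\eta_i$ to $\eta_{i+1}$, which is precisely why the lemma is stated only for $\eta<2s$. You instead treat $\mathcal{R}^n\Lambda^{2s}$ as a single convolution operator whose kernel is, away from the origin, a smooth function homogeneous of degree $-(3+2s)$, and then perform one near/far splitting; this yields the endpoint decay $(1+|x|)^{-(3+2s)}$, strictly stronger than what the paper proves, and handles all $n$ uniformly. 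The price is that you must invoke the structural theorem on Fourier transforms of smooth homogeneous symbols (H\"ormander-type: a homogeneous tempered distribution of degree $2s>-3$, smooth off the origin, has Fourier transform smooth off the origin and homogeneous of degree $-3-2s$ there), whereas the paper's induction is elementary and self-contained modulo the cited base case. You correctly identify this as the one delicate point and note the polynomial/non-polynomial dichotomy; your treatment of the compactly supported singular piece via finite order, and of the regular piece via $|x-y|\geq|x|/2$ versus rapid decay of $g$, is sound. In effect your argument generalizes the Grafakos--Oh base-case estimate to all $n$ in one stroke rather than iterating the Riesz transform.
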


\begin{proof}
The case $n=0$ follows from the pointwise decay estimate proved in \cite[Lemma 1]{grafakos_oh},
\begin{align}\label{decay.f}
|\Lambda^{2s} g(x)|\lec_{g,s} \frac 1{(1+|x|)^{3+2s}}.
\end{align} 
For $n\geq 1$ the proof follows by induction: we fix $n\in \mathbb{N}$ and $\eta\in (0,2s)$, and define $\eta_i := 2s - \frac{(2s-\eta)i}n$ for $i=0,1, \cdots, n$. In particular, $\eta_0 =2s$, $\eta_n = \eta$, and $\eta_{i+1} <\eta_i$. We claim that for each $i$, we have
\begin{align}\label{ind.g}
|\mathcal{R}^i \La^{2s}g(x)| \lec_{g,s,i} \frac 1{(1+|x|)^{3+\eta_i}}
\end{align}
The base step (when $i=0$) holds true by \eqref{decay.f}. 
Assume \eqref{ind.g} holds for $i$. We will write 
\[
f\coloneqq \mathcal{R}^i \La^{2s} g
\]
for brevity. We first recall the integral expression of the Riesz transform of $f$,
\begin{align}\label{Riesz}
\mathcal{R} f (x) \sim \text{p.v.} \int f(y) \frac{x-y}{|x-y|^4} \d y.
\end{align}
In order to estimate $|\mathcal{R}f(x)|$, we split the integral region in \eqref{Riesz} into three parts by writing 
\[\int = \int_{|y|\leq |x|/2} + \int_{|y-x|\leq |x|/2} + \int_{\substack{|y|\geq |x|/2 \\ |y-x| \geq |x|/2}}.
\]
As for the first part,
\[\begin{split}
\left| \text{p.v.}\int_{|y|\leq |x|/2}f(y)\frac{x-y}{|x-y|^4} \d y  \right|
&\leq \left| \text{p.v.}\int_{|y|\leq |x|/2} f(y)\left(\frac{x-y}{|x-y|^4} - \frac{x}{|x|^4}\right) \d y \right| +\frac{1}{|x|^3} \int_{|y|\leq |x|/2} |f(y)| \d y\\ 
&\lec \frac{1}{|x|^3} \int_{|y|\leq |x|/2} |f(y)| \d y\\
  &\lec_{g,s,i} \frac{1}{|x|^{3}} \int_{|y|\leq |x|/2} \frac1{(1+|y|)^{3+\eta_i } }  \d y
   \lec_i \frac1{|x|^{3+\eta_{i+1}}},
\end{split}\]
where we used the fact that 
\begin{align*}
\left|\frac{x-y}{|x-y|^4} - \frac{x}{|x|^4}\right|
\lec \int_0^1 \frac{|y|}{|x-\th y|^4} \d \th
\lec \frac {|y|}{|x|^4} \lec \frac 1{|x|^3} \quad \text{ for } |y|\leq \frac {|x|}2
\end{align*} 
(since $|x-\th y|\geq |x|-|y|\geq \frac {|x|}2$) in the second inequality, and \eqref{ind.g} in the last line.

As for the second part,
\[
\begin{split}
\left| \int_{\substack{|y|\geq |x|/2 \\ |y-x| \geq |x|/2}} f(y)\frac{x_i-y_i}{|x-y|^4} \d y \right| &\lec \frac{1}{|x|^3}  \int_{\substack{|y|\geq |x|/2 \\ |y-x| \geq |x|/2}} |f(y)| \d y\\
&\lec \frac{1}{|x|^{3+\eta_{i+1} }}  \int |f(y)| |y|^{\eta_{i+1}} \d y
\lec_i \frac{1}{|x|^{3+\eta_{i+1} }}.
\end{split}
\]
As for the last part,
\[\begin{split}
&\left|\text{p.v} \int_{|y-x|\leq |x|/2}f(y)\frac{x_i-y_i}{|x-y|^4} \d y \right| = \left| \text{p.v}\int_{ |y-x|\leq |x|/2} (f(x)-f(y))\frac{x_i-y_i}{|x-y|^4} \d y \right| \\
&\qquad \leq \int_{|y-x|\leq |x|/2} \frac{|f(x)-f(y)|^\th }{|x-y|^\th}  \frac {(|f(x)|+|f(y)|)^{1-\th }}{|x-y|^{(3-\th )}}  \d y \\
&\qquad\lec\frac{\| \nabla f \|_\infty^\th}{ |x|^{(3+\eta_i)(1-\th )}} \int_{ |y-x|\leq |x|/2} |x-y|^{-(3-\th ) }  \d y \\
&\qquad \lec_\th \frac 1{|x|^{(3+\eta_i)(1-\th ) -\th}} = \frac1{|x|^{3+\eta_{i+1}}} ,
\end{split}\]
where $\th \coloneqq \frac{2s-\eta}{n(2+3\eta_i)}\in (0,1)$. Indeed, the third line follows from the inductive assumption and the fact that $|y|\geq |x|-|x-y|\geq |x|/2$, and the last line is obtained by noting that
\begin{align*}
\norm{\na f}_\infty
\leq \int |\xi|^{1+2s} |\hat{g}(\xi)| d \xi
\lec \int_{|\xi|\leq 1}  |\xi|^{1+2s}  |\hat{g}(\xi)| d \xi 
+ \int_{|\xi|\geq 1} \frac{|\xi|^{1+2s}}{(1+|\xi|)^{7}} (1+|\xi|)^{7}|\hat{g}(\xi)| d\xi \lec_g 1. 
\end{align*}
Therefore, since a similar calculation gives $\mathcal{R}f \in L^\infty$, the decay estimate $\sim |x|^{3+\eta_{i+1}} $ for each of the parts above give \eqref{ind.g} for $i+1$. 
\end{proof}

\begin{prop}[Global estimate on $(-\Delta)^sp$]\label{prop_integrability_of_pressure} Let $s\in (0,1)$. Suppose that $f$ and $g$ are divergence-free. Then, for any $n\in \mathbb{N}\cup\{0\}$, it satisfies
\[
\norm{\mathcal{R}^n {\Lambda^{2s-2}}\div \div (f \otimes g) }_{1} \lec_{s,n} \norm{\Lambda^s f}_{2}\norm{\Lambda^s g}_{2}
\]
where $\mathcal{R}$ is the Riesz transform defined by $\widehat{\mathcal{R}f}(\xi) = \frac{-i\xi}{|\xi|}\hat{f}(\xi)$. 
In particular, if $p$ is a solution  to $(-\De) p = \div \div (u\otimes u)$, we have a global bound of $(-\Delta)^sp$, 
\[
\norm{\mathcal{R}^n(-\De)^s p}_{1}
\lec_{s,n} \norm{\Lambda^s u}_{2}^2.
\]
\end{prop}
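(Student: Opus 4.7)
The plan is to combine Bony's paraproduct decomposition with the divergence-free structure, extending the Coifman--Lions--Meyer--Semmes approach along the lines of Li's treatment of Kenig--Ponce--Vega commutator estimates. First I would use the divergence-free hypotheses $\partial_i f_i = \partial_j g_j = 0$ to rewrite
\[
\div \div (f \otimes g) = \partial_i \partial_j(f_i g_j) = \partial_i f_j \partial_j g_i,
\]
thereby reducing the proposition to the bound $\norm{\mathcal{R}^n \La^{2s-2}(\partial_i f_j \partial_j g_i)}_1 \lec \norm{\La^s f}_2 \norm{\La^s g}_2$. By a density argument (with Lemma~\ref{lem_pointwise_Dlk} ensuring that all relevant objects are integrable for Schwartz inputs) it suffices to consider $f,g \in \mathcal{S}(\R^3)$.

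Next I would apply Bony's decomposition
\[
\partial_i f_j \partial_j g_i = T_{\partial_i f_j}(\partial_j g_i) + T_{\partial_j g_i}(\partial_i f_j) + R(\partial_i f_j, \partial_j g_i),
\]
where $T_a b = \sum_k P_{\leq k-N} a \cdot P_k b$ and $R(a,b)=\sum_k P_k a \cdot \widetilde{P}_k b$, with $\widetilde{P}_k$ a slight fattening of $P_k$ and $N$ chosen large enough that each $T$-summand has Fourier support in an annulus $\{|\xi|\sim 2^k\}$. For the low-high paraproduct, the annular localisation ensures that $\mathcal{R}^n \La^{2s-2}$ acts on each summand as convolution with an $L^1$ kernel of norm $\sim 2^{(2s-2)k}$; together with H\"older's inequality and the Bernstein identities $\norm{P_k \partial g}_2 \sim 2^{k(1-s)}\norm{\La^s P_k g}_2$ and $\norm{P_{\leq k-N}\partial f}_2 \lec \sum_{l\leq k-N} 2^{l(1-s)}\norm{\La^s P_l f}_2$, this yields the summand bound
\[
\norm{\mathcal{R}^n \La^{2s-2}[P_{\leq k-N}(\partial f)\cdot P_k(\partial g)]}_1 \lec \sum_{l\leq k-N} 2^{-(k-l)(1-s)}\norm{\La^s P_l f}_2 \norm{\La^s P_k g}_2,
\]
and, since $s<1$, summation in $k$ and $l$ via Schur's test closes this piece; the high-low paraproduct is handled symmetrically.

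The high-high remainder $R(\partial_i f_j, \partial_j g_i)$ is the main obstacle: each summand $P_k(\partial f)\cdot \widetilde{P}_k(\partial g)$ has Fourier support in a ball $B(0,2^{k+C})$ that includes arbitrarily low frequencies, where the negative-order multiplier $\La^{2s-2}$ is singular at $\xi=0$ and no naive Fourier-localised $L^1$ bound is available. Here I would invoke the divergence-free condition a second time: since $\partial_i (P_k f_i)=\partial_j(\widetilde{P}_k g_j)=0$, a short computation produces the identity
\[
P_k(\partial_i f_j)\cdot \widetilde{P}_k(\partial_j g_i) = \partial_i \partial_j\bigl(P_k f_i \cdot \widetilde{P}_k g_j\bigr),
\]
so that, writing $\La^{2s-2}\partial_i\partial_j = \mathcal{R}_i \mathcal{R}_j \La^{2s}$,
\[
\La^{2s-2}R(\partial_i f_j, \partial_j g_i) = \mathcal{R}_i \mathcal{R}_j \La^{2s}\sum_k P_k f_i \cdot \widetilde{P}_k g_j.
\]
The apparent singularity has been converted into the regular positive-order operator $\La^{2s}$, and on each summand Fourier-localisation in $B(0,2^{k+C})$ yields $\norm{\mathcal{R}^n\mathcal{R}_i\mathcal{R}_j\La^{2s}(P_k f_i\widetilde{P}_k g_j)}_1 \lec 2^{2sk}\norm{P_k f}_2\norm{\widetilde{P}_k g}_2 \lec \norm{\La^s P_k f}_2 \norm{\La^s \widetilde{P}_k g}_2$, which sums via Cauchy--Schwarz to $\norm{\La^s f}_2 \norm{\La^s g}_2$.
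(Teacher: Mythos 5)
Your argument is correct, but it takes a genuinely different route from the paper's, and it is worth recording the comparison. The paper uses the divergence-free condition exactly once, at the outset, to rewrite $\La^{2s-2}\div\div(f\otimes g)$ as a sum of commutators $[\La^{2s-2}\p_{lk},f^l]g^k$; after Bony decomposition, the diagonal piece is treated by the Coifman--Meyer/Li device of expanding the symbol $z_lz_k|z|^{2s-2}\chi(z)$ in a Fourier series (with coefficient decay supplied by Lemma~\ref{lem_pointwise_Dlk} and a logarithmic loss from modulated Littlewood--Paley projections), while the off-diagonal pieces are genuine commutators estimated through kernel cancellation, Hardy--Littlewood maximal functions, and the square-function characterization of $\mathcal{H}^1$. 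You instead use the divergence-free condition twice: first to pass to $\p_i f_j\,\p_j g_i$, so that the paraproduct pieces carry a total of two derivatives against an operator of order $2s-2$, leaving a margin of $1-s>0$ per frequency scale that makes the $k,l$-sums converge by Schur's test with no commutator structure, no maximal functions, and no Hardy-space machinery; and second to convert the high-high remainder back into $\p_i\p_j(P_kf_i\,\tilde P_kg_j)$, which is then a single ball-localized multiplier bound. Your version is shorter and more elementary; the paper's buys the slightly stronger $\mathcal{H}^1$ bound for the off-diagonal pieces directly, but since the proposition requires all powers $\mathcal{R}^n$ anyway, your per-$n$ argument reaches the same conclusion.

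One step deserves more care than you give it. In the high-high piece you assert that frequency localization to $B(0,2^{k+C})$ "yields" $\norm{\mathcal{R}^n\mathcal{R}_i\mathcal{R}_j\La^{2s}(P_kf_i\,\tilde P_kg_j)}_1\lec 2^{2sk}\norm{P_kf}_2\norm{\tilde P_kg}_2$. This is not naive Bernstein: although the symbol $(\xi/|\xi|)^{\otimes n}(\xi_i\xi_j/|\xi|^2)|\xi|^{2s}$ is homogeneous of positive degree and continuous at the origin, it is not smooth there, so the localized multiplier is not automatically convolution with an $L^1$ kernel. The required fact is that $\mathcal{R}^{n+2}\La^{2s}\check\rho\in L^1$, which is exactly the pointwise decay estimate of Lemma~\ref{lem_pointwise_Dlk} (the Grafakos--Oh estimate extended to Riesz transforms); by homogeneity this gives the kernel norm $\lec_{n,s}2^{2sk}$, and the rest follows from Young's and Cauchy--Schwarz. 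You cite that lemma only for a density argument; it should be cited here, since it is the entire reason the high-high/diagonal piece closes. The low-high piece is unproblematic because there the symbol is restricted to an annulus away from the origin, where it is smooth.
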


\begin{proof}
Note that when $F$ and $G$ are divergence-free, we have
\begin{align*}
\sum_{l,k=1}^3 \La^{2s-2}\pa_{lk} (F^l G^k) 
= \sum_{l,k=1}^3 [\La^{2s-2}\pa_{lk}, F^l] G^k 
=\sum_{l,k=1}^3 [\La^{2s-2}\pa_{lk}, G^k] F^l  
\end{align*}
where $[A,B]=AB - BA$ and $F^l$ and $G^k$ are $l^{\text{th}}$ and $k^{\text{th}}$ components of vector functions $F$ and $G$, respectively. Using this together with Bony's paraproduct decomposition, we get
\[
{\Lambda^{2s-2}\div\div} (f\otimes g) = \sum_{l,k=1}^3\sum_{j\in  \Z} \Lambda^{2s-2}\p_{lk}  f_j^l \tilde{g}_j^k + [\Lambda^{2s-2}\p_{lk}, f_{\leq j-3}^l] g_j^k + [\Lambda^{2s-2}\p_{lk}, g_{\leq j-3}^k] f_{j}^l  ,
\]
where $h_j \coloneqq P_{j} h$, $ h_{\leq j }\coloneqq P_{\leq 2^j} h $, and
$\tilde{h}_j \coloneqq \td P_j h=\sum_{k=j-2}^{j+2} P_{2^j} h$. For convenience, we drop the indices $l$ and $k$ in $f^l$ and $g^k$. \vspace{0.3cm} \\
\texttt{Step 1.} We estimate the diagonal piece, that is we show that
\begin{align}\label{diag}
 \| \mathcal{R}^n \sum_{l,k}\sum_j \Lambda^{2s-2}{\p_{lk}} (f_j\tilde{g}_j ) \|_{1}  \lec \| \Lambda^s f \|_2\| \Lambda^s g \|_2.
\end{align}

We consider the case $n=0$ first.  
Let $\chi \in C_0^\infty (B_{2^{5}})$ be such that $\chi = 1 $ on $B_{2^4}$. We have from Fourier series expansion
\eqnb\label{def_of_chi_ms}
\frac{z_k z_l}{|z|^{2-2s}} \chi (z) = \sum_{m\in \ZZ^3} \chi_{m,s} e^{i \pi m\cdot \frac{z}{2^5}},
\eqne
where $\chi_{m,s} = \chi_{m,s}^{l,k}$ satisfies
\begin{align}\label{est.chims}
|\chi_{m,s}| \sim \left|\int \frac{z_l z_k}{|z|^{2-2s}} \chi(z) e^{-i \pi m \cdot z /2^5} \d z\right| \sim \left| \,(\Lambda^{2s-2}{\p_{lk}}   \widehat{\chi})\left(\frac{\pi m}{2^5}\right)\right| \lec_s(1+|m|)^{-3-s}
\end{align}
where the last inequality follows from Lemma~\ref{lem_pointwise_Dlk} (applied with $\eta \coloneqq s$).
Then for each $j\in \Z$
\begin{align}
-\Lambda^{2s-2}{\p_{lk}}  ( f_j \tilde{g}_j ) (x) &= \frac{1}{(2\pi )^{6}} \iint  \left( \frac{(\xi_l + \eta_l )(\xi_k + \eta_k)}{|\xi + \eta |^{2-2s}}  \right)  \widehat{f}_j (\xi ) \widehat{\tilde{g}}_j (\eta ) e^{i(\xi + \eta ) \cdot x} \d \xi \d \eta \label{dig.eq}\\
&= \frac{2^{2js}}{(2\pi )^{6}} \iint  \left( 2^{-2js} \frac{(\xi_l + \eta_l )(\xi_k + \eta_k)}{|\xi + \eta |^{2-2s}}  \right) \chi (2^{-j} (\xi +\eta )) \widehat{f}_j (\xi ) \widehat{\tilde{g}}_j (\eta ) e^{i(\xi + \eta ) \cdot x} \d \xi \d \eta  \nonumber\\
&= \frac{2^{2js}}{(2\pi )^{6}} \sum_{m\in \ZZ^3} \chi_{m,s} \int   \widehat{f}_j (\xi )  e^{i\xi  \cdot x} e^{i\pi m \cdot \frac{2^{-j} \xi}{2^5} } \d \xi \int\widehat{\tilde{g}}_j (\eta )  e^{i \eta \cdot x} e^{i\pi m \cdot \frac{2^{-j}  \eta }{2^5} } \d \eta  \nonumber\\
&= 2^{2js} \sum_{m\in \ZZ^3} \chi_{m,s} P_j^m f \tilde{P}_j^m g, \nonumber
\end{align}
where we set $\widehat{P_j^m f }(\xi) \coloneqq \varrho( 2^{-j} \xi ) \hat{f} (\xi )e^{i\pi m \cdot \frac{2^{-j} \xi }{2^5} }$ ($\varrho$ is defined as in \eqref{varrho}). Indeed, the second line follows from the facts that $|\xi | \leq 2^{j+1}$ and $|\eta | \leq 2^{j+3}$ (so that $|\xi + \eta | \leq 2^{j+1} + 2^{j+3} \leq 2^{j+4}$, which gives $\chi (2^{-j} (\xi + \eta ))=1$). Therefore we have
\begin{align*}
\| \sum_j \Lambda^{2s-2}{\p_{lk}} (f_j\tilde{g}_j ) \|_1 &\leq \sum_{m\in \ZZ^3} |\chi_{m,s} |\,  \| (2^{js} P_j^m f )_{l^2_j} \|_2\| (2^{js} \tilde{P}_j^m g )_{l^2_j} \|_2\\
&\lec_s   \| \Lambda^s f \|_2  \| \Lambda^s g \|_2  \sum_{m\in \ZZ} \frac{\log^2 (10+|m|)}{(1+|m|)^{3+s} }
\lec_s  \| \Lambda^s f \|_2\| \Lambda^s g \|_2 ,
\end{align*}
where the second line follows from \eqref{est.chims} and the fact that $P_j^m$ is bounded on $L^2 (\RR^3 , l^2)$ with constant $C \log (10+|m|)$ (see \cite{li}
).

The estimate \eqref{diag} for $n\in \mathbb{N}$ follows from the same argument as above. Indeed, with additional Riesz transform $\mathcal{R}^n$ the definition \eqref{def_of_chi_ms} of $\chi_{m,s}$ will include additional factor of $(-iz |z|^{-1})^{\otimes n}$, which will then also appear in the integrands in \eqref{dig.eq} and \eqref{est.chims}. However, since Lemma~\ref{lem_pointwise_Dlk} gives the same bound $(1+|m|)^{-3-s}$ for $|\chi_{m,s}|$ up to constant multiple,  \eqref{diag} follows in the same way for all $n\in \mathbb{N}$. \\

\noindent\texttt{Step 2.} We estimate the low-high and high-low pieces, by showing that 
\begin{align}\label{low.high}
\|\sum_j  \mathcal{R}^n [\Lambda^{2s-2}{\p_{lk}}, f_{\leq j-3}] g_j \| _{1} \lec_s \| \Lambda^s f \|_{2}\| \Lambda^s g \|_{2}.
\end{align}
(Then one can obtain the same bound for the high-low piece.) This together with \texttt{Step 1} proves the lemma.\\

Consider the case $n=0,1$ first. For each $j\in  \Z$, we have 
\[\begin{split}
|\left[   \Lambda^{2s-2}{\p_{lk}} , f_{\leq j-3} \right] g_j (x)| 
& \leq \ 2^{j(3+2s)}  \int |\psi ( 2^j y) (f_{\leq j-3}(x-y) - f_{\leq j-3} (x) ) g_j (x-y) |\d y\\
&\leq 2^{j(3+2s)}  \int | \psi ( 2^j y)|   \max_{\theta \in [0,1]} |\nabla f_{\leq j-3} (x+\theta y )|\, |y| \,    |g_j (x-y)| \d y\\
&\lec 2^{j(3+2s)}  \mathcal{M} (\nabla f_{\leq j-3}  ) (x)\mathcal{M} (g_j  ) (x)  \int | \psi ( 2^j y)| |y| (1+2^j |y| )^6   \d y\\
&\lec 2^{j(-1+2s)}  \mathcal{M} (\nabla f_{\leq j-3}  ) (x)\mathcal{M} (g_j  ) (x)
\end{split}\]
where $\widehat{\psi } (\xi ) \coloneqq \xi_l \xi_k |\xi |^{2s-2} \td\varrho (\xi )$ (here $\td\varrho (\xi )\coloneqq \sum_{|j|\leq3} \varrho(2^{-j} \xi)$ and $\varrho$ is defined as in \eqref{varrho}) and we used Lemma~\ref{lem.max} in the third line.

Therefore, using the characterization \eqref{hardy.norm.1} of the Hardy norm, we obtain 
\[\begin{split}
\| \sum_j[\Lambda^{2s-2}{\p_{lk}},  f_{\leq j-3} ]g_j \|_{\mathcal{H}^1} 
&\sim  \| (P_m \sum_j[\Lambda^{2s-2}{\p_{lk}},  f_{\leq j-3} ]g_j  )_{l^2_m} \|_{1} \\
& =  \|{ ( P_m \sum_{j=m-2}^{m+2}\td P_j [\Lambda^{2s-2}{\p_{lk}},  f_{\leq j-3} ]g_j }\|_1
\lec\|{ \left(  [\Lambda^{2s-2}{\p_{lk}},  f_{\leq j-3} ]g_j   \right)_{l^2_j}}\|_1\\
&\lec \| \ \mathcal{M} ( { \sup_j}(2^{j(-1+s)} |\nabla f_{\leq j-3 } | ) )\|_2 \| (\mathcal{M} (2^{js} g_j ))_{l^2_j} \|_2 \\
&\lec \| \sup_j (2^{j(-1+s)} |\nabla f_{\leq j-3 } | ) \|_2 \| \Lambda^s g \|_2 \lec \| \Lambda^s f \|_2 \| \Lambda^s g \|_2 \\
&\lec \| \Lambda^s u \|_2^2,
\end{split}\]
where the fifth line follows from $\norm{(\mathcal{M}(2^{js} g_j))_{l^2_j}}_2 
=(\norm{\mathcal{M}(2^{js} g_j)}_2)_{l^2_j}
\lec   \norm{(2^{js} g_j)_{l^2_j}}_2 \lec \norm{\La^s g}_2$, and the second last inequality follows from
\begin{align*}
|2^{j(s-1)} \na f_{\leq j-3}(x)| &=   2^{j(s-1)}|\na \La^{-s}  P_{\leq j-3} \La^s f(x)| 
 \leq 2^{j(s-1)}\sum_{k\leq j-3}  |\na \La^{-s}  P_{ k} \La^s f(x)|\\
&=  2^{j(s-1)}\sum_{k\leq j-3} 2^{k(1-s)}|\varrho_k\ast ( \La^s f)(x)| \lec  2^{j(s-1)}\sum_{k\leq j-3} 2^{k(1-s)} \mathcal{M}(\La^s f;\varrho ) (x) \\
&\lec \mathcal{M}(\La^s f)(x)
\end{align*}
where $\varrho_k (z) \coloneqq 2^{-3k} \varrho (2^{-k}x)$, $\widehat{\varrho  }(\xi ) \coloneqq  \xi |\xi |^{-s} (\rho (\xi)-\rho (2\xi))$, and we have used \eqref{smooth_max_bdd_by_max} in the last line.
It then follows that $\norm{(2^{j(s-1)} \na f_{\leq j-3})_{l^\infty}}_2 \lec \norm{\mathcal{M}(\La^s f)}_2 \lec\norm{\La^s f}_2$, as required. \\
In the case of the integer $n\geq 2$, we can obtain the same estimate \eqref{low.high} simply by modifying the definition of $\widehat{\psi}$ to include the additional factor of $(-i\xi |\xi|^{-1})^{\otimes n}$. Then we still have $ \psi \in \mathcal{S}$, and so the rest of the calculation remains the same.  \end{proof}

\section{Local study}\label{sec_local_study}
In this section, we prove Theorem~\ref{thm_local_reg_intro}, which gives a local regularity condition in terms of scale-optimal quantities for a suitable weak solution of \eqref{fNS} with zero $\psi$-mean. For the reader's convenience, we restate Theorem~\ref{thm_local_reg_intro}. 
\begin{thm}[Local regularity]\label{thm_local_reg}Let $s\in (\frac34,1)$. There exists $\ep=\ep (s, \psi)>0$ such that if $(u,p )$ is a suitable weak solution of \eqref{fNS} such that $\int u(x,t)\psi(x) \d x =0$ for all $t\in (-5^{2s},0) $ and
\begin{equation}\begin{split}
\label{smallness}
&\int_{Q_5^*} y^a |\bna u^*|^2 \d x\,\d y\, \d t  + \int_{-5^{2s}}^0 \int_{B_5}\int_{B_5} \frac{|u(x,t)-u(y,t)|^2}{|x-y|^{3+2s}}\d x \,\d y\, \d t\\
&\hspace{2cm}+ \int_{Q_5}   
\left( (\mathcal{M} |\La^s u |^{\frac 2{1+\de}})^{1+\de} +  |\La^{2s-1}\na p|  +|\mathscr{M}_4 (\La^{2s-1}\na p)|\right) \,\d x\, \d t \leq \ep,
\end{split}\end{equation}
where $\delta \coloneqq \frac{2s}{6-s}$, then 
\[
\sup_{Q_\frac12} (|u|+ |\na u|+|\nabla^2  u|)  \leq C_0
\]
for some positive constant $C_0=C_0(s)$.
\end{thm}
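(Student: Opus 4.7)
\smallskip

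\noindent\textbf{Proof plan.} The first observation is a reduction: by Theorem~\ref{thm_bootstrapping_intro}, once we establish $\|u\|_{L^\infty(Q_1)}\leq C$ and verify that the other quantities on the left-hand side of \eqref{local_terms_intro} are finite on the appropriate cylinders, the bounds on $|\nabla u|$ and $|\nabla^2 u|$ follow automatically on $Q_{1/2}$. Most of the auxiliary terms in \eqref{local_terms_intro} are already hypotheses in \eqref{smallness} or follow from Proposition~\ref{prop_integrability_of_pressure} and the Calder\'on--Zygmund estimate applied to the formula \eqref{pressure_formula}. The genuine content is therefore the single statement $\|u\|_{L^\infty(Q_1)}\lesssim 1$.

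The strategy to bound $u$ pointwise is a De Giorgi/Vasseur-type iteration based on the local energy inequality \eqref{LEI}. For a decreasing sequence of radii $r_k\downarrow 1$ and truncation levels $C_k\uparrow C_\infty$, I would apply \eqref{LEI} with a cut-off $\xi_k$ localised in $Q_{r_k}^*$ (which extends a spatial cut-off via a standard tail function in the $y$-variable) and with $|u|^2$ replaced effectively by $(|u|^2-C_k^2)_+$. The dissipative gradient term on the left-hand side combined with the trace/extension Sobolev embedding will produce a gain of integrability: using the Caffarelli--Silvestre Poincar\'e inequality (Lemma~\ref{lem_Poin}), whose applicability is guaranteed precisely by the zero $\psi$-mean hypothesis, one obtains control of $u$ in $L^{6/(3-2s)}(B_{r_k})$, and since $s>3/4$ this embeds into $L^4$, which is exactly what is required to make the cubic nonlinearity in \eqref{LEI} iterable. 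A standard nonlinear recurrence for the energy level-set quantities $A_k := \sup_t \int U_k^2\xi_k|_{y=0}+\int y^a|\bar\nabla U_k^*|^2\xi_k$ (where $U_k$ is the appropriate truncation) then yields $A_k\to 0$, giving the $L^\infty$ bound.

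The delicate step is controlling the three non-standard contributions on the right-hand side of \eqref{LEI}: the non-local extension term $\int |u^*|^2\overline{\mathrm{div}}(y^a\bar\nabla\xi)$, the tail piece $\overline{C}_s\lim_{y\to 0^+}y^a\partial_y\xi$, and the pressure term $\int u\cdot\nabla\xi|_{y=0}\,(2p+|u|^2)$. The first two are absorbed by writing $u^* = (u^*-(u^*)_{B^*})+(u^*)_{B^*}$ and applying the weighted Poincar\'e inequality \eqref{weighted_poin}, using that $\xi$ can be chosen so that $\overline{\mathrm{div}}(y^a\bar\nabla\xi)$ and $\lim_{y\to 0^+}y^a\partial_y\xi$ have bounded support or controlled tails, together with the smallness of $\int_{-5^{2s}}^0\int_{B_5}\int_{B_5}|u(x)-u(y)|^2/|x-y|^{3+2s}$ to handle the non-local interaction between $Q_{r_k}$ and its complement. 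For the pressure, I would decompose $\nabla p = (\nabla p - (\nabla p)_\psi) + (\nabla p)_\psi$; the constant-in-$x$ mean can be absorbed against $u$ since $\int u\psi=0$, and the fluctuation is estimated via the Poincar\'e-type inequality \eqref{poin.p_intro} (Lemma~\ref{lem_poincare_for_p}) whose right-hand side involves exactly the scale-optimal quantities $\|\Lambda^{2s-1}\nabla p\|_1$ and $\|\mathscr{M}_4(\Lambda^{2s-1}\nabla p)\|_1$ appearing in \eqref{smallness}. This yields $p\in L^1_tL^2_x(B_{5/4})$ smallness.

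The main obstacle will be the pressure term, since one needs an estimate whose right-hand side uses only scale-optimal objects (otherwise the resulting epsilon-regularity would not have the correct scaling to feed into Theorem~\ref{thm_main}). This is precisely why the grand maximal function enters the hypothesis: the Hardy-type Poincar\'e estimate for $p$ requires the stronger $\mathscr{M}_4$ bound to make global integrability of the tail compatible with local smallness. Once the pressure, extension, and cubic terms are all controlled by expressions of the form $\varepsilon+A_k^\theta$ with $\theta>1$, the De Giorgi iteration closes, yielding $\|u\|_{L^\infty(Q_1)}\leq C(s)$, and Theorem~\ref{thm_bootstrapping_intro} upgrades this to the claimed bounds on $\nabla u$ and $\nabla^2 u$.
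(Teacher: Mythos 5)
Your overall architecture matches the paper's: reduce to $\|u\|_{L^\infty(Q_1)}\lesssim 1$, then invoke Theorem~\ref{thm_bootstrapping_intro} to upgrade to derivatives; and the two key ingredients you identify for controlling the right-hand side of the local energy inequality --- the extension Poincar\'e inequality of Lemma~\ref{lem_Poin} (made applicable by the zero $\psi$-mean) for the cubic term, and the Poincar\'e-type inequality \eqref{poin.p_intro} with the grand maximal function for the pressure --- are exactly the ones the paper uses.

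Where you diverge is in how the $L^\infty$ bound itself is obtained. You propose to run a De Giorgi/Vasseur iteration from scratch, with truncations $(|u|^2-C_k^2)_+$ and a nonlinear recurrence $A_{k+1}\lesssim A_k^\theta$. The paper instead treats the iteration as a black box: it quotes the Tang--Yu $\epsilon$-regularity result (Proposition~\ref{prop_tang_yu}, i.e.\ Proposition~2.9 of \cite{TY15}), which already contains the De Giorgi machinery for the nonlocal dissipation, and the entire work of Proposition~\ref{prop:red} is to verify its hypotheses --- smallness of $\sup_t\|u\|_{L^2}$, $\|u\|_{L^3}$ and $\|p\|_{L^q_tL^1_x}$ --- from the scale-optimal quantities in \eqref{smallness}. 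This is done with the \emph{untruncated} local energy inequality and an integral Gr\"onwall argument, which is considerably lighter. Your route is viable in principle but hides a real difficulty that the paper avoids: the local energy inequality \eqref{LEI} is stated for $|u|^2$ only, and deriving its analogue for truncations is nontrivial for fractional dissipation (the truncation does not commute with $(-\Delta)^s$; one must work at the level of the extension $u^*$ and use convexity, which is precisely the technical core of \cite{TY15}). If you go your way, you are essentially re-proving Tang--Yu with modified hypotheses rather than building on it. One further small imprecision: the constant vector $(\nabla p)_\psi$ cannot be ``absorbed against $u$ using $\int u\psi=0$'' inside the term $\int p\,u\cdot\nabla\xi^2$, since the weight there is $\nabla\xi^2$, not $\psi$; the paper instead normalizes $\int p\psi=0$ and bounds $|(\nabla p)_\psi|$ by testing the equation against $\psi$, which yields $|(\nabla p)_\psi|\lesssim \|y^{a/2}\bna u^*\|_{L^2(B_2^*)}+\|y^{a/2}\bna u^*\|^2_{L^2(B_2^*)}$. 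This is easily repaired but should be stated correctly.
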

In the statement, $\ep>0$ is determined by Proposition~\ref{prop:red}.

\begin{rem} The conclusion of the theorem could be easily extended to the boundedness of $|\na^k u|$ on $Q_{\frac12}$ for any $k\geq 0$, see the comment below Theorem~\ref{thm_bootstrapping_intro}. Indeed, the proof of Theorem~\ref{thm_local_reg} is based on a bootstrapping argument that could be continued for higher derivatives. However, we only cover the case $k=2$ for the purposes of our main a priori bound, Theorem~\ref{thm_main}. 
\end{rem}
\subsection{A Poincar\'e-type inequality for the pressure function}\label{sec_poincare_for_p}

In this subsection, we discuss a Poincar\'e-type inequality, which will be used to estimate the pressure (applied with $g:=\na p$) in the proof of Theorem~\ref{thm_local_reg}. 
\begin{lemma}[Poincar\'e-type inequality]\label{lem_poincare_for_p} For $s\in (\frac34,1)$, $\psi\in C_c^\infty(B_1)$ satisfying $\int \psi (x)\d x =1$, we have  
\[
\norm{g - (g)_\psi}_{L^{\frac65}(B_\frac54)} 
\lec_{s,\psi}  \norm{\La^{2s-1}g}_{L^1(B_5)} 
+ \norm{ \mathcal{M}(\La^{2s-1} g ; \eta ) }_{L^1(B_3)} 
\]
for some $\eta\in C_c^\infty(\R^3)$, where $(g)_\psi = \int g\psi \d y$. In particular
\begin{equation}\label{poin.p}
\norm{g - (g)_\psi}_{L^{\frac65}(B_\frac54)} 
\lec_{s,\psi} \norm{\La^{2s-1}g}_{L^1(B_5)} 
+ \norm{ \mathscr{M}_4(\La^{2s-1}g) }_{L^1(B_3)} 
\end{equation}
\end{lemma}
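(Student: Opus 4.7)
Set $h := \Lambda^{2s-1} g$ and note that up to an additive constant (absorbed by $(g)_\psi$) one may invert the fractional Laplacian via the Riesz potential of order $2s-1 \in (1/2,1)$:
\[
g(x) - (g)_\psi = \int \Bigl[K(x-y) - \int \psi(z) K(z-y)\,\d z\Bigr] h(y)\,\d y, \qquad K(u) := c_s |u|^{-(4-2s)}.
\]
The plan is to split $h = h\mathbf{1}_{B_5} + h\mathbf{1}_{B_5^c}$, producing a local piece $g_1$ and a nonlocal piece $g_2$, and to handle each separately.

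\textbf{Local piece.} By the weak-type Hardy--Littlewood--Sobolev estimate, the Riesz potential $I_{2s-1} = \Lambda^{-(2s-1)}$ maps $L^1 \to L^{3/(4-2s),\infty}$. Crucially, the hypothesis $s > 3/4$ gives $3/(4-2s) > 6/5$, so on the bounded set $B_{5/4}$,
\[
\|I_{2s-1}(h\mathbf{1}_{B_5})\|_{L^{6/5}(B_{5/4})} \lesssim \|I_{2s-1}(h\mathbf{1}_{B_5})\|_{L^{3/(4-2s),\infty}(B_{5/4})} \lesssim \|h\|_{L^1(B_5)}.
\]
Its $\psi$-average is a constant bounded by the same quantity. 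This handles $g_1 - (g_1)_\psi$.

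\textbf{Nonlocal piece.} Using $\int\psi = 1$, the kernel becomes
\[
\tilde K(x,y) = \int \psi(z)\bigl[K(x-y) - K(z-y)\bigr]\,\d z.
\]
For $x \in B_{5/4}$, $z \in \supp\psi \subset B_1$, and $|y|\geq 5$, the mean value theorem and $|x-z|\lesssim 1$ yield $|\tilde K(x,y)| \lesssim |y|^{-(5-2s)}$, together with analogous decay for all $y$-derivatives. For any chosen $z_0 \in B_3$, write
\[
g_2(x) - (g_2)_\psi = (F_{x,z_0} * h)(z_0), \qquad F_{x,z_0}(u) := \tilde K(x, z_0 - u)\mathbf{1}_{|z_0 - u|\geq 5},
\]
and dyadically decompose $F_{x,z_0} = \sum_{j\geq j_0} F_{x,z_0}^{(j)}$ with $F_{x,z_0}^{(j)}$ supported where $|z_0 - u|\sim 2^j$. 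A rescaling $u = 2^j v$ exhibits each piece as
\[
F_{x,z_0}^{(j)}(u) = 2^{-j(2-2s)} (\Psi_{x,z_0}^{(j)})_{2^j}(u),
\]
where $\{\Psi_{x,z_0}^{(j)}\}$ lies in a bounded family in $C_c^\infty(\{1/4 \leq |v| \leq 4\})$, with $C^k$ bounds uniform in $x, z_0, j$. Consequently each rescaled $\Psi_{x,z_0}^{(j)}$ belongs (up to a fixed constant) to the admissible class defining $\mathscr{M}_4$, hence
\[
|F_{x,z_0}^{(j)} * h(z_0)| \lesssim 2^{-j(2-2s)} \mathscr{M}_4(h)(z_0),
\]
and summing over $j$ (which converges since $s<1$) yields $|g_2(x) - (g_2)_\psi| \lesssim \mathscr{M}_4(h)(z_0)$ for any $z_0 \in B_3$. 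Averaging in $z_0$ over $B_3$ gives the $\mathscr{M}_4$-form \eqref{poin.p}, and combining with the local estimate completes this version.

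\textbf{Upgrading to $\mathcal{M}(\cdot;\eta)$.} Since the family $\{\Psi_{x,z_0}^{(j)}\}$ is uniformly bounded and supported in a fixed compact annulus, it may be dominated (in the sense needed for the convolution bound, for all scales $t=2^j$) by a single $\eta \in C_c^\infty(\R^3)$, chosen large enough to majorize the family in the relevant $C^k$ seminorms. This refinement yields the first stated form of the lemma, and the second (with $\mathscr{M}_4$) follows at once since $\mathcal{M}(\cdot;\eta) \lesssim_\eta \mathscr{M}_4$.

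\textbf{Main obstacle.} The sharp step is the nonlocal analysis: establishing that, after the $\psi$-subtraction creates sufficient cancellation, every dyadic slab of the kernel reorganizes into a uniform family of Schwartz bumps at scale $2^j$, so that the singular integral can be realized as a convolution at a freely chosen basepoint $z_0 \in B_3$. Getting the single $\eta \in C_c^\infty$ (rather than merely $\mathscr{M}_4$) is the most delicate point, and forces careful uniformity of the $C^k$ bounds on $\Psi_{x,z_0}^{(j)}$ across all parameters.
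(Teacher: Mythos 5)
Your overall architecture is the same as the paper's: write $g-(g)_\psi$ through the Riesz potential $\Lambda^{1-2s}$, split the source $h=\Lambda^{2s-1}g$ into a near part (handled by the integrability of $|\cdot|^{-(4-2s)}$ near the origin, which is exactly where $s>\tfrac34$ enters) and a far part, and dyadically decompose the far-field kernel difference into bumps at scale $2^j$ with a convergent weight $2^{-j(2-2s)}$. Your one genuine organizational novelty is the ``free basepoint'' identity $g_2(x)-(g_2)_\psi=(F_{x,z_0}\ast h)(z_0)$ for arbitrary $z_0\in B_3$, which lets you bound the far part pointwise in $x$ by $\mathscr{M}_4(h)(z_0)$ and then average in $z_0$; this neatly bypasses the paper's change-of-variables argument in $\theta$ and $y$ (their $I_{211}$, $I_{212}$) needed to convert an evaluation at $\theta x+(1-\theta)y$ into an $L^1(B_3)$ norm. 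For the $\mathscr{M}_4$-version \eqref{poin.p} this works, since $\mathscr{M}_4$ takes a supremum over the whole admissible family and your $\Psi^{(j)}_{x,z_0}$ have uniform $C^5$ and support bounds.

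There are, however, two gaps. The serious one is the ``upgrading to $\mathcal{M}(\cdot;\eta)$'' step. Your rescaled bumps $\Psi^{(j)}_{x,z_0}$ genuinely depend on $x$ and $z_0$ (the $x$-dependence of $\widetilde K(x,\cdot)$ has been pushed into the kernel itself), and dominating a family of \emph{signed, varying} kernels pointwise by a single $\eta$ does not give $|(\Psi^{(j)}_{x,z_0})_{2^j}\ast h(z_0)|\lec |\eta_{2^j}\ast h(z_0)|$ unless $h\geq 0$; here $h=\La^{2s-1}g$ has no sign. So the first, stronger assertion of the lemma (a single fixed $\eta\in C_c^\infty$) does not follow from your argument as written. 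The paper gets it because in their decomposition every dyadic bump is an \emph{exact rescaling of one fixed} $\eta(z)=\overline\eta(z)z|z|^{-(6-2s)}$ — all the $x,y,\theta$ dependence sits in the evaluation point, not in the bump — so $\sup_j|\eta_j\ast h|\leq \mathcal{M}(h;\eta)$ is immediate. The second, minor issue: the sharp cutoff $\mathbf{1}_{|y|\geq 5}$ makes your lowest dyadic shell non-smooth, so it is not an admissible bump for $\mathscr{M}_4$, and it involves $h$ on an annulus around $|y|=5$ that is not contained in $B_5$, so it cannot be absorbed into $\norm{h}_{L^1(B_5)}$ either. This is fixable by splitting at a smaller radius (the paper splits at $|z|=\tfrac32$ and dumps the transitional shells $-1\leq j\leq 2$ crudely into $\norm{\La^{2s-1}g}_{L^1(B_5)}$), but as stated your decomposition leaves that shell uncontrolled.
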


\begin{rem}
The oscillation $g - (g)_\psi$ can be also controlled by the Hardy-Littlewood maximal function of $\La^{2s-2}\div g$, 
\[
\| g - (g)_\psi \|_{L^{6/5} (B_1)} \lec_{s,\psi} \| \mathcal{M} (\Lambda^{2s-1} g) \|_{L^1 (B_1)},
\]
which can be proved directly using the approach from Lemma~3 in \cite{vasseur} (and also follows directly from the lemma above and \eqref{smooth_max_bdd_by_max}). However, the maximal operator $\mathcal{M}:\mathcal{H}^1(\R^3)\to L^1(\R^3)$ is not bounded and hence we have no global bound for $\norm{\mathcal{M} (-\Delta)^s p}_1$. To get around this issue, we introduce the grand maximal function in the above lemma. 
\end{rem}
\begin{proof}
Using the fact that $\int\psi=1$ and the representation 
\begin{align*}
g(x)= \La^{1-2s} \La^{2s-1}g(x) \sim \int \frac { \La^{2s-1}g(z)}{|x-z|^{4-2s}}\, \d z,
\end{align*}
we decompose the oscillation into two parts, 
\begin{align*}
g(x)-(g)_\psi 
&= \int (g(x)-g(y)) \psi(y)\, \d y 
\sim \iint \left( \frac 1{|x-z|^{4-2s}} - \frac 1{|y-z|^{4-2s}} \right)\La^{2s-1} g (z) \psi (y) \,\d z\,\d y \\
&= \iint_{|z|\leq \frac32 }  \left( \frac 1{|x-z|^{4-2s}} - \frac 1{|y-z|^{4-2s}} \right)\La^{2s-1} g (z) \psi (y)\, \d z\,\d y \\
&\qquad +\iint_{|z|>\frac32}\int_0^1 \frac{(\th x +(1-\th) y - z)\cdot (x-y)}{|\th x +(1-\th) y - z|^{6-2s}} \,\d \th \,\La^{2s-1} g (z) \psi (y)\,\d z\, \d y \\
&=: I_1(x) + I_2(x).
\end{align*}
To estimate $I_1(x)$, we have
\begin{align*}
|I_1(x)|
&\leq \int_{|z|\leq \frac32} \frac {|\La^{2s-1}g(z)|}{|x-z|^{4-2s}} \d z + \iint_{|z|\leq \frac32} \frac {|\La^{2s-1}g(z)|}{|y-z|^{4-2s}} \d z |\psi(y)|\d y
\end{align*}
and we apply Young's inequality for convolutions to get
\begin{align*}
\norm{I_1}_{L^\frac65(B_\frac54)} 
\lec \norm{\La^{2s-1} g}_{L^1(B_2)} \left( \norm{|\cdot|^{-(4-2s)}1_{B_3}}_{\frac65} + \norm{|\cdot|^{-(4-2s)}1_{B_3}}_{1} \right) \lec \norm{\La^{2s-1} g}_{L^1(B_2)},
\end{align*}
where $1_E$ denotes the characteristic function of a set $E$, and we used the fact that $|x-z|\leq |x|+|z|<3$ (and similarly $|y-z|\leq 3$) in the first inequality, as well as the fact that $s>\frac34$ in the last inequality. 

As for $I_2$, we let $\chi \in C_c^\infty (B_\frac78)$ be such that $\chi=1$ on $B_{\frac34}$, we set $\overline{\eta}(z)\coloneqq \chi(z) - \chi (2z )$ and 
\[
\eta_j (z)\coloneqq 2^{j(2-2s) }\frac{z}{|z|^{6-2s}} \overline{\eta }(2^{-j }z)\qquad \text{ for }j\in \ZZ.\]
By construction $\norm{ \eta_j}_1 = \norm{|\cdot|^{-6+2s}\bar{\eta}}_1= c$, $\supp \, \eta_j \subset B_{\frac78 \cdot 2^{j} }\setminus B_{\frac 38 2^{j} }$ for every $j\in \ZZ$. Moreover
\eqnb\label{defn.eta}
\frac{z}{|z|^{6-2s}}
= \sum_{j\geq -1 } 2^{-j(2-2s)}  \eta_j(z)\hspace{1cm}\text{ for }|z|\geq \frac14,
\eqne
since $\eta_j(z)=0$ for such $z$ and $j< -1$. Since $|\theta x+(1-\theta )y -z|\geq |z| - \theta |x| - (1-\theta )|y| \geq \frac32 -\frac54 =\frac14$ in the definition of $I_2$ for every $x\in B_\frac54$, we have
\begin{align*}
I_2 (x)
&= \sum_{j\geq -1} 2^{-j(2-2s)} \int_0^1\iint_{|z|>\frac32} 
\eta_j(\th x +(1-\th) y - z) \La^{2s-1} g(z) \d z \cdot (x-y) \psi(y) \,\d y\, \d\th\\
&= \sum_{j\geq 3} 2^{-j(2-2s)} \int_0^1\iint 
\eta_j(\th x +(1-\th) y - z) \La^{2s-1} g(z) \d z \cdot (x-y) \psi(y) \, \d y \, \d\th\\
&\quad + \sum_{j=-1}^2 
2^{-j(2-2s)} \int_0^1\iint_{|z|>\frac32} 
\eta_j(\th x +(1-\th) y - z) \La^{2s-1} g(z) \d z \cdot (x-y) \psi(y) \, \d y\, \d\th\\
&= I_{21}(x)  + I_{22}(x).
\end{align*}
This decomposition allows us to drop ``$|z|>3/2$'' in $I_{21}(x)$ because we can assume $ |\th x + (1-\th) y -z|\geq 3$ (as $j\geq 3$) and hence
$|z|\geq |\th x + (1-\th) y -z| - \th |x| -(1-\th)|y| >3-\frac54> \frac32$.

We consider $I_{22}$ first. Since $|\th x + (1-\th) y -z|\leq \frac72$ (as $j\leq 2$) we have $|z|\leq |\th x + (1-\th) y -z| + \theta |x|+(1-\theta )|y|\leq 5$ whenever $|x|\leq \frac54$, $|y|\leq 1$, $\th\in [0,1]$, and so
\begin{align*}
|I_{22}(x)|
&\lec \sum_{j=-1}^2 \int_0^1 \int_{|y|\leq1}\int 
|\eta_j(\th x +(1-\th) y -z)| |\La^{2s-1}g(z)| 1_{B_5}(z)\,\d z\, \d y\,\d \th
\lec \| \La^{2s-1}g \|_{L^1(B_5)}
\end{align*} 
for every $x\in B_\frac54$, where we used the fact that $\sum_{j=-1}^2|\eta_j |\lec 1$ in the last inequality. Therefore 
\[
\| I_{22} \|_{L^1(B_\frac54)} \lec \| \La^{2s-1}g \|_{L^1(B_5)}.
\]

As for $I_{21}$, we write $G\coloneqq \sup_{j\geq 3} |\eta_j\ast \La^{2s-1}g| $ (for brevity) to get
\begin{align*}
|I_{21}(x)|
&= \left| \sum_{j\geq 3} 2^{-j(2-2s)}
\int_0^1
\int (\eta_j\ast \La^{2s-1} g )(\th x+ (1-\th)y) \cdot (x-y) \psi(y) \, \d y\, \d \th \right| \\
&\lec \int_0^1
\int_{B_{1}} |G(\th x+ (1-\th)y) |\, \d y\, \d \th  
= \left( \int_0^\frac12 +\int_\frac12^1 \right) \int_{B_{1}}|G(\th x+ (1-\th)y) |\, \d y\, \d \th  \\
&=:  I_{211}(x) + I_{212}(x),
\end{align*}
for $x\in B_\frac54$, where we used the facts that $s<1$, $|\psi|\lec 1$ and $|x-y|\leq |x|+|y|\lec 1$ in the second line. 

The estimate for $I_{211}$ follows from the change of variable $ y\mapsto\theta x + (1-\th)y=: y' $,
\begin{align}\label{I211.est}
\norm{I_{211}}_{L^1(B_\frac54)}&= \int_{B_1}\int_0^\frac12 
\int_{\theta x+(1-\th )B_1} |G(y')|\d y' \frac{\d \th}{(1-\th)^3} \d x
\lec  \| G \|_{L^1 (B_{2})} 
\end{align}

On the other hand, $I_{212}$ can be estimated by
\begin{align*}
\norm{I_{212}}_{L^\frac65(B_\frac54)}^{\frac65}
&= \int_{B_\frac54} \left[ \int_\frac12^1 \int_{B_1} |G(\theta x + (1-\theta )y)| \,\d y\,\d \theta \right]^{\frac65} \d x \\
&\lec \int_\frac12^1 \int_{B_\frac54} \left[ \int_{B_1} |G(\theta x - (1-\theta )y)| \,\d y \right]^{\frac65} \d x \,\d \theta \\
&= \int_\frac12^1 \int_{B_\frac54} \left[ \int  1_{B_{1-\th}}(y')\, 1_{B_3}(x'-y')|G(x' - y')| \frac{\d y'}{(1-\theta)^3} \right]^{\frac65} \d x' \frac{\d \theta }{\theta^3} \\
&\leq \int_\frac12^1 \left\| |1_{B_3}\,G|\ast \frac{1_{B_{1-\th}}}{(1-\theta)^3}\right\|_{6/5}^{6/5}\frac{\d \theta }{\theta^3}\\
&\lec \| G \|_{L^1(B_3)}^{\frac65}  \int_\frac12^1 
\left\| \frac{1_{B_{1-\th}}}{(1-\theta)^3}\right\|_{L^\frac65_{x'}}^\frac65 \d \theta \\
&\sim  \| G \|_{L^1(B_3)}^{\frac65}  \int_\frac12^1   (1-\theta )^{-\frac35}    \d \theta  \\
&\lec  \| G \|_{L^1(B_3)}^{\frac65}.
\end{align*}
where we applied the changes of variables $x\mapsto\theta x=: x'$, $y\mapsto(1-\theta )y=: y'$ and the fact that $|x'-y'|\leq |x-y| \leq |x|+|y|<3 $ in the third line, and we used Young's inequality for convolutions in the fourth line. Combining with \eqref{I211.est}, we obtain
\[
\| I_{21} \|_{L^{\frac65}(B_1)} \lec \| G \|_{L^1(B_3)} .
\]
Finally letting $\eta(z) \coloneqq \overline{\eta}(z)z/|z|^{6-2s}$ we see that $\eta_j(z) = 2^{-3j} \eta (2^{-j}z)$, so that
\[
G=\sup_{j\geq 3} |\eta_j\ast \La^{2s-1}g| = \sup_{j\geq 3} |2^{-3j}\eta(2^{-j}\,\cdot ) \ast \La^{2s-1}g| \leq \mathcal{M} (\La^{2s-1} g; \eta ),
\]
which (together with the above estimates on $I_1$ and $I_{22}$) completes the proof. Note that \eqref{poin.p} easily follows from the pointwise estimate $ \mathcal{M} (\La^{2s-1} g; \eta )(x) \lec \mathscr{M}_4 (\La^{2s-1} g)(x)$ at any point $x\in \R^3$.
\end{proof}

\subsection{$L^\infty$-boundness}\label{sec_boundedness_of_u}

In this subsection, we obtain the local boundedness of $u$ under the assumptions of Theorem~\ref{thm_local_reg}. We first recall an $\ep$-regularity result of \cite[Proposition 2.9]{TY15}.

\begin{prop}\label{prop_tang_yu} There exists $\ep_0=\ep_0(s)>0$ such that if a suitable weak solution $(u,p)$ to \eqref{fNS} satisfies
\eqnb\label{small}
\begin{split}
\sup_{t\in \left(-\left(\frac{10}9\right)^{2s},0\right)} \int_{B_{\frac{10}9}}& |u(x,t)|^2 \d x
+\int_{Q_{\frac{10}9}^*} y^a |\bna u^*|^2 \d X \d t\\ 
&+ \left(\int_{Q_{\frac{10}9}} |u|^3\right)^\frac23 
+ \left(\int_{-\left(\frac{10}9\right)^{2s}}^0\left(\int_{B_{\frac{10}9}} |p(x,t)| \d x\right)^{q} \d t\right)^\frac2q \leq \ep_0,
\end{split}\eqne
for some $q\in [1,2)$, then the local boundedness of $u$ follows,
\[
 \| u \|_{L^\infty (Q_1)} \leq 1.
\]
\end{prop}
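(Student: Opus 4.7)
The plan is to follow the Caffarelli-Kohn-Nirenberg-style iterative $\varepsilon$-regularity scheme, adapted to the fractional dissipation via the Caffarelli-Silvestre extension. The strategy is to introduce dimensionless quantities at scale $r$ that are invariant (or scale-controlled) under the rescaling \eqref{scale.invarience}, derive a closed iteration among them from the local energy inequality \eqref{LEI} and the pressure equation, and bootstrap this iteration to conclude that these quantities become arbitrarily small on all sufficiently fine scales, which by a standard Campanato-type argument yields local boundedness of $u$.

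Concretely, I would set, for $r\leq \tfrac{10}{9}$,
\begin{align*}
A(r)&\coloneqq \sup_{t\in (-r^{2s},0)} \frac{1}{r^{1-2s}}\int_{B_r} |u(x,t)|^2 \,\d x, \quad
E(r)\coloneqq \frac{1}{r^{1-2s}}\int_{Q_r^*} y^a|\bna u^*|^2 \,\d X\,\d t, \\
C(r)&\coloneqq \frac{1}{r^{5-4s}}\int_{Q_r}|u|^3 \,\d x\,\d t,\qquad
D(r)\coloneqq \frac{1}{r^{(5-4s)q/2}}\int_{-r^{2s}}^0\Big(\int_{B_r}|p|\,\d x\Big)^q \d t,
\end{align*}
each chosen to respect the energy scaling. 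First, by H\"older, the Sobolev embedding $\dot H^s(\R^3)\hookrightarrow L^{6/(3-2s)}$, the extension identity \eqref{ext.norm}, and the Poincar\'e inequality \eqref{prePoin}, one obtains an interpolation
\eqnb\label{plan-interp}
C(\rho)\lec \Big(\frac{r}{\rho}\Big)^{\alpha}\big(A(r)^{\theta}E(r)^{1-\theta}+A(r)^{\theta'}\big)
\eqne
for appropriate $\alpha,\theta,\theta'>0$. Second, decomposing the pressure as $p=p_1+p_2$ with $p_1$ produced by the localized source $\chi_{B_r} u\otimes u$ (estimated by Calder\'on-Zygmund) and $p_2$ harmonic on $B_{r/2}$ (for which interior mean-value inequalities give polynomial decay under rescaling), I would derive
\eqnb\label{plan-pressure}
D(\rho)\lec \Big(\frac{r}{\rho}\Big)^{\beta_1} C(r)\;+\;\Big(\frac{\rho}{r}\Big)^{\beta_2} D(r)
\eqne
for any $\rho\leq r$ and some $\beta_1,\beta_2>0$. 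Third, testing the local energy inequality \eqref{LEI} with a cut-off $\xi$ that is supported in $Q_r^*$ and identically one on $Q_{\rho}^*$, and using the weighted Poincar\'e inequality \eqref{weighted_poin} to convert the term $\int|u^*|^2\overline{\mathrm{div}}(y^a\bna \xi)$ into a controllable piece, yields the central iteration
\eqnb\label{plan-main}
A(\rho)+E(\rho)\lec \Big(\frac{r}{\rho}\Big)^{\gamma}\big(C(r)+C(r)^{2/3}D(r)^{1/3}+\text{tail}\big),
\eqne
where the tail term collects the contribution of $u^*$ on $B_r^*\setminus B_\rho^*$ and can be absorbed under the smallness assumption. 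Composing \eqref{plan-interp}, \eqref{plan-pressure}, \eqref{plan-main} and choosing a suitable ratio $\rho/r=\theta\ll 1$, I would produce a contraction
\[
\Phi(\theta r)\leq \tfrac12 \Phi(r),\qquad \Phi\coloneqq A+E+C+D^{1/q'},
\]
valid whenever $\Phi(r)<\varepsilon_0$, by choosing $\varepsilon_0=\varepsilon_0(s)$ small enough. Iterating this from scale $r=1$ gives decay $\Phi(\theta^k)\leq 2^{-k}\varepsilon_0$ for all $k$, which by a Morrey/Campanato argument based on $C(r)$ yields H\"older continuity and in particular $\|u\|_{L^\infty(Q_1)}\leq 1$.

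The main technical obstacle is the nonlocal interaction between the pressure and the fractional dissipation, which manifests in two ways: the right-hand side of \eqref{LEI} contains the extension integral $\int|u^*|^2\overline{\mathrm{div}}(y^a\bna \xi)$ whose dependence on the tail of $u^*$ must be absorbed into $E(r)$ via \eqref{weighted_poin}, and the pressure decomposition \eqref{plan-pressure} requires a clean separation of local and far-field contributions that remains compatible with the subcritical scaling for $s\in(3/4,1)$. Both are delicate but manageable because $s>3/4$ ensures that $u\in L^3_{t,x}$ and $p\in L^{3/2}_{t,x}$ are critical integrability thresholds that make the nonlinear and pressure contributions to \eqref{LEI} well-defined and homogeneous of the correct energy-scaling weight.
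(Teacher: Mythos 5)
You should first be aware that the paper does not prove Proposition~\ref{prop_tang_yu} at all: it is imported verbatim from \cite{TY15} (Proposition 2.9 there), and the authors' only contribution is the remark that the admissible range of $q$ extends to $[1,2)$. The proof in \cite{TY15} is a De Giorgi--type level-set iteration in the spirit of Vasseur's proof of the Caffarelli--Kohn--Nirenberg theorem, run on truncations of $|u|$ through the Caffarelli--Silvestre extension; this is precisely why the pressure enters only through the weak norm $L^q_tL^1_x$ with $q<2$. Your proposal is a CKN-style decay iteration of scaled quantities, which is a genuinely different (and in principle viable) route, but as written it has two concrete problems. The first is that your quantities do not ``respect the energy scaling'' as claimed: under \eqref{scale.invarience} one has $\sup_t\int_{B_\lambda}|u|^2\mapsto \lambda^{5-4s}(\cdot)$ and $\int_{Q_\lambda}|u|^3\mapsto\lambda^{6-4s}(\cdot)$, so the scale-invariant normalizations are $r^{-(5-4s)}\sup_t\int_{B_r}|u|^2$, $r^{-(5-4s)}\int_{Q_r^*}y^a|\bna u^*|^2$ and $r^{-(6-4s)}\int_{Q_r}|u|^3$, not $r^{-(1-2s)}$ and $r^{-(5-4s)}$ (at $s=1$ your $A(r)$ is $r\,\sup_t\int_{B_r}|u|^2$ rather than CKN's $r^{-1}\sup_t\int_{B_r}|u|^2$). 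Since the whole scheme lives or dies by the exponents $\alpha,\beta_i,\gamma$ balancing, this is not cosmetic.

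The more serious gap is the endgame, which is the actual crux of such a proof and which your sketch replaces with the phrase ``a Morrey/Campanato argument based on $C(r)$.'' A uniform contraction $\Phi(\theta^k)\le 2^{-k}\ep_0$ of scale-\emph{critical} quantities yields $\int_{Q_r(x_0,t_0)}|u|^3\lec \ep_0\, r^{6-4s+\mu}$ with $\mu=\log 2/\log(1/\theta)$, whereas a pointwise bound via Lebesgue differentiation needs $\int_{Q_r}|u|^3\lec r^{3+2s}$ (the parabolic volume), i.e.\ a decay rate $\mu\ge 6s-3\in(\tfrac32,3)$ for $s\in(\tfrac34,1)$. This forces $\theta\ge 2^{-1/(6s-3)}$, i.e.\ $\theta$ bounded well away from the small ratios one normally needs to absorb the constants $(r/\rho)^{\alpha},(r/\rho)^{\gamma}$ in your interpolation and energy steps, and the linear term $(\rho/r)^{\beta_2}D(r)$ from the harmonic part of the pressure is then not contracted by a factor $\tfrac12$ unless $\beta_2$ is quantified and large. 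The standard repairs are either to propagate a \emph{supercritical} bound $\dint_{Q_{r_k}}|u|^3\le C\ep_0$ directly by induction (which is what \cite{CKN} actually do in their Proposition 1), or to run the decay on oscillations $\dint_{Q_r}|u-(u)_{Q_r}|^3$ and invoke Campanato --- but your $C(r)$ has no mean subtracted, and subtracting one changes the nonlinear and pressure terms and reintroduces exactly the Galilean-drift issues that this paper handles elsewhere by the $\psi$-mean normalization. Until one of these is built into the iteration, the conclusion $\|u\|_{L^\infty(Q_1)}\le 1$ does not follow from the contraction you describe.
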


\begin{rem}
In \cite{TY15}, the authors restrict $q$ to the range of $q\in (\frac{4s}{(6s-3)},2)$ for the purpose of their main result, but
the proof of Proposition~\ref{prop_tang_yu} remains valid for any $q\in[1,2)$.
\end{rem} 
We now find $\ep>0$ in Theorem~\ref{thm_local_reg} such that the assumptions of the theorem imply \eqref{small}. This shows that \eqref{small} can be guaranteed (and so local boundedness follows) using only scale-optimal quantities given $u$ has the $\psi$-mean zero, $\int u \psi \d x=0$. (Recall that $\psi\in C_c^\infty (B_1)$ has been fixed in Theorem~\ref{thm_local_reg}.) 
\begin{prop}\label{prop:red} There exists $\ep>0$ such that if a suitable weak solution $(u,p)$ satisfies 
the assumptions in Theorem~\ref{thm_local_reg}, then
\[
 \| u \|_{L^\infty (Q_1)} \leq1. 
\]
\end{prop}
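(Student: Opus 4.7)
The plan is to reduce Proposition~\ref{prop:red} to an application of Proposition~\ref{prop_tang_yu} by verifying its smallness hypothesis \eqref{small} on $Q_{10/9}$ from the hypothesis \eqref{smallness} of Theorem~\ref{thm_local_reg} and the zero $\psi$-mean condition. I address each of the four quantities in \eqref{small} in turn.

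The extension energy $\int_{Q_{10/9}^*} y^a |\bna u^*|^2$ is bounded directly by the first term of \eqref{smallness}. For the spacetime $L^3$ norm of $u$, the zero $\psi$-mean combined with Lemma~\ref{lem_Poin} gives
\[
\|u(t)\|_{L^{6/(3-2s)}(B_{5/4})} \lec \|y^{a/2}\bna u^*(t)\|_{L^2(B_{5/2}^*)} \qquad \text{for a.e. } t,
\]
which upon integration in $t$ yields $\|u\|_{L^2_t L^{6/(3-2s)}_x}^2 \lec \epsilon$. Since $s > 3/4$ implies $6/(3-2s) > 4$, interpolating with the $L^\infty_t L^2_x$ bound (obtained below from the local energy inequality) gives $\|u\|_{L^3(Q_{10/9})} \lec \epsilon^\beta$ for some $\beta > 0$ via standard parabolic interpolation.

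For the pressure I would apply Lemma~\ref{lem_poincare_for_p} with $g = \nabla p$, so that the right-hand side coincides with the $\Lambda^{2s-1}\nabla p$ quantities in \eqref{smallness}, yielding $\|\nabla p - (\nabla p)_\psi\|_{L^{6/5}(B_{5/4})} \lec \epsilon$. The three-dimensional Poincaré-Sobolev embedding $W^{1,6/5} \hookrightarrow L^2$ then produces
\[
\|p - c(t) - (\nabla p)_\psi(t)\cdot x\|_{L^2(B_{5/4})} \lec \epsilon
\]
for some time-dependent constant $c(t)$, which I absorb using the pressure gauge freedom of \eqref{fNS}. Control of the remaining linear-in-$x$ correction $(\nabla p)_\psi(t)\cdot x$ will use the identity
\[
[(\nabla p)_\psi(t)]_j = \int u_i u_j \partial_i \psi \,\d x - \int u_j(-\Delta)^s \psi \,\d x,
\]
obtained by testing the momentum equation against $\psi$ and using the zero $\psi$-mean, together with the polynomial decay $|(-\Delta)^s \psi(x)| \lec (1+|x|)^{-3-\eta}$ from Lemma~\ref{lem_pointwise_Dlk} to handle the nonlocal tail.

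Finally, the kinetic energy $\sup_t \int_{B_{10/9}} |u|^2$ will come from the local energy inequality \eqref{LEI} with a spacetime cutoff $\xi = \phi^2 \tau^2$ supported in $B_{5/4}^* \times (-(5/4)^{2s}, 0]$ and vanishing near the left endpoint in time, so the initial data term drops. The weighted-Laplacian term $\int y^a|u^*|^2 \overline{\mathrm{div}}(y^a\bna \xi)$ is bounded via the weighted Poincaré \eqref{weighted_poin} after subtracting a weighted mean absorbed by the zero $\psi$-mean; the convective/pressure term $\int u\cdot \nabla\xi(2p+|u|^2)$ is bounded by the $L^3$ and pressure bounds just established; and the time-derivative term is bounded by interpolation. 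Choosing $\epsilon \ll \epsilon_0$ then makes all four smallness conditions of \eqref{small} hold, and Proposition~\ref{prop_tang_yu} yields $\|u\|_{L^\infty(Q_1)} \leq 1$. The main obstacle I anticipate is the control of the linear-in-$x$ part of the pressure, which is not absorbable into the gauge freedom and requires a careful interplay between the zero $\psi$-mean, the polynomial tail decay of $(-\Delta)^s\psi$, and the local versus global integrability of $u$.
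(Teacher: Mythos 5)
Your overall architecture is the same as the paper's: reduce to the Tang--Yu $\epsilon$-regularity criterion (Proposition~\ref{prop_tang_yu}), control $\|u\|_{L^3}$ by Lemma~\ref{lem_Poin} plus interpolation, control the pressure by Lemma~\ref{lem_poincare_for_p} applied to $g=\nabla p$ together with the identity for $(\nabla p)_\psi$ obtained by testing the equation against $\psi$, and close the $L^\infty_t L^2_x$ bound with the local energy inequality. However, there is a genuine gap in your treatment of the dissipative contribution to $(\nabla p)_\psi$. You propose to bound $\int u_j(-\Delta)^s\psi\,\d x$ using the polynomial decay $|(-\Delta)^s\psi(x)|\lesssim(1+|x|)^{-3-\eta}$. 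The far-field piece $\int_{|x|\ge 5}u_j(-\Delta)^s\psi\,\d x$ is then controlled only by $\|u\|_{L^2(\R^3)}\lesssim\|u_0\|_2$, which is finite but \emph{not} small: the hypotheses of Theorem~\ref{thm_local_reg} are purely local (integrals over $Q_5$, $Q_5^*$, $B_5\times B_5$), and in the application to Theorem~\ref{thm_main} the rescaled, Galilean-shifted solution $v_\lambda$ carries no global smallness whatsoever. Since $(\nabla p)_\psi$ enters $\|p\|_{L^2(B_{5/4})}$ at order one, a non-small contribution here destroys the verification of \eqref{small}. The paper avoids this by never letting the nonlocality escape: it writes $\int\psi\,\La^{2s}u_i\,\d x=\overline{C}_s\int_{\R^4_+}y^a\,\bna u_i^*\cdot\bna\psi^*\,\d X$ for a compactly supported extension $\psi^*$ of $\psi$, so the term is bounded by $\|y^{a/2}\bna u^*\|_{L^2(B_2^*)}$, which is $O(\sqrt{\epsilon})$ after integration in time. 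Your decay-based splitting cannot be repaired without some such localization device; the conclusion you want is true, but the proposed proof of it fails.

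A secondary point: in the energy step you say the convective/pressure term in the local energy inequality ``is bounded by the $L^3$ and pressure bounds just established,'' but your $L^3$ bound itself requires the $L^\infty_tL^2_x$ bound you are in the middle of proving. The paper resolves this by estimating $\int(\tfrac12|u|^2+p)\,u\cdot\nabla\xi^2$ by $\|u\xi\|_2\bigl(\|u\|^2_{L^4(B_{5/4})}+\|p\|_{L^2(B_{5/4})}\bigr)\lesssim\bigl(1+\int|u|^2\xi^2\bigr)F(t)$, with $\|u\|_{L^4}$ coming directly from Lemma~\ref{lem_Poin} (using $4<\frac{6}{3-2s}$ for $s>\frac34$), and then closing with an integral Gr\"onwall inequality. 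You should make this ordering explicit; as sketched, the argument is circular.
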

This proposition, together with our first result (Theorem~\ref{thm_bootstrapping_intro}), which guarantees boundedness of derivatives, concludes the proof of Theorem~\ref{thm_local_reg}.
\begin{proof} By Proposition~\ref{prop_tang_yu}, it suffices to prove that
\begin{align}\label{smallness.pre}
\sup_{t\in (-\left(\frac{10}9\right)^{2s},0)} \int_{B_\frac{10}9} |u(x,t)|^2 \d x
&+\int_{Q_\frac{10}9^*} y^a |\bna u^*|^2 \d X\d t
+\norm{u}_{L^3(Q_\frac{10}9)}^2
+\norm{p}_{L^1(Q_\frac{10}9)}^2
\leq \ep_0,
\end{align}
where $\ep_0$ is given by Proposition~\ref{prop_tang_yu}. We will show that the left hand side of \eqref{smallness.pre} is bounded by $c_*(\ep+ \ep^2)$ (and we will refer to such bound as ``smallness'') for some constant $c_*=c_*(s,\psi)$. Then, choosing $\ep$ sufficiently small such that $c_*(\ep+ \ep^2)\leq \ep_0$, we obtain \eqref{smallness.pre}. Without loss of generality we assume that $\int p \psi \d x=0$, since the pressure enters \eqref{fNS} only via $\nabla p$. \\

\noindent\texttt{Step 1.} We reduce the claim \eqref{smallness.pre} to showing only the smallness of $\| u \|_{L^\infty_t L^2_x (Q_{10/9})}$.\\

We note that $\int_{Q_1^*} y^a |\bna u^*|^2 \d X\d t\le \ep$ holds by assumption.
Since $\frac 6{3-2s}\geq 3$ for $s\geq \frac 12$, we use the interpolation inequality $\| f \|_{L^3} \leq \| f \|_{L^{\frac{6}{3-2s}}}^{\frac1{2s}} \|f \|_{L^2}^{\frac{2s-1}{2s}} $ and Lemma~\ref{lem_Poin} to get
\begin{align}\label{est.u}
\norm{u}_{L^3(Q_\frac{10}9)} 
\lec \norm{u}_{L^{4s}_tL^3_x(Q_\frac{10}9)}
\lec \norm{u}_{L^\infty_tL^2_x(Q_\frac{10}9)}
+ \norm{y^\frac{a}2  \bna u^*}_{L^2(Q^*_2)}
\lec \norm{u}_{L^\infty_tL^2_x(Q_\frac{10}9)}
+\sqrt{\ep}.
\end{align}

To  estimate the pressure, multiplying \eqref{fNS} by $\psi$ and integrating in space, we obtain
 \[
(\na  p )_\psi \coloneqq 
\int \psi \na p \,\d x  = \int \left( u (u\cdot\na)\psi -\psi \La^{2s}u  \right) \d x .
\]
By Lemma~\ref{lem_Poin}, the first term on the right hand side can be bounded by 
\[
\int |u|^2|\na \psi| 
\lec \| u \|_{L^2(B_1)}^2 
\lec \norm{y^\frac{a}2 \bna u^*}_{L^2(B_2^*)}^2.
\]
As for the second term, for each $i$ we have
\[
\int \psi \La^{2s} u_i  \d x
= -\overline{C}_s \lim_{y\to 0} \int y^a \p_y u^*_i \psi^* \d X
= \overline{C}_s \int_{\RR^4_+} \overline{\mathrm{div}} (y^a \bna u^*_i \psi^* ) \d X 
= \overline{C}_s \int_{\RR^4_+ } y^a \bna u^*_i \cdot \bna \psi^* \d X ,
\]
where $\psi^* \in C_c^\infty ( \RR^3 \times [0,1] )$ is any extension of $\psi$ such that $\psi^* (x,0)=\psi (x)$ (not the Caffarelli-Silvestre extension). This implies that $| \int  \psi \La^{2s}u  | \lec \|y^{\frac a2} \bna u^*\|_{L^2(B_2^*)} $ and hence at almost every time $t$,
\begin{align}
\| p \|_{L^2(B_\frac54)} &\leq \| \nabla p \|_{L^{\frac65} (B_\frac54)}
\lec \| \nabla p -(  \nabla p)_\psi  \|_{L^{\frac65} (B_\frac54)} +\norm{y^{\frac a2} \bna u^*}_{L^2(B_2^*)}^2+ \norm{y^{\frac a2} \bna u^*}_{L^2(B_2^*)} \nonumber \\
&\lec \norm{\La^{2s-1}\na p}_{L^1(B_5)} 
+ \norm{\mathscr{M}_4 ( \La^{2s-1} \na p)}_{L^1(B_3)}
+\norm{y^{\frac a2} \bna u^*}_{L^2(B_2^*)}^2
+ \norm{y^{\frac a2} \bna u^*}_{L^2(B_2^*)} \label{p_in_L2}
\end{align}
where we used the Poincar\'{e} inequality (recall $\int p\psi =0$) in the first inequality and Lemma~\ref{lem_poincare_for_p} in the last line. Integration in time over the interval $(-(10/9)^{2s},0)$ gives the estimate for the pressure, 
\begin{equation}\label{p_bound}\begin{split}
\| p \|_{L^1_tL^2_x (Q_\frac{10}9)} &\lec   
 \norm{\La^{2s-1}\na p}_{L^1(Q_5)} 
+ \norm{\mathscr{M}_4(\La^{2s-1}\na p )}_{L^1(Q_3)} +\norm{y^{\frac a2} \bna u^*}_{L^2(Q_2^*)}^2+\norm{y^{\frac a2} \bna u^*}_{L^2(Q_2^*)} \\
&\lec \ep + \sqrt{\ep}.
\end{split}\end{equation}
Thus, having shown smallness of $\norm{u}_{L^3(Q_\frac{10}9)} $ and $\| p \|_{L^1_tL^2_x (Q_\frac{10}9)}$, the claim \eqref{smallness.pre} follows if we show smallness of $\norm{u}_{L^{\infty}_t L^2_x (Q_\frac{10}9)}$.\\

\noindent\texttt{Step 2.} We show smallness of $\norm{u}_{L^\infty_tL^{2}_x(Q_\frac{10}9)}$.\\

For the convenience, we set
\begin{align*}
F(t)\coloneqq \norm{y^{\frac a2} \bna u^*(t)}_{L^2(B_2^*)}  +\norm{y^{\frac a2} \bna u^*(t)}_{L^2(B_2^*)}^2 + \norm{\La^{2s-1}\na p(t)}_{L^1(B_5)} 
+ \norm{\mathscr{M}_4(\La^{2s-1}\na p) (t)}_{L^1(B_3)}.
\end{align*}
Let $\xi=\xi(x,t)$ be a smooth cut-off in space and time satisfying $\xi(x,t) = 1$ on $Q_{\frac{10}9}$, $\supp(\xi) \subset Q_{\frac54}$, and $\xi^*$ be an extension of $\xi$ satisfying $\xi^* = 1$ on $Q_{\frac{10}9}^*$, $\supp(\xi^*) \subset Q_\frac54^*$, and $\xi^*(x,0,t) = \xi(x,t)$. The local energy inequality \eqref{LEI} applied with test function $(\xi^*)^2$ gives
\begin{equation}
\label{est.lei_xi}
\begin{split}
\frac12 \int |u(\tau )|^2 \xi(\tau )^2\d x
&+  \overline{C}_s \int_{-(5/4)^{2s}}^\tau \int_{\RR^4_+} y^a |\overline{\na } u^* |^2 (\xi^*)^2
\leq  \frac{\overline{C}_s}2  \int_{-(5/4)^{2s}}^\tau \int_{\RR^4_+} |u^*|^2 \overline{\mathrm{div}} (y^a \overline{\na } (\xi^*)^2) 
\\
&+ \int_{-(5/4)^{2s}}^\tau \int \left( \frac12|u|^2+p\right)u\cdot \na \xi^2  + \frac12 \int_{-(5/4)^{2s}}^\tau \int |u|^2 \left( \pa_t \xi^2 +\overline{C}_s \lim_{y\to 0^+}y^a\pa_y (\xi^*)^2 \right) 
\end{split}
\end{equation}
for almost every $\tau \in (-(5/4)^{2s},0)$. (Here, we used the fact that $\xi(\cdot, -(5/4)^{2s})=0$.) By Lemma~\ref{lem_Poin}
\begin{align*}
\int |u|^2 \pa_t \xi^2 \d x 
\lec \| u\|^2_{L^2 (B_\frac54)}
\lec \norm{y^\frac{a}2 \bna u^*}_{L^2(B_2^*)}^2\leq F
\end{align*}
for every $t \in (-(5/4)^{2s},0)$. Moreover, using H\"older's inequality, Lemma~\ref{lem_Poin} (note that $4<\frac6{(3-2s)}$ for $s>3/4$) and \eqref{p_in_L2}, we get 
\begin{equation}\label{computation_local_reg}\begin{split}
\int \left( \frac12|u|^2+p\right)u\cdot \na \xi^2 
&\lec 
\| u\xi \|_2  \left( \| u \|_{L^4 (B_\frac54)}^2  + \| p \|_{L^2 (B_{\frac54})} \right) \\
&\hspace{-2cm}\lec 
\| u\xi \|_2 \left( \norm{y^{\frac a2} \bna u^*}_{L^2(B_2^*)}  +\norm{y^{\frac a2} \bna u^*}_{L^2(B_2^*)}^2 +  \norm{\La^{2s-1}\na p}_{L^1(B_5)} 
+ \norm{\mathscr{M}_4(\La^{2s-1}\na p )}_{L^1(B_3)}  \right)\\
&\hspace{-2cm}\lec \left(1+ \int |u|^2 \xi^2  \right)  F
\end{split}\end{equation}
for every $t \in (-(5/4)^{2s},0)$, and so
\eqnb\label{rhs_1stpart}\begin{split}
\int_{-(5/4)^{2s}}^\tau \int \left( |u|^2 \pa_t \xi^2+ \left( \frac12|u|^2+p\right)u\cdot \na \xi^2 \right)
&\lec \int_{-(5/4)^{2s}}^\tau \left(1+ \int |u|^2 \xi^2  \right) F\\
&\lec  \norm{F}_{L^1(-(5/4)^{2s},0)}+\int_{-(5/4)^{2s}}^\tau \int |u|^2 \xi^2 F 
\end{split}\eqne
for every $\tau \in (-(5/4)^{2s},0)$. As for the remaining terms in \eqref{est.lei_xi}, we integrate by parts to get
\begin{align*}
 \int_{\RR^4_+} |u^*|^2 \overline{\mathrm{div}} (y^a \overline{\na } (\xi^*)^2) \d X
 + \int |u|^2 \lim_{y\to0^+} y^a\pa_y (\xi^*)^2 \d x
= -4 \int y^a (\xi^*\bna_j u_i^* )(u_i^* \bna_j \xi^*) \d X
\end{align*}
(at each $t\in (-(5/4)^{2s},\tau)$). Applying Young's inequality we obtain 
\begin{align*}
\abs{\int_{\R^4_+} y^a (\xi^*\bna_j u^*_i)(u_i^*  \bna_j \xi^*)\d X}
\leq \norm{y^\frac{a}2 \bna u^*}_{L^2(B_2^*)}^2
+\norm{ y^\frac{a}2 u^*}_{L^2(B_\frac54^*)}^2
\end{align*}
We now estimate the last term on the right-hand side. We first write the extension $u^*$ as
\[u^*(x,y,t) = u(x,t) + \int_0^y \pa_z u^*(x,z,t) \d z,\]
which gives
\begin{equation}\label{rep}\begin{split}
w(B_\frac74^*)|(u^*)_{B_\frac74^*}| 
&\leq w(B_\frac74^*)|(u)_{B_\frac74}| 
+ \int_{0}^2\int_{B_\frac74} \int_0^y y^az^{-\frac a2}\cdot z^{\frac a2} |\pa_z u^*(x,z,t)| \,\d z \,\d x\, \d y\\
&\lec \norm{y^\frac{a}2 \bna u^*}_{L^2(B_2^*)},
\end{split}\end{equation}
where $w(B_\frac74^*)\coloneqq \int_{B_\frac74^*} y^a \d X$ and $(f)_{B_\frac74^*} \coloneqq  \frac1{w(B_\frac74^*)}  \int_{B_\frac74^*} y^af(x,y) \d X$.  Here the last inequality follows from a modification of \eqref{est.ave} and the Cauchy-Scharz inequality. This together with the weighted Poincare inequality \eqref{weighted_poin} give
\begin{align*}
\int_{B_\frac74^*} y^a |u^*|^2   \d X
&\lec \int_{B_\frac74^*} y^a |u^*-(u^*)_{B_\frac74^*}|^2   \d X 
+ w(B_\frac74^*)|(u^*)_{B_\frac74^*}|^2\lec \int_{B_2^*} y^a |\bna u^*|^2 \d X
\end{align*}
for almost every $t\in (-(5/4)^{2s},0)$. Integrating in time we obtain $\norm{ y^\frac{a}2 u^*}_{L^2(Q_\frac54^*)} \lec \norm{y^\frac{a}2\bna u^*}_{L^2(Q_2^*)}\leq \| F \|_{L^1(-(5/4)^{2s},0)}$  
and therefore
\begin{align}\label{est.dissi}
\left|\int_{-(5/4)^{2s}}^\tau \left( \int_{\RR^4_+} |u^*|^2 \overline{\mathrm{div}} (y^a \overline{\na } (\xi^*)^2) \d X
+  \int |u|^2 \lim_{y\to0^+} y^a\pa_y (\xi^*)^2 \d x \right)  \right|
\lec \norm{F}_{L^1(-(5/4)^{2s},0)}
\end{align}
for every $\tau \in (-(5/4)^{2s},0)$. Applying the estimates \eqref{rhs_1stpart} and \eqref{est.dissi} in \eqref{est.lei_xi} gives
\begin{equation*}
\begin{split}
\int |u(\tau )|^2 \xi(\tau )^2  \d x
\lec \int_{-(5/4)^{2s}}^\tau \left(\int |u|^2 \xi^2 \d x\right)  F
+ \norm{F}_{L^1(-(5/4)^{2s},0)}.
\end{split}
\end{equation*}
for almost every $\tau \in (-(5/4)^{2s},0)$. Using the integral Gr\"onwall's inequality (see, for example, Theorem~7.3 in \cite{ODE_book}), it follows that
\begin{align*}
\int |u(\tau )|^2 \xi(\tau )^2 \d x
\lec  \norm{F}_{L^1(-(5/4)^{2s},0)}(1+   \norm{F}_{L^1(-(5/4)^{2s},0)}\exp( \norm{F}_{L^1(-(5/4)^{2s},0)}))
\end{align*}
for almost every $\tau \in (-(5/4)^{2s},0)$. 
Since $\norm{F}_{L^1(-(5/4)^{2s},0)} \lec \sqrt{\ep} + \ep$ by assumption we obtain (assuming $\ep\in (0,1)$)
\begin{align*}
\| u \|_{L^{\infty}_tL^2_x  (Q_\frac{10}9)}
\lec \sqrt{\ep},
\end{align*}
as required.
\end{proof}
\subsection{Consequences of the local regularity Theorem~\ref{thm_local_reg}}\label{sec_consequences}
We now show that the claim of Theorem~\ref{thm_local_reg} remains valid if one replaces the zero mean property by a smallness assumption on $|u|^3+|p|^{3/2}$. In other words we obtain Corollary~\ref{cor_single_scale_local_reg_into}, which we now restate for the reader's convenience.
\begin{cor}\label{cor_single_scale_local_reg}
There exists $\ep>0$ such that if a suitable weak solution $(u,p )$ of \eqref{fNS} satisfies
\begin{equation}\begin{split}
\label{smallness_cor}
&\int_{Q_5^*} y^a |\bna u^*|^2 \d X \,\d t + \int_{-5^{2s}}^0 \int_{B_5}\int_{B_5} \frac{|u(x,t)-u(y,t)|^2}{|x-y|^{3+2s}}\d x \,\d y\, \d t\\
&\hspace{1cm}+ \int_{Q_5}  \left( 
(\mathcal{M} |\La^s u |^{\frac 2{1+\de}})^{1+\de} +  |\La^{2s-1}\na p|  +|\mathscr{M}_4 (\La^{2s-1}\na p)|+|u|^3 + |p|^{\frac32} \right) \leq \ep,
\end{split}\end{equation}
where $\delta \coloneqq \frac{2s}{(6-s)}$. Then
\[
\sup_{Q_\frac12}\left(|u|+
|\na u|+|\nabla^2  u| \right) \leq C_1
\] 
for some constant $C_1>0$. 
\end{cor}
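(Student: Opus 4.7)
The plan is to establish the $L^\infty$-bound $\|u\|_{L^\infty(Q_1)} \leq 1$ by appealing to the Tang--Yu $\epsilon$-regularity statement of Proposition~\ref{prop_tang_yu}, and then to use Theorem~\ref{thm_bootstrapping_intro} to upgrade this to the desired pointwise bounds on $u,\nabla u,\nabla^2 u$ on $Q_{1/2}$. The heart of the argument is the verification of the smallness hypothesis \eqref{small}; the role played by the zero $\psi$-mean condition in the proof of Proposition~\ref{prop:red} will here be played by the extra smallness of $|u|^3 + |p|^{3/2}$ in \eqref{smallness_cor}, which makes the Poincar\'e inequality of Lemma~\ref{lem_Poin} unnecessary.

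Three of the four terms in \eqref{small} are immediate from \eqref{smallness_cor}: the dissipative integral is controlled by the extension term, $\int_{Q_{10/9}} |u|^3 \leq \epsilon$ is direct, and choosing $q = 3/2$ the pressure term is bounded by $\|p\|_{L^{3/2}(Q_5)}^2 \lec \epsilon^{4/3}$ via H\"older's inequality in space. For the remaining quantity $\sup_t \int_{B_{10/9}} |u|^2$, I would mimic \textbf{Step 2} of Proposition~\ref{prop:red}, applying the local energy inequality \eqref{LEI} with a cutoff $\xi$ supported in $Q_{5/4}$. The nonlinear and time-derivative terms are now bounded directly:
\[
\int \left(\tfrac12 |u|^2 + p\right) u \cdot \nabla \xi^2 \lec \int |u|^3 + \|p\|_{L^{3/2}}\|u\|_{L^3} \lec \epsilon + \epsilon^{5/6}, \qquad \int |u|^2 \pa_t \xi^2 \lec \|u\|_{L^3(Q_{5/4})}^2 \lec \epsilon^{2/3}.
\]
The extension terms are treated as in the original proof via \eqref{rep} and the weighted Poincar\'e inequality \eqref{weighted_poin}, except that the bound $|(u)_{B_{7/4}}| \lec \|u\|_{L^3(B_{7/4})}$ replaces the application of Lemma~\ref{lem_Poin}. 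An integral Gr\"onwall argument then yields $\sup_t \int |u(t)|^2 \xi(t)^2 \lec \epsilon^\alpha$ for some $\alpha > 0$, and choosing $\epsilon$ small enough completes the verification of \eqref{small}. Proposition~\ref{prop_tang_yu} then gives $\|u\|_{L^\infty(Q_1)} \leq 1$.

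It remains to verify the hypotheses of Theorem~\ref{thm_bootstrapping_intro}, i.e.\ finiteness of every term in \eqref{local_terms_intro}. The $L^\infty$ bound on $Q_1$ is in hand; $\|u\|_{L^2_t W^{s,2}(Q_2)}$ is controlled by the Sobolev--Slobodeckij seminorm in \eqref{smallness_cor} together with $\|u\|_{L^2(Q_2)} \lec \|u\|_{L^3(Q_5)}$. The pressure norms $\|p\|_{L^1(Q_1)}$ and $\|\nabla p\|_{L^1(Q_1)}$ follow from $\|p\|_{L^{3/2}(Q_5)}$, Lemma~\ref{lem_poincare_for_p}, and a direct estimate of $(\nabla p)_\psi$ obtained by testing \eqref{fNS} against $\psi$ (again using $\|u\|_{L^3}$ and $\|\Lambda^s u\|_{L^2}$ in place of Lemma~\ref{lem_Poin}). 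The norm $\|\mathcal{M}(\Lambda^s u)\|_{L^2(Q_2)}$ is finite because $\Lambda^s u \in L^2(\R^3 \times (0,T))$ globally (since $u$ is a suitable weak solution) and $\mathcal{M}$ is bounded on $L^2$. The remaining two terms in \eqref{local_terms_intro} are controlled directly by \eqref{smallness_cor}. Theorem~\ref{thm_bootstrapping_intro} then furnishes $\sup_{Q_1}(|u| + |\nabla u| + |\nabla^2 u|) \leq C_0$, which restricts to $Q_{1/2} \subset Q_1$.

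The main technical obstacle is in the first step, namely redoing the local energy estimate of Proposition~\ref{prop:red} without Lemma~\ref{lem_Poin}. Each appearance of $\|u\|_{L^2}$ that was previously handled via the Poincar\'e inequality must now be replaced by a direct H\"older estimate in terms of the controlled $L^3$ norm, forcing the final smallness of $\sup_t \int |u|^2$ to be of order $\epsilon^\alpha$ rather than $\epsilon$, which is harmless once $\epsilon$ is taken sufficiently small.
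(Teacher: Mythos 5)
Your proposal is correct and follows essentially the same route as the paper: verify the Tang--Yu smallness condition \eqref{small} by rerunning the argument of Proposition~\ref{prop:red} with every use of the zero $\psi$-mean and Lemma~\ref{lem_Poin} replaced by direct $L^3$/$L^{3/2}$ bounds on the averages and on the nonlinear and pressure terms, then invoke Theorem~\ref{thm_bootstrapping_intro} for the derivative bounds. The only (harmless) deviations are cosmetic --- you control the pressure term in \eqref{small} via $q=3/2$ and H\"older rather than through the $L^1_tL^2_x$ bound \eqref{p_bound_for_db}, and you estimate $(\na p)_\psi$ by testing the equation instead of integrating $\int p\,\na\psi$ by parts as in \eqref{tempxxx}.
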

\begin{proof}
We deduce \eqref{small} by the choice of sufficiently small $\ep$ (this choice is independent of the one in Proposition~\ref{prop:red}) and the same argument as in Proposition~\ref{prop:red} except for using Lemma~\ref{lem_Poin} only to $u-(u)_\psi$, where $(u)_\psi \coloneqq (\int \psi\d y)^{-1} \int u (t) \psi \d y$. We outline the main updates to the proof of Proposition~\ref{prop:red} below.

First, we replace \eqref{est.u} by the assumption, and \eqref{p_in_L2} by writing
\eqnb\label{tempxxx}
\begin{split}
\| p \|_{L^2(B_\frac54)} &\leq \| \nabla p \|_{L^{\frac65} (B_\frac54)} 
\lec \left\| \nabla p - \int \psi \nabla p \right\|_{L^{\frac65} (B_\frac54)} +\left| \int \psi \na p  \d x\right|  \\
&\lec \norm{\La^{2s-1}\na p}_{L^1(B_5)} 
+ \norm{\mathscr{M}_4 ( \La^{2s-1} \na p )}_{L^1(B_3)}+\| p \|_{L^{\frac32}(B_1)},
\end{split}
\eqne
where we applied Lemma~\ref{lem_poincare_for_p} and integrated the last term by parts in the second inequality.
This implies that
\eqnb\label{p_bound_for_db}
\| p \|_{L^1_tL^2_x (Q_\frac54)} \lec   
 \norm{\La^{2s-1}\na p}_{L^1(Q_5)} 
+ \norm{\mathscr{M}_4 ( \La^{2s-1} \na p )}_{L^1(Q_3)} + \| p \|_{L^{\frac32} (Q_1)} ,
\eqne
which is our substitute for \eqref{p_in_L2}. We are left to estimate $\norm{u}_{L^\infty_tL^2_x(Q_\frac{10}9)}$, for which we again use the local energy inequality \eqref{est.lei_xi} with the same test function $(\xi^*)^2$, but with some estimates for the terms on the right-hand side replaced as follows:
\begin{align*}
\int_{-(5/4)^{2s}}^\tau \int |u|^2 \pa_t \xi^2
&\lec \| u(t)\|^2_{L^2 (Q_\frac54)}
\lec \| u(t) - (u)_{B_\frac54} \|^2_{L^2 (B_\frac54)} + |(u)_{B_\frac54}|^2\\
&\lec \norm{y^\frac{a}2 \bna u^*(t)}_{L^2(Q_2^*)}^2
+ \norm{u}_{L^3(Q_2)}^2,
\end{align*}
and
\begin{align*}
\int_{-(5/4)^{2s}}^\tau \int \left( \frac12|u|^2+p\right)u\cdot \na \xi^2 
\lec \norm{u}_{L^3(Q_2)}^3 + \norm{p}_{L^\frac32(Q_2)}^\frac32.
\end{align*}
The estimate for the remaining term is similar to \eqref{est.dissi} except for \eqref{rep}, where we use the estimate $|(u)_{B_\frac74}| 
\lec \norm{u}_{L^3(B_2)} $ (instead of $|(u)_{B_{\frac74}}| \lec \| y^{\frac{a}2} \bna u^*\|_{L^2(B_2^*)}$). 
\end{proof}
We now show that Corollary~\ref{cor_single_scale_local_reg} gives an estimate on the box-counting dimension of the singular set, that is we prove Corollary~\ref{cor_box_counting}.

We first note that $u\in L^r(0,{T};L^q(\R^3))$ for any $r,q\in [2,\infty ]\times [2,6/(3-2s)] $ such that
\begin{align*}
\frac {2s}{r} + \frac 3q = \frac 32,
\end{align*}
recall Section~\ref{sec_suitable_ws}. This and the fact that all the other quantites appearing in \eqref{smallness_cor} are globally integrable (see \eqref{ext.norm}, \eqref{Sob.Slo.L2norm}, \eqref{hardy_characterisation}) allow us to use a standard covering argument. Indeed, recall the definition of the box-counting dimension,
\[
d_B (K) \coloneqq \limsup_{r\to 0} \frac{\log M (K,r)}{-\log r},
\]
for a set $K\subset \RR^3 \times (0,T)$, where $M(K,r)$ denotes the maximal number of pairwise disjoint $r$-balls {(in $\RR^4$)} with centers in $K$. One can see that the box-counting dimension $d_B (K)$ can be bounded by
\eqnb\label{bound_for_db}
\limsup_{r\to 0} \frac{\log M' (K,r)}{-\log r},
\eqne
where $M'(K,r)$ denotes the maximal number of pairwise disjoint cylinders $Q_r (x,t)$ with $(x,t)\in K$ (as $r^{2s}<r$ for $r<1$).
 
 We set
\[
W(x,y,t)\coloneqq \frac{|u(x,t)-u(y,t)|^2}{|x-y|^{3+2s}}
\]
and (as in \eqref{def_of_F_G})
\[
\begin{split}
F(x,t) &\coloneqq ( \mathcal{M} |\La^s u(x,t) |^{\frac 2{1+\de}})^{1+\de}
+|\La^{2s-1}\na p(x,t)|  +|\mathscr{M}_4 (\La^{2s-1}\na p )(x,t)|\\
G(X,t) &\coloneqq y^a |\bna u^* (X,t)|^2 
\end{split}
\]
As in \eqref{global} we see that all above quantities are globally integrable,
\eqnb\label{global_repeat}
\int_0^T \iint W(x,y,t) \d x\, \d y\, \d t + \int_0^T \int F(x,t) \d x  \,\d t + \int_0^T \int_{\RR^4_+} G(X,t) \d X \, \d t \lec \int_0^T \int |\La^s u |^2  \lec \| u_0 \|_2
\eqne
Corollary~\ref{cor_single_scale_local_reg} gives that if 
\[\begin{split}
&\frac{1}r\left( \int_{-r^{2s}}^0\int_{B_r} \int_{B_r} W(x,y,t)\d x\, \d y\, \d t + \int_{Q_r} F(x,t)\d x \, \d t +\int_{Q_r^* } G (X,t) \d X \,\d t \right) \\
&\hspace{7cm}+ {r^{4s-6}} \int_{Q_r}\left( |u(x,t)|^3 + |p(x,t)|^{3/2}\d x\, \d t \right) \leq \epsilon
\end{split}
\]
then $u$ and its spatial derivatives are bounded on $Q_{r/10}$. Given $t_0>0$ we consider $r<\min ( t_0^{1/2s},1)$ and let $\{ Q_r (x_k ,t_k )\}_{k=1}^{M'(S\cap \{ t\geq t_0 \} ,r)}$ be any collection of pairwise disjoint $r$-cylinders with $(x_k,t_k)\in S \cap \{ t\geq t_0 \}$ for every $k=1,\ldots ,M'(S\cap \{ t\geq t_0 \} ,r)$. Then
\[\begin{split}
\| u_0 \|_2 &\gtrsim \int_0^T \iint W + \int_0^T \int {F} + \int_0^T \int_{\RR^4_+} G + \int_0^T \int \left( |u|^\frac{2(2s+3)}{3} + |p|^\frac{2s+3}{3}\right) \\
&\gtrsim \sum_{k=1}^{M'(S\cap \{ t\geq t_0 \} ,r)} \left( \int_{t_k-r^{2s}}^{t_k}\int_{B_r(x_k)^2} W + \int_{Q_r(x_k,t_k)}F +\int_{Q_r^*(x_k,t_k) } G  + \int_{Q_r (x_k,t_k)}     \left( |u|^\frac{2(2s+3)}{3} + |p|^\frac{2s+3}{3}\right) \right) \\
&\gtrsim \sum_{k=1}^{M'(S\cap \{ t\geq t_0 \} ,r)} \left( \epsilon r +        \left( r^{\frac{3-4s}{2}} \int_{Q_r(x_k,t_k)} \left( |u|^3+ |p|^{3/2}\right)  \right)^{\frac{2(2s+3)}{9}} \right) \\
&\geq  \sum_{k=1}^{M'(S\cap \{ t\geq t_0 \} ,r)} \left( \epsilon r +         r^{\left( \frac{3-4s}{2}+6-4s \right)\frac{2(2s+3)}{9}} \epsilon^{\frac{2(2s+3)}{9}} \right) \\
&\gtrsim_\epsilon  M'(S\cap \{ t\geq t_0 \} ,r) r^{(-8s^2-2s+15)/3}
\end{split}\]
where we used \eqref{global_repeat} and the global integrability of $|u|^\frac{2(2s+3)}{3} + |p|^\frac{2s+3}{3}$ (recall Section~\ref{sec_suitable_ws}) in the first line, and H\"older's inequality in the fourth inequality. Applying this estimate in the bound \eqref{bound_for_db} gives $d_B(S\cap \{ t\geq t_0 \} ) \leq (-8s^2-2s+15)/3$, as required.

 \section{Higher derivatives of weak solutions}\label{sec_bddness_of_higher_der}
 In this section we prove Theorem~\ref{thm_bootstrapping_intro}, which we restate for reader's convenience. 

\begin{thm}\label{thm_bootstrapping}
Suppose that a Leray-Hopf weak solution $(u,p)$ to \eqref{fNS} for $\frac  34<s<1$ satisfies  
\[\begin{split}
\| u \|_{L^\infty_{t,x}(Q_1)}&+ \| u\|_{ L^2_tW^{s,2}_x(Q_2)}+\|p\|_{ L^1_{t,x}(Q_1)}+ \| \na p \|_{L^1_{t,x}(Q_1)} \\
&+ \| \mathcal{M} (\La^s u )\|_{L^2 (Q_2)}+\|\mathcal{M} |\La^s u |^{\frac 2{1+\de}} \|_{ L^{1+\de} (Q_2)}+ \| \mathscr{M_4} |\La^{2s-1}\na p | \|_{ L^1 (Q_2)}\leq c<\infty
\end{split}
\]
for $\de= \frac {2s}{6-s}$. Then the velocity $u$ satisfies
\[
\sup_{Q_1} |u(x,t)| + |\na u(x,t) | + |\nabla^2 u(x,t) | \leq C_0
\]
for some constant $C_0=C_0(c,s)$. 
\end{thm}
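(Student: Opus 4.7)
The plan is a nested-cutoff bootstrap scheme that works directly on the velocity $u$, handling the nonlocalities of both $(-\De)^s$ and of the pressure $p$ via localized commutator estimates. Fix a decreasing sequence of radii $1<\cdots<\rho_{k+1}<\rho_k<\cdots<\rho_0 = 2$ with $\rho_k\searrow 1$ and cutoffs $\phi_k\in C_c^\infty(Q_{\rho_k})$ equal to $1$ on $Q_{\rho_{k+1}}$. At the $k$th stage I work with $w_k := \phi_k \La^{\al_k} u$ for an increasing sequence $\al_0 = s < \al_1 < \cdots$ which will eventually exceed $5/2$. Commuting $\La^{\al_k}$ and $\phi_k$ with \eqref{fNS} yields
\[
\pa_t w_k + (-\De)^s w_k = (\pa_t\phi_k)\La^{\al_k}u + [(-\De)^s,\phi_k]\La^{\al_k}u - \phi_k\La^{\al_k}(u\cdot\na u) - \phi_k\La^{\al_k}\na p,
\]
so that Duhamel's formula and the parabolic regularity estimate \eqref{para.est} reduce the problem to bounding each right-hand side term in a suitable mixed Lebesgue norm.

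The core input is the family of localized commutator estimates of Lemmas~\ref{lem_1.10}--\ref{lem_comm.p}, which I apply with the generic input $v$ equal successively to $u$, $\na u$, $\La^\ga u$ for $\ga\in (s,1)$, $u\cdot\na u$, and $\na p$. These estimates are designed so that the tails produced by the cutoffs are controlled purely by the local hypotheses of the theorem --- the local $L^\infty$ and $L^2_tW^{s,2}_x$ norms of $u$, the local $L^1$ norms of $p$ and $\na p$, and the maximal-function controls $\mathcal{M}(\La^s u)\in L^2$, $\mathcal{M}|\La^s u|^{2/(1+\de)}\in L^{1+\de}$, and $\mathscr{M}_4|\La^{2s-1}\na p|\in L^1$. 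The choice $\de = 2s/(6-s)$ is pinned down so that the integrability coming from the quadratic nonlinearity is consistent with the parabolic gain $2s$ of the fractional heat semigroup.

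The iteration then runs as follows. Starting from $\La^s u\in L^2(Q_2)$ and $u\in L^\infty(Q_1)$, the first step produces $\La^{s+\sigma}u$ locally in a better mixed Lebesgue space on $Q_{\rho_1}$, for a fixed increment $\sigma = \sigma(s)>0$ extracted from the parabolic gain; because all right-hand side terms are controlled by quantities from \eqref{local_terms_intro} that persist on smaller cylinders, the same local hypotheses propagate uniformly along the sequence $\{\rho_k\}$. At step $k+1$, the improved regularity of $u$ on $Q_{\rho_{k+1}}$ is fed back into the four right-hand side terms, producing a further increment $\al_{k+1}-\al_k \geq \sigma$. After finitely many steps $\al_k > 5/2$, and Sobolev embedding yields $u$, $\na u$, $\na^2 u \in L^\infty$ on $Q_1$ (in fact $\na^k u$ for any $k\geq 0$, by continuing the procedure).

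The main technical obstacle is the pressure commutator $[\La^\ga,\phi_k]\na p$, which compounds the nonlocality of $\La^\ga$ with that of the Calder\'on--Zygmund inversion defining $p$. Its far-field tail is a double convolution which can only be bounded in $L^1$ against $\La^{2s-1}\na p$ weighted by a maximal-type operator; because the Hardy--Littlewood $\mathcal{M}f$ fails to be in $L^1$ for generic $f\in L^1$, a switch to the grand maximal function $\mathscr{M}_4$ is forced. The corresponding $L^1$ control is in turn supplied globally by the Hardy-space pressure estimate of Proposition~\ref{prop_integrability_of_pressure}. This is exactly why $\mathscr{M}_4|\La^{2s-1}\na p|\in L^1$ appears in the hypotheses \eqref{local_terms_intro}, and absorbing this tail uniformly across the iteration --- so that the constants $C_k(c,s)$ stay finite at every step --- is the delicate point of the proof.
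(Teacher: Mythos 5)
Your proposal correctly identifies the architecture of the argument --- nested cutoffs, localized Kato--Ponce-type commutators for $v=u,\na u,\La^\ga u,(u\cdot\na)u,\na p$, parabolic regularity for the fractional heat semigroup, and the forced appearance of the grand maximal function $\mathscr{M}_4$ in the pressure commutator --- and these are indeed the ingredients of the paper's proof. However, the iteration scheme you describe is not the one the paper runs, and as stated it has a genuine gap. You propose a uniform ladder $w_k=\phi_k\La^{\al_k}u$ with $\al_k$ increasing by a fixed increment until $\al_k>5/2$, followed by Sobolev embedding. But every commutator estimate available (Lemmas~\ref{lem_tricks}--\ref{lem_comm.p}) confines the fractional order to a bounded window: the tail estimate \eqref{trick2} requires $0\le\ga-s<1$, Lemma~\ref{lem_comm.Deu.var1} requires $\ga\in(s,1)$, and Lemmas~\ref{lem_comm.u}--\ref{lem_comm.p} require $\ga\in(2s-1,1)$. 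Consequently the terms $\phi_k\La^{\al_k}((u\cdot\na)u)$ and $\phi_k\La^{\al_k}\na p$ in your equation for $w_k$ cannot be localized once $\al_k\ge 1$, and the ladder stalls long before $5/2$. The paper instead terminates the fractional bootstrap at orders below $2s$, then switches to the compound unknowns $\phi\,\La^{\ga}u$ with $\ga\in(s,1)$ and $\phi\,\na u$, reaching $\na^2 u\in L^\infty$ through the equation for $\phi\na u$ with forcing $\na(\na\cdot F)$ rather than through a high-order fractional derivative of $u$.

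The second gap is the claim that the first step "produces $\La^{s+\sigma}u$ for a fixed increment $\sigma$ extracted from the parabolic gain." This is precisely what fails naively: the leading forcing term is $\na\cdot F$ with $F\sim u\otimes u\,\phi+\phi\,\mathcal{R}_{ij}(u_iu_j\bar\phi)$, and knowing only $F\in L^\infty_tL^p_x$, the estimate \eqref{para.est} (applied with $\al=-1$) yields $\La^{\overline\al}(u\phi)$ only for $\overline\al<2s-1<s$, i.e.\ \emph{below} the starting regularity $\La^s u\in L^2$. To get any positive increment one must first establish $\La^{3\varepsilon/2}F\in L^2_tL^q_x$, which the paper does by interpolating the $L^\infty$ bound on $u$ against the Sobolev--Slobodeckij seminorm $\|u\|_{\dot W^{s,2}(B_1)}$ coming from the hypothesis $u\in L^2_tW^{s,2}_x(Q_2)$. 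This interpolation is a separate idea, not a consequence of the parabolic gain, and without it the bootstrap never gets off the ground. Your discussion of $\de=2s/(6-s)$ and of the necessity of $\mathscr{M}_4$ is accurate, but the quantitative engine of the iteration needs to be replaced by the paper's four-step structure (or an equivalent one that respects the admissible ranges of the commutator lemmas).
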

\

We first introduce a lemma for pressure decomposition. 
\begin{lemma}[Pressure decomposition]\label{lem_pressure_est}
Let $p$ be a solution to $-\Delta p = \p_i\p_j(u_iu_j)$ and let $\phi$, $\bar \phi $ be smooth cut-offs in space satisfying $\supp(\phi) \Subset \{ \bar\phi =1 \}\subset \supp(\bar\phi) \subset B_1$. Then we have the decomposition
\[
\phi \nabla p = \phi \nabla  \mathcal{R}_{ij} (u_iu_j \bar\phi)  + \Ga
\]
for some $\Ga$ satisfying
\begin{align}\label{est.remainder}
\supp (\Ga)\subset \supp\, \phi, \quad
\norm{\Ga}_{W^{k, \infty }_x} \lec_{k,\phi,\bar\phi } \norm{ u}_{L^{\infty}(B_1)}^2+ \norm{ p}_{L^{1}(B_1)}, \quad \forall k\geq 0.
\end{align}
\end{lemma}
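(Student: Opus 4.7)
The plan is to peel off the "dangerous" part of the pressure using the ansatz
\[
p_1 \coloneqq \mathcal{R}_{ij}(u_i u_j \bar\phi),\qquad p_2 \coloneqq p - p_1, \qquad \Gamma \coloneqq \phi\nabla p - \phi\nabla p_1 = \phi \nabla p_2.
\]
Then the identity $\phi \nabla p = \phi\nabla \mathcal{R}_{ij}(u_i u_j \bar\phi) + \Gamma$ is tautological, and the support condition $\supp(\Gamma)\subset \supp(\phi)$ follows immediately from the factor $\phi$. So the entire content of \eqref{est.remainder} reduces to bounding spatial derivatives of $p_2$ pointwise on $\supp(\phi)$.

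The key observation is that $p_2$ is harmonic in a neighbourhood of $\supp(\phi)$. Indeed, since $-\Delta p = \partial_i\partial_j(u_i u_j)$ and $-\Delta p_1 = \partial_i\partial_j(u_iu_j\bar\phi)$ in $\R^3$, we obtain
\[
-\Delta p_2 = \partial_i\partial_j\bigl(u_iu_j(1-\bar\phi)\bigr),
\]
which vanishes on any open set where $\bar\phi\equiv 1$. By the hypothesis $\supp(\phi)\Subset\{\bar\phi=1\}$, one can choose an open set $V$ with $\supp(\phi)\subset V\Subset\{\bar\phi=1\}$; then $p_2$ is distributionally harmonic on $V$, and hence smooth and classically harmonic on $V$ by elliptic regularity. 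Letting $r_0=r_0(\phi,\bar\phi)>0$ denote the distance from $\supp(\phi)$ to $\partial V$, the standard interior estimate for harmonic functions (mean value property) yields
\[
\|\nabla^m p_2\|_{L^\infty(\supp(\phi))} \lec_{m,\phi,\bar\phi} \|p_2\|_{L^1(V)} \qquad \forall\, m\geq 0.
\]
Together with the Leibniz rule applied to $\Gamma = \phi\nabla p_2$, this gives $\|\Gamma\|_{W^{k,\infty}_x}\lec_{k,\phi,\bar\phi}\|p_2\|_{L^1(B_1)}$ for every $k\geq 0$.

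It remains to estimate $\|p_2\|_{L^1(B_1)}\leq \|p\|_{L^1(B_1)}+\|p_1\|_{L^1(B_1)}$. The first term is directly controlled by the hypothesis. For the second, the Calder\'on--Zygmund bound on the double Riesz transform $\mathcal{R}_{ij}$ gives, for any fixed $q\in (1,\infty)$,
\[
\|p_1\|_{L^q(\R^3)} = \|\mathcal{R}_{ij}(u_iu_j\bar\phi)\|_{L^q(\R^3)} \lec_{q} \|u_iu_j\bar\phi\|_{L^q(\R^3)} \lec_{\bar\phi,q} \|u\|_{L^\infty(B_1)}^2,
\]
using that $\bar\phi$ is supported in $B_1$. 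H\"older's inequality on the bounded set $B_1$ then yields $\|p_1\|_{L^1(B_1)}\lec_{\bar\phi}\|u\|_{L^\infty(B_1)}^2$. Collecting the estimates produces \eqref{est.remainder}.

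There is no real obstacle; the only point requiring mild care is the passage from "distributional harmonicity on $V$" to the pointwise derivative bounds, which is standard interior regularity.
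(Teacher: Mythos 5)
Your argument is correct, but it proves the estimate by a genuinely different mechanism than the paper. The paper first derives an explicit representation for $p\bar\phi$ (its identity \eqref{eq.loc.pre}): the remainder $p\bar\phi-\mathcal{R}_{ij}(u_iu_j\bar\phi)$ is written as a sum of Newtonian potentials applied to terms each of which carries at least one derivative of $\bar\phi$; since $\supp(\na\bar\phi)$ lies at a positive distance from $\supp\phi$, the kernels $|x-y|^{-(1+k)}$, $|x-y|^{-(2+k)}$ are uniformly bounded on $\supp\phi$, and the bounds \eqref{pressure1}--\eqref{pressure2} follow by direct pointwise kernel estimates. You instead observe that $\Ga$ is forced by the stated identity to equal $\phi\na\bigl(p-\mathcal{R}_{ij}(u_iu_j\bar\phi)\bigr)$, that this difference is distributionally harmonic on an open set $V$ with $\supp\phi\subset V\Subset\{\bar\phi=1\}$, and you then invoke Weyl's lemma together with the $L^1$-based interior derivative estimates for harmonic functions. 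Both routes are sound. Yours is somewhat more economical: it bypasses the justification of \eqref{eq.loc.pre}, for which the paper needs a Liouville-type uniqueness statement for the Poisson equation, at the price of quoting standard elliptic interior regularity. The paper's explicit kernel representation, on the other hand, separates the remainder into a piece quadratic in $u$ and a piece linear in $p$ with individually tracked bounds, a structure it reuses in the pressure commutator estimate; your argument recovers the same separation provided you keep the two contributions $\norm{p}_{L^1(B_1)}$ and $\norm{\mathcal{R}_{ij}(u_iu_j\bar\phi)}_{L^1(B_1)}\lec\norm{u}_{L^\infty(B_1)}^2$ distinct, as you do via the Calder\'on--Zygmund step.
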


\begin{proof}
Let $E\coloneqq \supp\, \phi$. Manipulating the equation $-\De p = \pa_{ij}(u_iu_j)$, we can write $p\bar\phi$ as
\begin{equation}\begin{split}
\label{eq.loc.pre}
 p\bar\phi  &
 =\mathcal{R}_{ij} (u_iu_j \bar\phi) 
 - \left[ 2(-\De)^{-1}\pa_i(u_i u_j \pa_j\bar\phi) +(-\De)^{-1}(u_i u_j \pa_{ij}\bar\phi) \right]\\
 & \quad- \left[2 (-\De)^{-1}\na \cdot (p \na \bar\phi ) -(-\De)^{-1}( p\De\bar\phi)\right]\\
&=: \mathcal{R}_{ij} (u_iu_j \bar\phi)  +p_1+p_2.
\end{split}\end{equation}
(Note that here we have used uniqueness of solutions to the Poisson equation: if $\Delta f =0$ in $\RR^3$ and $f\in L^p$ for any $p\in [1,\infty)$ then $f=0$, which can be proved using mollification and Liouville's theorem.) Thus 
\[\bs
\phi \nabla p &=  \phi \nabla (p\bar\phi )
= \phi \nabla \mathcal{R}_{ij} (u_iu_j \bar\phi) + \phi \nabla (p_1+p_2).
\end{split}
\]
We now show that
\begin{align}
\label{pressure1}
\| \na^k p_1 \|_{L^{\infty } (E)} 
&\lec_k \| u\|^2_{L^{\infty } (B_1)},\\
\label{pressure2}
\| \na^k p_2 \|_{L^1_tL^{\infty }_x (E)} 
&\lec_k \| p \|_{L^{1}(B_1)} 
\end{align}
for every integer $k\geq 0$. Then, setting $\Ga\coloneqq \phi \nabla (p_1+p_2)$, the claim follows from the product rule.

Since every term included in $p_1$ or $p_2$ involves at least one derivative falling on $\bar\phi$, and we have $\text{dist}(E, \supp(\na \bar\phi)) \geq \text{dist}(\supp(\phi), \{\bar\phi=1\}^c)>c_{\phi,\bar\phi}>0$ by the assumption, $p_1$  and $p_2$ satisfy
 \[
|\na^k p_1 (x,t) | \lec\int \left( \frac{|u(y)|^2 |\nabla \bar\phi (y) |}{|x-y|^{2+k}} +\frac{|u(y)|^2 |\na^2 \bar\phi (y)|}{|x-y|^{1+k}}    \right) \d y \lec_{k,\phi,\bar\phi} \| u \|_{L^{\infty}(B_1)}^2
\]
and
\eqnb\label{est_on_nak_p2}
|\na^k p_2 (x,t) | \lec\int \left( \frac{|p|\, |\nabla \bar\phi |}{|x-y|^{2+k}} +\frac{|p|\, |\na^2 \phi |}{|x-y|^{1+k}}    \right) \d y \lec_{k,\phi,\bar\phi} \| p \|_{L^{1 }(B_1)},
\eqne
for any $x\in E$, from which \eqref{pressure1} and \eqref{pressure2} follow, respectively.
\end{proof}

\subsection{Proof of Theorem~\ref{thm_bootstrapping}}\label{sec_bootstrapping}
We now discuss our new bootstrapping scheme, which proves Theorem~\ref{thm_bootstrapping}. We will use use a number of commutator estimates, which we discuss in Lemmas~\ref{lem_tricks}--\ref{lem_comm.p} in Section~\ref{sec_commutator_est} below.
\begin{proof}[Proof of Theorem~\ref{thm_bootstrapping}.]
In the proof we abuse the notation by letting $\{ Q_\ell \}_{\ell \in  \frac{\mathbb{N}}2} $ be a sequence of parabolic cylinders $Q_\ell  \coloneqq (-r(\ell )^2, 0)\times B_{r(\ell )}$, where $r(\ell )$ is a radius that is strictly decreasing in $\ell $, that satisfies $Q_1:= (-1,0)\times B_1$ and $\bigcap_{\ell \geq 1 } Q_\ell  = (-\left(\frac12\right)^{2s},0)\times B_\frac12$. In particular, $Q_{\ell +1}\Subset Q_{\ell +1/2} \Subset Q_\ell $. We set $\phi_\ell  \in C_c^\infty (Q_{\ell -1/2};[0,1])$ be such that $\phi_\ell =1 $ on $Q_\ell $.

In what follows, we consider several equations of the form $(\p_t+\La^{2s})v = \sum_i f_i$, where the right hand side is given as a finite summation of forcing terms $f_i$. To deduce a certain regularity of $v$, we decompose the equation into $(\p_t+\La^{2s})v_i =  f_i$ and apply the parabolic regularity estimates in Lemma~\ref{lemma:para.est} to each equation $(\p_t+\La^{2s})v_i= f_i$. For the simplicity, if $v_i$ satisfies the regularity required on $v$, we say ``some regularity of $f_i$ gives the required regularity of $v$" below.  \\

\noindent\texttt{Step 1.}  We show that $\La^{2s-1+\varepsilon } (u\phi_2)\in L^{2}_tL^{\frac6{1+4\varepsilon }}_x$ for $\varepsilon \in (0,1/2]$.\\

Multiplying \eqref{fNS} by $\phi_2$ and using Lemma~\ref{lem_pressure_est} to write the pressure term in terms of the expressions involving the Riesz transform and the remainder we obtain 
\begin{equation}\begin{split}
\label{eqn.uphi}
(\pa_t + \La^{2s}) (u\phi_{2}) 
&= \nabla\cdot F + [\La^{2s},\phi_{2}] u  
+\left( u\otimes u :\na \phi_{2} - \nabla \phi_{2} \mathcal{R}_{ij} (u_iu_j \phi_{\frac32})   + u\pa_t \phi_{2}\right)+ \Ga \\
&=: \na \cdot F + G_1 + G_2 + \Ga,
\end{split}\end{equation}
where
\[
F\coloneqq -(u\otimes u\phi_{2}) +  \phi_{2 } \mathcal{R}_{ij} (u_iu_j \phi_{\frac32})\I.
\]

First of all, $G_2\in L^{\infty}_t L^p_x$ for any $p\in(1,\infty)$ which implies $\Lambda^{\ga } (u\phi_{2} )\in L^{\infty}_{t}L^p_{x}$ for any $\ga \in [0 , 2s )$ and $p\in (1,\infty)$ by \eqref{para.est}. Therefore, $G_2$ gives the required regularity of $u\phi_2$.  
  
Next, $\Ga\in  {L^1_tW^{k,p}_x}$ for any $k\in \mathbb{N}\cup\{0\}$ and $p\in [1,\infty]$ due to \eqref{est.remainder}, so that we have $u\phi_{2}\in L^\infty_{t}W^{k,p}_{x} $ for any $k\in \mathbb{N}\cup \{0\}$ and $p\in [1, \infty]$ by \eqref{para.est.0}, and then $\La^\be(u\phi_{2})  \in L^\infty_tL^p_x$ for any real number $\be \geq 0$ and $p\in [1, \infty]$ by interpolation. In particular, it gives the required regularity of $u\phi_2$.

As for the commutator term $G_1$, we have $G_1 \in L^{2}_{t,x}$ by Lemma~\ref{lem_1.10}, which gives the required regularity of $u\phi_{2}$ by \eqref{para.est}. (The fact that $\varepsilon\in (0,\frac 12]$ is used here.)

The term $\na \cdot F$ is the most subtle to handle. Indeed, noting that the assumption gives $F\in L^\infty_tL^p_x$ for every $p\in (1,\infty )$ the parabolic estimate \eqref{para.est} (applied with $\alpha \coloneqq -1$) only allows to estimate $\La^{\overline{\alpha } } (u\phi_2)$ for $\overline{\alpha } < 2s-1$ (as $\sigma +(1/r+1/\overline{r})\geq 0$). In order to reach above the $2s-1$ threshold we show below that 
\eqnb\label{La_2eps_to_show}
\| \La^{\frac32\varepsilon } F  \|_{L^2_tL^{q}_{x}}\leq C (\| u\|_{L^{\infty}_{t,x} (Q_1)}, \| u \|_{L^2_t W^{s,2}_x(Q_1)} )
\eqne
for $q := \frac 6{1+4\varepsilon}$. This gives the required regularity of $u\phi_2$ by using $L^q$-boundedness of the Riesz transform and applying \eqref{para.est} with $\alpha \coloneqq -1+3\varepsilon /2$, $\overline{\alpha }\coloneqq 2s-1+\varepsilon$, $\sigma \coloneqq 0$ and $r,\overline{r}\coloneqq 2$.

Therefore, we are left to prove \eqref{La_2eps_to_show}. We first use the fractional Leibniz rule \eqref{fractional_leibniz}  to get
\begin{align*}
\| \La^{\frac32\varepsilon } (u_i u_j \phi_{\frac32} ) \|_{L^{q}_x} 
&=\| \La^{\frac32\varepsilon } (u_i u_j \phi_2\phi_{\frac32} ) \|_{L^{q}_{x}} \\
&\lec \norm{\La^{\frac32\varepsilon}(u\phi_{\frac32})}_{L^{q}_x}\norm{u\phi_2}_{L^\infty_x}
+ \norm{\La^{\frac32\varepsilon}(u\phi_2)}_{L^{q}_x}\norm{u\phi_{\frac32}}_{L^\infty_x}\\
&\lec \left( \|\La^{\frac32\varepsilon}(u\phi_{\frac32})\|_{L^{q}_x} +\|\La^{\frac32\varepsilon}(u\phi_2)\|_{L^{q}_x}\right) \norm{u}_{L^\infty(B_1)}.
\end{align*}
Since we have $\supp_x(\phi_{\frac32}) 
\coloneqq \{ x: (x,t)\in \supp(\phi_{\frac32}) \text{ for some } t\}
\Subset B_1$ from its construction, we can choose sufficiently small $R>0$ such that $B_{2R}(\supp_x(\phi_{\frac32}))   \coloneqq \supp_x(\phi_{\frac32}) +B_{2R}  \subset B_1 $. For such $R$ and fixed $t\in (-1,0)$, we obtain the following interpolation inequality for $\varepsilon \in (0,2s)$
\begin{align*}
\norm{\La^{\frac32\varepsilon}(u\phi_{\frac32})}_{q}
&\lec \left(\int_{|y|<R} + \int_{|y|\geq R}\right) \frac{\norm{u\phi_{\frac32}(\cdot +y ) - u\phi_{\frac32}}_{q}}{|y|^{3+\frac32\varepsilon}} \d y\\
&\lec \int_{|y|< R}\frac{\norm{u\phi_{\frac32}(\cdot +y ) - u\phi_{\frac32}}_{2}^\frac2q }{|y|^{(3+2s)\frac1q}}\cdot \frac {\norm{u\phi_{\frac32}}_{\infty}^{1-\frac2q}}{|y|^{\frac 3{q'} -\frac32\varepsilon- \frac{2s}q} } \d y
+\int_{|y|\geq R} \frac {\norm{u\phi_{\frac32}}_{q} }{|y|^{3+\frac32\varepsilon} }\d y \\
&\lec \left[\int_{B_R(\supp_x(\phi_{\frac32}))} \int_{|y|< R} \frac{|u\phi_{\frac32}(x+y)-u\phi_{\frac32}(x)|^2}{|y|^{3+2s} }\d y\,\d x \right]^{\frac1q}\norm{u\phi_{\frac32}}_{\infty}^{1-\frac2q} + \norm{u\phi_{\frac32}}_{q}\\
&\lec \left[\int_{B_1} \int_{B_1} \frac{|u\phi_{\frac32}(y)-u\phi_{\frac32}(x)|^2}{|y-x|^{3+2s} }\d y\,\d x \right]^{\frac12} + \norm{u}_{L^\infty(B_1)}\\
&\lec \norm{u}_{\dot{W}^{s,2}(B_1)} + \norm{u}_{L^\infty(B_1)},
\end{align*}
where we used the $L^p$-interpolation inequality in the second line, H\"older's inequality and the fact that $u\phi_{\frac32}(x+y) = u\phi_{\frac32}(x)=0$ for $|y|<R$ and $x\notin B_R(\supp_x(\phi_{\frac32}))$ in the third line, Young's inequality and change of variable $y\mapsto y-x$ in the fourth line, as well as the Leibniz rule 
\[
\norm{u\phi_{\frac32}}_{\dot{W}^{s,2}(B_1)} \lec \norm{u}_{\dot{W}^{s,2}(B_1)}\norm{\phi_{\frac32}}_{L^\infty(B_1)} + \norm{u}_{L^\infty(B_1)}\norm{\phi_{\frac32}}_{\dot{W}^{s,2}(B_1)}
\lec \norm{u}_{\dot{W}^{s,2}(B_1)} +\norm{u}_{L^\infty(B_1)}
\] 
in the last line. Similarly one can show the same upper bound for $\norm{\La^{\frac32 \varepsilon} (u\phi_2)}_{L^p_x}$, up to a constant multiple. Thus \eqref{La_2eps_to_show} follows by using the $L^q$-boundedness of the Riesz transforms. \\ 

\noindent\texttt{Step 2.} We show that $\La^{\be} (u\phi_7)\in L^{\infty}_tL^p_x$ for every $p\in [6,\infty)$, $\be \in [0,2s)$.\\

We will use \eqref{eqn.uphi} with $\phi_{2}$, $\phi_{\frac32}$ replaced by the cutoffs $\phi_i$, $\phi_{i-\frac12}$ for $i=3,\cdots,7$ in each of the finite number of the iterations below. We note that $G_2$ and $\Ga$ (after replacing the cutoffs) gives the required regularity by the same argument in \texttt{Step 1}. Therefore, we only need to focus on $\na \cdot F$ and $G_1$ (with the replacement of the cutoffs).\\

\noindent\texttt{Step 2a.}  $\La^{2s-1+\varepsilon } (u\phi_5)\in L^{\infty}_tL^p_x$ for every $p\in [6,\infty) $ and $\varepsilon \in (0,s-\frac34)$. \\

To deal with $\nabla \cdot F$, we can control $\La^{2s-1+\varepsilon }F$ using the improved regularity of $u$ in \texttt{Step 1};
\eqnb\label{first2terms}\begin{split}
\| \La^{2s-1+\varepsilon } F \|_{L^{2}_tL^{\frac6{1+4\varepsilon }}_x}&\leq 
\| \La^{2s-1+\varepsilon } (u\otimes u \phi_{3})\|_{L^{2}_tL^{\frac6{1+4\varepsilon }}_x}
+\| \La^{2s-1+\varepsilon } (\phi_{3} \mathcal{R}_{ij} (u_iu_j \phi_{5/2}) \|_{L^{2}_tL^{\frac6{1+4\varepsilon }}_x} \\
&\leq C\left( \| u \|_{L^{\infty}_{t,x} (Q_1)} , \| \La^{2s-1+\varepsilon }(u \phi_2) \|_{L^{2}_tL^{\frac6{1+4\varepsilon }}_x} \right)  
\end{split}
\eqne
The last two lines follow from the fractional Leibniz rule \eqref{fractional_leibniz} and the facts that $\phi_3= \phi_3\phi_2^2$ and $\phi_{\frac52}= \phi_{\frac52}\phi_2^2$. Then, using \eqref{para.est} and $L^p$-boundedness of Riesz transform, it gives $\La^{2s-1+\varepsilon } (u\phi_3) \in L^{2}_tL^p_x$ for any $p\in [6,\infty )$ and $\varepsilon  \in (0, \frac12)$.

As for $G_1$, since \texttt{Step 1} and \eqref{norm_comparisons} imply that $u\in L^2_tW^{2s-1+\varepsilon, \frac6{1+4\varepsilon} }_x(Q_2)$ (because of $\frac6{1+4\varepsilon}\geq 2$), we have $ [\La^{2s}, \phi_3 ] u \in L^{2}_tL^{\frac6{1+4\varepsilon}}_x$ by Lemma~\ref{lem_1.10}. Thus, it follows from \eqref{para.est} that $\La^{2s-1+\varepsilon } (u\phi_3) \in L^{2}_tL^{p}_x$ for any $p\in [6,\infty )$ and $\varepsilon  \in (0, s-\frac34)$.\\

Repeating the same argument twice, we can improve to the required regularity of $u\phi_5$;
\begin{align*}
\La^{2s-1+\varepsilon } (u\phi_3)\in L^{2}_tL^p_x, \quad \forall p\in [6,\infty)
&\implies \La^{2s-1+\varepsilon } (u\phi_4)\in L^{\frac s{1-s}}_tL^p_x, \ \qquad \forall p\in [6,\infty)\\
&\implies \La^{2s-1+\varepsilon } (u\phi_5)\in L^{\infty}_tL^p_x, \qquad\quad \forall p\in [6,\infty).
\end{align*}
Indeed, the first implication follow from $ [\La^{2s}, \phi_4 ] u, \La^{2s-1+\varepsilon}F \in L^{2}_tL^{p}_x$ for $p\in [6,\infty)$, (for $F$ with $\phi_4, \phi_{7/2}$). Then the second one follows from $ [\La^{2s}, \phi_5 ] u \in L^{2}_tL^{p}_x$ for $p\in [6,\infty)$ and $\La^{2s-1+\varepsilon}F\in L^{\frac s{1-s}}_tL^p_x$ for $p\in [6,\infty)$ (for $F$ with $\phi_5, \phi_{9/2}$).  \\

\noindent\texttt{Step 2b.} $\nabla (u\phi_6)\in L^{\infty}_tL^p_x$ for every $p\in [6, \infty]$. \\

As for $\nabla\cdot F$, since if $(\pa_t + \La^{2s}) w = \na \cdot F$ then $(\pa_t + \La^{2s})\na w = \na (\na \cdot F)$, using \eqref{para.est} we get
\begin{align}\label{na_w_calc}
\norm{\na w}_{L^\infty_{t,x}} 
\lec \norm{\La^{2s-3+\varepsilon} \na(\na \cdot F) }_{L^\infty_t L^p_x} 
\lec \norm{\La^{2s-1+\varepsilon}F}_{L^\infty_t L^p_x}.
\end{align}
Therefore, the inclusion $\La^{2s-1+\varepsilon } F\in L^{\infty}_tL^p_x$ for every $p\in [6,\infty)$ and $\varepsilon  \in (0, 1)$  (obtained as in \eqref{first2terms}) gives the required regularity of $u\phi_6$ (as $\na (u\phi_6)\in L^\infty_{t,x}$ implies $\na (u\phi_6)\in L^\infty_{t}L^p_x$ for every $p$).

As for the commutator term $G_1$, we use the decomposition $L^{\infty}_tL^{ p}_x+L^{2}_tW^{k,\infty }_x$ (for $p\in [6,\infty)$) suggested in Lemma~\ref{lem_1.10}. Then the latter part (in $L^{2}_tW^{k,\infty }_x$) gives the required regularity of $u\phi_6$ by \eqref{para.est.0}, while the former part (in $L^{\infty}_tL^{ p}_x$) also does by \eqref{para.est}. (In fact, it even gives the regularity $\La^{2s-\varepsilon_1} (u\phi_6)\in L^\infty_t L^p_x$ for any $p\in (6,\infty)$ and $\varepsilon_1 \in (0, 2s]$.\\

\noindent\texttt{Step 2c.} $\La^{2s-\varepsilon_1} (u\phi_7)\in L^{\infty}_tL^p_x$ for every $p\in [6,\infty)$, $\varepsilon_1  \in (0, 2s]$. \\

As in \texttt{Step 2b}, $G_1$ gives the required regularity on $u\phi_7$. As a consequence of \texttt{Step 2b}, we have $\nabla\cdot F \in L^{\infty}_tL^p_x$ for every $p\in [6,\infty)$. Therefore, it gives the regularity on $u\phi_7$ by \eqref{para.est}.\\ 

\noindent\texttt{Step 3.} We show that $\na (\phi_8 \La^{\ga} u)\in L^{\infty}_tL^p_x$  
for every $p\in [1,\infty]$, $\ga\in (s,1)$.\\

Since $\phi_8 \La^{\ga} u$ is compactly supported, it is enough to obtain the regularity for large $p$. As a byproduct of the proof, we also get $\La^{1-\ga}(\phi_8\La^{\ga}u) \in L^\infty_t L^{\infty-}_x$. 
 
We consider the equation for $\phi_8 \La^{\gamma}u$,
\begin{align*}
(\pa_t + \La^{2s} )( \phi_8 \La^\ga u) 
&= \underbrace{- \La^{\ga}(\phi_8 (u\cdot \na)u + \phi_8 \na p )}_{=:  \La^\ga F_1 + \La^\ga \Ga}
+\underbrace{[\La^{2s},\phi_8 ]\La^{\ga} u + [\La^{\ga},\phi_8 ] ((u\cdot\na) u + \na p)}_{=: G_3} 
+ \underbrace{\La^\ga u\p_t \phi_8}_{=:G_4} 
\end{align*}
where $F_1$ and $\Ga$ are determined by Lemma~\ref{lem_pressure_est}; in particular, 
\[ 
F_1\coloneqq \phi_8 ((u\cdot \na )u + \na \mathcal{R}_{ij} (u_i u_j \phi_{15/2})), \quad \Ga\in L^2_tW^{k,\infty}_x \quad \forall k\in \mathbb{N}\cup\{0\}.
\]

To deal with the last term $G_4$, we remark that $\La^{\ga} u \in L^\infty_tL^p_x + L^2W^{k,\infty}_x(Q_{\frac{15}2})$ for any $k\in \mathbb{N}\cup\{ 0\}$ and $p\in [6,\infty)$. Indeed, $\La^{\ga}(u\phi_7)\in L^\infty_tL^p_x$ follows from \texttt{Step 2} and $\La^{\ga}(u(1-\phi_7)) \in L^2_tW^{k,\infty }(Q_{\frac{15}2})$ (for every $k\geq 0$), which follows from \eqref{trick2a_inlem}. Therefore, $G_4\in L^{\infty}_tL^p_x+L^2_tW^{k,\infty }_x$ for all $p\in [1,\infty)$, $k\geq 0$, which gives the required regularity on $\na (\phi_8 \La^{\ga} u)$.

Moreover, $\La^\ga \Ga\in L^2_tW^{k,\infty }_x$ for any integer $k \geq 0$  because of $\Ga \in L^2_tW^{k,\infty }_x$. (Indeed, $\norm{\La^\ga h}_{L^\infty_x} \lec \norm{h}_{W^{1,\infty}_x}$.) Thus, $\La^\ga \Ga$ gives the required regularity of $\na (\phi_8 \La^{\ga} u)$, as in \eqref{na_w_calc} above.

As for the commutator terms in $G_3$, we obtain  $[(-\De)^s,\phi_\frac32 ]\La^{\ga} u \in L^{\infty}_tL^p_x + L^2_tW^{k,\infty}_x$ for any integer $k \geq 0$ and $p\in [6, \infty)$ by Lemma~\ref{lem_comm.Deu.var1} (applied with any $\kappa \in (2s+\ga -1,2s)$) and \texttt{Step 2}. Similarly, 
by Lemma~\ref{lem_comm.u} together with \eqref{p_bound}, we have $[\La^{\ga},\phi_8 ] ((u\cdot\na) u + \na p)\in L^{\infty}_tL^p_x + L^1_tW^{k,\infty}_x$ for any integer $k \geq 0$ and $p\in [6, \infty)$
Therefore, these terms give the required regularity on $\na (\phi_8 \La^{\ga} u)$ via \eqref{para.est} and \eqref{para.est.0}.

Lastly, we consider $\La^\ga F_1$. We note that $\La^{2s-1-\varepsilon_2 } F_1\in L^{\infty}_tL^p_x$ for every $p\in [6, \infty) $ and $\varepsilon \in (0, 2s]$, as a consequence of \texttt{Step 2} (which gives in particular that $\La^{2s-1-\varepsilon_2}(u\phi_7), \La^{2s-1-\varepsilon_2}\na (u\phi_7)\in L^\infty_t L^p_x$) and the fractional Leibniz rules as in \eqref{first2terms}. Therefore, by \eqref{para.est} (applied with $\alpha \coloneqq 2s-1-\varepsilon_2 - \gamma $, $\overline{\alpha } \coloneqq 1$, $\sigma\coloneqq 0$, $r=\overline{r} \coloneqq \infty$), it gives the required regularity on $\na (\phi_8 \La^{\ga} u)$ when $\varepsilon_2 \in (0, 4s-3)$.\\

\noindent\texttt{Step 4.} We show that $\phi_9\na^2 u \in L^\infty_{t,x}$.\\

It is sufficient to obtain $\na (\phi_9 \na u)\in L^\infty_{t,x}$ because $\na \phi_9\na u = \na \phi_9 \na (u\phi_6) \in  L^\infty_{t,x}$ by \texttt{Step 2b}. 

We consider the equation for $\phi_9\na u$,
\begin{equation}\begin{split}
\label{eqn.Dk_uphi}
(\pa_t + \La^{2s}) (\phi_9 \na u ) 
=&\   \na(\na \cdot  F_2) + \underbrace{[\La^{2s},\phi_9] \na u}_{=: G_5}  \\
&\quad+\underbrace{\na(u\otimes u ) :\na \phi_9 - \na( \nabla \phi_9 \mathcal{R}_{ij} (u_iu_j \phi_{17/2}) )  + \na u\pa_t \phi_{9}+ \na \Ga}_{=:G_6} ,
\end{split}\end{equation}
where $F_2\coloneqq -(u\otimes u)\phi_{9} +  \phi_{9 } \mathcal{R}_{ij} (u_iu_j \phi_{17/2})\I.$ 

Since \texttt{Step 2b} and \eqref{est.remainder} give that $G_6\in L^\infty_tL^p_x+ L^1_t W^{k,\infty}$ for any integer $k\geq 0$ and $p\in [1,\infty)$, it gives
the required regularity on $\na (\phi_9 \na u)$ via \eqref{para.est} and \eqref{para.est.0}. 

By Lemma~\ref{lem_comm.nau} with the results of \texttt{Step 2b} and \texttt{Step 3}, we have $[\La^{2s}, \phi_9]\na u\in L^\infty_tL^{\infty-}_x+ L^2_tW^{k,\infty}$ for integer $k\geq 0$. Therefore, this commutator term also gives the desired regularity via \eqref{para.est} and \eqref{para.est.0}. 

Lastly, we note that $\La^{\ga} \na F\in L^\infty_tL^p_x $ for any $\ga\in (s,1)$ and $p<\infty$, which follows from the fractional Leibniz rule \eqref{fractional_leibniz1}, \texttt{Step 2b, Step 2c, Step 3} and by noting that 
\begin{align*}
 \phi_9\La^{\ga} \na u  = \phi_9  \na (\phi_8\La^{\ga} u  ) \in L^\infty_tL^p_x, \quad \forall  p\in [1,\infty]. 
\end{align*} 
Therefore, by \eqref{para.est} (applied with $\alpha \coloneqq \gamma -1$, $\overline{\alpha } \coloneqq 1$, $\sigma =0$, $r=\overline{r}\coloneqq \infty$), it gives the required regularity of $\na (\phi_9 \na u)$. 
\end{proof}

\subsection{Commutator estimates}\label{sec_commutator_est}
In this section, we prove several commutator estimates of the form $[\La^\be, \phi] G$, used in the bootstrapping argument above. The main difficulty of these estimates is to control the commutators by local information in space, while fractional laplacians involve global information. This results in a number of tail estimates, for which we develop a technique that allows us to estimate the tails of $u$ and $\La^\ga u$, where $\ga \in [s,s+1)$, using only $\mathcal{M} (\La^s u )$ and a local mass of $u$, which we state in the lemma below. These tail estimates are the heart of this section, and will be used repeatedly in the commutator estimates that follow in Lemmas~\ref{lem_1.10}--\ref{lem_comm.u}.  
\begin{lemma}[The main tail estimates]
\label{lem_tricks}
Let $s\in (0,1)$, $R>0$ and $\rho\in (0,R/2)$. Choose $\chi_\rho \in C_c^\infty (B_{2\rho };[0,1])$ satisfying $\chi_\rho =1$ on $B_\rho$. Then, for every integer $k\geq 0$,
\eqnb\label{trick1}
\left\| \int \frac{u(y)}{|x-y|^{3+\be}}  (1-\chi_\rho )(x-y) \d y \right\|_{W^{k,\infty } (B_R )} \lec_{k,R,\rho, \be}  
\| \cM (\La^s u) \|_{L^2(B_R)}
+ \| u\|_{L^1 (B_R)} 
\eqne
and
\eqnb\label{trick2}
\left\| \int \frac{\La^\ga u(y)}{|x-y|^{3+\be}}  (1-\chi_\rho )(x-y) \d y \right\|_{W^{k,\infty } (B_R )} \lec_{k, R,\rho,\be, \ga}  
\| \cM (\La^s u) \|_{L^2(B_R)}
\eqne
for $\be >s$ and $0\le \ga-s<1$. 
\end{lemma}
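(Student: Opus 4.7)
The plan is to prove both tail estimates by a pointwise dyadic–annular decomposition of the integral around $x$, together with the Hardy–Littlewood maximal inequality and, for \eqref{trick1}, the Riesz potential representation $u=c_s\Lambda^{-s}\Lambda^s u$. Throughout, write $K(z)\coloneqq (1-\chi_\rho(z))|z|^{-3-\beta}$, so the two integrals are the convolutions $K\ast u$ and $K\ast\Lambda^\gamma u$. Since $\partial^\alpha(K\ast v)=(\partial^\alpha K)\ast v$ and $\partial^\alpha K$ enjoys exactly the same qualitative properties as $K$ (smooth on $\R^3$, vanishing on $B_\rho$, bounded by $C_{k,\rho,\beta}$, and with polynomial decay $|z|^{-3-\beta-|\alpha|}$ at infinity), every $W^{k,\infty}(B_R)$-estimate reduces at once to an $L^\infty(B_R)$-estimate with a constant that additionally depends on $k$. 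Thus it suffices to treat the case $k=0$.

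For \eqref{trick2} with $\gamma=s$, fix $x\in B_R$ and split into dyadic annuli $A_j(x)\coloneqq\{y:2^j\rho\leq |x-y|<2^{j+1}\rho\}$. On $A_j(x)$ one has $|K|\lec(2^j\rho)^{-3-\beta}$, while by the definition of $\mathcal{M}$,
\[
\int_{|x-y|<2^{j+1}\rho}|\Lambda^s u(y)|\,\d y\leq |B_{2^{j+1}\rho}|\,\mathcal{M}(\Lambda^s u)(x)\lec (2^j\rho)^3\,\mathcal{M}(\Lambda^s u)(x).
\]
The $j$-th term is thus bounded by $C\,2^{-j\beta}\rho^{-\beta}\mathcal{M}(\Lambda^s u)(x)$; summing the geometric series (convergent since $\beta>s>0$) gives the pointwise bound $|I(x)|\lec_{\beta,\rho}\mathcal{M}(\Lambda^s u)(x)$. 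For $\gamma\in(s,s+1)$, we write $\Lambda^\gamma u=\Lambda^{\gamma-s}\Lambda^s u$ and transfer $\Lambda^{\gamma-s}$ onto $K$ by self-adjointness, producing a new kernel $\widetilde K\coloneqq \Lambda^{\gamma-s}K$. Since $\gamma-s\in(0,1)$ and $K\in C^\infty(\R^3)$ has polynomial decay, $\widetilde K$ is smooth with polynomial decay (worst rate $|z|^{-3-(\gamma-s)}$ at infinity, still summable), so the identical dyadic argument yields the same pointwise bound.

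For \eqref{trick1} split $u=u\mathbf{1}_{B_R}+u\mathbf{1}_{B_R^c}$. The local piece contributes at most $\|K\|_{L^\infty(\{|z|\geq\rho\})}\|u\|_{L^1(B_R)}\lec \rho^{-3-\beta}\|u\|_{L^1(B_R)}$, accounting for the explicit $\|u\|_{L^1(B_R)}$ term on the right-hand side. For the tail piece we use $u(y)=c_s\int|y-z|^{s-3}\Lambda^s u(z)\,\d z$ and Fubini to rewrite it as $c_s\int\Lambda^s u(z)L(x,z)\,\d z$ with $L(x,z)\coloneqq \int_{B_R^c}K(x-y)|y-z|^{s-3}\,\d y$. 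A dyadic analysis of the inner $y$-integral (splitting $y$-space according to whether $|x-y|$ or $|y-z|$ dominates, and separately handling the intermediate regime $|x-y|\sim|y-z|$) shows that $L(x,z)$ inherits decay/structure of an effective kernel in the variable $x-z$ whose dyadic averaging of $\Lambda^s u$ once again closes by the Hardy–Littlewood maximal inequality, yielding a pointwise bound by $\mathcal{M}(\Lambda^s u)(x)$. Taking the supremum over $x\in B_R$ and combining with the reduction step finishes the proof; the $L^2(B_R)$-norm on the right in the lemma statement is the form in which the pointwise bound $|I(x)|\lec\mathcal{M}(\Lambda^s u)(x)$ is recorded for use in the bootstrapping argument, where a further $L^2_t$ integration transforms it into the global quantities controlled by the energy estimate.

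The technically most delicate step is the multi-scale analysis of the composite kernel $L(x,z)$ in the tail of \eqref{trick1}: one must verify that convolving the truncated power-law $K(x-\cdot)\mathbf{1}_{B_R^c}$ with the Riesz potential $|\cdot-z|^{s-3}$ produces a kernel whose behavior in $x-z$ still permits a convergent dyadic summation against $\Lambda^s u$. The balance of exponents $\beta>s$ is precisely what ensures that the $K$-decay more than compensates for the loss of $s$ introduced by the Riesz potential; an analogous observation is what forces the restriction $\gamma-s<1$ in \eqref{trick2}, because otherwise the operator $\Lambda^{\gamma-s}$ could not be moved harmlessly onto $K$ via the single-scale formula.
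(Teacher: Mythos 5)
There are two genuine gaps, one of them fatal to your treatment of \eqref{trick1}.

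\textbf{The main gap.} For the tail of \eqref{trick1} you split $u=u\mathbf{1}_{B_R}+u\mathbf{1}_{B_R^c}$ and represent the far piece via $u=c_s\La^{-s}\La^s u$, reducing matters to the composite kernel $L(x,z)=\int_{B_R^c}K(x-y)|y-z|^{s-3}\,\d y$. This kernel does \emph{not} have summable decay: for $|x-z|=D$ large, the region $\rho\le|x-y|\le D/2$ (so $|y-z|\sim D$) contributes, since the integrand is positive and there is no cancellation,
\[
\int_{\rho\le |x-y|\le D/2}|x-y|^{-3-\be}\,\d y\cdot D^{s-3}\;\sim\;\rho^{-\be}\,D^{s-3},
\]
so $L(x,z)\gtrsim\rho^{-\be}|x-z|^{s-3}$. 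Testing this against $|\La^s u|$ over dyadic shells gives $\sum_l 2^{l(s-3)}\cdot 2^{3l}\sup_{R'}\dint_{B_{R'}}|\La^s u|=\sum_l 2^{ls}=\infty$: the condition $\be>s$ only tames the regime $|x-y|\gtrsim|x-z|$, not this intermediate one, and a Riesz potential of $\La^s u$ cannot be dominated by its maximal function. The lemma is nevertheless true because of a cancellation your decomposition destroys: one must first subtract a local mean, writing $u-(u)_\varphi$ with $\varphi\in C_c^\infty(B_R)$, $\int\varphi=1$ (this subtraction is also the source of the $\|u\|_{L^1(B_R)}$ term, not the crude split by $\mathbf{1}_{B_R}$). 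The far-field Riesz representation then involves the \emph{difference} $|y-z|^{s-3}-|w-z|^{s-3}$, which gains one power of decay ($\sim|y-w|\,|z|^{s-4}$) and makes the analogue of the intermediate regime summable (this is exactly the paper's estimate $\int_{|y-x|\le 2^j\rho}|u-(u)_\varphi|\lec 2^{j(3+s)}\sup_{R'\ge 4R}\dint_{B_{R'}}|\La^s u|$, whose growth $2^{j(3+s)}$ against the kernel decay $2^{-j(3+\be)}$ is where $\be>s$ is really used).

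\textbf{A secondary gap.} In both estimates your endpoint is a pointwise bound by $\cM(\La^s u)(x)$, and you then take $\sup_{x\in B_R}$. But $\|\cM(\La^s u)\|_{L^\infty(B_R)}$ is not controlled by $\|\cM(\La^s u)\|_{L^2(B_R)}$, so this does not give the stated right-hand side; your closing remark that the $L^2$ norm arises from ``a further $L^2_t$ integration'' misreads the statement, which is a fixed-time spatial estimate. The fix is that every average occurring in your dyadic sums is over a ball of radius $\ge\rho$: such an average centred at $x\in B_R$ is comparable to $\dint_{B_{2r}(w)}|\La^s u|\le\cM(\La^s u)(w)$ for every $w$ in a subset of $B_R$ of measure $\gtrsim\rho^3$, hence is bounded by $\|\cM(\La^s u)\|_{L^1(B_R)}\lec\|\cM(\La^s u)\|_{L^2(B_R)}$ — a bound \emph{uniform in} $x$. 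This is the paper's inequality \eqref{est.max}, and with it your argument for \eqref{trick2} (including the transfer of $\La^{\ga-s}$ onto the dyadic pieces of the kernel, whose decay you should still verify quantitatively as in \eqref{eta_j_for_large_y_for_tricks}) can be repaired; the argument for \eqref{trick1} cannot, without reintroducing the mean subtraction.
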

\begin{rem}\label{rem.trick}
\eqref{trick2} is also valid with $\La^\ga$ replaced by the classical derivative $\na$ (when $\ga=1$). 
\end{rem}

\begin{proof}[Proof of Lemma~\ref{lem_tricks}.]
We consider \eqref{trick1} first. We let  $\varphi \in C_c^\infty (B_R ; [0,\infty ))$ be such that $\int \varphi \, \d x =1$, and we denote $\ph$-mean of $u$ by $(u)_\varphi \coloneqq \int u \varphi \d x$. We will show that 
\begin{align}\label{est.t1.suff}
\int_{|y-x|\leq 2^j \rho} |u(y) -(u)_\varphi| \d y \lec 2^{j(3+s)} \sup_{R' \geq 4R} \dint_{B_{R'}} |\La^s u|
\end{align}
for $x\in  B_R$ and $j\in \mathbb{N}$. Then for each such $x$ and integer $k\geq 0$ 
\begin{equation}\label{est.It11}\begin{split}
\left| \na^k \int \frac{u(y)-(u)_\varphi }{|x-y|^{3+\be}}  (1-\chi_\rho )(x-y) \d y \right| &
\lec_k \sum_{m=0}^k \int_{|y-x|\geq \rho}  \frac{|(u-(u)_\varphi)(y)|}{|x-y|^{3+\be+m}}\d y\\
&= \sum_{m=0}^k\sum_{j\geq 1}
\int_{2^{j-1}\rho\leq |y-x|<  2^j\rho}  \frac{|(u-(u)_\varphi)(y)|}{|x-y|^{3+\be+m}}\d y\\
&\lec_{k,\rho} \sum_{j\geq 1} 2^{-j(3+\be)} 
\int_{ |y-x|\leq  2^j\rho}  |(u-(u)_\varphi)(y)|\d y \\
&\lec  \sum_{j\geq 1} 2^{-j(\be-s)}  \sup_{R' \geq 4R} \dint_{B_{R'}} |\La^s u|\\
&\lec_{\be,s} \sup_{R' \geq 4R} \dint_{B_{R'}} |\La^s u| ,
\end{split}\end{equation}
 where the third line follows from $|x-y|^{-m}\lec_{\rho,m} 1$ (since $|x-y|\geq \rho$). On the other hand, the remaining part with the $\varphi$-mean can be easily estimated by 
 \[
\int \frac{(u)_\varphi }{|x-y|^{3+\be}}  (1-\chi_\rho )(x-y) \d y
\sim_{s, \rho, \be} (u)_\varphi 
 \lec \| u \|_{L^1 (B_R)}
\]
and all its derivatives vanish. Since for every $R'\geq 4R$ 
\begin{equation}\begin{split}
\label{est.max}
\dint_{B_{R'}} |\La^s u(z)| \d z 
&\lec \dint_{B_R}\dint_{B_{R'+R}(w)} |\La^s u(z)| \d z \, \d w \\
&\lec \int_{B_R}\cM(\La^s u)(w)  \d w 
\lec \| \cM (\La^s u) \|_{L^2(B_R)},
\end{split}\end{equation}
the claim \eqref{trick1} follows.

In order to get \eqref{est.t1.suff}, we fix $j \geq 1$ and write $u= \La^{-s}(\La^s u)$ to get 
\begin{equation}\begin{split}
\label{repr_u_for_tail}
(u-(u)_\varphi)&(y) = \int (u(y) - u(w)) \varphi(w) \d w
=\int (\La^{-s} (\La^s u)(y) - \La^{-s} (\La^s u)(w)) \varphi(w) \d w \\
&\sim_s\iint \left( \frac {1}{|y-z|^{(3-s)}} - \frac {1}{|w-z |^{(3-s)}} \right)  
\La^s u(z) \varphi (w)
\d z \,\d w\\
&=\int_{|z-x|\leq 2^{j+n_0}\rho}  \frac {\La^s u(z)}{|y-z|^{(3-s)}}  
\d z
- \int_{|z-x| \leq 2^{j+n_0}\rho}  \int \frac {\varphi(w)}{|w-z |^{(3-s)}} \d w\,\La^s u(z)  \d z
\\
&\quad +  \iint_{|z-x|> 2^{j+n_0}\rho } \left(\frac {1}{|y-z|^{(3-s)}} - \frac {1}{|w-z |^{(3-s)}} \right)
\La^s u(z)\varphi(w) 
\d z \,\d w \\
&= u_{\loc,1}(y) + u_{\loc,2}  + u_\tail (y)
\end{split}\end{equation}
for $y\in B_{2^j\rho }(x)$, where $n_0=n_0(\rho,R)$ is the smallest integer satisfying $2^{n_0}\rho \geq 4R$. In particular, $n_0\geq 4$ (since $\rho<2R$) and $2^{n_0}\rho < 8R$. As for $u_{\loc,1}$, we have
\begin{equation}\begin{split}
\label{est.uloc1}
\int_{B_{2^j \rho}(x)} |u_{\loc,1}(y)|\d y
&\lec
\int_{B_{2^{j+n_0}\rho}(x)} \int_{B_{2^j \rho}(x)}
\frac {1}{|y-z|^{(3-s)}} \d y \,|\La^s u(z)|\d z \\
&\lec_{s,n_0,\rho} 2^{j(3+s)}\dint_{B_{2^{j+n_0}\rho}(x)} |\La^s u| \d z\\
&\lec 2^{j(3+s)}\dint_{B_{2^{j+n_0+1}\rho}} |\La^s u| \d z\\
&\lec 2^{j(3+s)}\sup_{R'\geq 4R} \dint_{B_{R'}} |\La^s u| \d z ,
\end{split}\end{equation}
where the second inequality follows from the inequality $|y-z|\lec |y-x|+ |z-x|\leq 2^{j}\rho +2^{j+n_0}\rho \leq 2^jc\rho$ (so that $B_{2^j \rho }(x) \subset B_{2^jc\rho}(z)$ when $z\in B_{2^{j+n_0}\rho}(x)$) and the third inequality follows from the fact that $|x|\leq R\leq 2^{n_0-2}\rho$. As for $u_{\loc,2}$, recalling that $x\in B_R$, we obtain
\begin{equation}\begin{split}\label{est.uloc2}
|u_{\loc, 2}|
&\lec \left( \int_{B_{2^{n_0}\rho}(x)}+ \sum_{l=n_0+1}^{j+n_0} \int_{B_{2^{l}\rho}(x)\setminus B_{2^{l-1}\rho}(x)}\right)  
\int \frac {\varphi(w)}{|w-z |^{(3-s)}} \d w \,|\La^s u(z)| \d z\\
&\lec \int_{B_{8R}(x)} \int_{|w-z|\leq 10R} \frac {1}{|w-z |^{(3-s)}} \d w\, |\La^s u(z)| \d z \\
&\hspace{5cm}+ \sum_{l=n_0+1}^{j+n_0} \int_{B_{2^{l}\rho}(x)\setminus B_{2^{l-1}\rho}(x)}\frac {|\La^s u(z)|}{|x-z |^{(3-s)}}   \d z\\
&\lec_{s,R,\rho}  \int_{B_{8R}(x)} |\La^s u|  
+ \sum_{l=n_0+1}^{j+n_0} 2^{l(-3+s)} \int_{B_{2^l\rho }(x)}  |\La^s u(z)| \d z\\
&\lec  \int_{B_{9R}} |\La^s u|  
+ \sum_{l=n_0+1}^{j+n_0} 2^{l(-3+s)} \int_{B_{2^{l+1} \rho}}  |\La^s u(z)| \d z\\
&\lec_\rho 2^{js}\sup_{R'\ge 4R} \dint_{B_{R'}}  |\La^s u|  \d z.
\end{split}\end{equation}
Here, the second inequality follows from $|w-z|\leq |w|+|z-x|+|x|\leq R+8R+R=10R$ (in the first term) and $|w-z| \geq |z-x|-|x|-|w|\geq |z-x|-2R\geq \frac {|x-z|}2$ (in the second term). In the fourth inequality, we used the fact that $|z-x|\geq 2^{n_0}\rho \geq 4R$ which implies that $|z| \leq |z-x| + |x| \leq 2^l \rho + R\leq 2^l \rho + \frac 14\cdot 2^{n_0} \rho \leq 2^{l+1}\rho$ ,

As for $u_\tail$, we decompose the integral region $(B_{2^{j+n_0}\rho}(x))^c$ into sets $\{ z \colon 2^{l-1}\rho<|z-x|\leq 2^l\rho \}$ for $l\geq j+n_0+1$, and we note that on each such set we have
\[
\left| \frac 1{|y-z|^{(3-s)}} -\frac 1{ |w-z|^{(3-s)}} \right|
\lec_s \frac { |w-y|}{|\th y + (1-\th) w - z|^{(4-s)}} 
\lec 2^j2^{-l(4-s)}
\]
for some $\th\in[0,1]$. Indeed, here we used the fact that $|w-y| \leq |x-y|+ |x|+|w|\leq 2^j \rho + 2R \lec_{\rho,R} 2^j $ for $j\geq 1$ and the fact that
\[\begin{split}
| \th y + (1-\th) w - z| &\geq |z-x| - \theta |y-x| - (1-\theta )|w-x|  \geq 2^{l-1}\rho - 2^{j}\rho-2R \\
&\geq 2^{l}\rho \left(\frac 12 - 2^{j-l}-2^{n_0-l-1} \right)\geq 2^{l}\rho \left(\frac 12 - 2^{-5}-2^{-3} \right) \\
&\geq \frac14  \cdot 2^{l}\rho.
\end{split}
\]
where the second line follows from the choice of $n_0$ ($n_0\geq 4$ and $R\leq 2^{n_0-2}$). Therefore, recalling that $x\in B_R$, we have
\begin{equation}\begin{split}\label{est.utail}
|u_\tail (y) | 
&\lec 2^j \sum_{l\geq j+n_0+1} 2^{-l(4-s)} \int_{B_{2^{l}\rho}(x)} |\La^s u | \\
&\lec 2^j \sum_{l\geq j+n_0+1} 2^{-l(4-s)} \int_{B_{2^{l+1}\rho}} |\La^s u |  \\
&\lec 2^j \sum_{l\geq j+3} 2^{-l(1-s)}  
\sup_{R'\geq 4R} \dint_{B_{R'}} |\La^s u|\\
&\lec 2^{js} \sup_{R'\geq 4R} \dint_{B_{R'}} |\La^s u|
\end{split}\end{equation}
for every $y \in B_{2^j \rho }(x)$. Combining \eqref{est.uloc1}--\eqref{est.utail}, we obtain \eqref{est.t1.suff}, as required.

\

Now, we consider \eqref{trick2}. The case $\ga=s$ can be obtained easily as follows;
\[\begin{split}
\left| \na^k \int \frac{\La^s u(y)}{|x-y|^{3+\be}}  (1-\chi_\rho )(x-y) \d y\right| & = \left| \int \La^s u(y) \na^k \left( \frac{(1-\chi_\rho )(x-y)}{|x-y|^{3+\be}} \right)  \d y \right| \\
&\hspace{-3cm}\lec \sum_{m=0}^k \left(
\int_{\rho < |y-x|\leq 2^{n_0} \rho }
+\sum_{j\geq n_0 }  \int_{2^j \rho < |y-x|\leq 2^{j+1}\rho }\right)
 |\La^s u(y) |  |x-y|^{-(3+\be+m)}  \d y \\
&\hspace{-3cm}\lec_{s,\rho, \be}   \int_{|y|\leq 9R  } |\La^s u(y) |   \d y +  \sum_{j\geq n_0 } 2^{-j(3+\be)} \int_{ |y|\leq 2^{j+2}\rho } |\La^s u(y) |   \d y  \\
&\hspace{-3cm}\lec_{\rho,R}  \left( 1+ \sum_{j\geq n_0} 2^{-\be j} \right) \sup_{R'\geq 4R } \dint_{B_{R'}} |\La^s u | \d y  \\
&\hspace{-3cm}\lec_{n_0}  \sup_{R'\geq 4R } \dint_{B_{R'}} |\La^s u |  \d y
\end{split}\]
for every integer $k\geq 0$, $x\in B_R$, where, the second inequality follows from $|y|\leq |y-x|+|x|\leq 2^{n_0 }\rho + R \leq 9R$ (in the first term) and the fact that $|y|\leq |y-x| + |x| \leq 2^{j+1} \rho + R \leq 2^{j+1}\rho + 2^{n_0-2}\rho \leq 2^{j+2} \rho$ (in the second term).\\

As for the case $\ga >s $, we first set $\overline{\eta } (y) \coloneqq \chi_\rho (y ) - \chi_{\rho } (2y )$ and \[
\eta_j (y)\coloneqq \frac{2^{\be j }}{|y|^{3+\be}} \overline{\eta }(2^{-j }y)\qquad \text{ for }j\in \ZZ.\]
Then $\eta_j(y)= 2^{-3j} \eta_0(2^{-j} y)$, so that $\norm{ \eta_j}_1 = \norm{\eta_0}_1$, $\supp \, \eta_j \subset B_{2^{j+1}\rho }\setminus B_{2^{j-1}\rho }$ and $|\na^k \eta_j |\lec_k 2^{j(-3-k)}$ for every $j\in \ZZ$, integer $k\geq 0$, and 
\eqnb\label{eta_j_for_large_y_for_tricks}
|\La^\al \na^k \eta_j (y) |\lec_{\al} |y|^{-(3+k+\al)} \qquad \text{ for } |y| \geq 2^{j+2}\rho, \ \al\in (-1,1).
\eqne
The case $a=0$ trivially holds while $a\in (0,1)$ can be 
verified by writing 
\[
\begin{split}
|\Lambda^\al \na^k \eta_j (y) | &\sim_\al \left| \mathrm{p.v.} \int \frac{\na^k \eta_j (y) -  \na^k \eta_j (z)}{|z-y|^{3+\al }}  \d z\right|\\
&=  \left| \int \eta_j (z) \na^k \left( \frac{1}{|z-y|^{3+\al }} \right) \d z \right| \\
&\lec  \int   \frac{\eta_j (z)}{|z-y|^{3+\al+k }}  \d z  \\
&\lec \frac{1}{|y|^{3+\al +k }} \int |\eta_j| \d z\sim  \frac{1}{|y|^{3+\al +k}},
\end{split}
\]
as required, where, in the second line, we used that $\eta_j(y) =0$ for $|y|\geq 2^{j+2}\rho$ as well as integration by parts (which is allowed since $\eta_j(z)=0$ when $|z-y|\leq 2^{j+1}\rho $ (as then $|z|\geq |y|-|z-y|\geq 2^{j+1}\rho $)), and, in the fourth line, we used the inequality $|z-y|\geq |y|-|z| \geq |y|-2^{j+1}\rho \geq \frac{|y|}2$. The case $\al\in (-1,0)$ follows by skipping the first line. 

Using the auxiliary functions $\eta_j$ we can write 
\[
\frac{1-\chi_{\rho } (x-y)}{|x-y|^{3+\be}} = \sum_{j\geq 1} 2^{-\be j } \eta_j(x-y),
\]
and obtain that, for every integer $k\geq 0$  and every $x\in B_R$,
\eqnb\label{trick2_calculation}\begin{split} 
&\left| \na^k \int \frac{\La^\ga u(y)}{|x-y|^{3+\be}}  (1-\chi_\rho )(x-y) \d y \right| = \left| \sum_{j\geq 1 } 2^{-\be j } \na^k (\eta_j \ast \La^\ga u) (x)\right| \\
&\quad\leq   \sum_{j\geq 1 } 2^{-\be j } \left| (\na^k \La^{\ga-s } \eta_j \ast \La^s u) (x)\right| \\
&\quad\leq   \sum_{m=0}^k \sum_{j\geq 1} 2^{-\be j} \left( 
\left| \int_{|x-y|\leq 2^{j-n_0+5}R }  \na^{m} \La^{\ga-s} \eta_j(x-y) \La^s u(y) \d y\right|  \right.\\
&\hspace{3cm}+ \sum_{l\geq j-n_0+5}\left.
\left| \int_{2^lR < |x-y|\leq 2^{l+1}R}\na^{m} \La^{\ga-s}\eta_j(x-y) \La^s u(y) \d y\right| \right) \\
&\quad\lec_{R,k} \sum_{m=0}^k \sum_{j\geq 1} 2^{-\be j } \left( 2^{j(-3-m-\ga+s)} 
 \int_{|y|\leq 2^{j+2}R} | \La^s u(y) |\d y   \right.\\
&\hspace{3cm}+ \sum_{l\geq j-n_0+5}2^{l(-3-m-\ga+s)} \left.
 \int_{|y|\leq 2^{l+n_0+2}R}| \La^s u(y) |\d y  \right) \\
&\quad\lec_R  \sum_{j\geq 1} 2^{-\be j} \left( 2^{j(-\ga+s)} 
\dint_{B_{2^{j+2}R}} | \La^s u | \d y+ \sum_{l\geq j-n_0+5}2^{l(-\ga+s)} \dint_{2^{l+n_0+2}R}| \La^s u | \d y\right) \\
&\quad\lec  \sup_{R'\geq 4R }\dint_{B_{R'}}| \La^s u | \d y
\end{split}
\eqne
(recall $n_0\geq 4$ is such that $4R\leq 2^{n_0} \rho < 8 R$), as required, where, in the third inequality, we used the bound $|\na^m \La^{\ga-s} \eta_j |\lec_{m,\ga,s} 2^{j(-3-m-\ga+s)}$ and $|y|\leq |y-x|+|x| \leq 2^{j+1}R +R \leq 2^{j+2}R$ (for the first term) as well as \eqref{eta_j_for_large_y_for_tricks} (which is allowed since $|x-y|>2^l R \geq 2^{j-n_0+5} R \geq 2^{j+2}\rho$ by definition of $n_0$) together with the lower bound $|x-y|\gtrsim_R  2^l$ and the inequality $|y| \leq |y-x|+|x| \leq 2^{l+1}R +R \leq 2^{l+2}R$ (for the second term).
\end{proof}

We now move on to the commutator estimates. In  Lemmas~\ref{lem_1.10}--\ref{lem_comm.p}, we consider the commutators of the form $[\La^\be, \phi] G$ where $\be\in (1/2,2)$, $\phi=\phi(x,t)$ is a smooth function supported on $B_{R}\times (-T,0]$ for some $R>0$ and $T>0$, and $G$ is chosen differently in each lemma. Also, we introduce $R_0>R$. Since we use the finite number of candidates for $R,R_0,\phi, \be$ in the bootstrapping argument, we ignore their dependence in the implicit constants of the commutator estimates. We also ignore the dependence on $s$. 
 
\begin{lemma}\label{lem_1.10} Let $s\in (\frac12,1)$. Let $\phi=\phi(x,t)$ be a smooth function compactly supported on $B_{R}\times (-T,0]$ for some $R>0$ and $T>0$. Let $R_0>R$. Then, for any $\bar\varepsilon\in (0,2-2s)$, $r\in [1,\infty ]$ and $p\in(1,\infty )$, we have a decomposition
\[
[(-\De)^s, \phi] u = f + g
\]
where $f$ and $g$ satisfy
\begin{align*}
\norm{f}_{L^r(-T,0;L^p)} &\lec_{\bar\varepsilon} 
\norm{u}_{L^r(-T,0;W^{2s-1+\bar\varepsilon,p}(B_{R_0}))}\\
\norm{g}_{L^2(-T,0; W^{k,\infty})} &\lec_{k} \norm{\cM (\La^s u)}_{L^2([-T,0]\times B_{R})}+\norm{u}_{L^2(-T,0;L^1 (B_R))}, \quad \forall k\in \mathbb{N}\cup \{0\}.
\end{align*}
Furthermore, $g$ is compactly supported in $B_{R}\times (-T,0)$, which gives that
\begin{align*}
\norm{g}_{L^2(-T,0; W^{k,q})} &\lec_{k} \norm{\cM (\La^s u)}_{L^2([-T,0]\times B_{R})}+\norm{u}_{L^2(-T,0;L^1 (B_R))}, \quad \forall k\in \mathbb{N}\cup \{0\}, \ q\in [1,\infty].
\end{align*} 
\end{lemma}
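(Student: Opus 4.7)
The plan is to split $u$ using an auxiliary spatial cut-off in order to separate a local commutator (to be handled by a Kato-Ponce-type estimate) from a nonlocal tail (to be handled by Lemma~\ref{lem_tricks}). First, fix $\rho>0$ small enough that $B_{R+\rho}\Subset B_{R_0}$, and choose $\bar\phi\in C_c^\infty(B_{R_0})$ with $\bar\phi\equiv 1$ on $\overline{B_{R+\rho/2}}$. Writing $u=\bar\phi u+(1-\bar\phi)u$, set
\[f:=[(-\De)^s,\phi](\bar\phi u),\qquad g:=[(-\De)^s,\phi]((1-\bar\phi)u),\]
so that $[(-\De)^s,\phi]u=f+g$.

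For $f$, I would invoke a standard Kato-Ponce-type commutator estimate (as in \cite{kato_ponce, grafakos_oh, li}) of the form
\[\|[(-\De)^s,\phi]w\|_{L^p}\lec_{\phi,\bar\varepsilon}\|w\|_{W^{2s-1+\bar\varepsilon,p}},\qquad \bar\varepsilon\in(0,2-2s),\ p\in(1,\infty),\]
reflecting the fact that $[\La^{2s},\phi]$ is a pseudo-differential operator of order $2s-1$. Applied with $w=\bar\phi u$, together with the standard localization $\|\bar\phi u\|_{W^{2s-1+\bar\varepsilon,p}(\R^3)}\lec_{\bar\phi}\|u\|_{W^{2s-1+\bar\varepsilon,p}(B_{R_0})}$ (valid since $\bar\phi$ is smooth and supported in $B_{R_0}$), this yields the desired bound after taking the $L^r_t$ norm.

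For $g$, the crucial cancellation is $\phi(1-\bar\phi)\equiv 0$, since $\bar\phi=1$ on the spatial support of $\phi$. This gives $(-\De)^s(\phi(1-\bar\phi)u)=0$ and hence
\[g=-\phi\,(-\De)^s((1-\bar\phi)u),\]
which is automatically compactly supported in $\supp\phi\subset B_R\times(-T,0)$. Moreover, for $x\in B_R$ the identity $(1-\bar\phi)(x)=0$ reduces the singular-integral representation \eqref{def.frac} to
\[(-\De)^s((1-\bar\phi)u)(x)=-C_s\int\frac{(1-\bar\phi)(y)\,u(y)}{|x-y|^{3+2s}}\d y,\]
and this integrand is supported where $|x-y|\geq\rho/2$. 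Following the argument of Lemma~\ref{lem_tricks} (the main tail estimate \eqref{trick1} with $\beta=2s>s$, with the factor $(1-\chi_\rho)(x-y)$ replaced by the analogous localizer $(1-\bar\phi)(y)$, which only adds a cut in $y$), I would obtain
\[\|(-\De)^s((1-\bar\phi)u)(\cdot,t)\|_{W^{k,\infty}(B_R)}\lec_k\|\cM(\La^s u)(\cdot,t)\|_{L^2(B_R)}+\|u(\cdot,t)\|_{L^1(B_R)}.\]
Squaring in $t$ and integrating then gives the desired $L^2_tW^{k,\infty}_x$ bound on $g$. The $L^2_tW^{k,q}_x$ bound for arbitrary $q\in[1,\infty]$ follows for free from the compact spatial support of $g$, which makes all spatial $L^q$ norms comparable up to a factor of $|B_R|^{1/q}$.

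The main technical subtlety lies in the Kato-Ponce step for $f$: since $2s\in(1,2)$, the fractional Leibniz rule \eqref{fractional_leibniz1} does not apply directly (it requires $\beta\in(0,1)$), and one must instead rely on a paraproduct or Fourier-side decomposition to establish the order-$(2s-1)$ commutator estimate, with the small $\bar\varepsilon$ loss covering the full range of $(r,p)$ in the statement.
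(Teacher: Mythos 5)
Your decomposition is correct but genuinely different from the paper's. The paper splits the commutator \emph{kernel} into a near-diagonal piece $\chi_\rho(x-y)$ and a far-field piece, keeping $u$ intact; you instead split the \emph{function}, $u=\bar\phi u+(1-\bar\phi)u$, and exploit the identity $\phi(1-\bar\phi)\equiv 0$ to turn the second commutator into the pure tail term $g=-\phi\,(-\De)^s((1-\bar\phi)u)$. That part is fine: the proof of \eqref{trick1} adapts verbatim with the localizer $(1-\chi_\rho)(x-y)$ replaced by $(1-\bar\phi)(y)$, since for $x\in\supp_x\phi$ the integrand is still supported in $\{|x-y|\gtrsim\rho\}$ and all $x$-derivatives fall on the kernel, so you recover exactly the stated $\cM(\La^s u)$ and $L^1$ bounds; the $W^{k,q}$ variant follows from compact support as you say. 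The real divergence is in the local part. The paper estimates its near-diagonal piece $I_{\ell 2}$ by a bare H\"older inequality on the double integral, which produces the Sobolev--Slobodeckij seminorm directly for every $p\in(1,\infty)$ with no harmonic analysis. Your route runs through the Kato--Ponce commutator bound $\norm{[\La^{2s},\phi]w}_{L^p}\lec \norm{\na\phi}_{\infty}\norm{\La^{2s-1}w}_{L^p}+\norm{\La^{2s}\phi}_{\infty}\norm{w}_{L^p}$, which is true but leaves the Bessel-potential norm $\norm{\La^{2s-1}(\bar\phi u)}_{L^p}$ on the right; the paper's comparison \eqref{norm_comparisons} converts $W^{\ga,p}$ into $\norm{\La^\ga\cdot}_{L^p}$ only for $p\le 2$, and \eqref{fractional_leibniz1} is unavailable since $2s>1$. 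For $p>2$ you must pass through the Besov embeddings $W^{2s-1+\bar\varepsilon,p}=B^{2s-1+\bar\varepsilon}_{p,p}\hookrightarrow B^{2s-1}_{p,1}\hookrightarrow H^{2s-1,p}$ (handling low frequencies via $2s-1>0$ and compact support), which is exactly where your $\bar\varepsilon$ is spent. You correctly flag this as the main subtlety, but be aware it is a genuine extra ingredient rather than bookkeeping --- without it your argument only covers $p\le 2$, whereas the bootstrap uses the lemma for large $p$ --- and it is precisely the machinery the paper's elementary kernel-splitting is designed to avoid.
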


We note that the lemma is true for any $\bar\varepsilon >0$, but (for brevity) we restrict ourselves to $\bar\varepsilon < 2-2s$ since then the Sobolev-Slobodeckij space $W^{2s-1+\bar\varepsilon,p}$ is of order less than $1$. We set
\eqnb\label{def_of_rho}
\rho \coloneqq \frac 15(R_0-R )
\eqne
We may assume $R_0<\frac 72 R$ to achieve $2\rho<R$; otherwise we choose $R_0'$ satisfying $R_0'<\frac 72 R$ instead of $R_0$ and then the desired estimates follow by expanding the domain.
\begin{proof} For the convenience, we omit the variable $t$ of $u$ and $\phi$ unless it is needed. Using the definition \eqref{def.frac}, the commutator $[(-\De)^s, \phi] u$ can be written as
\begin{align*}
[(-\De)^s, \phi] u(x) 
&\sim  \left( \text{p.v.}\int \frac{\phi(x) u(x) - \phi(y) u(y)}{|x-y|^{3+2s}} \d y - \text{p.v.} \int \frac{\phi(x) ( u(x) - u(y) ) }{|x-y|^{3+2s}} \d y\right)\\
&=  \text{ p.v.}\int \frac{u(y)( \phi(x) - \phi (y) ) }{|x-y|^{3+2s}} \d y.
\end{align*} 
We first decompose the integral in the last line into local and tail parts,
\begin{align*}
[(-\De)^s, \phi] u (x)
&\sim \text{ p.v.} \int \frac{u(y)( \phi(x) - \phi (y) ) }{|x-y|^{3+2s}} \chi_\rho(x-y) \d y +  \int\frac{u(y)( \phi(x) - \phi (y) ) }{|x-y|^{3+2s}} (1-\chi_\rho)(x-y) \d y\\
&=: I_\ell + I_t,
\end{align*}
where $\chi_\rho=\chi_\rho(x)$ is a radial function in space satisfying $\chi_\rho =1$ on $B_\rho$ and supported on $B_{2\rho}$. 

\noindent\texttt{Step 1.} We estimate the local part $I_\ell$. \\

We decompose $I_\ell$ by writing
\begin{equation}\begin{split}
\label{supp_comment_below}
I_\ell (x) 
&= \text{ p.v.}\int \frac{u(x)(\phi (x) - \phi(y))}{|x-y|^{3+2s}} \chi_\rho(x-y) \d y \\
&\quad + \text{ p.v.}\int \frac{(u(y)-u(x))(\phi (x) - \phi(y))}{|x-y|^{3+2s}}  \chi_\rho(x-y)\d y \\
&=: I_{\ell 1} (x)  + I_{\ell 2} (x)
\end{split}\end{equation}
We note that both $I_{\ell 1}$ and $I_{\ell 2}$ are supported on $(-T_0,0)\times B_{R+2\rho}$. Indeed, if $|x|\geq R+2\rho$, we have $|x|\geq R$ and $|y|\geq |x|-|y-x|\geq (R+2\rho ) - 2\rho =R$, which make $\phi(x)-\phi(y)$ vanish. Moreover using $\text{p.v.}\int \frac{(x-y)\chi_\rho (x-y)}{|x-y|^{3+2s}} \d y =0$, we see that $\phi$ satisfies
\begin{align*}
&\left| \text{p.v.}\int \frac{\phi (x,t) - \phi(y,t)}{|x-y|^{3+2s}}  \cdot \chi_\rho(x-y) \d y \right|\\
&=\left|\text{p.v.}\int \frac{\phi (x,t) - \phi(y,t)-(x-y)\cdot \na \phi(x,t)}{|x-y|^{3+2s}} \cdot \chi_\rho(x-y)\d y \right|\\
&\lec \norm{\phi}_{C(-T,0;C^2(\R^3))} , 
\qquad \forall (x,t)\in \R^3 \times [-T,0].
\end{align*}
Thus we can estimate $I_{\ell 1}$,
\begin{align}\label{est.Iell1}
\norm{I_{\ell 1}}_{L^r(-T,0;L^p)}
=\norm{I_{\ell 1}}_{L^r(-T,0;L^p(B_{R+2\rho}))}
\lec \norm{u}_{L^r(-T,0;L^p(B_{R+2\rho}))}.
\end{align} 
As for $I_{\ell 2}$, we use the following estimate for $\phi$: for any $\bar\varepsilon>0$ and $1\le q<\infty$, 
\begin{align*}
\left|
\int_{|y-x|\le 2\rho} \frac{|\phi(x,t)-\phi(y,t)|^{q}}{|x-y|^{3+(1-\bar\varepsilon)q}} \d y \right|
\lec_{\bar\varepsilon} \norm{\phi}_{C(-T,0;C^1(\R^3))}^q, \quad \forall (x,t)\in \R^3\times (-T,0),
\end{align*}
while for $q=\infty$ we use
\[
\sup_{|y-x|\le 2\rho} \frac{|\phi(x,t)-\phi(y,t)|}{|x-y|^{1-\bar\varepsilon}}\lec _{\bar\varepsilon} \norm{\phi}_{C(-T,0;C^1(\R^3))}. 
\]
Using H\"{o}lder's inequality, this gives for $p\in [1,\infty)$ 
\begin{align*}
|I_{\ell 2}(x)|^p
&\lec  \int_{|y-x|\le 2\rho} \frac{|u(y)-u(x)|^p}{|x-y|^{3+(2s-1+\bar\varepsilon)p}} \d y
\left( \int_{|y-x|\le 2\rho} \frac{|\phi (y) - \phi (x)|^{p'}}{|x-y|^{3+(1-\bar\varepsilon){p'}}} \d y
\right)^{\frac p{p'}}\\
&\lec_{\bar\varepsilon}  \int_{|y|\le R+4\rho} \frac{|u(y)-u(x)|^p}{|x-y|^{3+(2s-1+\bar\varepsilon)p}} \d y,
\end{align*}
where $p'$ is the H\"{o}lder conjugate of $p$, and hence
\begin{align}\label{est.Iell2}
\norm{I_{\ell 2}}_{L^r(-T,0;L^p)} 
&= \norm{I_{\ell 2}}_{L^r(-T,0;L^p(B_{R+2\rho}))} 
\lec \norm{u}_{L^r(-T,0;W^{2s-1+\bar\varepsilon,p}(B_{R+4\rho}))}.
\end{align}
Therefore, combining \eqref{est.Iell1} and \eqref{est.Iell2}, we have
\[
\norm{I_{\ell}}_{L^r(-T,0;L^p)} 
\lec_{\bar\varepsilon} \norm{u}_{L^r(-T,0;W^{2s-1+\bar\varepsilon,p}(B_{R_0}))}.
\]

\noindent\texttt{Step 2.} We estimate the tail part $I_t$.\\

We decompose $I_t$,
\begin{align*}
I_t &= C_s \int \frac{u(y)( \phi(x) - \phi (y) ) }{|x-y|^{3+2s}} (1-\chi_\rho)(x-y) \d y\\
&=C_s \ \phi(x)\int  \frac{u(y)}{|x-y|^{3+2s}}  (1-\chi_\rho)(x-y)\d y
-C_s \int  \frac{u(y)\phi (y)  }{|x-y|^{3+2s}} (1-\chi_\rho)(x-y) \d y\\
&=: I_{t1} + I_{t2},
\end{align*}
and we show below that  
\begin{align*}
\norm{I_{t1}}_{W^{k,\infty }}
&\lec \norm{\cM (\La^s u)}_{L^2( B_{R})}+\norm{u}_{L^1(B_R)}, \quad \forall t\\
\norm{I_{t2}}_{L^r(-T,0;L^p)} 
&\lec
  \norm{u}_{L^r(-T,0;L^p(B_{R_0}))}.
\end{align*}
This concludes the lemma by letting 
 \[
 f\coloneqq I_{\ell 1} + I_{\ell 2} +  I_{t2} \quad
 g\coloneqq I_{t1}.
 \]

The first term $I_{t1}$ is supported in $B_R\times (-T,0]$ and so can be estimated as in \eqref{trick1} with $\be:= 2s$,
\[
\| I_{t1} \|_{L^2(-T,0;W^{k,\infty })} \lec \norm{\cM (\La^s u)}_{L^2([-T,0]\times B_{R})}+\norm{u}_{L^2(-T,0;L^1 (B_R))}.
\]
As for $I_{t2}$ we see that, since $|x-y|>\rho$ and $\supp_x(\phi)\subset B_R$, it can be bounded for any $x\in \R^3$, 
\[
|I_{t2}(x)| \lec  \int |u(y)||\phi (y)| \d y \lec
\norm{ u }_{L^1 (B_R)}.
\]
Moreover, it also satisfies a decay estimate
\[
|I_{t2} (x) | 
\lec \int \frac{|u(y) \phi (y) | }{|x-y|^{3+2s}}\d y 
\lec \frac1{|x|^{3+2s}} \norm{u}_{L^1 (B_R)}, \quad \forall |x| \geq 2R
\]
because of $|x-y|\geq |x|-|y| \geq |x|-R \geq |x|/2$. 
Combining the two inequalities we get
\begin{align}\label{est.It2}
\norm{I_{t2}}_{L^r(-T,0;L^p)} 
\lec 
\norm{u}_{L^r(-T,0;L^p(B_R))},
\end{align}
as required.
\end{proof}

In the following lemmas, we keep using a decomposition of a commutator suggested in the proof above. Namely, given a function $G$ we use the decomposition
\begin{align*}
[\La^\be, \phi] G = I_{\ell 1}  + I_{\ell 2} + I_{t1} + I_{t2},
\end{align*}
where
\eqnb\label{decomposition_of_I}
\begin{split}
I_{\ell 1} (x)&\coloneqq  C_\be \,\text{ p.v.}\int \frac{G(x)(\phi (x) - \phi(y))}{|x-y|^{3+\be}} \chi_\rho(x-y) \d y \\
I_{\ell 2} (x)&\coloneqq C_\be\, \text{ p.v.}\int \frac{(G(y)-G(x))(\phi (x) - \phi(y))}{|x-y|^{3+\be}}  \chi_\rho(x-y)\d y \\
I_{t1} (x)&\coloneqq C_\be\, \phi(x)\int  \frac{G(y)}{|x-y|^{3+\be}}  (1-\chi_\rho)(x-y)\d y \\
I_{t2} (x)&\coloneqq -C_\be \int  \frac{G(y)\phi (y)  }{|x-y|^{3+\be}} (1-\chi_\rho)(x-y) \d y,
\end{split}
\eqne
where $\phi \in C_c^\infty (B_R \times (-T,0))$, $\rho=\frac15(R_0-R)$, $\chi_\rho \in C_c^\infty (B_{2\rho} )$ with $\chi_\rho =1$ on $B_\rho$. We recall that the local terms, $I_{\ell 1}$, $I_{\ell 2}$ are supported in $B_{R+2\rho }$ (regardless of $G$).
As in the proof of Lemma~\ref{lem_1.10}, we may assume $2\rho<R$. 

\begin{lem}\label{lem_comm.Deu.var1} 
Let $s\in (\frac12, 1)$ and $R_0>R$. Let $\phi=\phi(x,t)$ be a smooth function compactly supported on $B_{R}\times (-T,0]$ for some $R>0$ and $T>0$. 
Then, given $\ga\in (s,1)$,  $\ka-\ga\in (2s-1, 1)$, and $\bar\phi=\bar\phi(x,t)\in C_c^\infty(\R^4)$ with $\bar\phi=1$ on $[-T,0]\times B_{R_0}$, for any $r\in [1,  \infty]$, and $p\in  [2,\infty)$, we have  a decomposition 
\begin{align*}
[(-\De)^s, \phi] \La^\ga u  = f_0  + g_0,
\end{align*}
where $f_0$ and $g_0$ satisfy
\begin{align*}
\norm{f_0}_{L^r(-T,0;L^p)}
&\lec \norm{\La^\ka (u\bar\phi)}_{L^r(-T,0; L^p)}
+\norm{\La^\ga (u\bar\phi ) }_{L^r(-T,0; L^p)}
\\
\norm{g_0}_{L^2(-T,0;W^{k,\infty})}
&\lec \norm{\cM (\La^s u)}_{L^2([-T,0]\times B_{R_0})} + \norm{ u}_{L^2(-T,0;L^1(\supp_x \bar\phi ))}
\quad \text{ for } k\in\mathbb{N}\cup \{0\} .
\end{align*} 
Furthermore for every integer $k\geq 0$
\eqnb\label{trick2a_inlem}
\| \La^\ga (u(1-\bar\phi))  \|_{L^2(-T,0;W^{k,\infty } (B_{\frac25 R + \frac35 R_0}))} \lec  \norm{\cM (\La^s u)}_{L^2([-T,0]\times B_{R_0})} + \norm{ u}_{L^2(-T,0;L^1(\supp_x \bar\phi ))}.
\eqne
\end{lem}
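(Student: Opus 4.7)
The plan is to apply the decomposition \eqref{decomposition_of_I} with $G := \La^\ga u$, and then, because $\La^\ga u$ itself has no global control in our scale-optimal quantities, to split
\[
\La^\ga u = A + B,\qquad A := \La^\ga(u\bar\phi),\qquad B := \La^\ga(u(1-\bar\phi)).
\]
Writing $I_{\ell 1}^A, I_{\ell 2}^A, I_{t2}^A$ for the terms in \eqref{decomposition_of_I} with $G$ replaced by $A$, and similarly for $B$, I set
\[
f_0 := I_{\ell 1}^A + I_{\ell 2}^A + I_{t2}^A,\qquad g_0 := I_{\ell 1}^B + I_{\ell 2}^B + I_{t1} + I_{t2}^B,
\]
and choose $\bar\varepsilon := \ka - \ga - (2s-1) \in (0,\,2-2s)$, a non-empty range by the hypotheses $\ka - \ga \in (2s-1,1)$ and $s<1$. ($I_{t1}$ is not split, as it will be handled by Lemma~\ref{lem_tricks} directly.) The geometric key is that $\bar\phi \equiv 1$ on $B_{R_0} \supset B_{R+5\rho}$, so for every $x \in B_{R+4\rho}$ the effective integrand defining $B$ is supported at distance $\geq \rho$ from $x$; this removes the principal-value singularity and turns $B$ into a smooth, tail-controlled function on $B_{R+4\rho}$.

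For $f_0$ I would follow the scheme of Lemma~\ref{lem_1.10}. The cancellation $\mathrm{p.v.}\int (x-y)\chi_\rho(x-y)|x-y|^{-3-2s}\,\d y = 0$ yields $|I_{\ell 1}^A(x)| \lesssim \|\phi\|_{C^2}|A(x)|$, and the support of $I_{\ell 1}^A$ in $B_{R+2\rho}$ gives $\|I_{\ell 1}^A\|_{L^r_tL^p_x} \lesssim \|\La^\ga(u\bar\phi)\|_{L^r_tL^p_x}$. H\"older's inequality with $|\phi(x)-\phi(y)| \lesssim |x-y|^{1-\bar\varepsilon}$ (exactly as in the $I_{\ell 2}$ step of Lemma~\ref{lem_1.10}) reduces $I_{\ell 2}^A$ to the Sobolev--Slobodeckij seminorm $\|A\|_{\dot W^{2s-1+\bar\varepsilon,p}}$, which by \eqref{norm_comparisons} (using $p\geq 2$) is bounded by $\|\La^\ka(u\bar\phi)\|_{L^p} + \|\La^\ga(u\bar\phi)\|_{L^p}$. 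Finally, $I_{t2}^A = -C\,(A\phi) \ast K$ for the radial kernel $K(z) := (1-\chi_\rho(z))|z|^{-3-2s} \in L^1(\RR^3)$ (since $s>0$), and Young's inequality gives $\|I_{t2}^A\|_{L^r_tL^p_x} \lesssim \|\La^\ga(u\bar\phi)\|_{L^r_tL^p_x}$. Integrating the pointwise-in-time bounds in $L^r_t$ yields the desired $f_0$ estimate.

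For $g_0$, the central step and main obstacle is to control $B$ in $W^{k,\infty}(B_{R+4\rho})$ purely in terms of $\mathcal{M}(\La^s u)$, since $\mathcal{M}(\La^\ga u)$ has no global integrability available to us. The geometric observation above gives the pointwise identity
\[
B(x) = -C_\ga \int \frac{u(y)(1-\bar\phi(y))}{|x-y|^{3+\ga}}\,\d y,\qquad x \in B_{R+4\rho},
\]
with no principal value. Differentiating under the integral and adapting the dyadic argument from the proof of \eqref{trick1} in Lemma~\ref{lem_tricks} (write $u = \La^{-s}\La^s u$, introduce a $\varphi$-mean as in \eqref{repr_u_for_tail}, and split into the pieces $u_{\mathrm{loc},1}, u_{\mathrm{loc},2}, u_{\mathrm{tail}}$), the hypothesis $\ga > s$ guarantees convergence of the resulting geometric series and delivers
\[
\|B(\cdot,t)\|_{W^{k,\infty}(B_{R+4\rho})} \lesssim_k \|\mathcal{M}(\La^s u)(\cdot,t)\|_{L^2(B_{R_0})} + \|u(\cdot,t)\|_{L^1(B_{R_0})}
\]
for every $k \geq 0$; this is exactly the ``furthermore'' statement \eqref{trick2a_inlem}, since $B_{(2R+3R_0)/5} = B_{R+3\rho} \subset B_{R+4\rho}$. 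With this tail bound in hand (applicable to every derivative order of $B$), the four pieces of $g_0$ all fall into place: $|I_{\ell 1}^B| \lesssim \|\phi\|_{C^2}|B|$ by the cancellation used for $I_{\ell 1}^A$; $I_{\ell 2}^B$ is handled by bounding $|B(y)-B(x)| \lesssim \|\nabla B\|_\infty|x-y|$ and using $\int_{|z|\leq 2\rho}|z|^{-1-2s}\,\d z < \infty$ (valid since $s<1$), with the analogous Leibniz/Taylor argument at higher orders $k$ absorbing more derivatives of $B$ from the same tail estimate; $I_{t2}^B$ is bounded by Young's inequality against $\phi K \in L^1$; and $I_{t1}$ is controlled directly by \eqref{trick2} of Lemma~\ref{lem_tricks} with $\be = 2s > s$ and $\ga-s \in (0,1)$. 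Taking $L^2$ in time throughout concludes the argument.
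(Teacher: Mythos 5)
Your proposal is correct and follows the paper's proof in all essential respects: the same decomposition \eqref{decomposition_of_I} with $G=\La^\ga u$, the same splitting $\La^\ga u = \La^\ga(u\bar\phi)+\La^\ga(u(1-\bar\phi))$ with the same assignment of $f_0$ and $g_0$, the same choice $\bar\varepsilon=\ka-\ga-(2s-1)$ combined with \eqref{norm_comparisons} for the $f_0$ bound, \eqref{trick2} with $\be=2s$ for $I_{t1}$, and a reduction of the ``furthermore'' estimate \eqref{trick2a_inlem} to \eqref{trick1} (via $u(1-\bar\phi)=u-u\bar\phi$, which is exactly the paper's $J_1+J_2$ split in its Step~2a). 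The one genuine divergence is the term $I_{\ell 2}^{B}$ (the paper's $I_{\ell 22}$): the paper expands $\La^\ga(u(1-\bar\phi))(x-y)-\La^\ga(u(1-\bar\phi))(x)$ through the kernel, extracts a factor $|y|$ against $|x-\th y-z|^{-(4+\ga)}$, and invokes \eqref{trick1} with $\be=1+\ga$, whereas you use the mean-value bound $|B(x-y)-B(x)|\lec \norm{\na B}_{L^\infty}|y|$ together with the $W^{k,\infty}$ control of $B$ on a slightly enlarged ball, which you have already established for \eqref{trick2a_inlem}. Your variant is valid (the integrand then scales like $|y|^{-1-2s}$, integrable near the origin in $\R^3$ since $s<1$, and the pointwise tail identity for $B$ extends from $B_{R+3\rho}$ to $B_{R+4\rho}$ because the support of $u(1-\bar\phi)$ still keeps $|x-y|\geq\rho$ there), and it is arguably cleaner in that all four $B$-terms are controlled uniformly by the single tail estimate; what it gives up is only that the paper's kernel-difference argument works directly from \eqref{trick1} without needing the enlarged-ball version of the tail bound.
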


\begin{rem}
The motivation of the terms $\La^\ka (u\bar\phi)$ and $\La^\ga (u\bar\phi )$ (rather than $\La^\ka u$ and $\La^\ga u$) comes from the bootstrapping argument (see \texttt{Step 3} in the proof of Theorem~\ref{thm_bootstrapping}.) These terms are the reason why the above lemma cannot be proved in the same way as Lemma~\ref{lem_1.10} by replacing \eqref{trick1} by \eqref{trick2} (in estimating $I_{t1}$). Instead we need to estimate additional error terms of the form of \eqref{trick2a_inlem}, which we include as part of $g_0$.
\end{rem}

\begin{proof}[Proof of Lemma~\ref{lem_comm.Deu.var1}.] For the convenience, we omit the variable $t$ of $u$ and $\phi$ unless it is needed.
We use the decomposition \eqref{decomposition_of_I} with $G\coloneqq \La^\ga u$ and $\be:=2s$ to obtain 
\begin{align*}
[(-\De)^s, \phi] \La^\ga u 
=  I_{\ell 1} + I_{\ell 2} + I_{t1} + I_{t2}.
\end{align*}
First, $I_{t1}$ can be estimated using \eqref{trick2} with $\be:=2s$ as
\[
\norm{I_{t1}}_{L^2(-T,0;W^{k,\infty})}\lec \norm{\cM (\La^s u)}_{L^2([-T,0]\times B_{R_0})}
\]
for every integer $k\geq 0$. As for the other terms, we further decompose, writing $\La^\ga u = \La^\ga (u\bar\phi ) + \La^\ga (u(1-\bar\phi ) ) $. We denote the corresponding decomposition by
\[
I_{\ell 1} + I_{\ell 2}  + I_{t2} = (I_{\ell 11} +I_{\ell 12} )  +( I_{\ell 21}  +I_{\ell 22}  )+ (I_{t21} + I_{t22}). 
\] 
We will estimate the local parts (i.e. the ones with subscript ending with ``$1$'') in \texttt{Step 1}, and the tail parts (i.e. the ones with subscript ending with ``$2$'') in \texttt{Step 2}. \\

\noindent\texttt{Step 1.} We estimate for the local parts, that is we show that 
\[
\| I_{\ell 11} + I_{\ell 21}+ I_{t21} \|_{L^r (-T,0;L^p )} \lec \norm{\La^\ga (u\bar\phi ) }_{L^r(-T,0; L^p(B_{R_0}))} + \norm{\La^\ka (u\bar\phi ) }_{L^r(-T,0; L^p)}.
\]

Indeed, the same calculations as in \eqref{est.Iell1}, \eqref{est.Iell2}, \eqref{est.It2} give  
\[
\| I_{\ell 11} + I_{\ell 21}+ I_{t21} \|_{L^r (-T,0;L^p )} \lec \norm{\La^\ga (u\bar\phi ) }_{L^r(-T,0; L^p(B_{R_0}))} + \norm{\La^\ga (u\bar\phi)}_{L^r(-T,0; W^{2s-1+\bar\varepsilon,p}(B_{R+4\rho}))}
\]
for $\bar\varepsilon \coloneqq \ka - \ga -2s+1\in (0,2-2s)$. Using \eqref{norm_comparisons}, we then bound the last term by $\norm{\La^\ka (u\bar\phi ) }_{L^r(-T,0; L^p)}$ and $\norm{\La^\ga (u\bar\phi)}_{L^r(-T,0; L^p)}$, which concludes this step.\\

\noindent\texttt{Step 2.} Estimate for the tail parts, that is we show that 
\[
\|I_{\ell 12} +I_{\ell 22}+ I_{t22} \|_{L^2(-T,0;W^{k,\infty })}  \lec \norm{\cM (\La^s u)}_{L^2([-T,0]\times B_{R_0})} + \| u \|_{L^2(-T,0;L^1 (\supp \bar\phi ))}.
\] 
(This completes the proof of the lemma by setting 
\[
f_0\coloneqq I_{\ell 11} + I_{\ell 21}+ I_{t21},\quad g_0\coloneqq I_{t1} + I_{\ell 12} +I_{\ell 22}+ I_{t22}.)
\]

Recall that
\[
\begin{split}
I_{\ell 12}  (x)  &\sim_s  \La^\ga (u(1-\bar\phi))(x)\,\mathrm{p.v.}\int \frac{\phi (x-y) - \phi(x)}{|y|^{3+2s}} \chi_\rho(y) \d y \\
&= \La^\ga (u(1-\bar\phi))(x) \td \chi(x) \left(  \La^{2s} \phi(x) - \int \frac{\phi (x-y) - \phi(x)}{|y|^{3+2s}} (1- \chi_\rho(y)) \d y \right) \\
 I_{\ell 22} (x) &\sim_s \, \text{ p.v.}\int \frac{(\La^\ga(u(1-\bar\phi ))(x-y)-\La^\ga(u(1-\bar\phi ))(x))(\phi (x) - \phi(x-y))}{|y|^{3+2s}}  \chi_\rho(y)\d y ,\\
 I_{t22} (x) &\sim_s - \int  \frac{\La^\ga(u(1-\bar\phi ))(y)\phi (y)  }{|x-y|^{3+2s}} (1-\chi_\rho)(x-y) \d y.
\end{split}
\]
where $\td\chi \in C_c^\infty (B_{R_0};[0,1])$ is such that $\chi =1$ on $B_{R+2\rho}$ (recall that $I_{\ell 12}$ vanishes for $|x|>R+2\rho $).

We first reduce our aim to showing that
\eqnb\label{trick2a}
\| \La^\ga (u(1-\bar\phi))  \|_{W^{k,\infty } (B_{R+3\rho })} \lec  \sup_{R'\geq 4R }\dint_{B_{R'}}| \La^s u | +  \| u \|_{L^1 (\supp \bar\phi )},
\eqne
which will be obtained in \texttt{Step 2a} below.

Assuming \eqref{trick2a} we have
\[
\|  I_{\ell 12}   \|_{W^{k,\infty }} \lec \| \La^\ga (u(1-\bar\phi))  \|_{W^{k,\infty } (B_{R+3\rho })} \left( \| \La^{2s} \phi   \|_{W^{k,\infty } (B_{R+3\rho })} + \|  \phi   \|_{W^{k,\infty }} \right)
\]
for every integer $k\geq 0$, and so the required estimate on $I_{\ell 12}$ follows from \eqref{est.max}. As for $I_{t22} $,
\[
|\na^k I_{t22} (x) | \leq \| \La^\ga (u(1-\bar\phi))  \|_{L^{\infty } (B_{R})} \int \left|  \na^k \left( \frac{1-\chi_\rho (x-y) }{|x-y|^{3+2s}} \right)  \right| \d y \lec_{k,\rho} \| \La^\ga (u(1-\bar\phi))  \|_{L^{\infty } (B_{R})}
\]
for every integer $k\geq 0$, $x\in \R^3$, and so the required estimate on $I_{t22}$ follows from \eqref{trick2a} and \eqref{est.max} as well.

Lastly,  we recall that $ I_{\ell 22} $ is supported on $\{|x|\leq R+2\rho\}$, and for such $x$
\[
\La^\ga(u(1-\bar\phi ))(x) \sim_\ga\, \mathrm{ p.v.} \int \frac{u(1-\bar\phi)(x)-u(1-\bar\phi)(z)}{|x-z|^{3+\ga}} \d z
= - \int \frac{u(1-\bar\phi)(z)}{|x-z|^{3+\ga}} \d z
\]
because $\bar\phi =1$ on $B_{R+5\rho }$ and similarly with $x $ replaced by $x-y$ (note that $|x-y|\leq R+2\rho$ since $|y|\leq 2\rho$ and either $|x|\leq R$ or $|x-y|\leq R$ (as otherwise $\phi(x)-\phi(x-y)$ vanishes)). This implies that
\eqnb\label{temp99}\begin{split}
\La^\ga(u(1-\bar\phi ))(x-y)&-\La^\ga(u(1-\bar\phi ))(x) \\
&\sim_\ga -\int \left[ \frac 1{|x-y-z|^{3+\ga}}-\frac 1{|x-z|^{3+\ga}} \right] u(1-\bar\phi)(z) \d z\\
&\sim_\ga -\int \int_0^1 \frac{(x-\th y-z)\cdot y }{|x-\th y-z|^{5+\ga}}   u(1-\bar\phi)(z) \d \th \,\d z ,
\end{split}\eqne
which gives  
\begin{align*}
I_{\ell 22}(x) \sim_\ga
\int_0^1\iint\frac{\chi_\rho(y)(\phi(x-y)-\phi(x))}{|y|^{3+2s}} \cdot \frac{(x-\th y-z)\cdot y}{|x-\th y-z|^{5+\ga}}  \ u(1-\bar\phi)(z) 
\d y \,\d z\, \d\th.
\end{align*}
Observe that for every integer $k\geq 0$
\begin{align*}
|\na^k \phi(x-y) - \na^k \phi(x)|
\leq c_k |y| \norm{\na^{k+1}\phi}_{L^\infty}
\end{align*}
and 
\begin{align*}
 \left| \na^k \left(\frac{x-\th y-z}{|x-\th y-z|^{5+\ga}}\right)\right| \lec \frac 1{|x-\th y-z|^{4+k+\ga}} 
 \lec \frac {1-\chi_\rho (x-\theta y -z)}{|x-\th y-z|^{4+\ga}},
\end{align*}
where we used the fact that $|x-\th y -z| \geq |z| -|x-\th y|\geq R_0 - (R+2\rho)\geq 3\rho$ in the last inequality. (Recall that $|z|\geq R_0$, as otherwise $(1-\bar\phi )(z)$ vanishes, and that $|x|,|x-y|\leq R+2\rho $ which gives the same bound on $|x-\th y|$.) This implies that
\begin{align*}
\norm{I_{\ell22}}_{L^2(-T,0;W^{k,\infty})}
&\lec
 \Norm{\int_{|y|\leq 2\rho} \frac 1{|y|^{1+2s}} 
\iint_0^1 \frac{|u(z)|(1-\chi_\rho (x-\theta y -z))}{|x-\th y-z|^{4+\ga}} \d \th\,\d z\,\d y }_{L^2(-T,0;L^\infty(B_{R+2\rho}))} \\
&\lec
\left\| \sup_{|x'|\leq R+4\rho} \int \frac {|u(z)|}{|x'-z|^{4+\ga}} (1-\chi_\rho (x'-z)) \d z\right\|_{L^2(-T,0)}\\
&\lec \norm{\cM(\La^s u)}_{L^2([-T,0]\times B_{R_0})} + \norm{u}_{L^2(-T,0; L^1 (B_{R_0}))} 
\end{align*}
for all integer $k\geq 0$, where we used \eqref{trick1} (with $\be:=1+\ga$) in the last inequality.\\

\noindent\texttt{Step 2a.} We prove \eqref{trick2a}. (Note that this also proves \eqref{trick2a_inlem} by applying \eqref{est.max}.)\\

For any $x\in B_{R+3\rho }$ and integer $k\geq 0$
\eqnb\label{temp1}
\begin{split}
 \La^\ga (u(1-\bar\phi)) (x)  
&\sim_\ga \mathrm{p.v.} \int \frac{ (u(1-\bar\phi ))(x) - (u(1-\bar\phi ))(y) }{|x-y|^{3+\ga }} \d y  
= \int \frac{ (u(1-\bar\phi ))(y) }{|x-y|^{3+\ga }} \d y\\
&= \int \frac{ (u(1-\bar\phi ))(y) }{|x-y|^{3+\ga }} (1-\chi_\rho(x-y))\d y,  \\
&= \int \frac{ u(y) }{|x-y|^{3+\ga }} (1-\chi_\rho(x-y))\d y
-\int \frac{ u\bar\phi (y) }{|x-y|^{3+\ga }} (1-\chi_\rho(x-y))\d y\\
&=: J_1 + J_2,
\end{split}
\eqne
where the second line follows from $|x-y|\geq |y|-|x| \geq R_0 -(R+3\rho) \geq 2\rho$, so that $\chi_\rho(x-y)=0$ for such $x,y$. As for $J_1$, we use \eqref{trick1} with $\be\coloneqq \ga>s$ to obtain
\begin{align}\label{est.J1}
\norm{J_1}_{W^{k,\infty}(B_{R+3\rho})}
\lec \norm{\mathcal{M}(\La^s u)}_{L^2(B_R)} + \norm{u}_{L^1(B_R)}.  
\end{align}
As for $J_2$ we have for any integer $k\geq 0$ and $x\in B_{R+3\rho}$
\begin{align*}
|\na^k J_2(x)|
\lec \int u\bar\phi(y) \na^k \left(\frac{1-\chi_\rho(x-y)}{|x-y|^{3+\ga}}\right) \d y
\lec_{k,\rho } \norm{u\bar\phi}_{L^1(\R^3)}  
\leq \norm{u}_{L^1(\supp_x(\bar\phi))},
\end{align*}
which gives $\norm{J_2}_{W^{k,\infty}(B_{R+3\rho})}
\lec \norm{u}_{L^1(\supp_x(\bar\phi))}$. Combining this with \eqref{est.J1} gives \eqref{trick2a}. 
\end{proof}

\begin{lem}\label{lem_comm.nau}
Let $\frac12<s<1$  and $R_0>R$. Let $\phi=\phi(x,t)$ be a smooth function compactly supported on $B_{R}\times (-T,0]$ for some $R>0$ and $T>0$. Then, given $2s-1<\ga<1$ and $\bar\phi=\bar\phi(x,t)\in C_c^\infty(\R^4)$ with $\bar\phi=1$ on $[-T,0]\times B_{R_0}$, for any $1 \le r \le \infty$ and $2\le p <\infty $
 we have a decomposition,
\begin{align*}
[(-\De)^s, \phi] \na u  = f_1  + g_1,
\end{align*}
where $f_1$ and $g_1$ satisfy
\begin{align*}
\norm{f_1}_{L^r(-T,0;L^p)}
&\lec \norm{\na (\bar\phi \La^\ga u )}_{L^r(-T,0; L^p)}
+\norm{\La^{1-\ga}(\bar\phi \La^\ga u )}_{L^r(-T,0; L^p)}
+\norm{\na u}_{L^r(-T,0;L^p(B_{R_0}))} \\
\norm{g_1}_{L^2(-T,0;W^{k,\infty})}
&\lec \norm{\cM (\La^s u)}_{L^2([-T,0]\times B_{R_0})},
\qquad\qquad \forall k\in \mathbb{N}.
\end{align*} 
\end{lem}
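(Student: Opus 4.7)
My plan is to follow the decomposition strategy of Lemmas~\ref{lem_1.10} and \ref{lem_comm.Deu.var1}. Specifically, I apply the splitting in \eqref{decomposition_of_I} with $G := \nabla u$ and $\beta := 2s$, yielding $[(-\Delta)^s, \phi] \nabla u = I_{\ell 1} + I_{\ell 2} + I_{t1} + I_{t2}$, where the local terms are supported in $B_{R+2\rho}$. Three of the four pieces admit standard estimates. The diagonal local term $I_{\ell 1}$ is controlled pointwise by $C_\phi|\nabla u(x)|$ thanks to the cancellation of $\mathrm{p.v.}\!\int (\phi(x)-\phi(y))\chi_\rho(x-y)|x-y|^{-3-2s}\,\d y$, yielding a bound by $\norm{\nabla u}_{L^r(-T,0;L^p(B_{R_0}))}$. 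The exterior term $I_{t2}$ is a convolution of $\nabla u\cdot\phi$ against the $L^1\cap L^\infty$ kernel $(1-\chi_\rho)(\cdot)/|\cdot|^{3+2s}$, giving the same bound. The remaining exterior term $I_{t1}(x)=\phi(x)\int \nabla u(y)(1-\chi_\rho)(x-y)|x-y|^{-3-2s}\,\d y$ is handled by Remark~\ref{rem.trick}: applying \eqref{trick2} with the classical gradient in place of $\Lambda^\gamma$ produces the bound $\norm{\mathcal{M}(\Lambda^s u)}_{L^2([-T,0]\times B_{R_0})}$ in $L^2_t W^{k,\infty}_x$, so $I_{t1}$ contributes to $g_1$.

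The substantive estimate is for $I_{\ell 2}$. Following the H\"older argument used to derive \eqref{est.Iell2} in Lemma~\ref{lem_1.10} but with $G=\nabla u$, one obtains
\[
\norm{I_{\ell 2}}_{L^r(-T,0;L^p)} \lec \norm{\nabla u}_{L^r(-T,0;\dot W^{2s-1+\bar\varepsilon, p}(B_{R+4\rho}))}
\]
for any $\bar\varepsilon\in (0, 2-2s)$. To relate the right hand side to the $\bar\phi\Lambda^\gamma u$-quantities appearing in the lemma, I use the identity $\nabla u = \mathcal{R} \Lambda^{1-\gamma}(\Lambda^\gamma u)$ and split $\Lambda^\gamma u = \bar\phi \Lambda^\gamma u + (1-\bar\phi)\Lambda^\gamma u$, giving $\nabla u = F_1 + F_2$ with $F_1 := \mathcal{R}\Lambda^{1-\gamma}(\bar\phi \Lambda^\gamma u)$ and $F_2 := \mathcal{R}\Lambda^{1-\gamma}((1-\bar\phi)\Lambda^\gamma u)$. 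For $F_1$, the $L^p$-boundedness of $\mathcal{R}$ together with \eqref{norm_comparisons} and interpolation between the $L^p$-norms of $\Lambda^{1-\gamma}(\bar\phi\Lambda^\gamma u)$ and $\Lambda(\bar\phi\Lambda^\gamma u)\sim |\nabla(\bar\phi\Lambda^\gamma u)|$ yields
\[
\|F_1\|_{\dot W^{2s-1+\bar\varepsilon, p}(\mathbb{R}^3)} \lec \|\nabla(\bar\phi\Lambda^\gamma u)\|_{L^p} + \|\Lambda^{1-\gamma}(\bar\phi\Lambda^\gamma u)\|_{L^p},
\]
provided $\bar\varepsilon \in (0,\min(2-2s,\,1-2s+\gamma))$, which is nonempty precisely because $\gamma>2s-1$.

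For $F_2$, the function $(1-\bar\phi)\Lambda^\gamma u$ is supported in $B_{R_0}^c$, so for $x\in B_{R+4\rho}$ the integrand defining $F_2(x)$ is nonzero only for $|x-y|\ge \rho$. Differentiating the kernel of $\mathcal{R}\Lambda^{1-\gamma}$, which decays like $|z|^{\gamma-4}$, yields
\[
|\nabla F_2(x)|\lec \int_{|x-y|\ge \rho} \frac{|\Lambda^\gamma u(y)|}{|x-y|^{5-\gamma}}\,\d y,\qquad x\in B_{R+4\rho},
\]
to which I apply \eqref{trick2} with $\beta:=2-\gamma>s$, obtaining $\|\nabla F_2\|_{L^\infty(B_{R+4\rho})}\lec \|\mathcal{M}(\Lambda^s u)\|_{L^2(B_{R_0})}$; when $\gamma\in (2s-1,s)$ so that \eqref{trick2} does not apply to $\Lambda^\gamma u$ directly, I rewrite $\Lambda^\gamma u = \Lambda^{\gamma-s}\Lambda^s u$ as a Riesz potential of $\Lambda^s u$ and perform the same shell-by-shell estimate as in the proof of Lemma~\ref{lem_tricks}. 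Since $2s-1+\bar\varepsilon<1$, this Lipschitz bound controls $\|F_2\|_{\dot W^{2s-1+\bar\varepsilon,p}(B_{R+4\rho})}$ by the standard computation $\int\!\!\int |x-y|^{-3-(2s-1+\bar\varepsilon)p+p}\lec 1$ on bounded domains. Setting $f_1:=I_{\ell 1}+I_{t2}+(F_1\text{-contribution})$ and $g_1:=I_{t1}+(F_2\text{-contribution})$ completes the decomposition.

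The main obstacle I expect is calibrating $\bar\varepsilon$ so that the Sobolev-Slobodeckij bound from H\"older, the interpolation inequality producing $F_1$'s bound, and the smoothness needed for the Lipschitz bound of $F_2$ are simultaneously valid; this is what pins down the hypothesis $\gamma\in (2s-1,1)$, and the narrow sub-case $\gamma\in(2s-1,s)$ requires the additional Riesz-potential detour described above for the $F_2$ estimate.
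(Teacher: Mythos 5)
Your skeleton coincides with the paper's: the splitting \eqref{decomposition_of_I} with $G=\nabla u$, the treatment of $I_{\ell 1}$, $I_{t1}$, $I_{t2}$, and the decomposition $\nabla u=\nabla\Lambda^{-\gamma}(\bar\phi\Lambda^\gamma u)+\nabla\Lambda^{-\gamma}((1-\bar\phi)\Lambda^\gamma u)=:F_1+F_2$ inside $I_{\ell 2}$ are all exactly what the paper does (your $F_1,F_2$ are its $(\nabla u)_\ell,(\nabla u)_t$), and your $F_1$ estimate is a harmless variant of the paper's endpoint choice $\bar\varepsilon=\gamma-(2s-1)$. The gap is in the $F_2$ estimate. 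First, the displayed bound $|\nabla F_2(x)|\lesssim\int_{|x-y|\ge\rho}|x-y|^{-(5-\gamma)}|\Lambda^\gamma u(y)|\,\d y$ cannot be fed into \eqref{trick2}: that estimate and its proof rest on the identity $\eta_j\ast\Lambda^\gamma u=(\Lambda^{\gamma-s}\eta_j)\ast\Lambda^s u$, which requires the signed convolution structure, and for $\gamma>s$ the quantity $|\Lambda^\gamma u|$ is not even a locally integrable function under the standing hypotheses. Second, and more seriously, even retaining the convolution structure, $\nabla F_2$ is a convolution of a kernel vanishing near the origin against $(1-\bar\phi)\Lambda^\gamma u$, not against $\Lambda^\gamma u$, so you must split $(1-\bar\phi)\Lambda^\gamma u=\Lambda^\gamma u-\bar\phi\,\Lambda^\gamma u$. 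The $\Lambda^\gamma u$ piece is then of the form \eqref{trick2} and legitimately lands in $g_1$; but the $\bar\phi\,\Lambda^\gamma u$ piece is \emph{not} controlled by $\mathcal{M}(\Lambda^s u)$ (that would gain $\gamma-s$ derivatives locally for free) — it must be bounded through $\|\bar\phi\Lambda^\gamma u\|_{L^1}\lesssim\|\nabla(\bar\phi\Lambda^\gamma u)\|_{L^p}$ and reassigned to $f_1$. This is precisely the paper's splitting of $I_{\ell 2t}$ into $I_{\ell 2t1}+I_{\ell 2t2}$, with $I_{\ell 2t2}$ estimated by $\|\nabla(\bar\phi\Lambda^\gamma u)\|_{L^1(B_{R_0})}$ and placed in $f_1$. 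As written, your $g_1$ contains a term for which the claimed bound is false.

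A secondary issue: your route for the $F_2$-contribution (H\"older, then the Slobodeckij seminorm of $F_2$ on a ball, then a Lipschitz bound) produces only an $L^r_tL^p_x$ estimate, whereas $g_1$ must be controlled in $L^2_tW^{k,\infty}_x$ for every $k$. This part is repairable: since the integrand of $I_{\ell 2}$ applied to $F_2$ is $O(|y|^{-1-2s})$ after extracting one factor of $|y|$ from each of $F_2(x-y)-F_2(x)$ and $\phi(x)-\phi(x-y)$, one can differentiate under the integral and use that the tail mechanism of \eqref{trick2} supplies $W^{k,\infty}$ bounds for all $k$ — but this needs to be carried out rather than inferred from the Lipschitz bound alone.
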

\begin{rem} The motivation of $\na (\bar\phi \La^\ga u )$ and $\La^{1-\ga}(\bar\phi \La^\ga u )$ comes from the bootstrapping argument. (See \texttt{Step 4} in the proof of Theorem~\ref{thm_bootstrapping}. Indeed, it is easy to see that one could control the commutator $[(-\De)^s, \phi] \na u $ using Lemma~\ref{lem_1.10} (with $u$ replaced by $\nabla u$). However, then we would not be able to use the resulting estimate in (\texttt{Step 4} of) the bootstrapping argument, as we would not have sufficient control over the term $\norm{\na u}_{L^r(-T,0;W^{2s-1+\bar\varepsilon,p}(B_{R_0}))}$. Indeed, using \eqref{norm_comparisons} this would require a bound on $\La^{2s-1+\bar\varepsilon } \na u$, which we have no control using \texttt{Step 2} of the bootstrapping argument (since the order of the operator is $2s+\bar\varepsilon > 2s-\bar\varepsilon$). Using Step 3 of the bootstrapping we have some control over derivatives of order $2s+\bar\varepsilon$, but it comes only via the derivatives of $\bar\phi \La^\ga u $. This the reason why terms $\na (\bar\phi \La^\ga u )$ and $\La^{1-\ga}(\bar\phi \La^\ga u )$ appears in the estimate on $f_1$ in the above lemma. We are able prove such estimate due to a nontrivial decomposition of $\nabla u$ (see \eqref{na_u_decomposition} below). The decomposition results in additional tail terms that need to be estimated (as part of $g_1$).
\end{rem}

\begin{proof}[Proof of Lemma~\ref{lem_comm.nau}.] For brevity we omit the variable $t$ of $u$ and $\phi$ unless it is needed.
We use the decomposition \eqref{decomposition_of_I} with $G\coloneqq \na u$. Similarly to \eqref{est.Iell1} and \eqref{est.It2}, we have
\begin{align*}
\norm{I_{\ell 1}}_{L^r(-T,0;L^p)} 
+\norm{I_{t2}}_{L^r(-T,0;L^p)} 
\lec \norm{\na u}_{L^r(-T,0;L^p(B_{R+2\rho}))} .
\end{align*}
Moreover, using \eqref{trick2} with $\be:= 2s$ and $\La^\ga u$ replaced by $\na u$ (see Remark~\ref{rem.trick}), we obtain
\[
\norm{I_{t1}}_{L^2(-T,0;W^{k,\infty})} \leq \norm{\cM|\La^s u|}_{L^2(-T,0;L^2(B_{R_0}))}
\]
As for $I_{\ell 2}$ we will use the decomposition
\eqnb\label{na_u_decomposition}
\begin{split}
\na u (x)  &= \na \La^{-\ga} (\bar\phi \La^{\ga} u + (\La^\ga u)(1-\bar\phi)) (x)\\
& =  \na \La^{-\ga}( \bar\phi \La^{\ga} u) (x) +C_{-\ga} \int \frac{x-z}{|x-z|^{5-\ga}} ((\La^{\ga} u)(1-\bar\phi))(z)  \d z \\
 &= (\na u)_{\ell} + (\na u)_t.
\end{split}\eqne
This gives further decomposition of $I_{\ell2}$;
\begin{align*}
I_{\ell2} (x) 
&= C_s \text{ p.v.} \int  \frac{\chi_{\rho}(x-y)(\na u(x)-\na u(y))(\phi(x)-\phi(y))}{|x-y|^{3+2s}} \d y \\
&= C_s \text{ p.v.} \int \frac{\chi_{\rho}(x-y)((\na u)_\ell(x)-(\na u)_\ell(y))(\phi(x)-\phi(y))}{|x-y|^{3+2s}} \d y \\
&\quad+ C_s \text{ p.v.} \int \frac{\chi_{\rho}(y)((\na u)_t(x-y)-(\na u)_t(x))(\phi(x-y)-\phi(x))}{|y|^{3+2s}} \d y \\
&=: I_{\ell 2\ell}(x) + I_{\ell 2t}(x).
\end{align*}
Then the first term can be estimated in the same way as \eqref{est.Iell2} with the choice of $\bar\varepsilon = \ga-(2s-1)$,
\begin{align*}
\norm{I_{\ell2\ell}}_{L^r(-T,0;L^p)} 
&\lec \norm{(\na u)_\ell}_{L^r(-T,0;\dot W^{\ga,p})}\\
&\lec \norm{\La^{\ga} \na \La^{-\ga} ((\La^{\ga} u)\bar\phi)}_{L^r(-T,0;L^p)} 
+\norm{\na \La^{-\ga} ((\La^{\ga} u)\bar\phi)}_{L^r(-T,0;L^p)} \\
&\lec  \norm{ \na ((\La^{\ga} u)\bar\phi)}_{L^r(-T,0;L^p)} 
+\norm{\La^{1-\ga} ((\La^{\ga} u)\bar\phi)}_{L^r(-T,0;L^p)}.
\end{align*}
As for $I_{\ell 2 t}$ we have
\begin{align*}
&I_{\ell 2t} 
=C_s\text{ p.v.}
\iint \frac{\chi_\rho(y)(\phi(x-y)-\phi(x))}{|y|^{3+2s}} \left(\frac {x-y-z}{|x-y-z|^{5-\ga}} - \frac {x-z}{|x-z|^{5-\ga}} \right) \La^\ga u(1-\bar\phi)(z) \d y\,\d z
\end{align*} 
(This is similar to \eqref{temp99}, except that now we have $\La^\ga u$ instead of $u$, which is an additional difficulty.)\\
Note that $|x-y-z| \geq |z| - |x|-|y| \geq R_0 - (R+2\rho ) - 2\rho \geq \rho$ and similarly $|x-z|\geq \rho$. We set $\overline{\eta } (y) \coloneqq \chi_\rho (y ) - \chi_{\rho } (2y )$ and \[
\eta_j (y)\coloneqq 2^{j(1-\ga )}\frac{y}{|y|^{5-\ga }} \overline{\eta }(2^{-j }y).\]
Then $\norm{\eta_j}_1 =C$ for some constant $C>0$ independent of $j$, $\supp \, \eta_j \subset B_{\rho 2^{j+1}}\setminus B_{\rho 2^{j-1}}$ and $|\na^k \eta_j |\lec_k 2^{j(-3-k)}$ for every $j\in \ZZ$, integer $k\geq 0$, and 
\eqnb\label{eta_j_for_large_y}
|\La^\al \na^k \eta_j (y) |\lec |y|^{-(3+k+\al)} \qquad \text{ for } |y| \geq 2^{j+2}\rho,\, \al\in (0,1),
\eqne
which can be verified as \eqref{eta_j_for_large_y_for_tricks}. Moreover for any $y$ with $|y|\geq \rho$
\[
\frac{y}{|y|^{5-\ga }} = \sum_{j\geq 0}2^{-j(1-\ga )} \eta_j (y)
\]
Thus, we can write $I_{\ell 2t} $ as
\begin{align*}
&I_{\ell 2t} 
\sim_s\sum_{j\geq 0} 2^{-j(1-\ga )}
\int \frac{\chi_\rho(y)(\phi(x-y)-\phi(x))}{|y|^{3+2s}}  \left[ \left(  \eta_j \ast \La^\ga u (1-\bar\phi ) \right) (x-y) -  \left(  \eta_j \ast \La^\ga u (1-\bar\phi ) \right) (x) \right] \d y \\
& =\sum_{j\geq 0} 2^{-j(1-\ga )}
\int \frac{\chi_\rho(y)(\phi(x-y)-\phi(x))}{|y|^{3+2s}}  \left[ \left( \La^{\ga-s} \eta_j \ast \La^s u  \right) (x-y) -  \left( \La^{\ga-s} \eta_j \ast \La^s u \right) (x) \right] \d y \\
& - \sum_{j\geq 0}2^{-j(1-\ga )} 
\int \frac{\chi_\rho(y)(\phi(x-y)-\phi(x))}{|y|^{3+2s}}  \left[ \left(  \eta_j \ast \La^\ga u \bar\phi  \right) (x-y) -  \left(  \eta_j \ast \La^\ga u \bar\phi  \right) (x) \right] \d y \\
& =: I_{\ell 2t1} + I_{\ell 2t2} 
\end{align*} 
As for $I_{\ell 2t2} $, for every $x\in B_{R+2\rho }$, $y\in B_{2\rho }$ we have
\begin{align*}
\left| \left(  \eta_j \ast \La^\ga u \bar\phi  \right) (x-y) -  \left(  \eta_j \ast \La^\ga u \bar\phi  \right) (x) \right| 
&\leq  \left| \eta_j \ast \na (\La^\ga u \bar\phi ) (x-\theta y)\right| \, | y | \leq \| \eta_j \|_\infty \| \na (\La^\ga u \bar\phi ) \|_{L^1(B_{R_0})} |y| \\
&\lec 2^{-3j }\| \na (\bar\phi\La^\ga u  ) \|_{L^1(B_{R_0})} |y|
\end{align*}
for some $\theta= \th_{x,y} \in [0,1]$. Thus,
\[
|I_{\ell 2t2}  (x) | \lec   \| \na \phi \|_\infty \| \na (\La^\ga u \bar\phi ) \|_{L^1(B_{R_0})} \int_{B_{2\rho }} |y|^{-(1+2s)} \d y \sum_{j\geq 0} 2^{-j(4-\ga )} \lec \| \na (\La^\ga u \bar\phi ) \|_{L^1(B_{R_0})}
\]
for every $x \in B_{R+2\rho }$, which gives that 
\[ \norm{I_{\ell2t2}}_{L^r(-T,0;L^p)} \lec  \norm{\na (\La^\ga u \bar\phi )}_{L^r(-T,0;L^p(B_{2 R_0}))} .\]
As for $I_{\ell 2t1 }$, we have for every integer $k\geq 0$, $x\in B_{R+2\rho }$
\[\begin{split}
|\na^k I_{\ell 2t1}& (x)| 
\lec \| \phi \|_{W^{k,\infty }} \sum_{m=0}^k
\sum_{j\geq 0} 2^{-j(1-\ga )} \int_{B_{2\rho} } |y|^{-(1+2s)} 
\norm{\na^{m+1}\La^{\ga-s}  \eta_j \ast \La^s u }_{ L^\infty (B_{R+4\rho })}
\d y \\
&\lec   \sum_{m=0}^k \sum_{j\geq 0} 2^{-j(1-\ga )} \left(  \left\| \int_{|z-y|\leq 2^{j}R_0} \La^{\ga-s} \na^{m+1} \eta_j (y-z )  \La^s u (z)  \d z \right\|_{L_y^\infty (B_{R+4\rho })} \right.\\
&\hspace{2cm}+ \left.\sum_{l\geq j} \left\| \int_{2^lR_0 < |z-y|\leq 2^{l+1}R_0} \La^{\ga-s} \na^{m+1} \eta_j (y-z )  \La^s u (z)  \d z \right\|_{L_y^\infty (B_{R+4\rho })}  \right) \\
&\lec_k  \sum_{m=0}^k \sum_{j\geq 0} 2^{-j(1-\ga )} \left( 2^{j(-4-m-\ga +s)} \left\| \int_{|z-y|\leq 2^{j+2}R_0} | \La^s u (z) | \d z \right\|_{L_y^\infty (B_{R+4\rho })} \right.\\
&\hspace{2cm}+ \left.\sum_{l\geq j+2} 2^{l(-4-m-\ga +s)} \left\| \int_{ |z-y|\leq 2^{l+1}R_0} |  \La^s u (z)|  \d z \right\|_{L_y^\infty (B_{R+4\rho })}  \right) \\
&\lec   \sum_{j\geq 0} 2^{-j(1-\ga )} \left( 2^{j(-1-\ga +s)}  \dint_{|z|\leq 2^{j+3}R_0} | \La^s u (z) | \d z +\sum_{l\geq j+2} 2^{l(-1-\ga +s)} \dint_{ |z|\leq 2^{l+2}R_0} |  \La^s u (z)|  \d z   \right) \\
&\lec \sum_{j\geq 0} 2^{-j(2-s )} \sup_{R'\geq 4R_0} \dint_{B_{R'}} | \La^s u  |  
\lec  \sup_{R'\geq 4R_0} \dint_{B_{R'}} | \La^s u  | ,
\end{split}
\]
where, in the third inequality, we used the bound $\|\La^{\ga-s} \na^n \eta_j \|_\infty \lec 2^{j(-3-n-\ga+s)}$ (in the first term) and \eqref{eta_j_for_large_y} (in the second term; recall \eqref{def_of_rho} that $R_0>R >2\rho$). Thus using \eqref{est.max}
\[
\norm{ I_{\ell 2t1} }_{L^2(-T,0;W^{k,\infty })}  \lec \norm{\cM (\La^s u)}_{L^2([-T,0]\times B_{R_0})}.
\]
Hence lemma follows by setting
\begin{align*}
f_1 = I_{\ell1} + I_{\ell 2\ell} + I_{\ell 2 t2} + I_{t2}, \quad
g_1 = I_{\ell2t1} + I_{t1}.
\end{align*}
\end{proof}
We now move on to the commutators involving the nonlinear term and the pressure term.
\begin{lem}\label{lem_comm.u} 
Let $\frac 34 <s<1$ and $R_0>R$. Let $\phi=\phi(x,t)$ be a smooth function compactly supported on $B_{R}\times (-T,0]$ for some $R>0$ and $T>0$. For any $\ga \in(2s-1,1)$, $r\in[1,\infty]$, and $p\in [1,\infty)$, we have decompositions
\begin{align*}
[\La^\ga, \phi] (u\cdot\na) u = f_2 + g_2\\
\end{align*}
such that for any $k\in \mathbb{N}\cup\{0\}$,
\begin{align*}
\norm{f_2}_{L^r(-T,0;L^p)}
&\lec \norm{(u\cdot\na) u}_{L^r(-T,0;L^p(B_R))} + \norm{u}_{L^\infty([-T,0]\times B_{R_0})}^2\\
\norm{g_2}_{L^1_t(-T,0; W^{k,\infty})}
&\lec_k \norm{\cM |\La^s u|^{\frac2{1+\delta }}}_{L^{1+\delta }([-T,0]\times B_{R_0})} 
+\norm{u}_{L^\infty([-T,0]\times B_{R_0})}^2
\end{align*}
where $\delta \coloneqq \frac{2s}{6-s}$.
\end{lem}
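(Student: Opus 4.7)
My approach is to apply the decomposition \eqref{decomposition_of_I} with $G:=(u\cdot\na)u$ and $\beta:=\gamma$, yielding four pieces $I_{\ell 1}, I_{\ell 2}, I_{t1}, I_{t2}$. A unifying idea is the divergence-free identity $(u\cdot\na)u = \na\cdot(u\otimes u)$, which I use to integrate by parts in $y$ whenever a singular kernel acts on a derivative of $u$, transferring the derivative onto the kernel. The pieces admitting a direct bound by local $L^p$-information of $(u\cdot\na)u$ or by $\norm{u}_\infty^2$ become $f_2$, and the genuinely nonlocal quadratic residual becomes $g_2$.

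For the combined local integral $I_{\ell 1}+I_{\ell 2} = C_\gamma\int G(y)(\phi(x)-\phi(y))\chi_\rho(x-y)|x-y|^{-3-\gamma}\,dy$, the kernel is absolutely integrable of order $|x-y|^{-2-\gamma}\chi_\rho$ (using $\gamma<1$ together with $|\phi(x)-\phi(y)|\lec|x-y|$), so Schur's test gives $\norm{I_{\ell 1}+I_{\ell 2}}_{L^r L^p}\lec\norm{(u\cdot\na)u}_{L^r L^p(B_{R_0})}$. For $I_{t2}$, the constraint $|x-y|\ge \rho$ makes the kernel smooth, and integration by parts in divergence form yields $u\otimes u$ convolved with a smooth kernel supported in $y\in B_R$, giving $\norm{I_{t2}}_{L^r L^p}\lec\norm{u}_{L^\infty(B_{R_0})}^2$. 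For $I_{t1}$ the same IBP produces a localized boundary piece (from $\p_y\chi_\rho$) bounded by $\norm{u}^2_{L^\infty(B_{R_0})}$ and a main piece of the form $\phi(x)\int (u_iu_j)(y)(y-x)_j|x-y|^{-5-\gamma}(1-\chi_\rho(x-y))\,dy$. Splitting $u = u\bar\phi+u(1-\bar\phi)$ for a cutoff $\bar\phi\equiv 1$ on $B_{R_0}$ reduces every cross term and the $u\bar\phi\otimes u\bar\phi$ piece to factors controlled by $\norm{u}^2_{L^\infty(B_{R_0})}$, isolating the purely nonlocal
\[
g_2(x,t)\sim \phi(x)\int_{|y|\ge R_0}\frac{|u(y,t)|^2}{|x-y|^{4+\gamma}}\,dy
\]
(with analogously faster-decaying kernels producing the $W^{k,\infty}_x$-control by $\p_x$-differentiation).

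The heart of the argument is the bound on this quadratic tail by $\norm{\cM|\La^s u|^{2/(1+\delta)}}_{L^{1+\delta}_{t,x}(B_{R_0})}$. I plan to start from the Hedberg-type pointwise inequality
\[
|u(y)|\lec r^s (\cM|\La^s u|^q(y))^{1/q}+r^{s-\frac{3}{2}}\norm{\La^s u}_{L^2},\qquad r>0,\ q\ge 1,
\]
derived from $u = c\La^{-s}\La^s u$ by dyadic splitting at radius $r$ and Jensen's inequality applied to replace $\dint|\La^s u|$ by $(\dint|\La^s u|^q)^{1/q}$ at each dyadic scale. After squaring, optimizing $r$, dyadically decomposing in $|y|$ (where $|x-y|\sim |y|$ for $x\in B_R$ and $|y|\ge R_0$), and applying H\"older in $x$ with the conjugate pair $(a,a') = (3/(3-2s),3/(2s))$, the exponent $q = 2/(1+\delta)$, equivalently $\delta = 2s/(6-s)$, is pinned uniquely by two constraints: $q(1+\delta)=2$, which converts the $L^2$-control of $\La^s u$ into $L^{1+\delta}$-integrability of $|\La^s u|^q$; and $a'<\infty$, which requires exactly $s>3/4$ and makes $|x-y|^{-(4+\gamma)}$ integrable at the dual exponent. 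Tail values of $\cM|\La^s u|^q(y)$ for $|y|\ge R_0$ are brought back into an integral over $B_{R_0}$ via the observation that any ball $B_{|y|+R}(x)$ with $x\in B_R$ contains $y$, so $\dint_{B_{|y|+R}(x)}|\La^s u|^q\lec \cM|\La^s u|^q(x)$. A final H\"older in time converts the pointwise-in-$t$ estimate into the stated $L^1_t$ bound.

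The main obstacle will be the precise bookkeeping in this Hedberg-H\"older step: matching the exponent $\delta = 2s/(6-s)$ requires simultaneously balancing the pointwise Riesz optimization, the spatial Hölder pair forced by $s>3/4$, and the tail-to-$B_{R_0}$ transfer of the maximal function. Once these exponents are aligned, the surrounding dyadic summations and the $k\ge 1$ case (for which $\p_x$-derivatives only improve the decay of $(y-x)_j|x-y|^{-5-\gamma}$) are routine.
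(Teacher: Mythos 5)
Your skeleton matches the paper's: the decomposition \eqref{decomposition_of_I} with $G=(u\cdot\na)u$ and $\be=\ga$, the direct bound on the local pieces using $\ga<1$, and the integration by parts in $I_{t1}$ via $(u\cdot\na)u=\na\cdot(u\otimes u)$, with the $\na\chi_\rho$ boundary piece absorbed into $\|u\|_\infty^2$. (Two small bookkeeping issues: the cross terms in your splitting $u=u\bar\phi+u(1-\bar\phi)$ live on the transition region of $\bar\phi$, which is \emph{outside} $B_{R_0}$ if $\bar\phi\equiv1$ on $B_{R_0}$, so they are not controlled by $\|u\|^2_{L^\infty(B_{R_0})}$ as stated; and your $I_{t2}$ bound by $\|u\|_\infty^2$, while plausible after IBP, is unnecessary — the paper bounds it directly by $\|(u\cdot\na)u\|_{L^rL^p(B_{R_0})}$ since the kernel is bounded on $|x-y|\ge\rho$.)

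The genuine gap is in the heart of the argument, the quadratic tail $\phi(x)\int_{|y|\gtrsim R_0}|u(y)|^2|x-y|^{-(4+\ga)}\,\d y$. Your Hedberg inequality carries the global term $r^{s-3/2}\|\La^s u\|_{L^2(\R^3)}$, and after optimizing in $r$ (or choosing $r\sim2^j$ on dyadic shells, or applying H\"older/Sobolev with the pair $(3/(3-2s),3/(2s))$, which gives $\|u\|^2_{L^{6/(3-2s)}(\R^3)}\lec\|\La^s u\|^2_{L^2(\R^3)}$) the output inevitably contains either $\|\La^s u\|_{L^2(\R^3)}$ or integrals of $(\cM|\La^s u|^q)^{1+\de}$ over far shells $|y|\sim2^j$. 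Neither is controlled by the only admissible quantity $\|\cM|\La^s u|^{2/(1+\de)}\|_{L^{1+\de}([-T,0]\times B_{R_0})}$, which is evaluated \emph{only on $B_{R_0}$}; and your transfer step does not repair this, because $\dint_{B_{|y|+R}(x)}|\La^s u|^q\lec\cM|\La^s u|^q(x)$ controls averages over large balls centered near the origin, not the value $\cM|\La^s u|^q(y)$ at a far point $y$ (which can be large due to concentration of $\La^s u$ near $y$). The paper's mechanism for locality is different and is the missing idea: first subtract the mean, replacing $u\otimes u$ by $(u-(u)_\varphi)\otimes(u-(u)_\varphi)$ (the error terms form $I_{t12},I_{t13}$ and are bounded by $\|u\|_\infty^2$ and $\|\cM(\La^s u)\|^2_{L^2(B_{R_0})}$); then represent $u-(u)_\varphi$ through $\int(\La^{-s}\La^su(y)-\La^{-s}\La^su(w))\varphi(w)\,\d w$ and split into $u_{\loc,1}+u_{\loc,2}+u_{\tail}$ as in \eqref{repr_u_for_tail}, which yields the growth estimate $\int_{B_{2^j\rho}(x)}|u-(u)_\varphi|^2\lec 2^{j(3+2s)}\bigl(\sup_{R'\ge4R}\dint_{B_{R'}}|\La^su|^{2/(1+\de)}\bigr)^{1+\de}$ (Young's convolution inequality with exponents $\tfrac2{1+\de}$ and $\tfrac{6-s}{6-2s}$ is where $\de=\tfrac{2s}{6-s}$ and $s>\tfrac34$ enter); the factor $2^{j(3+2s)}$ against the kernel $2^{-j(4+\ga)}$ gives a convergent sum precisely because $\ga>2s-1$; and finally the averages over large balls $B_{R'}$ are converted to $\dint_{B_R}\cM|\La^su|^q$ by the observation \eqref{est.max}, which is the correct direction of your transfer idea. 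Without the mean subtraction and this representation, the tail cannot be closed with only local maximal-function data.
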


\begin{rem} The lemma is also valid with $\delta =0$, but such version would be of no use to us (in \texttt{Step 3} of the bootstrapping argument) since the global boundedness of $\norm{\cM |\La^s u|^2}_{L^1_{t,x}}$ is not guaranteed. In fact, making sure that $|\La^s u|$ appears in the estimate above with a power lower than $2$ under the maximal operator $\mathcal{M}$ as well as estimating the quadratic nonlinearity are the two main difficulties of this lemma.
\end{rem}
\begin{proof} We apply decomposition \eqref{decomposition_of_I} with $G\coloneqq (u\cdot \na ) u$ and $\be \coloneqq \ga$,
\begin{align*}
[\La^\ga, \phi] (u\cdot\na) u
= I_\ell + I_{t1} +I_{t2},
\end{align*}
where 
\[I_\ell = I_{\ell 1}+I_{\ell 2} \sim_\ga\,\text{ p.v.}\int \frac {\chi_\rho(x-y)(\phi(x)-\phi(y))(u\cdot \na) u(y)}{|x-y|^{3+\ga}} \d y .\]
We recall that (due to the support of $\phi$) $I_\ell$ is supported in $B_{R+2\rho}\times (-T,0]$, and that $R_0 = R+5\rho$ (see \eqref{def_of_rho}). Since $\ga <1$, we can easily estimate it by writing
\begin{equation}\begin{split}
\label{est.Iell}
&\norm{I_\ell}_{L^r(-T,0;L^p)}\\
&\quad\lec \Norm{\int \frac{|\chi_\rho(y)|}{|y|^{3+\ga}}
\norm{(\phi(x)-\phi(x-y))(u\cdot \na) u(x-y)}_{L^p(B_{R+2\rho})} \d y }_{L^r(-T,0)}\\
&\quad\lec\int
\frac {|\chi_\rho(y)|}{|y|^{2+\ga}} 
\d y\norm{(u\cdot \na) u}_{L^r(-T,0;L^p(B_{R+4\rho}))}
\lec \norm{(u\cdot \na) u}_{L^r(-T,0;L^p(B_{R_0}))}.
\end{split}\end{equation}
As for $I_{t2}$, 
\begin{align*}
\norm{I_{t2}}_{L^r(-T,0;L^p(\R^3))} 
\lec \norm{(u\cdot \na) u}_{L^r(-T,0;L^p(B_{R_0}))}.
\end{align*}
Lastly, consider the remaining piece 
\begin{align*}
I_{t1} (x)
&\sim_\ga   \phi(x) \int \frac{(1-\chi_\rho(x-y)) \na\cdot (u\otimes u)(y)}{|x-y|^{3+\ga} } \d y\\
&=  -(3+\ga)\,\phi(x) \int \frac{(1-\chi_\rho(x-y)) (x-y)_i }{|x-y|^{5+\ga}} (u-(u)_\ph)_i(u-(u)_\varphi)(y) \d y\\
&\quad 
-(3+\ga)\,\phi(x) \int \frac{(1-\chi_\rho(x-y)) (x-y)_i }{|x-y|^{5+\ga}} [u_iu(y)- (u-(u)_\varphi)_i(u-(u)_\varphi)(y)] \d y\\
&\quad- \phi(x) \int \frac{(u(y)\cdot\na) \chi_\rho(x-y)}{|x-y|^{3+\ga}}  u (y) \d y\\
&=: I_{t11} + I_{t12} + I_{t13}
\end{align*}
for $x\in \R^3$, where $\varphi \in C_c^\infty (B_R;[0,\infty ))$ is such that $\int \varphi \,\d x=1$. Then it is easy to see that
\begin{align*}
\norm{I_{t13}}_{L^r(-T,0;L^p)} 
\lec \left\|\int_{|x-y|\leq 2\rho} |u(y)|^2 \d y \right\|_{L^r(-T,0;L^p(B_R))} 
\lec \norm{u}_{L^{\infty}([-T,0]\times B_{R_0})}^2,
\end{align*}
where we used the fact that $\supp(\na\chi_\rho)\subset B_{2\rho}\setminus B_{\rho}$ in the first inequality. Also, since $I_{t12}$ satisfies 
\begin{align*}
|\na^k I_{t12}(x)|
\lec_k \sum_{m=0}^k 
|(u)_\ph| \int_{|y-x|\geq \rho}  \frac{|(u-(u)_\varphi)(y)|}{|x-y|^{4+\ga + m}}  \d y + |(u)_\ph|^2,
\end{align*}
for every $x\in B_{R }$, we obtain (as in \eqref{est.It11})
\begin{align*}
\norm{I_{t12}}_{L^2(-T,0;W^{k,\infty})}
&\lec \norm{u}_{L^\infty([-T,0]\times B_R)}^2 
+\norm{\cM|\La^s u|}_{L^2([-T,0]\times B_{R_0})}^2.
\end{align*} 
As for $I_{t11}$, we will show that for any $x\in B_R$,
\begin{align}\label{est.It11.suff2}
\int_{|y-x|\leq 2^j \rho} |u-(u)_\varphi|^2(y) \d y
\lec 2^{j(3+2s)} \left( \sup_{R'\geq 4R} \dint_{B_{R'}} |\La^s u|^{2/(1+\delta )} \right)^{1+\delta }.
\end{align}
(This is a quadratic version of \eqref{est.t1.suff}; recall that $\delta$ satisfies $1+\delta = \frac{6+s}{6-s}$.)\\
Given \eqref{est.It11.suff2}, similarly as in \eqref{est.It11}, we obtain
\begin{align*}
\norm{I_{t11}}_{L^1(-T,0;W^{k,\infty})}
&\lec\sum_{m=0}^k \sum_{j=1}^\infty 
\left\| \int_{2^{j-1}\rho\leq |y-x|\leq 2^j \rho}
\frac{|u-(u)_\varphi|^2(y,t)}{|x-y|^{4+\ga+m}} \d y \right\|_{L^1_t(-T,0;L^{\infty}_x(B_R))}\\
&\lec \sum_{j=1}^\infty 2^{-j(4+\ga)} 
\left\| \int_{|y-x|\leq 2^j \rho}
|u-(u)_\varphi|^2(y,t) \d y\right\|_{L^1_t(-T,0;L^{\infty}_x(B_R))}\\
&\lec \int_{-T}^0 \left( \sup_{R'\geq 4R} \dint_{B_{R'}} |\La^s u|^{\frac 2{1+\delta }}  \d y \right)^{1+\delta } \\
&\lec \norm{\cM |\La^s u|^{\frac 2{1+\delta }}}_{L^{1+\delta }((-T,0)\times B_{R})},
\end{align*}
where we used \eqref{est.It11.suff2} and the fact that $\ga >2s-1$ in the third inequality, and \eqref{est.max} (without its last step) in the last inequality. Thus (given \eqref{est.It11.suff2}) the lemma follows by letting 
\begin{align*}
f_2 \coloneqq  I_{\ell} + I_{t13}
,\quad
g_2 \coloneqq  I_{t11}+ I_{t12}. 
\end{align*}

In order to obtain \eqref{est.It11.suff2}, we write
\[
u(y)-(u)_\ph = u_{\loc,1}(y) +u_{\loc,2} +u_{\tail }(y) ,
\]
as in \eqref{repr_u_for_tail}. Using using \eqref{est.uloc2} and \eqref{est.utail}, we easily obtain 
\begin{align*}
\int_{|y-x|\leq 2^j \rho} \left( |u_{\loc,2}|^2+|u_{\tail}(y)|^2 \right)\d y
&\lec 2^{j(3+2s)} \left( \sup_{R'\geq 4R} \dint_{B_{R'}} |\La^s u| \right)^2\\
&\lec 2^{j(3+2s)} \left( \sup_{R'\geq 4R} \dint_{B_{R'}} |\La^s u|^{2/(1+\delta )} \right)^{1+\delta } .
\end{align*} 
As for $u_{\loc,1}$, we obtain the following estimate by a modification of \eqref{est.uloc1}, 
\begin{align*}
\int_{|y-x|\le 2^j\rho} |u_{\loc,1}(y)|^2\d y
&\lec 
\int_{|y-x|\le 2^j\rho}
\left[\int_{|z-x|\leq 2^{j+n_0}\rho}
\frac{|\La^s u(z)|}{|y-z|^{3-s}}\d z\right]^2\d y\\
&\le 
\int
\left[\int
\frac{|\La^s u(z)|1_{B_{2^{j+n_0}\rho}(x)}(z)1_{B_{2^{j+n_0+1}\rho}}(y-z)}{|y-z|^{3-s}}\d z\right]^2\d y\\
&\lec \norm{\La^s u}_{L^{\frac 2{1+\delta}}(B_{2^{j+n_0}\rho}(x))}^2
\norm{|\cdot|^{-(3-s)}}_{L^{\frac{6-s}{6-2s}}(B_{2^{j+n_0+1}\rho})}^2\\
&\lec 2^{js\frac{6-2s}{6-s}}\left( \int_{B_{2^{j+n_0}\rho}(x)} |\La^s u|^{\frac2{1+\delta }} \right)^{1+\delta } \\
&\lec 2^{j(3+2s)} \left( \sup_{R'\geq 4R} \dint_{B_{R'}} |\La^s u|^{\frac2{1+\delta }}  \right)^{1+\delta },
\end{align*}
where we used Young's inequality for convolutions in the third inequality. 
\end{proof}
Finally, we turn to the commutator concerning the pressure function.

\begin{lem}\label{lem_comm.p} 
Let $p$ satisfy $-\De p = \pa_{ij}(u_iu_j)$ on $(-T,0)\times \RR^3$, and let $\frac 34 <s<1$ and $R_0>R$. Let $\phi=\phi(x,t)$ be a smooth function compactly supported on $B_{R}\times (-T,0]$ for some $R>0$ and $T>0$. For any $\ga \in(2s-1,1)$, $r\in[1,\infty]$, and $q\in [1,\infty)$, we have decompositions
\begin{align*}
[\La^\ga, \phi]\na p = f_3 +g_3
\end{align*}
such that for any $k\in \mathbb{N}\cup\{0\}$,
\begin{align*}
\norm{f_3}_{L^r(-T,0;L^q)}
 &\lec \norm{(u\cdot\na)u }_{L^r(-T,0;L^q(B_{R_0}))}\\
\norm{g_3}_{L^1_t(-T,0; W^{k,\infty})} 
&\lec_k \norm{u}^2_{L^2(-T,0;L^\infty (B_{R_0})} +\norm{p}_{L^1([-T,0]\times B_{R_0})}+
\norm{\mathscr{M}_4 (\La^{2s-1}\na p)}_{L^1([-T,0]\times B_{R})}.
\end{align*}
\end{lem}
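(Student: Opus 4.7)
The plan is to apply the decomposition \eqref{decomposition_of_I} with $G\coloneqq \nabla p$ and $\beta \coloneqq \gamma$, writing
\[
[\La^\gamma, \phi]\nabla p = (I_{\ell 1} + I_{\ell 2}) + I_{t1} + I_{t2},
\]
and to allocate each piece to either $f_3$ or $g_3$, following the template of Lemmas~\ref{lem_1.10}--\ref{lem_comm.u} but with a crucial new ingredient: the grand maximal function enters through the long-range tail $I_{t1}$ of $\nabla p$.

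For the local pieces $I_{\ell 1}+I_{\ell 2}$ and the semi-localized tail $I_{t2}$, the pressure gradient appears only through $\phi(y)\nabla p(y)$ or through $\nabla p(y)$ for $y$ in a fixed $\rho$-neighborhood of $\supp\phi$. I would choose an auxiliary cut-off $\td\phi\in C_c^\infty$ which is identically $1$ on $\supp\phi+B_{2\rho}$ and supported in $B_{R_0}$, and apply Lemma~\ref{lem_pressure_est} to obtain $\td\phi\nabla p = \td\phi\nabla \mathcal{R}_{ij}(u_iu_j\bar\phi) + \Ga$ with $\|\Ga\|_{W^{k,\infty}} \lec \|u\|^2_{L^\infty(B_{R_0})} + \|p\|_{L^1(B_{R_0})}$. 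The smooth remainder $\Ga$, once inserted into the relevant local and tail kernels $\chi_\rho/|\cdot|^{3+\gamma}$ and $(1-\chi_\rho)/|\cdot|^{3+\gamma}$, contributes harmlessly to $g_3$. For the Riesz piece, moving the derivative inside $\mathcal{R}_{ij}$ and using $\div u = 0$ (so that $\pa_j(u_iu_j) = ((u\cdot\nabla)u)_i$), then invoking $L^q$-boundedness of the Calder\'on-Zygmund operators $\mathcal{R}_i\mathcal{R}_j$, yields the required $\|(u\cdot\nabla)u\|_{L^r_tL^q_x(B_{R_0})}$ contribution to $f_3$, up to residual $\|u\|^2_\infty$-type terms (coming from derivatives falling on $\bar\phi$) that are absorbed into $g_3$.

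The main difficulty is the long-range tail
\[
I_{t1}(x) = C_\gamma\, \phi(x)\int \frac{(1-\chi_\rho)(x-y)}{|x-y|^{3+\gamma}}\nabla p(y)\,\d y.
\]
Following the template of Lemma~\ref{lem_tricks}, dyadically decompose $(1-\chi_\rho(y))/|y|^{3+\gamma} = \sum_{j\geq 0} 2^{-j\gamma}\eta_j(y)$, where $\eta_j = (\eta_0)_{2^j}$ is a bump of uniform $L^1$ mass on $|y|\sim 2^j\rho$. Since $s>\tfrac12$, the operator $\La^{1-2s}$ is a Riesz potential with locally integrable kernel $|w|^{-(4-2s)}$; lifting $\nabla p = c_s\La^{1-2s}(\La^{2s-1}\nabla p)$ and moving $\La^{1-2s}$ onto $\eta_j$ gives, by scaling, $\La^{1-2s}\eta_j = 2^{j(2s-1)}\Phi_{2^j\rho}$ for the fixed profile $\Phi\coloneqq \La^{1-2s}\eta_0$. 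Hence
\[
I_{t1}(x) = C\,\phi(x)\sum_{j\geq 0} 2^{-j(\gamma-(2s-1))}(\Phi_{2^j\rho}\ast \La^{2s-1}\nabla p)(x),
\]
and the dyadic weight is summable exactly because $\gamma > 2s-1$. It then remains to establish the uniform bound
\[
|\phi(x)(\Phi_t \ast \La^{2s-1}\nabla p)(x)| \lec \|\mathscr{M}_4 (\La^{2s-1}\nabla p)\|_{L^1(B_R)},\qquad x\in B_R,\ t>0,
\]
which (combined with the time-pointwise structure of the other terms in $g_3$) delivers the required estimate upon integrating in $t$. Derivatives $\nabla^k I_{t1}$ in $x$ fall either on $\phi$ (harmless) or on $\eta_j$, producing an extra factor $2^{-jk}$ that preserves summability.

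The hard part will be the uniform bound in the previous step. The profile $\Phi$ decays only like $|w|^{-(4-2s)}$, which is far too slow to place $\Phi$ (even after rescaling) directly into the Schwartz-class test functions defining $\mathscr{M}_4$, as that class with $N=4$ effectively requires decay faster than $|w|^{-7}$. The way around this is to split $\Phi = \Phi^{(1)} + \Phi^{(2)}$, with $\Phi^{(1)}$ a compactly supported Schwartz bump (admissible for $\mathscr{M}_4$, yielding a clean pointwise bound by $\mathscr{M}_4(\La^{2s-1}\nabla p)(x)$), and $\Phi^{(2)}$ the slowly decaying tail, which I would control by averages of $\La^{2s-1}\nabla p$ over nested balls, absorbed into $\|\mathscr{M}_4(\La^{2s-1}\nabla p)\|_{L^1(B_R)}$. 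This is the precise structural reason why the grand maximal function must be used in this lemma: its global $\mathcal{H}^1\to L^1$ boundedness (recall \eqref{pressure_global_grand_max}) absorbs the slowly decaying tail of the double convolution, whereas the Hardy-Littlewood or smooth maximal function would fail to close the estimate at the scale-optimal level required for Theorem~\ref{thm_main}.
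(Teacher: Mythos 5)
Your treatment of the local pieces $I_{\ell}$ and of $I_{t2}$ via the pressure decomposition of Lemma~\ref{lem_pressure_est} and the $L^q$-boundedness of $\mathcal{R}_i\mathcal{R}_j$ matches the paper's argument and is fine. The gap is in the long-range tail $I_{t1}$. Your reduction
\[
I_{t1}(x)= C\,\phi(x)\sum_{j\ge 1}2^{-j(\ga-(2s-1))}\bigl(\Phi_{2^j\rho}\ast H\bigr)(x),\qquad H\coloneqq\La^{2s-1}\na p,\ \ \Phi\coloneqq\La^{1-2s}\eta_0,
\]
is formally correct, and you rightly flag the slow decay of $\Phi$ as the hard part; but the proposed resolution does not close. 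Since $s>\tfrac12$, the profile decays like $|z|^{-(4-2s)}$ with $4-2s<3$, so the tail $\Phi^{(2)}$ is \emph{not integrable at infinity}. Controlling $\Phi^{(2)}_{2^j\rho}\ast H$ by "averages of $H$ over nested balls" fails quantitatively: on the annulus $|x-y|\sim 2^l\rho$, $l\ge j$, one has $|\Phi^{(2)}_{2^j\rho}(x-y)|\sim (2^j\rho)^{1-2s}(2^l\rho)^{-(4-2s)}$, and the best available bound on the (smoothed, signed) annulus averages of $H$ is $\lec\mathscr{M}_4H(x)$; this gives a contribution $\sim 2^{(l-j)(2s-1)}\mathscr{M}_4H(x)$ per annulus, and $\sum_{l\ge j}2^{(l-j)(2s-1)}=\infty$ because $2s-1>0$. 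Averages \emph{with} absolute values would instead require the Hardy--Littlewood maximal function, which is not dominated by $\mathscr{M}_4$ and is not globally in $L^1$; and a crude bound by $\|H\|_{L^1(\R^3)}$ would destroy the locality of the right-hand side, on which Theorems~\ref{thm_bootstrapping_intro} and~\ref{thm_local_reg_intro} depend.

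The missing idea is a cancellation: one must first split $\na p=(\na p-(\na p)_\varphi)+(\na p)_\varphi$ for some $\varphi\in C_c^\infty(B_R)$ with $\int\varphi=1$. The mean is harmless, since $(\na p)_\varphi=-\int p\,\na\varphi$ is controlled by $\|p\|_{L^1(B_R)}$ (this is precisely why that term appears in the bound for $g_3$). For the oscillation, the representation $\na p(y)-(\na p)_\varphi\sim\iint\bigl(|y-z|^{-(4-2s)}-|w-z|^{-(4-2s)}\bigr)H(z)\varphi(w)\,\d z\,\d w$ replaces the Riesz kernel by a difference decaying like $|z|^{-(5-2s)}$, and $5-2s>3$, so the far-field dyadic sums now carry a factor $2^{l(2s-2)}$ and converge since $s<1$. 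Even then one must organize the resulting double convolutions $\eta^{(a)}_l\ast\td\zeta_{m+1,k}\ast H$, evaluated along the segment $\th x+(1-\th)w$, into a family of uniformly admissible test functions for $\mathscr{M}_4$ — this is the content of the operator $\mathcal{L}_m$ in \eqref{def_of_L} and Lemma~\ref{lem_bound_G_by_grand_max} — and that, rather than a single rescaled profile $\Phi$, is where the grand maximal function actually enters.
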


\begin{proof}
We apply decomposition \eqref{decomposition_of_I} with $G\coloneqq \na p$ and $\be=\ga$, 
\begin{align*}
[\La^\ga, \phi]\na p = I_\ell + I_{t1}  + I_{t2},
\end{align*}
where
\[I_\ell  \sim_\ga \text{p.v.}\int \frac {\chi_\rho(x-y)(\phi(x)-\phi(y))\na p(y)}{|x-y|^{3+\ga}} \d y .\]
As in the proof of Lemma~\ref{lem_1.10}, we assume that $R_0< \frac 72 R$ (so that $2\rho <R$). \\

\noindent\texttt{Step 1.} We estimate $I_\ell $ and $I_{t2}$.\\ 

To deal with the local part, $I_\ell$, we use the pressure decomposition \eqref{eq.loc.pre},
\[\begin{split}
\na( p\td\phi)  &=   \na \mathcal{R}_{ij} (u_iu_j \td\phi) 
 - \na \left[ 2(-\De)^{-1}\pa_i(u_i u_j \pa_j\td\phi) +(-\De)^{-1}(u_i u_j \pa_{ij}\td\phi) \right] +\na p_2 \\
 &= \mathcal{R}  \mathcal{R}_{j} (u \cdot \na  u_j \td\phi) 
 - \na \left[ (-\De)^{-1}\pa_i(u_i u_j \pa_j\td\phi) +(-\De)^{-1}(u_i u_j \pa_{ij}\td\phi) \right] +\na p_2 \\
 &=:\mathcal{R}  \mathcal{R}_{j} (u \cdot \na  u_j \td\phi) 
 + \na \td p
 \end{split}
\]
where $p_2 \coloneqq - \left[2 (-\De)^{-1}\na \cdot (p \na \bar\phi ) -(-\De)^{-1}( p\De\bar\phi)\right]$ and $\td\phi=\td\phi(x,t)\in C_c^\infty( B_{R_0}\times \RR )$ satisfies $\td\phi\equiv 1$ on $B_{R+\frac92\rho}\times [-T,0]$. (We note that $\td p$ is the same as $p_1+p_2$ in \eqref{eq.loc.pre}, except for the factor of $2$ in $p_1$). As in \eqref{pressure1}, \eqref{pressure2} we obtain 
\[
\| \na^k \td p \|_{L^1 (-T,0;L^\infty (B_{R+4\rho }))} \lec_k \| u \|_{L^2 (-T,0;L^\infty (B_{R_0}))}^2 + \| p \|_{L^1 (-T,0; L^1 (B_{R_0}))} 
\]
for every integer $k\geq 0$. Thus, since $I_\ell(x)$ is supported on $B_{R+2\rho}$ (so that $x,x-y\in B_{R+4\rho}$, which implies that $\td \phi (x-y)=1$), we can write 
\begin{align*}
I_\ell(x)
&\sim_\ga \text{ p.v.}\int  \frac{\chi_\rho(y)(\phi(x)-\phi(x-y))\na (p\td\phi)(x-y)}{|y|^{3+\ga}} \d y\\
&=\text{ p.v.}\int \frac{\chi_\rho(y)(\phi(x)-\phi(x-y))\mathcal{R}  \mathcal{R}_{j} (u \cdot \na  u_j \td\phi)  (x-y)}{|y|^{3+\ga}} \d y\\
&\quad+ \text{ p.v.}\int \frac{\chi_\rho(y)(\phi(x)-\phi(x-y))
  \na \td p (x-y)}{|y|^{3+\ga}} \d y\\
&=: \td  I_{\ell 1} + \td I_{\ell 2}, 
\end{align*}
which gives that 
\[\norm{\td I_{\ell 1}}_{L^r(-T,0;L^q)}
\lec  \norm{\mathcal{R}  \mathcal{R}_{j} (u \cdot \na  u_j \td\phi) }_{L^r(-T,0;L^q)}\lec_q  \norm{(u\cdot \na) u}_{L^r(-T,0;L^q(B_{R_0}))}
\]
for every $q<\infty$, $r\in [1,\infty ]$, and 
\[ \norm{\td I_{\ell2}}_{L^1(-T,0;W^{k,\infty})} \lec_k  \| u \|_{L^2 (-T,0;L^\infty (B_{R_0}))}^2 + \| p \|_{L^1 ([-T,0]\times B_{R_0})}  \]
for every integer $k\geq 0$.

As for the tail part $I_{t2}$ we have
\begin{align*}
\norm{I_{t2}}_{L^1(-T,0;W^{k,\infty})}
&\lec_k  
\sum_{m=1}^k \left\|\int_{|x-y|\geq \rho} \frac{ |\na p(y)||\phi(y)|}{|x-y|^{3+\ga+m}} \d y\right\|_{L^1(-T,0;L_x^{\infty})}
\lec_k \norm{\na p}_{L^1([-T,0]\times B_R)}
\end{align*} 
for any integer $k\geq 0$, as required. \\

\noindent\texttt{Step 2.} We estimate $I_{t1}$. \\

We first decompose $I_{t1}$ as
\eqnb\label{def_of_It11}\begin{split}
I_{t1} (x) &\sim_\ga \, \phi(x)\int  \frac{\na p(y)}{|x-y|^{3+\ga }}  (1-\chi_\rho)(x-y)\d y \\
&=\, \phi(x)\left( \int  \frac{(\na p(y)-(\na p )_\varphi )}{|x-y|^{3+\ga }}  (1-\chi_\rho)(x-y)\d y + \int  \frac{(\na p)_\varphi }{|x-y|^{3+\ga }}  (1-\chi_\rho)(x-y)\d y \right) \\
&= : I_{t11} (x)+I_{t12} (x),
\end{split}
\eqne
where $\varphi \in C_c^\infty (B_R)$ is such that $\int \varphi \d x =1$. Then we obtain
\begin{align*}
\norm{I_{t12}}_{L^1(-T,0;W^{k,\infty})}
&\sim\, \Norm{\phi(x) \int\frac{(1-\chi_\rho(x-y))}{|x-y|^{3+\ga}} \d y (\na p)_\varphi}_{L^1(-T,0;W^{k,\infty}(B_R))}\\
&\lec \norm{(\na p )_\varphi }_{L^1(-T,0)}\\
&\lec \norm{ p}_{L^1([-T,0]\times B_R)}.
\end{align*}

As for $I_{t11}$ we will show below that  
\eqnb\label{It11_optimal}
\norm{ I_{t11} }_{L^1(-T,0;W^{k,1})} \lec_k \| \mathscr{M}_4 (\La^{2s-1}\na p ) \|_{L^1 ((-T,0]\times B_{R_0})} .
\eqne
In fact this is the most challenging estimate in this section. Actually one could instead use a similar approach as in \eqref{trick1} to prove the same estimate, but with the grand maximal function $\mathscr{M}_4$ replaced by the Hardy-Littlewood maximal function. However, as mentioned in the introduction, such estimate would be of no use to us (as $\cM f \not \in L^1$ for any $f\in L^1$, $f\ne 0$). This is the point where the use of the grand maximal function becomes necessary and, in the remaining part of this section, we show that \eqref{It11_optimal} can be obtained by combining the two ideas that we have already used (in showing \eqref{trick1} and in the proof of Lemma~\ref{lem_poincare_for_p}) and adapting them to fit the structure of $\mathscr{M}_4$. 

In fact, in order to see \eqref{It11_optimal} we will show that   
\begin{align}\label{It11_optimal_reduced}
\norm{ \na^m I_{t11} }_{L^1(-T,0;L^1)} \lec_m
\norm{\mathcal{L}_m(\La^{2s-1}\na p)}_{L^1((-T,0]\times B_{R_0})}, 
\end{align}
where the operator $\mathcal{L}_m$ is defined by
\eqnb\label{def_of_L}\begin{split}
\mathcal{L}_m(H) &\coloneqq     
\sup_{j\geq 1}\left| (\eta_{j,m} \ast \zeta_j )\ast H  \right|   + \sup_{j\geq 1}\left| {\zeta }_{j}\ast H  \right|  
+\sup_{j\geq 0} |\td \zeta_{j,m+1} \ast H|\\
&\quad +\sup_{\substack{1-n_0\leq l\leq j\\ a\in [1/2,1]}} |\eta_l^{(a)} \ast \td \zeta_{m+1,j}\ast H| + \sup_{\substack{1-n_0\leq l\leq j\\ a\in [1/2,1]}}  |\td\eta_l^{(a)} \ast \td \zeta_{m+1,j}\ast H|.
\end{split}\eqne
(Recall (above \eqref{est.uloc1}) that the integer $n_0\geq 4 $  is determined by $R$ and $R_0$.) Then \eqref{It11_optimal} follows from the inequality $\mathcal{L}_m\lec_m \mathscr{M}_4$, which we show in Lemma~\ref{lem_bound_G_by_grand_max} below. The auxiliary functions appearing under the suprema above are defined as
\eqnb\label{def_of_fcns}
\begin{split}
\eta_j (y) &\coloneqq 2^{j\ga } \frac{1}{|y|^{3+\ga }} \overline{\eta }(2^{-j}y), \quad
\eta_j^{(\th)} (y) \coloneqq 
\frac 1{\th^3} \eta_j\left(\frac y {\th}\right),\quad
\td\eta_j^{(\th)} (y) \coloneqq 
\frac {2^{-j}y}{\th^4} \eta_j\left(\frac y {\th}\right)\\
\zeta_j (y) &\coloneqq 2^{j(1-2s)} \frac{\chi_\rho (2^{-j} y)}{|y|^{4-2s}} ,\quad
\td\zeta_j (y) \coloneqq 2^{j(1-2s)} \frac{\bar\eta (2^{-j} y)}{|y|^{4-2s}}, \\
\eta_{m,j} &\coloneqq 2^{jm} \na^m \eta_j, \hspace{1.7cm} 
\td\zeta_{m,j} \coloneqq  2^{jm} \na^m \td\zeta_j,
\end{split}
\eqne
where $\th\in [1/2,1]$ and $\overline{\eta } (y) \coloneqq \chi_\rho (y) - \chi_\rho (2y)$ for $\chi_\rho \in C_c^\infty (B_{2\rho };[0,1])$ satisfying $\chi_\rho =1$ on $B_\rho$. Every function defined in \eqref{def_of_fcns} satisfies 
\begin{align*}
g_j (y) = 2^{-3j} g(2^{-j}y)
\end{align*}
where $g=\eta,\eta^{(\th )}, \td \eta^{(\th )}, \zeta, \td \zeta, \eta_m , \td\zeta_m$. We also note that 
\begin{align*}
\eta_{m,j}\ast \zeta_j (y)
= 2^{-3j} (\eta_{m,0} \ast\zeta_0)(2^{-j} y).
\end{align*}

In order to prove \eqref{It11_optimal_reduced} note that 
\[
\frac{1-\chi_{ \rho } (2^{-n}( y))}{|y|^{3+\ga }} = \sum_{j\geq n+1} 2^{-j\ga  } \eta_j(y)
\]
for every $n\in \ZZ$, and so $I_{t11}$ can be written as
\[
\begin{split}
I_{t11} (x) &= C_\ga \,\phi(x)   \int \frac{1-\chi_\rho (x-y)}{|x-y|^{3+\ga }} \left( \na p(y)-(\na p)_\varphi  \right) \d y\\
&=C_\ga\phi(x)\sum_{j\geq 1}  2^{-j \ga } \int \eta_j (x-y) \left( \na p(y)-(\na p)_\varphi  \right) \d y.
\end{split}
\]
Using 
\[
\frac{1}{|x|^{4-2s}}
= \frac{\chi_\rho (2^{-j}x)}{|x|^{4-2s}} + \frac{1-\chi_\rho (2^{-j}x)}{|x|^{4-2s}} 
= 2^{-j(1-2s)}\zeta_j(x) + \sum_{k>j}2^{-k(1-2s)}\td \zeta_k(x),
\]
we decompose $\na p(y)- (\na p)_\varphi$,
\[
\begin{split}
\na p(y)- (\na p)_\varphi &=  \int (\na p(y)-\na p(w) ) \varphi (w) \d w \\
&= \iint \left( \frac{1}{|y-z|^{4-2s}} - \frac{1}{|w-z|^{4-2s}}   \right) H(z) \varphi(w) \d z \, \d w  \\
&=2^{-j(1-2s)} \int \left(\zeta_j\ast H(y) - \zeta_j\ast H(w) \right)\ph(w) \d w\\
&\hspace{1cm}+\sum_{k>j} 2^{-k(1-2s)} \int \left(\td\zeta_k\ast H(y) - \td\zeta_k\ast H(w) \right)\ph(w) \d w\\
&=: q_{j,1} (y) + q_{j,2}(y),
\end{split}
\]
where we write $H \coloneqq  \La^{2s-1}\na p$ for brevity. Plugging this  back into the integral, we obtain the corresponding decomposition of $I_{t11}$, 
\begin{align*}
I_{t11}(x)  
&=C_\ga \, \phi(x)
\sum_{j\geq 1}  2^{-j \ga } \left( \eta_j \ast q_{j,1} (x) +\eta_j \ast q_{j,2} (x) \right) \\
&=:C_\ga \, \phi(x)\left( J_1(x)  +J_2(x)\right).
\end{align*}
Since $\phi$ is smooth and supported in $B_R$, the claim \eqref{It11_optimal_reduced} follows if we show that
\begin{align}\label{It11_optimal_reduced2}
\| \na^m J_1\|_{L^1 (B_R)} + \|\na^m J_2 \|_{L^1 (B_R)} \lec_m \| \mathcal{L}_m(H) \|_{L^1 (B_{R_0})}
\end{align}
for every $m\geq 0$ (and almost every $t\in (-T,0)$). 

The estimate of $J_1$ can be obtained easily by noting that 
\[
\begin{split}
J_1 (x) &= \sum_{j\geq 1} 2^{-j(\ga+1-2s ) }
\left(\eta_j \ast \zeta_j \ast H(x) - c\int (\zeta_j\ast H) \ph \,\d w\right)
\end{split}
\]
since $\int \eta_j \, \d x  =c$ and $\int \ph \,\d x =1$. Thus for any integer $m\geq 0$, using $\sum_{j\geq 1}  2^{-j(\ga+1-2s) }2^{-jm} \lec 1$ 
\[
\| \na^m J_1 \|_{L^1 (B_R)} 
\lec \left\| \sup_{j\geq 1}\left| (\eta_{m,j} \ast \zeta_j )\ast H  \right|  \right\|_{L^1 (B_R)} + \left\| \sup_{j\geq 1}\left| {\zeta }_{j}\ast H  \right|  \right\|_{L^1 (B_R)} 
\lec \| \mathcal{L}_m(H)\|_{L^1 (B_R)}.
\]

\

As for $J_2$, we first write the $m$th derivatives of the integral in the definition of $q_{j,2}$ for integer $m\geq 0$ as
\begin{align*}
\pa_y^m \int
&\left(\td\zeta_k \ast H(y) - \td\zeta_k \ast H(w) \right)\ph(w) \d w
= \pa_y^m\iint_0^1 \na \td\zeta_k \ast H(\th y+(1-\th)w) \cdot (y-w) \d \th\,\ph(w) \d w\\
&= 2^{-k(m+1)}\th^m\iint_0^1 2^{k(m+1)}\na^{m+1} \td\zeta_k \ast H(\th y+(1-\th)w) \cdot (y-w) \d \th\, \ph(w) \d w  \\
&=2^{-k(m+1)}\th^m\left(\iint_0^{2^{-j-n_0}} + \sum_{l=1-n_0}^{j} \iint_{2^{-j+l-1}}^{2^{-j+l}}\right) \td\zeta_{m+1,k} \ast H(\th y+(1-\th)w) \cdot (y-w) \d \th \,\ph(w) \d w.
\end{align*}
We now rewrite $\na^m J_2$ by applying the integration by parts and have a corresponding decomposition, 
\begin{align*}
\na^m J_2(x)
&=\sum_{j\geq 1} \eta_j \ast \p^m q_{j,2} (x)  \\
&=\sum_{j\geq 1} \sum_{k>j} 2^{-j\ga}2^{k(2s-1)} 
\iint \eta_j(x-y)
\pa_y^m \int
\left(\td\zeta_k \ast H(y) - \td\zeta_k \ast H(w) \right)\ph(w) \d w\d y \\
&=: \na^m J_{21}(x) + \na^m J_{22}(x),
\end{align*}
where the last decomposition is obtained by the decomposition of the $\th$-integral pointed out above.

In order to estimate $\na^m J_{21}(x)$ for $x\in B_R$ we note that we have $|x-y|\leq 2^{j+1}\rho$, $|w|\leq R$, $\th\in [0,2^{-j-n_0}]$, so that $|y| \leq |x-y|+|x|\leq 2^{j+1}\rho + \frac 14\cdot 2^{n_0} \rho \leq 2^{j+n_0}\rho$ (recall above \eqref{est.uloc1} that $4R\leq 2^{n_0}\rho$), $|\th y + (1-\th )w | \leq \th|y|+ |w|\leq \rho + R < R_0$ and $|y-w|\lec 2^j$, which gives that 
\begin{align*}
&| \na^m J_{21}(x) | \\
& \lec \sum_{j\geq 1} \sum_{k> j} 2^{-j\ga } 2^{k(2s-2)} \iiint_0^{2^{-j-n_0}} |\eta_j (x-y)| |\td \zeta_{m+1,k} \ast H(\theta y + (1-\theta )w ) |\,|y-w|\,|\varphi (w) | \, \d \theta\, \d w\, \d y \\
& \lec \sum_{j\geq 1} \sum_{k> j} 2^{j(1-\ga)} 2^{k(2s-2)}\iint_0^{2^{-j-n_0}} |\eta_j (x-y)|
\int_{B_R} |\td \zeta_{m+1,k} \ast H(\theta y + (1-\theta )w ) |\d w 
 \, \d \theta\,   \, \d y\\
& \lec \sum_{j\geq 1} \sum_{k> j} 2^{j(1-\ga)} 2^{k(2s-2)} \int \int_0^{2^{-j-n_0}} |\eta_j (x-y)|
\norm{\td \zeta_{m+1,k} \ast H}_{L^1(B_{R_0})} \frac{\d \theta}{(1-\th)^3}   \, \d y\\
& \lec \sum_{j\geq 1} \sum_{k> j} 2^{-j\ga} 2^{k(2s-2)} \norm{\td \zeta_{m+1,k} \ast H}_{L^1(B_{R_0})}  \int  |\eta_j (x-y)|\d y\\
&\lec \norm{\sup_{k\geq 0} |\td \zeta_{m+1,k} \ast H|}_{L^1(B_{R_0})}.
\end{align*}

Therefore, it easily follows that 
\[
\norm{\na ^m J_{21}}_{L^1(B_R)} 
\lec \norm{\sup_{k\geq 0} |\td \zeta_{m+1,k} \ast H|}_{L^1(B_{R_0})} \lec \norm{\mathcal{L}_m(H)}_{L^1(B_{R_0})}. 
\]

To deal with $\na^m J_{22}$, we rewrite the following integral, when $\th \in (2^{-j+l-1},2^{-j+l}]$, as
\begin{align}
& \iint \eta_j(x-y)
 \td \zeta_{m+1,k}\ast H(\th y + (1-\th) w) \cdot (y-w) \ph(w) \, \d w\, \d y  \nonumber\\
&\quad= \iint \frac 1{\th^3}\eta_j\left(\frac{\th x + (1-\th) w - y'}{\th}\right)  
 \td \zeta_{m+1,k}\ast H(y') \cdot (x-w) \ph(w) \, \d w\, \d y' \nonumber\\
&\qquad -
 \iint \frac 1{\th^3} \eta_j\left(\frac{\th x + (1-\th) w - y'}{\th}\right)  
 \td \zeta_{m+1,k}\ast H(y') \cdot \left(\frac{\th x +(1-\th) w - y'}{\th}\right) \ph(w) \, \d w\, \d y' \nonumber\\
\begin{split} 
&\quad= \int
\eta_l^{(\th')} \ast \td \zeta_{m+1,k}\ast H(\th x + (1-\th) w ) \cdot (x-w) \ph(w) \, \d w \\
&\qquad -
 2^j\int \td\eta_{l,i}^{(\th')}\ast \td \zeta_{m+1,k}\ast H_i(\th x + (1-\th) w)  \ph(w) \, \d w \label{decom.J22}
 \end{split}
\end{align}
where $H_i$ is $i$th component of $H$, and the summation convention in $i$ is used. Here the first equality follows from the change of variable $y\mapsto\th y + (1-\th)w =: y' $ and the decomposition $y-w  = \frac 1{\th} (y'-w) = (x-w) - \frac{\th x +(1-\th) w - y'}{\th}$, and the second equality follows by setting $\th' = 2^{j-l}\th \in (1/2,1]$ and noting that
\begin{align*}
\frac 1{\th^3} \eta_j \left( \frac z{\th}\right)
= \frac 1{2^{3l} (2^{j-l}\th)^3} \eta_0 \left( \frac z{2^{l} (2^{j-l}\th)}\right) = \eta_l^{(\th')}(z), \quad
\frac {2^{-j}z_i}{\th^4} \eta_j \left( \frac z{\th}\right)
=\td \eta_{l,i}^{(\th')}(z).
\end{align*}
Using \eqref{decom.J22}, we can decompose $\na^m J_{22}$ into two parts and estimate on $B_R$ as
\begin{align*}
&\int_{B_R}|\na^m J_{22}(x)| \d x\\
&\lec  \sum_{j\geq 1} \sum_{k>j} 2^{-j\ga} 2^{k(2s-2)} \sum_{l=1-n_0}^j\int_{2^{-j+l -1}}^{2^{-j+l}}
\int_{B_R}\int_{B_R}
|\eta_l^{(\th')} \ast \td \zeta_{m+1,k}\ast H(\th x + (1-\th) w )| |x-w|  
 \,\d x \, \d w\,\d \th\\
&\quad+ \sum_{j\geq 1} \sum_{k>j} 2^{j(1-\ga)} 2^{k(2s-2)} \sum_{l=1-n_0}^j\int_{2^{-j+l -1}}^{2^{-j+l}}
\int_{B_R} \int_{B_R} |\td\eta_{l,i}^{(\th')}\ast \td \zeta_{m+1,k}\ast H_i(\th x + (1-\th) w)| \,  \d x \, \d w\,
  \d \th \\
  &\lec \sum_{j\geq 1} \sum_{k>j} 2^{-j\ga} 2^{k(2s-2)} 
\sum_{l=1-n_0}^j\int_{2^{-j+l -1}}^{2^{-j+l}}\frac{\d \th}{\th^3}\\
&\qquad \cdot\left(\Norm{\sup_{\substack{1-n_0\leq l\leq k\\ k\geq 1}} \left| \eta_l^{(\th')} \ast \td \zeta_{m+1,k}\ast H\right| }_{L^1(B_R)} + 2^j\Norm{\sup_{\substack{1-n_0\leq l\leq k\\ k\geq 1}}  \left| \td\eta_l^{(\th')} \ast \td \zeta_{m+1,k}\ast H\right|}_{L^1(B_R)}
\right)\\
&  \lec\, \norm{\mathcal{L}_m(H)}_{L^1(B_R)},
\end{align*}
where we used the trivial bound $2^{-km}\leq 1$ in the first inequality, the bound $|x-w|\leq 2R$ in the second inequality, and the last line follows by noting that $\int_{2^{-j+l -1}}^{2^{-j+l}}\frac{\d \th}{\th^3} \sim 2^{2(j-l)}$, which gives convergence  of the triple sum, 
\[
\sum_{j\geq 1} \sum_{k>j} 2^{j(1-\ga)} 2^{k(2s-2)} 
\sum_{l=1-n_0}^j 2^{2(j-l)} \lec \sum_{j\geq 1} \sum_{k>j} 2^{j(1-\ga)} 2^{k(2s-2)} \lec \sum_{j\geq 1}  2^{-j(\ga+1-2s)} \lec 1.
\]
 This together with the same estimates for $J_1$ and $J_{21}$ above give \eqref{It11_optimal_reduced2}, as required.   
\end{proof}
We now conclude this section by showing the relation between $\mathcal{L}_m(H)$ and $\mathscr{M}_4(H)$. 
%
\begin{lemma}\label{lem_bound_G_by_grand_max} If $\mathcal{L}_m(H)$ is defined by \eqref{def_of_L} then for any $H$
\[
\mathcal{L}_m(H)(x)\lec_m \mathscr{M}_4(H)(x) \qquad \text{ for all } x\in \R^3. 
\]
\end{lemma}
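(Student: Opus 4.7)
The plan is to show pointwise that each of the five suprema appearing in the definition of $\mathcal{L}_m$ is dominated by $\mathscr{M}_4(H)(x)$. The guiding principle is that, by the very definition of $\mathscr{M}_4$, whenever a kernel can be written as a Schwartz dilation $\Phi_t(y)=t^{-3}\Phi(y/t)$ with $\Phi$ satisfying a normalization $\int(1+|y|)^4\sum_{|\alpha|\le 5}|\pa^\alpha\Phi(y)|\,\d y\le C$, one gets $|\Phi_t\ast H(x)|\le C\,\mathscr{M}_4(H)(x)$ for every $t>0$. So the task will reduce to identifying each of the five kernels (or, if necessary, decomposing it into a convergent superposition of such) as a dilation of a Schwartz function with uniform normalization. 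To prepare the ground I would record two basic facts: (i) both $\eta_0(y):=\overline\eta(y)/|y|^{3+\gamma}$ and $\td\zeta_0(y):=\overline\eta(y)/|y|^{4-2s}$ lie in $C_c^\infty$, because $\overline\eta$ is smooth and supported in the annulus $B_{2\rho}\setminus B_{\rho/2}$ bounded away from the origin; and (ii) $\zeta_0(y):=\chi_\rho(y)/|y|^{4-2s}$ is compactly supported in $B_{2\rho}$ and belongs to $L^1$ for $s>1/2$, but is singular at the origin. A direct change of variable then gives the scaling identities $\eta_{j,m}(y)=2^{-3j}(\pa^m\eta_0)(y/2^j)$, $\zeta_j(y)=2^{-3j}\zeta_0(y/2^j)$, and $\td\zeta_{m,j}(y)=2^{-3j}(\pa^m\td\zeta_0)(y/2^j)$.

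Three of the five terms are then immediate. For term $1$ I would combine the scaling identities and use Fubini to obtain $\eta_{j,m}\ast\zeta_j=\Phi_{2^j}$ with $\Phi:=(\pa^m\eta_0)\ast\zeta_0\in C_c^\infty(B_{4\rho})$ (smoothness follows from $\pa^m\eta_0\in C_c^\infty$, compact support from both factors). For term $3$ the kernel is already $\td\zeta_{m+1,j}=(\pa^{m+1}\td\zeta_0)_{2^j}$ with $\pa^{m+1}\td\zeta_0\in C_c^\infty$. Term $2$ requires one extra step, since $\zeta_0$ is singular at $0$ and hence not Schwartz; here I would use the telescoping identity $\chi_\rho(y)=\sum_{k\ge 0}\overline\eta(2^ky)$ to decompose
\[
\zeta_j(y)=\sum_{k\ge 0}2^{-k(2s-1)}(\td\zeta_0)_{2^{j-k}}(y),
\]
after which each summand is a Schwartz dilation bounded by $\mathscr{M}_4(H)$ as in term $3$, and the weighted series converges since $s>3/4$ forces $2s-1>1/2$.

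The most delicate step, which I expect to be the main obstacle, is terms $4$ and $5$, which couple two kernels at different scales $b=a2^l$ and $2^j$ with $l\le j$, $a\in[1/2,1]$. A direct change of variable in the convolution should give
\[
(\eta_l^{(a)}\ast\td\zeta_{m+1,j})(y)=\bigl(\Phi_\alpha\bigr)_{2^j}(y),\qquad \Phi_\alpha:=(\eta_0)_{1/\alpha}\ast(\pa^{m+1}\td\zeta_0),
\]
where $\alpha:=2^{j-l}/a\ge 1$ and $(\eta_0)_{1/\alpha}(w):=\alpha^3\eta_0(\alpha w)$ is the $L^1$-normalized concentration of $\eta_0$. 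The crux will be verifying that the family $\{\Phi_\alpha\}_{\alpha\ge 1}$ satisfies Schwartz bounds \emph{uniformly} in $\alpha$. This holds because $(\eta_0)_{1/\alpha}$ is supported in $B_{2\rho/\alpha}\subset B_{2\rho}$ with $\|(\eta_0)_{1/\alpha}\|_1=\|\eta_0\|_1$ independent of $\alpha$, which forces $\Phi_\alpha\in C_c^\infty(B_{4\rho})$ uniformly in $\alpha$ and $\|\pa^\beta\Phi_\alpha\|_\infty\le\|\eta_0\|_1\,\|\pa^{m+1+|\beta|}\td\zeta_0\|_\infty$ uniformly in $\alpha$. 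Dividing by a suitable $m$-dependent constant then produces an admissible test function in the definition of $\mathscr{M}_4$. Term $5$ can be handled identically after observing that $\td\eta_l^{(\theta)}(y)=b^{-3}(u\eta_0(u))|_{u=y/b}$ with $b=2^l\theta$, so that the $C_c^\infty$ vector-valued function $u\mapsto u\eta_0(u)$ plays the role of $\eta_0$ in the argument above.
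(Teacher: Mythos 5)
Your proposal is correct and follows essentially the same route as the paper: each kernel is recognized as a dilation $\Phi_t$ of a profile whose $\mathscr{M}_4$-normalization is bounded uniformly over the auxiliary parameters, the key point for the two-scale terms being that the $L^1$-normalized concentrating factor $(\eta_0)_{1/\alpha}$ has fixed mass and shrinking support, so all derivatives can be put on the fixed smooth factor. Your explicit telescoping treatment of the second term $\sup_{j}|\zeta_j\ast H|$ (whose profile is singular at the origin) is in fact slightly more careful than the paper's, which folds that case into the generic smooth-profile template without comment; note only that convergence of that series needs just $2s-1>0$, not $2s-1>1/2$.
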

\begin{proof}
We recall that $\mathcal{L}_m(H)$ consists of the terms which can be represented as 
\eqnb\label{terms_in_G}
\sup_{j\geq 1} |\eta_{j} \ast H|, \quad
\quad \sup_{-b\le l \leq k, a\in [1/2,1]} | \eta_l^{(a)} \ast \zeta_{k} \ast H|
\eqne
where $b$ is some fixed positive constant, $\eta_j(y) = 2^{-3j}\eta_0(2^{-j}y)$ and $\zeta_j(y) = 2^{-3j}\zeta_0(2^{-j}y)$ for some $\eta_0, \zeta_0\in C_c^\infty(B_{2\rho})$, and $\eta_j^{(a)}(y)=a^{-3}\eta_j (a^{-1}y)$. 

By the definition of $\mathscr{M}_4$, it easily follows that 
\begin{align*}
\sup_{j\geq 1} |\eta_{2^j} \ast H| \lec_{\varrho} \mathscr{M}_4 (H)
\end{align*}
for every $\varrho$. Since $\mathcal{L}_m$ contains only finitely many candidates for $\eta$, the claim follows for such terms. 

Therefore, we are left to deal with the second representative term in \eqref{terms_in_G}, 

Since $l\leq k$, we set $l \coloneqq k + n$, $n\leq 0$.
Using $\eta_l^{(a)}(y) = 2^{-3k} \eta_{n}^{(a)}(2^{-k}y)$, we have
\begin{align*}
 \eta_l^{(a)} \ast \zeta_{k} (y)
 = 2^{-3k}(\eta_{n}^{(a)}\ast \zeta_0) (2^{-k}y)
=: 2^{-3k}\Psi^{n,a} (2^{-k}y)
=: \Psi_{k}^{n,a} (y).
\end{align*} 

Then we obtain
\begin{align*}
\sup_{-b \le l\le k,a\in [1/2,1]} |  \eta_l^{(a)} \ast \zeta_{k}  \ast H(x)|
&\le \sup_{n\leq 0, a\in [1/2,1]}\sup_{k\geq -b} 
|\Psi_{k}^{n,a}  \ast H(x)| 
\le \sup_{n\leq 0, a\in [1/2,1]}
|\mathcal{M}(H;\Psi^{n,a} )(x)|\\
&\le \sup_{n\leq 0, a\in [1/2,1]}|\mathcal{M}_1^*(H;\Psi^{n,a} )(x)|. 
\end{align*}

Since for each $n\leq 0$ and $a\in [1/2,1]$, $\Psi^{n,a}$ satisfies 
\begin{align*}
\int (1+|x|)^4 &\sum_{|\al|\leq 5} |\pa^{\al} \Psi^{n,a}(x)| \,\d x
= \int (1+|x|)^4 \sum_{|\al|\leq 5} |\eta_{n}^{(a)}\ast \pa^\al\zeta_0| \,\d x\\
&\le \sum_{|\al|\leq 5}
\iint (1+|x|)^4 |\eta_{n}^{(a)}(x-y)| |\pa^\al  \zeta_0(y)| \,\d y\, \d x\\
&= \sum_{|\al|\leq 5}
\iint (1+|x|)^4 \frac{1}{(2^{n}a)^{3}} \left|\eta_0\left(\frac{x-y}{2^na}\right)\right| |\pa^\al  \zeta_0(y)| \,\d y\, \d x\\
&\leq 
(1+4\rho)^4 \norm{\eta_0}_1 \sum_{|\al|\leq 5}\norm{\pa^\al \zeta_0}_1,
\end{align*}
where the last line follows from $\supp(\eta_0), \supp ( \zeta_0)\subset B_{2\rho}$, so that 
\begin{align*}
|x-y|\leq 2^n a \cdot 2\rho \leq 2\rho, \quad |y|\leq 2\rho \implies
|x|\leq 4\rho. 
\end{align*}
Observe that the upper bound is independent of $n$ and $a$. Therefore, by rescaling, we can make it bounded by $1$. Hence by definition of $\mathscr{M}_4$ (recall Section~\ref{sec_hardy_and_grand_max})
\begin{align*}
\sup_{0\le l\le k,a\in [1/2,1]} | {\eta}_{l}^{(a)}\ast \zeta_{k}  \ast H(x)|
\le
\sup_{n\leq 0, a\in [1/2,1]}|\mathcal{M}_1^*(H;\Psi^{n,a} )(x)|
\lec \mathscr{M}_4 (H)(x),
\end{align*}
as required.
\end{proof}

\section*{Acknowledgements}
H. Kwon has been supported by the National
Science Foundation under Grant No. DMS-1638352.
W. S. O\.za\'nski has been supported by the funding from Charles Simonyi Endowment at the Institute for Advanced Study as well as the AMS Simons Travel Grant. The authors are grateful to Camillo De Lellis for suggesting this project and many helpful discussions.

\bibliographystyle{alpha}

\newcommand{\etalchar}[1]{$^{#1}$}

\end{document}